\theoremstyle{definition}
\newtheorem{thm}{Theorem}[section]
\newtheorem{lem}[thm]{Lemma}
\newtheorem{rem}[thm]{Remark}
\newtheorem{prop}[thm]{Proposition}
\newtheorem{defn}[thm]{Definition}
\newtheorem*{thm*}{Theorem}
\numberwithin{equation}{section}
\def\Z{{\mathbbm Z}}
\def\1{{\mathbbm{1}}}
\newcommand{\Hom}{{\rm Hom}}
\renewcommand{\to}{\rightarrow}
\def\dif{\partial}
\def\lra{{\longrightarrow}}
\def\Id{\mathrm{Id}}
\def\mc{\mathcal}
\def\mf{\mathfrak}
\def\shuffle{\,\raise 1pt\hbox{$\scriptscriptstyle\cup{\mskip
               -4mu}\cup$}\,}
\newcommand{\refequal}[1]{\xy {\ar@{=}^{#1}
(-1,0)*{};(1,0)*{}};
\endxy}
\title{A braid group action on a $p$-DG homotopy category}
\author{You Qi, Joshua Sussan, Yasuyoshi Yonezawa}
\date{\today}
\begin{document}
%
% ==============================================================================

\maketitle

\begin{abstract}
We construct a categorical braid group action on a homotopy category of $p$-DG modules of a deformed Webster algebra.
\end{abstract}

\setcounter{tocdepth}{2} \tableofcontents

\section{Introduction}
An approach to categorification of quantum groups, their representations, and quantum invariants at a prime $p$ root of unity was outlined in \cite{Hopforoots} and further developed in \cite{QYHopf}.  These works suggest that one should look for $p$-differentials on structures categorifying objects at generic values of the quantum parameter. There has been some progress in this program for quantum $\mathfrak{sl}_2$ \cite{EQ2, EQ1, KQ, KQS, QiSussan, QiSussan2, QiSussan3}.

In particular, a categorification of the braid group action on the Burau representation at a prime root of unity was constructed in \cite{QiSussan}.  The authors considered a $p$-DG structure on the algebra $A_n^!$ which describes a singular block of category $\mathcal{O}(\mathfrak{gl}_n)$ corresponding to the Young subgroup $S_1 \times S_{n-1}$.  Ignoring the $p$-DG structure, this categorical action is a consequence of Koszul duality and Khovanov and Seidel's action of the braid group on the homotopy category of modules over the zigzag algebra $A_n$.  Whereas the projective modules play the role of the Temperley-Lieb algebra in \cite{KS}, the simple objects form an exceptional sequence of objects on the other side of Koszul duality.  As a result, one must find projective resolutions of the simple objects of $A_n^!$ in order to construct the braid group action directly.
In the context of the $p$-differential, one must find cofibrant replacements of the simple objects.  This was the main technical step in \cite{QiSussan}.

Partially motivated by the construction in \cite{QiSussan}, a deformation $W=W(n,1)$ of $A_n^!$ was considered in \cite{KhovSussan} and the authors showed that there is a categorical braid group action on the homotopy category of $W$-modules.   This result was extended in \cite{KLSY} to a categorical braid group action on the homotopy category of a deformation of more general Webster algebras for $\mathfrak{sl}_2$, for which $W$ is a special case (hence the notation).

In this note, we return to the simplified setting of $W$.  There is a $p$-DG structure on this algebra (see \cite{Y}).  We show that there are braiding complexes in a homotopy category of $p$-DG $W$-modules using key ideas from \cite{KRWitt}.   Using some results from $p$-DG theory we extend the main result of \cite{KhovSussan} to the $p$-DG setting.

\begin{thm*}
There is a categorical action of the braid group on $n$ strands on the relative $p$-DG homotopy category of $W$.
\end{thm*}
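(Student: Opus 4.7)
The plan is to lift the construction of \cite{KhovSussan} to the $p$-DG world. There, a braid group action is obtained by tensoring with bounded complexes $T_i$ of $W$-bimodules (one per elementary generator $\sigma_i$), whose terms are built from explicit bimodules relating Webster-type algebras, together with chain-level data implementing the braid relations. My goal is to equip each $T_i$ with a $p$-differential compatible with the $p$-DG structure on $W$ from \cite{Y}, and then to upgrade the homotopy equivalences realizing the braid relations to the relative $p$-DG homotopy category.

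First I would define the $p$-differential on each term of $T_i$ following the method of \cite{KRWitt}: the generators of the bimodules appearing are essentially canonical, so one can impose the unique derivation compatible with the left and right $W$-actions and with the $p$-DG derivation on $W$. Then I would verify that the chain differentials of $T_i$ commute with these derivations and that the resulting $p$-nilpotency condition holds, so that each $T_i$ is a well-defined object of the relative $p$-DG homotopy category. This step is tedious bookkeeping, but the canonicity of the derivations should make it mechanical once the right generators are chosen.

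The substantive work is to check invertibility and the braid relations. For invertibility, the candidate inverse $T_i^{-1}$ is the obvious dual complex, and the unit/counit homotopies of \cite{KhovSussan} must be shown to survive in the $p$-DG setting. Far commutativity $T_iT_j \iso T_jT_i$ for $|i-j|>1$ should follow from a disjoint-strand argument that has no new $p$-DG content. The three-strand relation $T_iT_{i+1}T_i \iso T_{i+1}T_iT_{i+1}$ is the heart of the argument: one needs the explicit chain-level homotopy equivalence of \cite{KhovSussan} to lift to a map of $p$-DG bimodule complexes commuting with the $p$-differential.

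I expect this last step to be the principal obstacle, mirroring the main technical difficulty isolated in \cite{QiSussan}. The natural route is to show that each $T_i$ is already cofibrant as a one-sided $p$-DG $W$-module, so that the honest tensor product computes the derived tensor product and the braid-relation maps of \cite{KhovSussan} lift term by term to $p$-DG bimodule maps. Should direct cofibrancy fail, the backup is to replace $T_i$ by an explicit cofibrant model built from Koszul-type $p$-DG resolutions in the spirit of \cite{KRWitt}, and to transport the braid isomorphisms across the resulting homotopy equivalence.
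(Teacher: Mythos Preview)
Your plan is a reasonable strategy in spirit, but it takes a harder road than the paper does, and it contains a specific gap that would block you before you reach the three-strand relation.

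The paper does not lift the braid-relation homotopies directly in the $p$-DG world, and it never needs cofibrancy arguments or cofibrant replacements. Instead it proves the braid relations once, for ordinary two-term complexes $\Sigma_i$, $\Sigma_i'$ in the \emph{classical} relative homotopy category $\mathcal{C}^{\partial}(W,d_0)$ of chain complexes of $(W,W)\# H$-bimodules (Proposition~\ref{braidclassical}); this is where the bimodule homotopies of \cite{KhovSussan} and \cite{KRWitt} already live, and where Lemmas~\ref{BB=B+B} and~\ref{sesWii+1} do the work. Then it applies the $p$-extension functor $\mathcal{P}$ of Section~\ref{secpdgtheory}, which is exact and descends to the relative categories (Proposition~\ref{relextot}), to define $T_i := \mathcal{P}(\Sigma_i)$ and $T_i' := \mathcal{P}(\Sigma_i')$; the braid relations transfer automatically. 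This completely sidesteps the homotopy-lifting and cofibrancy problems you flag as the principal obstacle: the deformation of $A_n^!$ to $W$ is precisely what allows one to trade the $p$-DG derived category of \cite{QiSussan} for a relative homotopy category where $\mathcal{P}$ is available.

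There is also a concrete error in your setup. You assert that each bimodule term carries ``the unique derivation compatible with the left and right $W$-actions,'' but uniqueness fails: one may twist by central elements, and the paper must do so. The natural $\partial$ on $W_i$ makes $\epsilon_i \colon W_i \to W$ a $p$-DG map, but $\iota_i \colon W \to W_i\{-2\}$ does \emph{not} commute with $\partial$. The repair is to replace the target by the twisted module $W_i^{-e_1}$, on which the generator satisfies $\partial(1\otimes e_j) = -(x_i + x_{i+1})\otimes e_j$; only then is $\iota_i$ a $p$-DG map. Without this twist your candidate inverse complex is not a $p$-DG object at all, so invertibility fails before any braid relation enters the picture.
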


Deforming the algebra $A_n^!$ allows us to replace the $p$-DG derived category in \cite{QiSussan} with the relative $p$-DG homotopy category here. This theorem should be compared to \cite[Theorem 5.14]{QiSussan}, albeit in the weaker context of the relative homotopy category. However, the authors believe that the results will become useful towards building a $p$-DG link homology theory, as proposed in \cite{QiSussan4}. These further questions will be addressed in subsequent works of the authors.

In \cite{KLSY} a braid group action on the homotopy category of deformed Webster algebras was constructed by exhibiting an action of the Khovanov-Lauda-Rouquier $2$-category \cite{KL3, Rou2}, and then using symmetric Howe duality.  We expect the main result here holds on the level of generality of \cite{KLSY}.  One would need to consider a $p$-DG version of the Khovanov-Lauda-Rouquier category and show the braiding of Rickard complexes \cite{CKbraid} holds in the presence of the $p$-differential.

In recent work \cite{Webdeformed}, the main result of \cite{KLSY} was proved by relating the deformed algebra to various objects in Lie theory and geometry.  It would be interesting to import the $p$-DG structure to the categories of Gelfand-Tsetlin modules and perverse sheaves studied there.

\subsection{Acknowledgements.}
Y.~Q.~is partially supported by the NSF grant DMS-1947532.
J.~S.~is partially supported by the NSF grant DMS-1807161 and PSC CUNY Award 63047-00 51.
Y.~Q.~and~Y.~Y.~were partially supported by the Research Institute for Mathematical Sciences, an International Joint Usage/Research Center located in Kyoto University.

%%%%%%%%%%%%%%%%%%%%%%%%%%%%%%%%%%%%%%%%%%%%%%%%%%%%%%%%%%%%%%%%%%%%%%%%%%%%%%%%%%%%%%%%%%%%%%%%%%%%%%%%%%%%%%%%%%%%%%
%%%%%%%%%%%%%%%%%%%%%%%%%%%%%%%%%%%%%%%%%%%%%%%%%%%%%%%%%%%%%%%%%%%%%%%%%%%%%%%%%%%%%%%%%%%%%%%%%%%%%%%%%%%%%%%%%%%%%%
\section{Some exact functors on \texorpdfstring{$p$}{p}-complexes}\label{secpdgtheory}

In this section, we gather some necessary background material for later use. Further details of this section can be found in \cite[Section 2.1]{KQ} and \cite[Section 2.1]{QiSussan4}.

\subsection{Extension functors}
Let $\Bbbk$ be a field of finite characteristic $p>0$ and $A$ be a $\Bbbk$-algebra. We think of $A$ as a graded algebra by setting all elements of $A$ to be of degree zero. A \emph{$p$-complex} of $A$-modules $U=\oplus_{i\in \Z}U^i$ is then a graded $A$-module, equipped with a degree-two endomorphism $\dif_U$ satisfying $\dif_U^p=0$. A morphism of $p$-complexes $f: U\lra V$ consists of $A$-module maps $f_i:U^i \lra V^i$, $i\in \Z$, that commute with the $p$-differentials on $U$ and $V$. A morphism $f: U\lra V$ of $p$-complexes is called \emph{null-homotopic} if there is a collection of $A$-linear maps $h_i: U^i\lra V^{i-2p+2}$ such that, for any $i\in \Z$,
\begin{equation}
    f_i= \sum_{k=0}^{p-1}\dif_U^{p-1-k} h_{i+2k} \dif_V^{k}.
\end{equation}
When $p=2$, these notions reduce to the usual notion of (co)chain complexes of $A$-modules over characteristic two, with the differential degree doubled. Furthermore, just as the usual homotopy category of chain complexes is triangulated, the \emph{homotopy category of $p$-complexes}, obtained from the (abelian) category of $p$-complexes modulo the class of null-homotopic morphisms, is also triangulated.
Let us denote this $p$-homotopy category by $\mc{C}(A,\dif)$.

In this subsection, we recall a functor relating the usual homotopy category $\mc{C}(A,d)$ of $A$ with its $p$-homotopy category $\mc{C}(A,\dif)$.

To do this, recall that a usual chain complex of $A$-modules $M$ consists of a collection of $A$-modules and homorphisms $d_i:M^i\lra M^{i+1}$ called coboundary maps
\[
\xymatrix{
\cdots \ar[r]^-{d_{i-2}}  & M^{i-1} \ar[r]^-{d_{i-1}} & M^{i} \ar[r]^-{d_{i}} & M^{i+1} \ar[r]^-{d_{i+1}} & M^{i+2} \ar[r]^-{d_{i+2}} & \cdots
} ,
\]
satisfying $d_i\circ d_{i-1}=0$ for all $i\in \Z$. A morphism of chain complexes $f:M\lra N$ consists of a sequence $f_i:M^i\lra N^i$ that commute with the differentials. A map is null-homotopic if there is a sequence of $A$-module maps $f_i:M^i\lra N^i$, $i\in \Z$, of $A$-modules, as depicted in the diagram below,
\begin{equation*}
 \xymatrix{
 \cdots\ar[r]^{d_{i-2}} & M^{i-1} \ar[dl]|-{h_{i-1}} \ar[r]^{d_{i-1}} \ar[d]|-{f_{i-1}} & M^{i} \ar[dl]|-{h_{i}} \ar[r]^{d_{i}} \ar[d]|-{f_{i}} & M^{i+1} \ar[dl]|-{h_{i+1}} \ar[r]^{d_{i+1}} \ar[d]|-{f_{i+1}} & M^{i+2}\ar[r]^{d_{i+2}} \ar[dl]|-{h_{i+2}} \ar[d]|-{f_{i+2}} & \cdots \ar[dl]|-{h_{i+3}}\\
 \cdots \ar[r]_{d_{i-2}} & N^{i-1} \ar[r]_{d_{i+1}} & N^{i} \ar[r]_{d_{i}} & N^{i+1} \ar[r]_{d_{i+1}} & N^{i+2}\ar[r]_{d_{i+2}} & \cdots
 }   
\end{equation*}
which satisfy $f_{i}=d_{i+1}\circ h_i+h_{i+1}\circ d_i$ for all $i\in \Z$. The homotopy category $\mc{C}(A,d)$, by construction, is the quotient of chain complexes over $A$ by the ideal of null-homotopic morphisms.

 We define the \emph{$p$-extension functor}
\begin{equation}
   \mc{P}: \mc{C}(A,d)\lra \mc{C}(A ,\dif) 
\end{equation}
as follows. Given a complex of $A$-modules, we repeat every term sitting in odd homological degrees $(p-1)$ times while keeping terms in even homological degrees unaltered. More explicitly, for a given complex
\[
\xymatrix{
\cdots \ar[r]^-{d_{2k-2}}  & M^{2k-1} \ar[r]^-{d_{2k-1}} & M^{2k} \ar[r]^-{d_{2k}} & M^{2k+1} \ar[r]^-{d_{2k+1}} & M^{2k+2} \ar[r]^-{d_{2k+2}} & \cdots
} ,
\]
the extended complex looks like
\[
\xymatrix{\cdots \ar[r]^-{d_{2k-2}} & M^{2k-1} \ar@{=}[r]& \cdots \ar@{=}[r] & M^{2k-1}\ar[r]^-{d_{2k-1}} \ar[r] &
M^{2k}\ar `r[rd] `_l `[llld] _-{\phi_{2k}} `[d] [lld]
& \\
& & M^{2k+1}\ar@{=}[r]&\cdots \ar@{=}[r]& M^{2k+1} \ar[r]^-{d_{2k+1}}& M^{2k+2} \ar[r]^{d_{2k+2}}& \cdots}.
\]
Likewise, for a chain-map
\begin{equation*}
\begin{gathered}
 \xymatrix{
 \cdots\ar[r]^-{d_{2k-3}} & M^{2k-2}  \ar[r]^{d_{2k-2}} \ar[d]|-{f_{2k-2}} & M^{2k-1}  \ar[r]^{d_{2k-1}} \ar[d]|-{f_{2k-1}} & M^{2k}  \ar[r]^{d_{2k}} \ar[d]|-{f_{2k}} &  \cdots\\
 \cdots \ar[r]_-{d_{2k-3}} & N^{2k-2} \ar[r]_{d_{2k-2}} & N^{2k-1} \ar[r]_{d_{2k-1}} & N^{2k} \ar[r]_{d_{2k}}  & \cdots
 }
 \end{gathered}
 \ ,
\end{equation*}
the obtained morphism of $p$-DG $A$-modules is given by
\[
\xymatrix{
 \cdots\ar[r]^-{d_{2k-3}} & M^{2k-2}  \ar[r]^{d_{2k-2}} \ar[d]|-{f_{2k-2}} & M^{2k-1}  \ar[d]|-{f_{2k-1}} \ar@{=}[r] & \cdots \ar@{=}[r] &  M^{2k-1}   \ar[r]^{d_{2k-1}} \ar[d]|-{f_{2k-1}} & M^{2k}  \ar[r]^{d_{2k}} \ar[d]|-{f_{2k}} &  \cdots\\
 \cdots \ar[r]_-{d_{2k-3}} & N^{2k-2} \ar[r]_{d_{2k-2}} & N^{2k-1} \ar@{=}[r] & \cdots \ar@{=}[r] & N^{2k-1}  \ar[r]_{d_{2k-1}} & N^{2k} \ar[r]_{d_{2k}}  & \cdots
 }   \ .
\]
This is clearly a functor from the abelian category of cochain complexes over $A$ into the category of $p$-complexes of $A$-modules, which we call $\widehat{\mc{P}}$.

Next, one may check that $\widehat{\mc{P}}$ preserves ideals of null-homotopic morphisms. Indeed, suppose $f=dh+hd$ is a null-homotopic morphism in $\mc{C}(A,d)$. We first extend, for any $i\in \Z$  and $h_i: M_i \lra N_{i-1}$, to a collection of maps 
$$\hat{\mc{P}}(h)_i: \hat{\mc{P}}(M)_i \lra \hat{\mc{P}}(N)_{i-2p+2}$$
as follows.
On unrepeated terms, $\hat{\mc{P}}(h)$ sends $M^{2k}$ to the copy of $N^{2k-1}$ sitting as the leftmost term in the repeated $N^{2k-1}$'s, while on the repeated terms, it only sends the rightmost $M^{2k+1}$ to the unrepeated $N^{2k}$ and acts by zero on the other repeated $M^{2k+1}$'s. Schematically, this has the effect:
\[
\xymatrix{
 \cdots\ar@{=}[r] & M^{2k-3} \ar[r]^-{d_{2k-3}} & M^{2k-2}  \ar[r]^{d_{2k-2}} & M^{2k-1}  \ar[dlll]|-{0}  \ar@{=}[r] & \cdots \ar@{=}[r] &  M^{2k-1}   \ar[r]^{d_{2k-1}} \ar[dlll]|-{h_{2k-1}} & M^{2k} \ar[r]^{d_{2k}} \ar[dlll]|-{h_{2k}} &  M^{2k+1} \ar[dlll]|-{0} \ar@{=}[r] &\cdots\\
\cdots \ar@{=}[r]&  N^{2k-3} \ar[r]_-{d_{2k-3}} & N^{2k-2} \ar[r]_{d_{2k-2}} & N^{2k-1} \ar@{=}[r] & \cdots \ar@{=}[r] & N^{2k-1}  \ar[r]_{d_{2k-1}} & N^{2k} \ar[r]_{d_{2k}}  & N^{2k+1} \ar@{=}[r]&\cdots
 }   
 \ .
\]

\begin{lem} \label{lemnullhom}
The functor $\widehat{\mc{P}}$ sends null-homotopic morphisms in $\mc{C}(A,d)$ to null-homotopic morphisms in $\mc{C}(A,\dif)$.
\end{lem}
\begin{proof}
The proof now is an easy exercise. See \cite[Lemma 2.2]{QiSussan4} for more details.
\end{proof}

This lemma implies that $\widehat{\mc{P}}$ descends to a functor
\begin{equation}\label{equation-functor-P}
    \mc{P}: \mc{C}(A,d)\lra \mc{C}(A,\dif).
\end{equation}
which we call the \emph{$p$-extension functor}. A key property of this functor that we will use is the following.

\begin{prop}
\cite[Proposition 2.3]{QiSussan4}
The functor $\mc{P}$ is exact.
\end{prop}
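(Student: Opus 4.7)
The goal is to verify that $\mc{P}$ is a triangulated (equivalently, exact) functor between the triangulated categories $\mc{C}(A,d)$ and $\mc{C}(A,\dif)$. By definition this amounts to exhibiting a natural isomorphism $\mc{P}\circ[1]\cong[1]\circ\mc{P}$ and showing that $\mc{P}$ sends distinguished triangles to distinguished triangles. Since every distinguished triangle in either homotopy category is isomorphic to a standard mapping-cone triangle $M\xrightarrow{f}N\to C(f)\to M[1]$, the second condition reduces to producing a natural isomorphism $\mc{P}(C(f))\cong C(\mc{P}(f))$ in $\mc{C}(A,\dif)$ for every chain map $f$.

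My plan is to first construct both isomorphisms at the level of the underlying abelian categories, i.e.\ using the functor $\widehat{\mc{P}}$ between cochain complexes and $p$-cochain complexes, and then descend to the homotopy categories via Lemma~\ref{lemnullhom}. For the cone compatibility, write out $C(f)$ with terms $M^{i+1}\oplus N^i$ carrying the usual twisted differential. Applying $\widehat{\mc{P}}$ inserts $(p-1)$-fold identity segments at each odd-degree slot of the cone; on the other side, $C(\mc{P}(f))$ is the mapping cone (in the $p$-DG sense) of the already-extended chain map. A slot-by-slot comparison of the underlying $A$-modules and of the $p$-differentials produces the desired natural isomorphism, using that the inserted identity morphisms commute with the off-diagonal $f$-entry of the cone differential and that $\widehat{\mc{P}}$ is additive on direct sums.

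Shift compatibility is handled by the same strategy, with the additional subtlety that the shift $[1]$ on ordinary complexes swaps the parities of homological degrees while the shift on $p$-complexes is a plain reindexing; this forces a relocation of the $(p-1)$-fold identity blocks, which is carried out by a canonical isomorphism of $p$-complexes. Once both natural isomorphisms are in hand at the cochain/$p$-cochain level, Lemma~\ref{lemnullhom} guarantees that they pass to natural isomorphisms between the induced functors on the homotopy categories, yielding the triangulated structure on $\mc{P}$.

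The main obstacle is a careful bookkeeping of parities, signs in the twisted differential of $C(f)$, and the exact positions at which the $(p-1)$-fold identity segments are inserted when commuting $\widehat{\mc{P}}$ with shifts and cones. This step is routine but somewhat tedious; the cleanest way to organize it is to perform all matching of structural data (direct sums, twisted differentials, identity fillers) at the level of $\widehat{\mc{P}}$ before invoking Lemma~\ref{lemnullhom} to descend, thereby avoiding the need to construct any explicit homotopies by hand.
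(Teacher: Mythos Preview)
The paper does not supply its own proof of this proposition; it simply cites \cite[Proposition 2.3]{QiSussan4} and moves on. So there is nothing in the present paper to compare your argument against.

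That said, your outline is the standard and correct strategy: verify compatibility with shift and with cones at the level of $\widehat{\mc{P}}$, then descend via Lemma~\ref{lemnullhom}. One point deserves more care than you give it. The triangulated structure on $\mc{C}(A,\dif)$ for $p$-complexes is genuinely more delicate than for ordinary complexes: the shift and cone constructions are not the naive degree-by-one reindexing you would use in the DG case, and the distinguished triangles are defined through a more elaborate mechanism (see Khovanov's hopfological algebra or \cite{QYHopf}). In particular, your claim that the parity-swap under $[1]$ is repaired by ``a canonical isomorphism of $p$-complexes'' is where the real content lies; at the strict level $\widehat{\mc{P}}(M[1])$ and $\widehat{\mc{P}}(M)[1]$ are visibly different $p$-complexes, and what saves you is that they differ by contractible summands of the form $X\xrightarrow{=}\cdots\xrightarrow{=}X$ ($p$ copies). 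You should make this explicit rather than asserting it. The cone comparison has the same flavor. If you want a complete proof rather than a pointer, you will need to engage with the actual definition of distinguished triangles in $\mc{C}(A,\dif)$, not just the analogy with the DG case.
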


\subsection{Relative homotopy categories}
For any graded or ungraded algebra $B$ over $\Bbbk$, denote by $d_0$ the zero super differential and by $\dif_0$ the zero $p$-differential on $B$, while letting $B$ sit in homological degree zero. When $B$ is graded, the homological grading is independent of the internal grading of $B$.
For a graded module $M$ over a graded algebra $B$, we let $M \{n\}$ denote the module $M$, where the internal grading has been shifted up by $n$.

Suppose $(A,\dif_A)$ is a $p$-DG algebra, i.e., a graded algebra equipped with a differential $\dif_A$ of degree two, satisfying
\begin{equation}
\dif_A^p(a)\equiv 0 , \quad \quad \dif_A(ab)=\dif_A(a)b+a\dif_A(b),
\end{equation}
for all $a,b\in A$. In other words, $A$ is an algebra object in the module category of the graded Hopf algebra $H=\Bbbk[\dif]/(\dif^p)$, where the primitive degree-two generator $\dif\in H$ acts on $A$ by the differential $\dif_A$. 

Then, we may form the \emph{smash product algebra} $A\# H$ in this case. As a $\Bbbk$-vector space, $A\# H$ is isomorphic to $A\otimes H$, subject to the multiplication rule determined by 
\begin{equation}
(a\otimes \dif)(b\otimes \dif)=ab\otimes \dif^2+ a\dif_A(b)\otimes \dif.
\end{equation}
Notice that, by construction, $A\otimes 1$ and $1\otimes H$ sit in $A\# H$ as subalgebras. 

A module over $A\# H$ is also called a \emph{$p$-DG module}. We will consider the usual cochain complex or $p$-complexes of $p$-DG modules. In such a complex, the usual differential or the $p$-differential are required to respect the $H$-actions. Likewise, the null-homotopies in these cases intertwine $A\# H$-actions.

There is an exact forgetful functor between the usual homotopy categories of chain complexes of graded $A\# H$-modules 
\[
\mc{F}_d: \mc{C}(A\# H,d_0)\lra \mc{C}(A,d_0).
\]
An object $K^\bullet$ in $\mc{C}(A\# H,d_0)$ lies inside the kernel of the functor if and only if, when forgetting the $H$-module structure on each term of $K^\bullet$, the complex of graded $A$-modules $\mc{F}_d(K_\bullet)$ is null-homotopic. The null-homotopy map on $\mc{F}_d(K^\bullet)$, though, is not required to intertwine $H$-actions.
 
Likewise, there is an exact forgetful functor 
\[
\mc{F}_\dif: \mc{C}(A\# H,\dif_0)\lra \mc{C}(A,\dif_0).
\]
Similarly, an object $K^\bullet$ in $\mc{C}(A\# H,\dif_0)$ lies inside the kernel of the functor if and only if, when forgetting the $H$-module structure on each term of $K^\bullet$, the $p$-complex of $A$-modules $\mc{F}(K^\bullet)$ is null-homotopic. The null-homotopy map on $\mc{F}(K^\bullet)$, though, is not required to intertwine $H$-actions.

\begin{defn}\label{def-relative-homotopy-category}
Given a $p$-DG algebra $(A,\dif_A)$, the \emph{relative homotopy category} is the Verdier quotient 
$$\mc{C}^\dif(A,d_0):=\dfrac{\mc{C}(A\# H,d_0)}{\mathrm{Ker}(\mc{F}_d)}.$$
Likewise, the \emph{relative $p$-homotopy category} is the Verdier quotient 
$$\mc{C}^\dif(A,\dif_0):=\dfrac{\mc{C}(A\# H,\dif_0)}{\mathrm{Ker}(\mc{F}_\dif)}.$$
\end{defn}
The superscripts in the definitions are to remind the reader of the $H$-module structures on the objects.

The categories $\mc{C}^\dif(A,d_0)$ and  $\mc{C}^\dif(A,\dif_0)$ are triangulated with the triangulated structures inherited from those of $\mc{C}(A\#H,d_0)$ and $\mc{C}(A\# H,\dif_0)$.
For instance, distinguished triangles in the quotient category are declared to be those that are isomorphic to distinguished triangles in $\mc{C}(A\# H, d_0)$ and $\mc{C}(A\# H,\dif_0)$ respectively. Recall that the latter distinguished triangles arise from short exact sequences of ($p$-)cochain complexes over $A\# H$ that are termwise split exact \cite[Lemma 4.3]{QYHopf}.

By construction, there is a factorization of the forgetful functor
\[
\begin{gathered}
\xymatrix{ \mc{C}(A\# H,d_0) \ar[rr]^{\mc{F}_d} \ar[dr] && \mc{C}(A,d_0)\\
& \mc{C}^\dif(A,d_0)\ar[ur]&
} 
\end{gathered}
\ ,
\quad
\begin{gathered}
\xymatrix{ \mc{C}(A\# H,\dif_0) \ar[rr]^{\mc{F}_\dif} \ar[dr] && \mc{C}(A,\dif_0)\\
& \mc{C}^\dif(A,\dif_0)\ar[ur]&
}
\end{gathered} \ .
\]

\begin{prop} \cite[Proposition 2.13]{QiSussan4}  \label{relextot}
The $p$-extension functor $\mc{P}: \mc{C}(A\# H, d_0)\lra \mc{C}(A\# H, \dif_0)$ descends to an exact functor, still denoted $\mc{P}$, between the relative homotopy categories:
$$\mc{P}: \mc{C}^\dif (A, d_0)\lra \mc{C}^\dif(A, \dif_0) \ .$$
\end{prop}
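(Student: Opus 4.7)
The plan is to verify that $\mc{P}$, viewed as a functor on chain complexes over $A\#H$, sends $\mathrm{Ker}(\mc{F}_d)$ into $\mathrm{Ker}(\mc{F}_\dif)$, and then invoke the universal property of the Verdier quotient.

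The first step is to establish a compatibility square
\[
\xymatrix{
\mc{C}(A\#H,d_0) \ar[r]^-{\mc{P}} \ar[d]_-{\mc{F}_d} & \mc{C}(A\#H,\dif_0) \ar[d]^-{\mc{F}_\dif}\\
\mc{C}(A,d_0) \ar[r]_-{\mc{P}} & \mc{C}(A,\dif_0)
}
\]
which commutes on the nose. This is essentially by construction: the $p$-extension functor only repeats odd-degree terms along the horizontal direction and inserts the map $\phi_{2k}$ between consecutive copies, and none of this touches the $H$-module structure placed on each module. In particular, forgetting $H$-actions before or after the repetition produces the same $p$-complex of $A$-modules.

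The second step is to use Lemma~\ref{lemnullhom} applied to both algebras $A$ and $A\#H$. Suppose $K_\bullet \in \mathrm{Ker}(\mc{F}_d)$, so that $\mc{F}_d(K_\bullet)$ is null-homotopic as a complex of graded $A$-modules, i.e., its identity morphism is null-homotopic. By Lemma~\ref{lemnullhom} (applied to $A$), the functor $\widehat{\mc{P}}$ carries this null-homotopy to a null-homotopy of the identity of $\mc{P}(\mc{F}_d(K_\bullet))$ as a $p$-complex of $A$-modules. Combining this with the commutativity of the square above, the object $\mc{F}_\dif(\mc{P}(K_\bullet)) = \mc{P}(\mc{F}_d(K_\bullet))$ is null-homotopic, so $\mc{P}(K_\bullet) \in \mathrm{Ker}(\mc{F}_\dif)$. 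The analogous argument applied to morphisms shows that $\mc{P}$ also sends a morphism whose $\mc{F}_d$-image is null-homotopic to a morphism whose $\mc{F}_\dif$-image is null-homotopic.

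With this containment in hand, the final step is purely formal: since $\mc{P}$ is exact (the $A\#H$-version of Proposition~2.3) and carries $\mathrm{Ker}(\mc{F}_d)$ into $\mathrm{Ker}(\mc{F}_\dif)$, the universal property of Verdier quotients yields the induced exact functor $\mc{P}: \mc{C}^\dif(A,d_0) \to \mc{C}^\dif(A,\dif_0)$ fitting into the evident commutative square with the quotient functors. The only potentially subtle point is the commutativity of the compatibility square on nullhomotopies; but since the null-homotopy map used to witness membership in $\mathrm{Ker}(\mc{F}_d)$ need not respect the $H$-action, and the construction of $\widehat{\mc{P}}$ produces its new null-homotopy out of that same non-$H$-equivariant data, this causes no difficulty.
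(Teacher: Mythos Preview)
Your argument is correct and is exactly the expected one: commute $\mc{P}$ with the forgetful functors, use Lemma~\ref{lemnullhom} over $A$ to push null-homotopies through, and then invoke the universal property of the Verdier quotient. Note, however, that the paper does not give its own proof of this proposition; it simply cites \cite[Proposition~2.13]{QiSussan4}, so there is nothing in the present paper to compare your argument against beyond that reference.
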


\section{Deformed Webster algebras}
\subsection{The \texorpdfstring{$p$}{p}-DG algebra}
We begin by recalling the definition of a particular deformed Webster algebra $W(n,1)$. 
More general versions of these algebras $W(\mathbf{s},n)$ can be found in \cite{KhovSussan, KLSY}.
The $p$-DG structures on these algebras were introduced in \cite{Y}.
The algebras are deformations of the algebras introduced by Webster in \cite{Webcombined}.

\begin{defn}
Let $n\geq 0$ be an integer.
Let $\mathrm{Seq}(1^n,1)$ be the set of all sequences $\mathbf{i}=(i_1,...,i_{n+1})$ where $n$ of the entries are $1$ and the other entry is the symbol $\mf{b}$.
Therefore, we have $\vert \mathrm{Seq}(1^n,1)\vert=n+1$.
Denote by $\mathbf{i}_j$ the $j$-th entry of $\mathbf{i}$.
Each transposition $\sigma_j \in S_{n+1}$ naturally acts on the set of sequences.

$W=W(n,1)$ is the graded algebra over the ground field $\Bbbk$ generated by $e(\mathbf{i})$, where $\mathbf{i} \in \mathrm{Seq}(1^n,1)$, $x_j$, where $1\leq j\leq n$, $y$ and $\psi_j$, where $1\leq j \leq n$, satisfying the relations below. 

\begin{minipage}{0.3\textwidth}
\begin{align}
& \sum_{\mathbf{i}\in \mathrm{Seq}(1^n,1)}e(\mathbf{i}) = 1
\\
& e(\mathbf{i})e(\mathbf{j}) = \delta_{\mathbf{i},\mathbf{j}}e(\mathbf{i})
\\
& \psi_j e(\mathbf{i}) = e(\sigma_j(\mathbf{i}))\psi_j
\end{align}
\end{minipage}
\begin{minipage}{0.7\textwidth}
\begin{align}
\label{kill-red2}
& \psi_j e(\mathbf{i}) = 0 \quad \text{if $\mathbf{i}_j=\mathbf{i}_{j+1}=1$}
\\
& \psi_j\psi_\ell = \psi_\ell\psi_j \quad \text{if $|j - \ell| > 1$}
\\
\label{central}
& x_j \text{ and } y \text{ are central}\\
\label{r2-like2}
&\psi_j^2 e(\mathbf{i})=
(x_j-y) e(\mathbf{i})\quad
\text{if  $(\mathbf{i}_j,\mathbf{i}_{j+1})=(1,\mathfrak{b}),(\mathfrak{b},1)$}
%\\
%\label{cyclotomic}
%& e(\mathbf{i}) = 0 \text{ if $\mathbf{i}_1=\mathfrak{b}$}.
\end{align}
\end{minipage}

The degrees of the generators are
\[
\deg(e(\mathbf{i}))=0,\quad
\deg(x_j)=2,\quad
\deg(y)=2,\quad
\deg(\psi_j)=1
\ .
\]
\end{defn}

In some contexts, it is natural to impose the so-called cyclotomic relation
\begin{equation} \label{cyclotomic}
e(\mathbf{i}) = 0 \quad\text{if}\quad\mathbf{i}_1=\mathfrak{b}.
\end{equation}
Quotienting $W$ by the cyclotomic relation yields an algebra denoted by $\overline{W}$.

We now recall the diagrammatic description of the deformation of the Webster algebra $W=W(n,1)$.
Consider collections of smooth arcs in the plane connecting $ n $ red points and $1$ black point on one horizontal line with $n$ red points and $1$ black point on another horizontal line.
The red points correspond to the $1$'s in the sequence $\mathbf{i}$ of $e(\mathbf{i})$ and the black point corresponds to $\mathfrak{b}$ in the sequence $\mathbf{i}$.
The $ n $ red points and $1$ black point on the line appear in the order in which they appear in $\mathbf{i}$ of $e(\mathbf{i})$.
The arcs are colored in a manner consistent with their boundary points.
Arcs are assumed to have no critical points (in other words no cups or caps).
Arcs are allowed to intersect (as long as they are both not solid red), but no triple intersections are allowed.
Arcs can carry dots.  
Two diagrams that are related by an isotopy that does not change the combinatorial types of the diagrams or the relative position of crossings are taken to be equal. 
We give $ W(n,1) $ the structure of an algebra by concatenating diagrams vertically as long as the colors of the endpoints match. If they do not, the product of two diagrams is taken to be zero.  For two diagrams $D_1$ and $D_2$, their product $D_1 D_2$ is realized by stacking $D_1$ on top of $D_2$.

The generator $e(\mathbf{i})$ is represented by vertical strands comprised of $n$ red strands and one black strand.
For instance, in the case $\mathbf{i}=(1,1,\mathfrak{b},1)$, the generator $e(\mathbf{i})$ is represented by 
\[
\begin{DGCpicture}
\DGCstrand[red](0,0)(0,1)
\DGCstrand[red](.5,0)(.5,1)
\DGCstrand(1,0)(1,1)
\DGCstrand[red](1.5,0)(1.5,1)
\end{DGCpicture}
\ .
\]

The generator $y$ is represented by a black dot and $x_j $ is represented by a red dot on the $j$th red strand:
\[
\begin{DGCpicture}
\DGCstrand(0,0)(0,1)
\DGCdot{0.5}
\end{DGCpicture}
\ , \quad \quad
\begin{DGCpicture}
\DGCstrand[red](0,0)(0,1)
\DGCdot{0.5}
\end{DGCpicture}
\ .
\]

The generator $\psi_j e(\mathbf{i})$ is represented by a black-red crossing on the left of the diagram below if $\mathbf{i}_j=1$ and $\mathbf{i}_{j+1}=\mathfrak{b}$ or a black-red crossing on the right if $\mathbf{i}_j=\mathfrak{b}$ and $\mathbf{i}_{j+1}=1$:
\[
\begin{DGCpicture}
\DGCstrand(1,0)(0,1)
\DGCstrand[red](0,0)(1,1)
\end{DGCpicture}
\ , \quad \quad
\begin{DGCpicture}
\DGCstrand(0,0)(1,1)
\DGCstrand[red](1,0)(0,1)
\end{DGCpicture}
\ .
\]
Note that a red-red crossing does not appear due to relation \eqref{kill-red2}.

Far away generators commute.  
The relations \eqref{central} and \eqref{r2-like2} involving red-black strands are
\begin{gather} 
\label{Wrelations}
\begin{DGCpicture}
\DGCstrand(0,0)(1,1)
\DGCdot{0.25}
\DGCstrand[red](1,0)(0,1)
\end{DGCpicture}
~=~
\begin{DGCpicture}
\DGCstrand(0,0)(1,1)
\DGCdot{0.75}
\DGCstrand[red](1,0)(0,1)
\end{DGCpicture}
\ , \quad \quad
\begin{DGCpicture}
\DGCstrand(1,0)(0,1)
\DGCdot{0.25}
\DGCstrand[red](0,0)(1,1)
\end{DGCpicture}
~=~
\begin{DGCpicture}
\DGCstrand(1,0)(0,1)
\DGCdot{0.75}
\DGCstrand[red](0,0)(1,1)
\end{DGCpicture}
, \\
\begin{DGCpicture}
\DGCstrand[red](0,0)(1,1)
\DGCdot{0.25}
\DGCstrand(1,0)(0,1)
\end{DGCpicture}
~=~
\begin{DGCpicture}
\DGCstrand[red](0,0)(1,1)
\DGCdot{0.75}
\DGCstrand(1,0)(0,1)
\end{DGCpicture}
\ , \quad \quad
\begin{DGCpicture}
\DGCstrand[red](1,0)(0,1)
\DGCdot{0.25}
\DGCstrand(0,0)(1,1)
\end{DGCpicture}
~=~
\begin{DGCpicture}
\DGCstrand[red](1,0)(0,1)
\DGCdot{0.75}
\DGCstrand(0,0)(1,1)
\end{DGCpicture}
, \\
\begin{DGCpicture}[scale=0.55] \label{R2rel}
\DGCstrand(1,0)(0,1)(1,2)
\DGCstrand[red](0,0)(1,1)(0,2)
\end{DGCpicture}
~=~
\begin{DGCpicture}[scale=0.55]
\DGCstrand(1,0)(1,2)
\DGCdot{1}[r]{}
\DGCstrand[red](0,0)(0,2)
\end{DGCpicture}
~-~
\begin{DGCpicture}[scale=0.55]
\DGCstrand(1,0)(1,2)
\DGCstrand[red](0,0)(0,2)
\DGCdot{1}[r]{}
\end{DGCpicture}
\ , \quad \quad \quad
\begin{DGCpicture}[scale=0.55]
\DGCstrand(0,0)(1,1)(0,2)
\DGCstrand[red](1,0)(0,1)(1,2)
\end{DGCpicture}
~=~
\begin{DGCpicture}[scale=0.55]
\DGCstrand(0,0)(0,2)
\DGCdot{1}[l]{}
\DGCstrand[red](1,0)(1,2)
\end{DGCpicture}
~-~
\begin{DGCpicture}[scale=0.55]
\DGCstrand(0,0)(0,2)
\DGCstrand[red](1,0)(1,2)
\DGCdot{1}[l]{}
\end{DGCpicture}
\ .
\end{gather}
The cyclotomic relation \eqref{cyclotomic} translates to: a black strand, appearing on the far left of any diagram, annihilates the entire picture:
\begin{equation}\label{eqn-cyclotomic}
\begin{DGCpicture}
\DGCstrand(1,0)(1,1)
\DGCcoupon*(1.25,0.25)(1.75,0.75){$\cdots$}
\end{DGCpicture}
~=~0.
\end{equation}
%When we omit the cyclotomic relation ~\eqref{cyclotomic} we denote the corresponding algebra by $\widetilde{W}=\widetilde{W}(n,1)$.  
Note that setting red dots to be zero, we recover Webster's algebra, and so we may think of the polynomial algebra generated by red dots as a polynomial deformation space of the Webster algebra.

For $i=0,\ldots,n$, there is a sequence $(1^i,\mathfrak{b},1^{n-i})$.
Denote by $e_i$ the idempotent $e(1^i,\mathfrak{b},1^{n-i})$;
\begin{equation}\label{idempotents}
e_i=e(1^i,\mathfrak{b},1^{n-i})=
\begin{DGCpicture}
\DGCstrand[red](0,0)(0,1)[$^{1}$`{\ }]
\DGCcoupon*(.25,0.25)(.75,0.75){$\cdots$}
\DGCstrand[red](1,0)(1,1)[$^{i}$`{\ }]
\DGCstrand(1.5,0)(1.5,1)
\DGCstrand[red](2,0)(2,1)[$^{i+1}$`{\ }]
\DGCcoupon*(2.25,0.25)(2.75,0.75){$\cdots$}
\DGCstrand[red](3,0)(3,1)[$^{n}$`{\ }]
\end{DGCpicture}
\ .
\end{equation}

Over a base field $\Bbbk$ of finite characteristic $p>0$, a $p$-differential graded ($p$-DG) algebra structure was introduced on a generalization of $W$ in \cite{Y}.   

\begin{defn}
The $p$-derivation $\partial:W\to W$ of degree $2$ satisfying the Leibniz rule:
$$
\partial(ab) = \partial(a)b + a\partial(b)
$$
for any $a, b \in W$ is
defined on the generators of the algebra $W$ by 
\[
\dif(e_i)=0, \quad \dif(x_i)=x_i^2,\quad \dif(y)=y^2,\quad \dif(\psi_j)=x_j \psi_j e_{j-1}+y \psi_j e_j
\]
and extended by the Leibniz rule to the entire algebra. 
\end{defn}
An easy exercise shows that $\dif^p \equiv 0$.

In the diagrammatic description, we have
\begin{subequations}
\begin{gather}
\dif \left(~
\begin{DGCpicture}
\DGCstrand[red](0,0)(0,1)
\DGCdot{0.5}
\end{DGCpicture} 
~\right)=
\begin{DGCpicture}
\DGCstrand[red](0,0)(0,1)
\DGCdot{0.5}[ur]{$_2$}
\end{DGCpicture} 
\ , \quad \quad \quad
\dif\left(~
\begin{DGCpicture}
\DGCstrand(0,0)(0,1)
\DGCdot{0.5}
\end{DGCpicture}
~\right)=
\begin{DGCpicture}
\DGCstrand(0,0)(0,1)
\DGCdot{0.5}[ur]{$_2$}
\end{DGCpicture}
\ ,
\end{gather}
\begin{gather}
\label{diffonWeb}
\dif_{}\left(~
\begin{DGCpicture}
\DGCstrand(0,0)(1,1)
\DGCstrand[red](1,0)(0,1)
\end{DGCpicture}
~\right)=
\begin{DGCpicture}
\DGCstrand[red](1,0)(0,1)
\DGCdot{0.75}
\DGCstrand(0,0)(1,1)
\end{DGCpicture}, 
\quad \quad
\dif_{}\left(~
\begin{DGCpicture}
\DGCstrand(1,0)(0,1)
\DGCstrand[red](0,0)(1,1)
\end{DGCpicture}
~\right)=
\begin{DGCpicture}
\DGCstrand(1,0)(0,1)
\DGCdot{0.75}
\DGCstrand[red](0,0)(1,1)
\end{DGCpicture}
\ .
\end{gather}
\end{subequations}

\subsection{A basis}
A basis for the cyclotomic deformed Webster $\overline{W}(n,1)$ was given in \cite{KhovSussan} (see also \cite{Webdeformed} and \cite{SWSchur}).  We slightly modify this basis and a representation of the algebra for the case where the cyclotomic condition is omitted.

\begin{prop}
\label{repprop}
Let 
\[
R_n=\Bbbk[x_1,\ldots,x_n], \quad \quad V_{n,i}=R_n[y_i], \quad \quad
V_n=\bigoplus_{i=0}^n V_{n,i}.
\]

There is an action of $W(n,1)$ on $V_n$ determined by
\begin{eqnarray*}
e_i&:&f(\mathbf{x},y_j)\in V_{n,j}\mapsto\left\{
\begin{array}{ll}
f(\mathbf{x},y_i)\in V_{n,i}&\text{if $j=i$}\\
0&\text{if $j\not=i$}
\end{array}\right.
\\
x_k^{a_k} e_i&:&f(\mathbf{x},y_i)\in V_{n,i}\mapsto 
x_k^{a_k}f(\mathbf{x},y_i)\in V_{n,i},
\\
y^{a_k} e_i&:&f(\mathbf{x},y_i)\in V_{n,i}\mapsto 
y_i^{a_k}f(\mathbf{x},y_i)\in V_{n,i},
\\
\psi_i e_i&:&f(\mathbf{x},y_i)\in V_{n,i}\mapsto f(\mathbf{x},y_{i-1})\in V_{n,i-1}
\\
\psi_{i+1} e_i&:&f(\mathbf{x},y_i)\in V_{n,i}\mapsto (y_{i+1}-x_{i+1})f(\mathbf{x},y_{i+1})\in V_{n,i+1}
\ .
\end{eqnarray*}

In the diagrammatic description, we have
\begin{eqnarray*}
\begin{DGCpicture}
\DGCstrand[red](0,0)(0,1)[$^{1}$`{\ }]
\DGCdot{0.5}
\DGCcoupon*(-.5,0.25)(0,0.75){$^{a_1}$}
\DGCcoupon*(.25,0.25)(.75,0.75){$\cdots$}
\DGCstrand[red](1,0)(1,1)[$^{i}$`{\ }]
\DGCdot{0.5}
\DGCcoupon*(1,0.25)(1.5,0.75){$^{a_i}$}
\DGCstrand(2,0)(2,1)
\DGCdot{0.5}
\DGCcoupon*(1.5,0.5)(2,0.75){$^{b}$}
\DGCstrand[red](3,0)(3,1)[$^{i+1}$`{\ }]
\DGCdot{0.5}
\DGCcoupon*(2.2,0.25)(2.9,0.75){$^{a_{i+1}}$}
\DGCcoupon*(3.25,0.25)(3.75,0.75){$\cdots$}
\DGCstrand[red](4,0)(4,1)[$^{n}$`{\ }]
\DGCdot{0.5}
\DGCcoupon*(4.1,0.25)(4.5,0.75){$^{a_n}$}
\end{DGCpicture}
&\colon&
f(\mathbf{x},y_i) \in V_{n,i} 
~\mapsto
x_1^{a_1} \cdots x_n^{a_n} y_i^b f(\mathbf{x},y_i)\in V_{n,i} 
\\
\begin{DGCpicture}
\DGCstrand[red](0,0)(0,1)[$^{1}$`{\ }]
\DGCcoupon*(.25,0.25)(.75,0.75){$\cdots$}
\DGCstrand[red](1,0)(1,1)[$^{i-1}$`{\ }]
\DGCstrand[red](1.5,0)(2.5,1)[$^{i}$`{\ }]
\DGCstrand(2.5,0)(1.5,1)
\DGCstrand[red](3,0)(3,1)[$^{i+1}$`{\ }]
\DGCcoupon*(3.25,0.25)(3.75,0.75){$\cdots$}
\DGCstrand[red](4,0)(4,1)[$^{n}$`{\ }]
\end{DGCpicture}
&\colon&
f(\mathbf{x},y_i) \in V_{n,i} 
~\mapsto
f(\mathbf{x},y_{i-1}) \in V_{n,i-1} 
\\
\begin{DGCpicture}
\DGCstrand[red](0,0)(0,1)[$^{1}$`{\ }]
\DGCcoupon*(.25,0.25)(.75,0.75){$\cdots$}
\DGCstrand[red](1,0)(1,1)[$^{i}$`{\ }]
\DGCstrand(1.5,0)(2.5,1)
\DGCstrand[red](2.5,0)(1.5,1)[$^{i+1}$`{\ }]
\DGCstrand[red](3,0)(3,1)[$^{i+2}$`{\ }]
\DGCcoupon*(3.25,0.25)(3.75,0.75){$\cdots$}
\DGCstrand[red](4,0)(4,1)[$^{n}$`{\ }]
\end{DGCpicture}
&\colon&
f(\mathbf{x},y_i) \in V_{n,i} 
~\mapsto
(y_{i+1}-x_{i+1})f(\mathbf{x},y_{i+1}) \in V_{n,i+1} .
\end{eqnarray*}
\end{prop}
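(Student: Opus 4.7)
The strategy is to verify that the prescribed formulas extend to a well-defined action on $V_n$ by checking each defining relation of $W(n,1)$ in turn. Because the action is block-diagonal with respect to the decomposition $V_n = \bigoplus_{i=0}^n V_{n,i}$ and each $e_i$ acts as the projection onto $V_{n,i}$, the idempotent relations $\sum_i e_i = 1$ and $e_i e_j = \delta_{i,j} e_i$ are immediate. The intertwining identity $\psi_j e(\mathbf{i}) = e(\sigma_j(\mathbf{i})) \psi_j$ is built into the prescription, since $\psi_i e_i$ lands in $V_{n,i-1}$ and $\psi_{i+1} e_i$ lands in $V_{n,i+1}$, matching the idempotent obtained after transposition. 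The vanishing $\psi_j e(\mathbf{i}) = 0$ when $\mathbf{i}_j = \mathbf{i}_{j+1} = 1$ is enforced by declaring $\psi_j e_i = 0$ whenever $j \notin \{i, i+1\}$.

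Centrality of $x_k$ and $y$ is routine: each $x_k$ acts as multiplication by $x_k \in R_n$ on every summand and $y$ acts on $V_{n,i}$ as multiplication by $y_i$, so commutativity with all multiplication operators and with the projections is automatic. The only nontrivial checks are commutativity with $\psi_i e_i$ and $\psi_{i+1} e_i$, and these follow from the fact that these operators simply rename the distinguished $y$-variable from $y_i$ to $y_{i\pm 1}$, while the polynomial factor $(y_{i+1}-x_{i+1})$ in the second formula commutes with everything multiplicative. The far-apart commutation $\psi_j \psi_\ell = \psi_\ell \psi_j$ for $|j - \ell| > 1$ holds componentwise: on $V_{n,i}$ only the operators $\psi_i$ and $\psi_{i+1}$ can be nonzero, so both compositions vanish as soon as $|j-\ell| > 1$.

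The only substantive calculation is the quadratic relation $\psi_j^2 e(\mathbf{i}) = (x_j - y) e(\mathbf{i})$ in the two cases $(\mathbf{i}_j, \mathbf{i}_{j+1}) = (1, \mathfrak{b})$ and $(\mathfrak{b}, 1)$. For the first case with $j=i$, one starts from $f(\mathbf{x},y_i) \in V_{n,i}$, applies $\psi_i e_i$ to obtain $f(\mathbf{x},y_{i-1}) \in V_{n,i-1}$, and then applies $\psi_i e_{i-1}$ (which is the formula $\psi_{(i-1)+1} e_{i-1}$) to produce a scalar multiple of $f(\mathbf{x}, y_i)$ by a linear factor in $y_i$ and $x_i$; this matches the action of $(x_j - y) e_i$ on $V_{n,i}$. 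The case $(\mathfrak{b},1)$ is analogous, composing $\psi_{i+1} e_{i+1} \circ \psi_{i+1} e_i$ and using that the coefficient $(y_{i+1}-x_{i+1})$ introduced in the first step becomes the required scalar on $V_{n,i}$ after $y_{i+1}$ is renamed to $y_i$ in the second step. I do not anticipate any serious obstacle: the componentwise structure of $V_n$ kills most relations automatically, and the quadratic relation reduces to a one-line substitution on each summand.
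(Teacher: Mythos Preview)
Your approach is correct and is precisely what the paper does: its entire proof reads ``This is a straightforward check.'' One caution when you actually carry out the quadratic relation: the composition $\psi_i e_{i-1}\circ\psi_i e_i$ on $V_{n,i}$ gives multiplication by $(y_i-x_i)$, which agrees with the diagrammatic double-crossing relation and with the convention in \eqref{r2-like}, but differs in sign from \eqref{r2-like2} as printed---this is a typo in the paper rather than a flaw in your argument, so do not be alarmed by the apparent mismatch.
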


\begin{proof}
This is a straightforward check.
\end{proof}

Next we will define elements which form a basis of $W(n,1)$.  

Let ${\bf a}=(a_1,\ldots,a_n)\in \mathbb{Z}_{\geq 0}^n$ and $b\in \mathbb{Z}_{\geq 0}$.
For $0\leq i\leq j\leq n$, define

\begin{eqnarray*}
NE_{i,j}({\bf a},b)&:=&\prod_{k=1}^n x_k^{a_k}y^{b}\psi_{j}\psi_{j-1}\cdots \psi_{i+1}e_i
\\
&=&
\left\{
\begin{array}{ll}
\begin{DGCpicture}
\DGCstrand[red](-2,0)(-2,1)[$^{1}$`{\ }]
\DGCdot{0.75}
\DGCcoupon*(-2.5,.5)(-2,1){$^{a_1}$}
\DGCcoupon*(-1.75,0.25)(-1.25,0.75){$\cdots$}
\DGCstrand[red](-1,0)(-1,1)[$^{i}$`{\ }]
\DGCdot{0.75}
\DGCcoupon*(-1,.5)(-.5,1){$^{a_{i}}$}
\DGCstrand(0,0)(0,1)
\DGCdot{0.75}
\DGCcoupon*(-.5,.5)(0,1){$^b$}
\DGCstrand[red](1,0)(1,1)[$^{i+1}$`{\ }]
\DGCdot{0.75}
\DGCcoupon*(1,.5)(2,1){$^{a_{i+1}}$}
\DGCcoupon*(1.25,0.25)(1.75,.75){$\cdots$}
\DGCstrand[red](2,0)(2,1)[$^{n}$`{\ }]
\DGCdot{0.75}
\DGCcoupon*(2,.55)(2.5,1.05){$^{a_n}$}
\end{DGCpicture}
&\text{if $i=j$,}
\\
\begin{DGCpicture}
\DGCstrand[red](-2,0)(-2,1)[$^{1}$`{\ }]
\DGCdot{0.75}
\DGCcoupon*(-2.5,.5)(-2,1){$^{a_1}$}
\DGCcoupon*(-1.75,0.25)(-1.25,0.75){$\cdots$}
\DGCstrand[red](-1,0)(-1,1)[$^{i}$`{\ }]
\DGCdot{0.75}
\DGCcoupon*(-1,.5)(-.5,1){$^{a_{i}}$}
\DGCstrand(0,0)(3,1)
\DGCdot{0.75}
\DGCcoupon*(2.5,.75)(3,1.25){$^b$}
\DGCstrand[red](1,0)(1,1)[$^{i+1}$`{\ }]
\DGCdot{0.75}
\DGCcoupon*(.1,.5)(.75,1){$^{a_{i+1}}$}
\DGCcoupon*(1.25,0)(1.75,.5){$\cdots$}
\DGCstrand[red](2,0)(2,1)[$^{j}$`{\ }]
\DGCdot{0.75}
\DGCcoupon*(2,.55)(2.5,1.05){$^{a_j}$}
\DGCstrand[red](4,0)(4,1)[$^{j+1}$`{\ }]
\DGCdot{0.75}
\DGCcoupon*(3.15,.5)(3.85,1){$^{a_{j+1}}$}
\DGCcoupon*(4.25,0.25)(4.75,0.75){$\cdots$}
\DGCstrand[red](5,0)(5,1)[$^{n}$`{\ }]
\DGCdot{0.75}
\DGCcoupon*(5.15,.5)(5.55,1){$^{a_n}$}
\end{DGCpicture}
&\text{if $i<j$ .}
\end{array}
\right.
\end{eqnarray*}

For $0\leq i< j\leq n$, define
\begin{eqnarray*}
SE_{i,j}({\bf a},b)&:=&\prod_{k=1}^n x_k^{a_k}y^{b}\psi_{i+1}\psi_{i}\cdots \psi_{j}e_i
\\
&=&
\begin{DGCpicture}
\DGCstrand[red](-2,0)(-2,1)[$^{1}$`{\ }]
\DGCdot{0.75}
\DGCcoupon*(-2.5,.5)(-2,1){$^{a_1}$}
\DGCcoupon*(-1.75,0.25)(-1.25,0.75){$\cdots$}
\DGCstrand[red](-1,0)(-1,1)[$^{i}$`{\ }]
\DGCdot{0.75}
\DGCcoupon*(-1,.5)(-.5,1){$^{a_{i}}$}
\DGCstrand(3,0)(0,1)
\DGCdot{0.25}
\DGCcoupon*(2.5,.25)(3,.75){$^b$}
\DGCstrand[red](1,0)(1,1)[$^{i+1}$`{\ }]
\DGCdot{0.75}
\DGCcoupon*(.2,.7)(.85,1.25){$^{a_{i+1}}$}
\DGCcoupon*(1.25,0)(1.75,.5){$\cdots$}
\DGCstrand[red](2,0)(2,1)[$^{j}$`{\ }]
\DGCdot{0.75}
\DGCcoupon*(2,.55)(2.5,1.05){$^{a_j}$}
\DGCstrand[red](4,0)(4,1)[$^{j+1}$`{\ }]
\DGCdot{0.75}
\DGCcoupon*(3.15,.5)(3.85,1){$^{a_{j+1}}$}
\DGCcoupon*(4.25,0.25)(4.75,0.75){$\cdots$}
\DGCstrand[red](5,0)(5,1)[$^{n}$`{\ }]
\DGCdot{0.75}
\DGCcoupon*(5.15,.5)(5.55,1){$^{a_n}$}
\end{DGCpicture}
\ .
\end{eqnarray*}

The proof of the next proposition is similar to the proof of \cite[Proposition 2, Corollary 1]{KhovSussan}.  See also \cite[Proposition 4.9]{SWSchur} and \cite[Definition 2.7]{Webdeformed}.
\begin{prop}
We have the following facts about $W(n,1)$ and its representation $V_n$.
\begin{enumerate}
\item The action of $W(n,1)$ on $V_n$ is faithful.
\item $W(n,1)$ has a basis
\begin{equation*}
\{NE_{i,j}({\bf a},b), SE_{i',j'}({\bf a'},b') | 0 \leq i \leq j \leq n,
0 \leq i' < j' \leq n, {\bf a},{\bf a'}\in \mathbb{Z}_{\geq 0}^n, b, b' \in \mathbb{Z}_{\geq 0}
\} \ .
\end{equation*}
\end{enumerate}
\end{prop}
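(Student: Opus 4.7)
My plan is to prove the two assertions in tandem by the standard strategy: establish that the listed elements span $W(n,1)$, and then establish linear independence by showing they act as linearly independent operators on $V_n$. Faithfulness then follows from linear independence automatically.

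First I would verify that the formulas of Proposition~\ref{repprop} really do define a $W(n,1)$-module structure on $V_n$. This is a routine check of the defining relations: the idempotent relations and centrality of $x_k,y$ are immediate from the definition; the square relation \eqref{r2-like2} follows by computing
\[
\psi_{i+1}\psi_i e_i \cdot f(\mathbf{x},y_i) = \psi_{i+1}f(\mathbf{x},y_{i-1}) = 0, \qquad \psi_{i}\psi_{i+1}e_i \cdot f(\mathbf{x},y_i) = (y_{i+1}-x_{i+1})\psi_i f(\mathbf{x},y_{i+1}),
\]
and so on, producing a factor of $(x_j - y)$ in each round-trip of the black strand. The far-commutation relations are obvious since $\psi_j,\psi_\ell$ act in disjoint ``slots'' when $|j-\ell|>1$.

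Second, for spanning, I would take an arbitrary word in the generators and reduce it using the relations. Centrality of the $x_k$ and $y$ lets me pull all dots to the top. The relation $\psi_j e(\mathbf{i}) = 0$ when $\mathbf{i}_j = \mathbf{i}_{j+1} = 1$ forbids red-red crossings, so the only crossings present involve the single black strand and red strands. Thus what remains of the crossings traces the trajectory of the unique black strand from its bottom position (say $i$) to its top position (say $j$). Whenever this trajectory doubles back on itself, the square relation \eqref{r2-like2} converts the resulting $\psi_k^2$ into $(x_k - y)e(\mathbf{i})$, which can be absorbed into the dots at the top. Iterating this reduction until no backtracking remains shows that the black strand travels monotonically, either entirely to the right of its starting position (yielding an $NE_{i,j}$ element) or entirely to the left (yielding an $SE_{i,j}$ element, whose convention here places the strand going from bottom-right to top-left). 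Hence the $\{NE_{i,j}({\bf a},b)\} \cup \{SE_{i',j'}({\bf a}',b')\}$ span $W(n,1)$.

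Third, for linear independence, I would compute the action of each basis element on $V_n$. A direct calculation using the formulas of Proposition~\ref{repprop} gives
\[
NE_{i,j}({\bf a},b) \cdot f(\mathbf{x},y_j) \;=\; \mathbf{x}^{\bf a}\, y_i^b\, f(\mathbf{x}, y_i) \qquad \in V_{n,i},
\]
and similarly
\[
SE_{i,j}({\bf a},b) \cdot f(\mathbf{x},y_i) \;=\; \mathbf{x}^{\bf a}\, y_j^b \cdot \Bigl(\prod_{k=i+1}^{j}(y_j - x_k)\Bigr)\, f(\mathbf{x}, y_j) \qquad \in V_{n,j},
\]
acting as zero on all other summands $V_{n,k}$. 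A hypothetical linear dependence among the basis elements immediately separates according to its source and target idempotents $e_j, e_i$; within each $\Hom(V_{n,j}, V_{n,i})$ summand the images are visibly linearly independent polynomials in the variables $x_1,\dots,x_n,y_i$ (or $y_j$), distinguished by the monomial $\mathbf{x}^{\bf a}y^b$-part. This simultaneously shows that the spanning set is a basis and that the representation $V_n$ is faithful.

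The main obstacle is the reduction argument in the spanning step: one must be careful that after moving dots past crossings via \eqref{Wrelations} and eliminating backtracks via \eqref{r2-like2}, the resulting terms really lie in the claimed span and the process terminates. This is essentially a bookkeeping argument tracking the topological type of the black strand's trajectory modulo backtracking, and is the place where the argument of \cite[Proposition~2, Corollary~1]{KhovSussan} must be adapted to the noncyclotomic setting (the difference being that here we allow the black strand to start or end in any of the $n+1$ positions, which doubles the basis into the $NE$ and $SE$ families).
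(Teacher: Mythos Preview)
Your approach is correct and is exactly the standard argument the paper defers to by citing \cite[Proposition~2, Corollary~1]{KhovSussan}: establish spanning by reduction (centrality of dots to pull them to the top, red--red crossings vanish, and the square relation~\eqref{r2-like2} to eliminate backtracks of the unique black strand), then prove linear independence via the polynomial representation~$V_n$, which simultaneously yields faithfulness.

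One small slip to correct: you have interchanged the action formulas for $NE$ and $SE$. The element $NE_{i,j}$ has its bottom idempotent $e_i$ and top idempotent $e_j$, so it maps $V_{n,i}\to V_{n,j}$, and since each $\psi_{k+1}e_k$ contributes a factor $(y_{k+1}-x_{k+1})$ when the black strand moves rightward, it is $NE_{i,j}$ that picks up $\prod_{k=i+1}^{j}(y_j-x_k)$; conversely $SE_{i,j}$ maps $V_{n,j}\to V_{n,i}$ with no such factor (each $\psi_k e_k$ merely substitutes $y_k\mapsto y_{k-1}$). This swap is harmless for your linear-independence argument, which only requires that the resulting operators are distinguished by their monomial prefactors $\mathbf{x}^{\mathbf{a}}y^b$ within each $(e_j,e_i)$-component.
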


\section{Braid invariant}

\subsection{Bimodules}
We recall the $(W,W)$-bimodules $W_i$ for $i=1,\ldots,n-1$, and $W_{i,i+1}$ for $i=1,\ldots,n-2$ introduced in \cite{KhovSussan}.
These bimodules were generalized in \cite{KLSY}.
While all diagrams are supposed to be braid-like, we will occasionally, for visual reasons, have some pictures with cups and caps.  These pictures could easily be converted to braid-like diagrams.

In order to define the bimodules $W_i$ and $W_{i,i+1}$, we introduce the algebra $W((1^n)_{i,k},1)$ which is also a particular deformed Webster algebra and has been shown in \cite[Section 3.4]{KhovSussan} to be isomorphic to a subalgebra of $W$.

Let $(1^n)_{i,k}$ be the sequence whose $n-k$ entries are $1$ and $i$-th entry from left is $k$,
\[
(1^n)_{i,k}=(1^{i-1},k,1^{n-i-k+1}),
\]
and let $\mathrm{Seq}((1^n)_{i,k},1)$ be the set of all sequences $\mathbf{i}=(i_1,...,i_{n-k+2})$ composed of the sequence $(1^n)_{i,k}$ into which the symbol $\mf{b}$ is inserted.

For instance, the set $\mathrm{Seq}((1^4)_{2,2},1)$ is
\[
\{
(\mf{b},1,2,1),
(1,\mf{b},2,1),
(1,2,\mf{b},1),
(1,2,1,\mf{b})
\}.
\]
\begin{defn}
$W((1^n)_{i,k},1)$ is the graded algebra over the ground field $\Bbbk$ generated by 
\begin{itemize}
\item $e(\mathbf{i})$, where $\mathbf{i} \in \mathrm{Seq}((1^n)_{i,k},1)$,
\item $x_j$, where $1\leq j< i$ or $i < j\leq n-k+1$,
\item $y$,
\item $\psi_j$, where $1\leq j \leq n-k+1$,
\item $E(d)$, where $1\leq d \leq k$
\end{itemize}
satisfying the relations below.\\
\begin{minipage}{0.3\textwidth}
\begin{align}
& \sum_{\mathbf{i}\in \mathrm{Seq}((1^n)_{i,k},1)}e(\mathbf{i}) = 1
\\
& e(\mathbf{i})e(\mathbf{j}) = \delta_{\mathbf{i},\mathbf{j}}e(\mathbf{i})
\\
& \psi_j e(\mathbf{i}) = e(\sigma_j(\mathbf{i}))\psi_j
\end{align}
\end{minipage}
\begin{minipage}{0.7\textwidth}
\begin{align}
\label{kill-red}
& \psi_j e(\mathbf{i}) = 0 \quad \text{if $(\mathbf{i}_j,\mathbf{i}_{j+1})=(1,1),(1,k),(k,1)$}
\\
& \psi_j\psi_\ell = \psi_\ell\psi_j \quad \text{if $|j - \ell| > 1$}
\\
& x_j, y \text{ and } E(d) \text{ are central}
%\\
%\label{cyclotomic-2}
%& e(\mathbf{i}) = 0 \quad\text{ if $\mathbf{i}_1=\mathbf{b}$}
\end{align}
\end{minipage}
\begin{minipage}{\textwidth}
\begin{align}
\label{r2-like}
&\psi_j^2 e(\mathbf{i})=\left\{
	\begin{array}{ll}
		(y-x_j) e(\mathbf{i})
		&\text{if  $(\mathbf{i}_j,\mathbf{i}_{j+1})=(1,\mathfrak{b}),(\mathfrak{b},1)$},
		\\
		\displaystyle\sum_{a=0}^k(-1)^a E(a)y^{k-a} e(\mathbf{i})
		&\text{if  $(\mathbf{i}_j,\mathbf{i}_{j+1})=(k,\mathfrak{b}),(\mathfrak{b},k)$}.
	\end{array}\right.
\end{align}
\end{minipage}
The degrees of the generators are
\[
\deg(e(\mathbf{i}))=0,\quad
\deg(x_j)=2,\quad
\deg(y)=2,\quad
\deg(E(d))=2d,\quad
\deg(\psi_j e(\mathbf{i}))=
a \quad \text{if} \quad (\mathbf{i}_j,\mathbf{i}_{j+1})=(\mathfrak{b},a),(a,\mathfrak{b}),
\]
where $a$ is $1$ or $k$.
\end{defn}

We have a translation from $(1^n)_{i,k}$ to $(1^n)$ which is obtained by replacing $k$ of the sequence $(1^n)_{i,k}$ by $(\underbrace{1, \ldots, 1}_k)$. This translation naturally induces the map $\phi:\mathrm{Seq}((1^n)_{i,k},1)\to \mathrm{Seq}((1^n),1)$.

We define the inclusion map $\Phi$ from $W((1^n)_{i,k},1)$ to $W((1^n),1)$ by mapping idempotents $e(\mathbf{i})$ for $\mathbf{i}\in\mathrm{Seq}((1^n)_{i,k},n)$ by
\begin{equation*}
\Phi(e(\mathbf{i})) = e(\phi(\mathbf{i})),
\end{equation*}
mapping generators $x_j$ by
\begin{equation*}
\Phi(x_j ) =
\begin{cases}
x_j & \text{ if } 1\leq j < i , \\
x_{j+k-1} & \text{ if } i < j \leq n-k+1,
\end{cases}
\end{equation*}
mapping generators $\psi_{j}$ by 
\begin{equation*}
\Phi(\psi_j )=\begin{cases}
\psi_j(e_{j-1}+e_j)&j<i,\\
\psi_j\psi_{j+1}\cdots \psi_{j+k-1}e_{j+k-1}+\psi_{j+k-1}\psi_{j+k-2}\cdots \psi_{j}e_{j-1}&j=i,\\
\psi_{j+k-1}(e_{j+k-2}+e_{j+k-1})&j>i
\end{cases}
\end{equation*}
(where we recall the definition of $e_i$ is given in \eqref{idempotents}), and mapping generators $E(d)$ by
\begin{equation*}
\Phi(E(d)) =
\sum_{\substack{d_1+d_2+\cdots +d_k=d\\ d_1,...,d_k\geq 0}}x_{i}^{d_1}x_{i+1}^{d_2}\cdots x_{i+k-1}^{d_k}.
\end{equation*}
This inclusion map gives rise to a left action of $W((1^n)_{i,2},1)$ on $\hat{e}_{i}W((1^n),1)$ and a right action on $W((1^n),1)\hat{e}_{i}$, where $\hat{e}_{i}$ is $\sum_{j\not=i}e_j$. 
\begin{defn}
We define the bimodule $W_i$ as the tensor product of two deformed Webster algebras $W((1^n),1)$ over $W((1^n)_{i,2},1)$.
\[
W_i:=W((1^n),1)\hat{e}_{i}\otimes_{W((1^n)_{i,2},1)} \hat{e}_{i}W((1^n),1).
\]
\end{defn}

The following proposition follows directly from definitions.
\begin{prop}\label{bimod-rel}
In the bimodule $W_i$, we have the following equalities.
\begin{align}
&e_k\otimes e_\ell=0\quad\text{if } k\not=\ell,\quad
e_j \otimes e_j= e_j \otimes 1= 1 \otimes e_j,\quad y\otimes 1=1\otimes y,\\
\label{dot-slide-splitter}
&x_\ell\otimes e_j=1\otimes x_\ell e_j \quad\text{if } \ell\not=i,i+1,\quad
(x_i+x_{i+1})\otimes e_j=1\otimes (x_i +x_{i+1})e_j,\quad
x_ix_{i+1}\otimes e_j=1\otimes x_i x_{i+1}e_j,\\
\label{crossing-slide-splitter}
&\psi_{\ell} \otimes e_j=1\otimes \psi_{\ell}e_j \quad\text{if }\ell\not=i,i+1,\quad
\psi_{i+1}\psi_{i} \otimes e_j=1\otimes \psi_{i+1}\psi_{i}e_j, \quad
\psi_{i}\psi_{i+1}\otimes e_j=1\otimes \psi_{i}\psi_{i+1}e_j,
\end{align}
where $j\not=i$.
\end{prop}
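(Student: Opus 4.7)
The plan is to verify each equality by exhibiting the element being slid across the tensor product as the image under $\Phi\colon W((1^n)_{i,2},1)\to W((1^n),1)$ of an element of the middle algebra; the bimodule identity $m\cdot\Phi(b)\otimes n = m\otimes \Phi(b)\cdot n$ then yields the slide immediately. The argument is a case-by-case bookkeeping organized by the type of generator.

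For the idempotent relations in the first line, I would use that for any $j\ne i$ the idempotent $e_j\in W$ is the image under $\Phi$ of the idempotent $e(\mathbf{j})\in W((1^n)_{i,2},1)$ attached to the sequence obtained by inserting $\mathfrak{b}$ into $(1^{i-1},2,1^{n-i-1})$ in the slot whose $\phi$-image is the $\mathfrak{b}$-position of $e_j$. This gives $e_k\otimes e_\ell = e_ke_\ell\otimes 1 = \delta_{k,\ell}\,e_k\otimes 1$ and $e_j\otimes 1 = 1\otimes e_j$; the identity $y\otimes 1 = 1\otimes y$ follows from $y=\Phi(y)$. For the dot identities in \eqref{dot-slide-splitter}, the same argument applies with $x_\ell = \Phi(x_\ell)$ (if $\ell<i$) or $\Phi(x_{\ell-1})$ (if $\ell>i+1$), while $x_i+x_{i+1} = \Phi(E(1))$ and $x_ix_{i+1} = \Phi(E(2))$ lie in the image by the very formula defining $\Phi(E(d))$. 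For the distant crossings of \eqref{crossing-slide-splitter} with $\ell\ne i,i+1$, we have $\Phi(\psi_\ell) = \psi_\ell(e_{\ell-1}+e_\ell)$ (respectively $\Phi(\psi_{\ell-1})$ on the other side of $i$); since $\psi_\ell e_k\ne 0$ only for $k\in\{\ell-1,\ell\}$, neither of which equals $i$ by hypothesis, the element $\psi_\ell\hat{e}_i$ coincides with this image and hence slides.

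The main obstacle, such as it is, is the ``middle'' case of the crossing identities. Here $\psi_i\psi_{i+1}$ and $\psi_{i+1}\psi_i$ are not individually images of generators of the middle algebra, but their combinations appear in the two-term formula $\Phi(\psi_i) = \psi_i\psi_{i+1}e_{i+1}+\psi_{i+1}\psi_ie_{i-1}$. Right-multiplying by the image element $e_{i+1}$ kills the second summand since $\psi_ie_{i+1}=0$ by the red-red vanishing \eqref{kill-red2}, exhibiting $\psi_i\psi_{i+1}e_{i+1} = \Phi(\psi_i)\,e_{i+1}$ as an element of the image of $\Phi$. A direct check using \eqref{kill-red2} shows $\psi_i\psi_{i+1}e_k = 0$ for $k\ne i+1$, so $\psi_i\psi_{i+1}\hat{e}_i = \psi_i\psi_{i+1}e_{i+1}$ and the desired slide follows at once. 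The $\psi_{i+1}\psi_i$ case is symmetric: right-multiplication by $e_{i-1}$ isolates the complementary summand of $\Phi(\psi_i)$ (using $\psi_{i+1}e_{i-1}=0$ by \eqref{kill-red2}), and the same reasoning applies.
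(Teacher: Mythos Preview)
Your approach is correct and is exactly what the paper intends: it merely states that the proposition ``follows directly from definitions,'' and the definition in question is precisely that the tensor product is taken over $W((1^n)_{i,2},1)$ acting via $\Phi$, so any element in the image of $\Phi$ slides across. Your case-by-case identification of each element with something in $\mathrm{Im}\,\Phi$ is the right unpacking of this.

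One small computational slip: with the paper's convention $\Phi(E(d))=\sum_{d_1+d_2=d}x_i^{d_1}x_{i+1}^{d_2}$, the generator $E(2)$ maps to the complete homogeneous symmetric function $h_2=x_i^2+x_ix_{i+1}+x_{i+1}^2$, not to $x_ix_{i+1}$. This does not affect the conclusion, since $x_ix_{i+1}=\Phi(E(1))^2-\Phi(E(2))=\Phi(E(1)^2-E(2))$ is still in the image; just correct the attribution.
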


A graphical description of the bimodules $W_i$ for $i=1,\ldots,n-1$ was introduced in \cite{KhovSussan}.
We consider collections of smooth arcs in the plane connecting $ n $ red points and $1$ black point on one horizontal line with $n$ red points and $1$ black point on another horizontal line.
The $i$th and $(i+1)$st red dots on one horizontal line must be connected to the
 $i$th and $(i+1)$st red dots on the other horizontal line by a diagram which has a thick red strand in the middle, which is given in \eqref{thickgenerator}.
The arcs are colored in a manner consistent with their boundary points and are assumed to have no critical points (in other words no cups or caps).
They are allowed to intersect, but no triple intersections are allowed.
Arcs are allowed to carry dots.  
Two diagrams that are related by an isotopy that does not change the combinatorial types of the diagrams or the relative position of crossings are taken to be equal.
The elements of the vector space $ W_i $ are formal linear combinations of these diagrams modulo the local relations for $W$ along with the relations given in
\eqref{symmetricrelations} and \eqref{blackthickred1} which  correspond to relations in Proposition \ref{bimod-rel}.

The elements $e_j\otimes e_j$, where $0\leq j<i$ or $i<j\leq n$, are represented by diagrams
\begin{equation}
\label{thickgenerator}
e_j\otimes e_j=\begin{DGCpicture}
\DGCstrand[red](-4,0)(-4,1.5)[$^{1}$`{\ }]
\DGCcoupon*(-3,0)(-4,1.5){$\cdots$}
\DGCstrand[red](-3,0)(-3,1.5)[$^{j}$`{\ }]
\DGCstrand(-2.5,0)(-2.5,1.5)
\DGCstrand[red](-2,0)(-2,1.5)[$^{j+1}$`{\ }]
\DGCcoupon*(-1,0)(-2,1.5){$\cdots$}
\DGCstrand[red](-1,0)(-1,1.5)[$^{i-1}$`{\ }]
\DGCstrand[red](-.5,0)(0,.5)[$^{i}$`{\ }]
\DGCstrand[red](0.5,0)(0,.5)[$^{i+1\,\,}$`{\ }]
\DGCstrand[Red](0,.5)(0,1)
\DGCstrand[red](0,1)(-0.5,1.5)
\DGCstrand[red](0,1)(0.5,1.5)
\DGCstrand[red](1,0)(1,1.5)[$^{\,\,i+2}$`{\ }]
\DGCcoupon*(1,0)(2,1.5){$\cdots$}
\DGCstrand[red](2,0)(2,1.5)[$^{n}$`{\ }]
\end{DGCpicture}
\in W_i
\ .
\end{equation}
%where $j+1$ is the position of the black strand from left.
The second and third equations of \eqref{dot-slide-splitter} in terms of diagrams are
\begin{equation}
\label{symmetricrelations}
\begin{DGCpicture}
\DGCstrand[red](0,0)(.5,.5)[$^{i}$`{\ }]
\DGCstrand[red](1,0)(.5,.5)[$^{i+1}$`{\ }]
\DGCstrand[Red](.5,.5)(.5,1)
\DGCstrand[red](.5,1)(0,1.5)
\DGCdot{E}
\DGCstrand[red](.5,1)(1,1.5)
\end{DGCpicture}
~+~
\begin{DGCpicture}
\DGCstrand[red](0,0)(.5,.5)[$^{i}$`{\ }]
\DGCstrand[red](1,0)(.5,.5)[$^{i+1}$`{\ }]
\DGCstrand[Red](.5,.5)(.5,1)
\DGCstrand[red](.5,1)(0,1.5)
\DGCstrand[red](.5,1)(1,1.5)
\DGCdot{E}
\end{DGCpicture}
~=~
\begin{DGCpicture}
\DGCstrand[red](0,0)(.5,.5)[$^{i}$`{\ }]
\DGCdot{B}
\DGCstrand[red](1,0)(.5,.5)[$^{i+1}$`{\ }]
\DGCstrand[Red](.5,.5)(.5,1)
\DGCstrand[red](.5,1)(0,1.5)
\DGCstrand[red](.5,1)(1,1.5)
\end{DGCpicture}
~+~
\begin{DGCpicture}
\DGCstrand[red](0,0)(.5,.5)[$^{i}$`{\ }]
\DGCstrand[red](1,0)(.5,.5)[$^{i+1}$`{\ }]
\DGCdot{B}
\DGCstrand[Red](.5,.5)(.5,1)
\DGCstrand[red](.5,1)(0,1.5)
\DGCstrand[red](.5,1)(1,1.5)
\end{DGCpicture}
\ , \quad \quad \quad
\begin{DGCpicture}
\DGCstrand[red](0,0)(.5,.5)[$^{i}$`{\ }]
\DGCstrand[red](1,0)(.5,.5)[$^{i+1}$`{\ }]
\DGCstrand[Red](.5,.5)(.5,1)
\DGCstrand[red](.5,1)(0,1.5)
\DGCdot{E}
\DGCstrand[red](.5,1)(1,1.5)
\DGCdot{E}
\end{DGCpicture}
~=~
\begin{DGCpicture}
\DGCstrand[red](0,0)(.5,.5)[$^{i}$`{\ }]
\DGCdot{B}
\DGCstrand[red](1,0)(.5,.5)[$^{i+1}$`{\ }]
\DGCdot{B}
\DGCstrand[Red](.5,.5)(.5,1)
\DGCstrand[red](.5,1)(0,1.5)
\DGCstrand[red](.5,1)(1,1.5)
\end{DGCpicture}
\ .
\end{equation}
The second and third equations of \eqref{crossing-slide-splitter} in terms of diagrams are
\begin{equation}
\label{blackthickred1}
\begin{DGCpicture}
\DGCstrand(0,0)(0,1.25)(2,1.75)
\DGCstrand[red](1,0)(1.5,0.5)[$^{i}$`{\ }]
\DGCstrand[red](2,0)(1.5,0.5)[$^{i+1}$`{\ }]
\DGCstrand[Red](1.5,0.5)(1.5,0.75)
\DGCstrand[red](1.5,0.75)(2,1.25)(1,1.75)
\DGCstrand[red](1.5,0.75)(1,1.25)
\DGCstrand[red](1,1.25)(0,1.75)
\end{DGCpicture}
~=~
\begin{DGCpicture}
\DGCstrand(0,0)(2,0.5)(2,1.75)
\DGCstrand[red](1,0)(0,0.5)(0.5,1)[$^{i}$`{\ }]
\DGCstrand[red](2,0)(1,0.5)[$^{i+1}$`{\ }]
\DGCstrand[red](1,0.5)(0.5,1)
\DGCstrand[Red](0.5,1)(0.5,1.25)
\DGCstrand[red](0.5,1.25)(0,1.75)
\DGCstrand[red](0.5,1.25)(1,1.75)
\end{DGCpicture}
\ , \quad \quad \quad
\begin{DGCpicture}
\DGCstrand(3,0)(3,1.25)(1,1.75)
\DGCstrand[red](1,0)(1.5,0.5)[$^{i}$`{\ }]
\DGCstrand[red](2,0)(1.5,0.5)[$^{i+1}$`{\ }]
\DGCstrand[Red](1.5,0.5)(1.5,0.75)
\DGCstrand[red](1.5,0.75)(2,1.25)
\DGCstrand[red](2,1.25)(3,1.75)
\DGCstrand[red](1.5,0.75)(1,1.25)(2,1.75)
\end{DGCpicture}
~=~
\begin{DGCpicture}
\DGCstrand(1,0)(-1,0.5)(-1,1.75)
\DGCstrand[red](-1,0)(0,0.5)[$^{i}$`{\ }]
\DGCstrand[red](0,0.5)(0.5,1)
\DGCstrand[red](0,0)(1,0.5)(0.5,1)[$^{i+1}$`{\ }]
\DGCstrand[Red](0.5,1)(0.5,1.25)
\DGCstrand[red](0.5,1.25)(0,1.75)
\DGCstrand[red](0.5,1.25)(1,1.75)
\end{DGCpicture}
\ .
\end{equation}

The bimodule $W_i$ naturally inherits a $p$-DG structure from $W$ as follows: 
\begin{equation*}
\dif_{W_i}:=\dif_W\otimes \Id + \Id \otimes \dif_W
\ .
\end{equation*}

One may twist the $p$-DG structure on $W_i$ to obtain a new $p$-DG bimodule $W_i^{-e_1}$.  As bimodules, $W_i = W_i^{-e_1}$ but the $p$-DG structure on the generator is twisted as follows:
\begin{equation}
\dif(1\otimes e_j)=
-(x_i+x_{i+1})\otimes e_j
\end{equation}
In terms of a diagrammatic description, this twisted differential is
\begin{equation}
\label{defofWitwisted}
\dif_{}\left(~
\begin{DGCpicture}
\DGCstrand[red](0,0)(.5,.5)
\DGCstrand[red](1,0)(.5,.5)
\DGCstrand[Red](.5,.5)(.5,1)
\DGCstrand[red](.5,1)(0,1.5)
\DGCstrand[red](.5,1)(1,1.5)
\end{DGCpicture}
~\right)
~=~
~-~
\begin{DGCpicture}
\DGCstrand[red](0,0)(.5,.5)
\DGCstrand[red](1,0)(.5,.5)
\DGCstrand[Red](.5,.5)(.5,1)
\DGCstrand[red](.5,1)(0,1.5)
\DGCdot{E}
\DGCstrand[red](.5,1)(1,1.5)
\end{DGCpicture}
~-~
\begin{DGCpicture}
\DGCstrand[red](0,0)(.5,.5)
\DGCstrand[red](1,0)(.5,.5)
\DGCstrand[Red](.5,.5)(.5,1)
\DGCstrand[red](.5,1)(0,1.5)
\DGCstrand[red](.5,1)(1,1.5)
\DGCdot{E}
\end{DGCpicture}
\ .
\end{equation}
We use the notation $W_i^{-e_1}$ because the differential is twisted by the first elementary symmetric function in the dots corresponding to the bimodule generator connected to the thick red strand.

We will also need the bimodules $W_{i,i+1}$ for $i=1,\ldots,n-2$.
\begin{defn}
We define the bimodule $W_{i,i+1}$ as the tensor product of two deformed Webster algebras $W((1^n),1)$ over $W((1^n)_{i,3},1)$.
\[
W_{i,i+1}:=W((1^n),1)\hat{e}_{i,i+1}\otimes_{W((1^n)_{i,3},1)} \hat{e}_{i,i+1}W((1^n),1),
\]
where $\displaystyle\hat{e}_{i,i+1}=\sum_{j\not=i,i+1}e_j$.
\end{defn}

The following proposition follows directly from definitions.
\begin{prop}\label{bimod-rel3}
In the bimodule $W_{i,i+1}$, we have the following equalities.
\begin{align}
&e_k\otimes e_\ell=0\quad\text{if } k\not=\ell,\quad
e_j \otimes e_j= e_j \otimes 1= 1 \otimes e_j,
\quad x_\ell\otimes e_j=1\otimes x_\ell e_j \quad\text{if } \ell\not=i,i+1,i+2,\\
&\psi_{\ell} \otimes e_j=1\otimes \psi_{\ell}e_j \quad\text{if }\ell\not=i,i+1,i+2,\quad y\otimes 1=1\otimes y,\\
\label{dot-slide-splitter3}
&
\sum_{\substack{a+b+c=d\\a,b,c\geq 0}}
x_i^a x_{i+1}^b x_{i+2}^c\otimes e_j=1\otimes\sum_{\substack{a+b+c=d\\a,b,c\geq 0}} x_i^a x_{i+1}^b x_{i+2}^c e_j \quad\text{ where } d=1,2,3,
\\
\label{crossing-slide-splitter3}
&\psi_{i+2}\psi_{i+1}\psi_{i} \otimes e_j=1\otimes \psi_{i+2}\psi_{i+1}\psi_{i}e_j, \quad
\psi_{i}\psi_{i+1}\psi_{i+2}\otimes e_j=1\otimes \psi_{i}\psi_{i+1}\psi_{i+2}e_j,
\end{align}
where $j\not=i,i+1$.
\end{prop}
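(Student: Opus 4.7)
The plan is to derive each listed equality as an instance of the generic property of $M\otimes_B N$ that $\Phi(z)\otimes 1=1\otimes \Phi(z)$ for every $z\in B=W((1^n)_{i,3},1)$, using the inclusion $\Phi$ described in the preceding subsection.

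First, I would specialize the formulas defining $\Phi$ to the case $k=3$. Under this specialization: $\Phi(x_\ell)=x_\ell$ for $\ell<i$ and $\Phi(x_\ell)=x_{\ell+2}$ for $\ell>i$; $\Phi(y)=y$; $\Phi(\psi_\ell)=\psi_\ell(e_{\ell-1}+e_\ell)$ for $\ell<i$ and $\Phi(\psi_\ell)=\psi_{\ell+2}(e_{\ell+1}+e_{\ell+2})$ for $\ell>i$; $\Phi(E(d))=\sum_{a+b+c=d}x_i^a x_{i+1}^b x_{i+2}^c$; and $\Phi(\psi_i)=\psi_i\psi_{i+1}\psi_{i+2}\,e_{i+2}+\psi_{i+2}\psi_{i+1}\psi_i\,e_{i-1}$. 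The idempotent identities of Proposition~\ref{bimod-rel3} then follow because the idempotents $e_j$ with $j\neq i,i+1$ are precisely $\Phi$-images of the idempotent decomposition of $W((1^n)_{i,3},1)$ under the sequence translation $\phi$, and $\hat{e}_{i,i+1}=\Phi(1)$. The sliding relations for $x_\ell$, $y$, and $\psi_\ell$ with $\ell$ outside $\{i,i+1,i+2\}$ are immediate, since each appears as $\Phi$ of a generator of the subalgebra.

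The identity \eqref{dot-slide-splitter3} for $d=1,2,3$ is precisely $\Phi(E(d))\otimes 1=1\otimes \Phi(E(d))$, built into the tensor relation because each $E(d)$ is a central generator of $W((1^n)_{i,3},1)$. For \eqref{crossing-slide-splitter3}, I would exploit the fact that the two summands of $\Phi(\psi_i)$ are supported on mutually disjoint pairs of source/target idempotents. Multiplying $\psi_i$ on the right in the subalgebra by the idempotent $\epsilon_{i-1}$ (resp.\ $\epsilon_i$) before applying $\Phi$ isolates the single summand $\psi_{i+2}\psi_{i+1}\psi_i\,e_{i-1}$ (resp.\ $\psi_i\psi_{i+1}\psi_{i+2}\,e_{i+2}$). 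Since $\psi_i\epsilon_p$ lies in $W((1^n)_{i,3},1)$, each isolated summand slides across the tensor, giving the two nontrivial instances of \eqref{crossing-slide-splitter3}; for the remaining values of $j\neq i,i+1$ both sides vanish trivially because $\psi_ie_j$ or $\psi_{i+2}e_j$ is already zero.

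The main obstacle is just the bookkeeping of identifying which idempotents of $W((1^n)_{i,3},1)$ correspond under $\Phi$ to $e_{i-1}$ and $e_{i+2}$ in $W$, so that the two summands of $\Phi(\psi_i)$ can be isolated one at a time. This is entirely analogous to the $k=2$ case underlying Proposition~\ref{bimod-rel} and its diagrammatic counterparts \eqref{blackthickred1}, and requires no new ideas beyond a careful match of sequences under $\phi$.
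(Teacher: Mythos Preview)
Your proposal is correct and is precisely the intended unpacking of what the paper records only as ``follows directly from definitions.'' The paper gives no further argument, so your approach---sliding images of generators of $W((1^n)_{i,3},1)$ across the tensor via $\Phi$, and isolating the two summands of $\Phi(\psi_i)$ by idempotents---is exactly what that phrase means and supplies the detail the paper omits.
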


For a diagrammatic description of $W_{i,i+1}$, we consider collections of smooth arcs in the plane connecting $ n $ red points and $1$ black point on one horizontal line with $n$ red points and $1$ black point on another horizontal line.
The $i$th, $(i+1)$st and $(i+2)$nd red dots on one horizontal line must be connected to the
 $i$th, $(i+1)$st and $(i+2)$nd red dots on the other horizontal line by a diagram which has a thick red strand in the middle, which is given in ~\eqref{thickthickgenerator}.
The arcs are colored in a manner consistent with their boundary points and are assumed to have no critical points (in other words no cups or caps).
They are allowed to intersect, but no triple intersections are allowed.
Arcs are allowed to carry dots.  
Two diagrams that are related by an isotopy that does not change the combinatorial types of the diagrams or the relative position of crossings are taken to be equal.
The elements of the vector space $ W_{i,i+1} $ are formal linear combinations of these diagrams modulo the local relations for $W$ along with the relations given in
~\eqref{thicksymmetricrelations1}, ~\eqref{thicksymmetricrelations2}, ~\eqref{thicksymmetricrelations3} and ~\eqref{thickblackthickred1}.

The elements $e_j\otimes e_j$, where $0\leq j<i$ or $i+1<j\leq n$, are represented by
\begin{equation}
\label{thickthickgenerator}
e_j\otimes e_j=\begin{DGCpicture}
\DGCstrand[red](-4,0)(-4,1.5)[$^{1}$`{\ }]
\DGCcoupon*(-3,0)(-4,1.5){$\cdots$}
\DGCstrand[red](-3,0)(-3,1.5)[$^{j}$`{\ }]
\DGCstrand(-2.5,0)(-2.5,1.5)
\DGCstrand[red](-2,0)(-2,1.5)[$^{j+1}$`{\ }]
\DGCcoupon*(-1,0)(-2,1.5){$\cdots$}
\DGCstrand[red](-1,0)(-1,1.5)[$^{i-1}$`{\ }]
\DGCstrand[red](-.5,0)(0,.5)[$^{i}$`{\ }]
\DGCstrand[red](0,0)(0,.5)
\DGCstrand[red](0.5,0)(0,.5)
\DGCstrand[Red](0,.5)(0,1)
\DGCstrand[red](0,1)(-0.5,1.5)
\DGCstrand[red](0,1)(0,1.5)
\DGCstrand[red](0,1)(0.5,1.5)
\DGCstrand[red](1,0)(1,1.5)[$^{\,\,i+3}$`{\ }]
\DGCcoupon*(1,0)(2,1.5){$\cdots$}
\DGCstrand[red](2,0)(2,1.5)[$^{n}$`{\ }]
\end{DGCpicture}
\quad\in W_{i,i+1}
\ .
\end{equation}
%where $j+1$ is the position of the black strand from left.

The generating equations in \eqref{dot-slide-splitter3} in terms of diagrams are
\begin{equation}
\label{thicksymmetricrelations1}
\begin{DGCpicture}
\DGCstrand[red](0,0)(.5,.5)
\DGCstrand[red](1,0)(.5,.5)
\DGCstrand[Red](.5,.5)(.5,1)
\DGCstrand[red](.5,1)(0,1.5)
\DGCdot{E}
\DGCstrand[red](.5,1)(1,1.5)
\DGCstrand[red](.5,0)(.5,.5)
\DGCstrand[red](.5,1)(.5,1.5)
\end{DGCpicture}
~+~
\begin{DGCpicture}
\DGCstrand[red](0,0)(.5,.5)
\DGCstrand[red](1,0)(.5,.5)
\DGCstrand[Red](.5,.5)(.5,1)
\DGCstrand[red](.5,1)(0,1.5)
\DGCstrand[red](.5,1)(1,1.5)
\DGCstrand[red](.5,0)(.5,.5)
\DGCstrand[red](.5,1)(.5,1.5)
\DGCdot{E}
\end{DGCpicture}
~+~
\begin{DGCpicture}
\DGCstrand[red](0,0)(.5,.5)
\DGCstrand[red](1,0)(.5,.5)
\DGCstrand[Red](.5,.5)(.5,1)
\DGCstrand[red](.5,1)(0,1.5)
\DGCstrand[red](.5,1)(1,1.5)
\DGCdot{E}
\DGCstrand[red](.5,0)(.5,.5)
\DGCstrand[red](.5,1)(.5,1.5)
\end{DGCpicture}
~=~
\begin{DGCpicture}
\DGCstrand[red](0,0)(.5,.5)
\DGCdot{B}
\DGCstrand[red](1,0)(.5,.5)
\DGCstrand[Red](.5,.5)(.5,1)
\DGCstrand[red](.5,1)(0,1.5)
\DGCstrand[red](.5,1)(1,1.5)
\DGCstrand[red](.5,0)(.5,.5)
\DGCstrand[red](.5,1)(.5,1.5)
\end{DGCpicture}
~+~
\begin{DGCpicture}
\DGCstrand[red](0,0)(.5,.5)
\DGCstrand[red](1,0)(.5,.5)
\DGCstrand[Red](.5,.5)(.5,1)
\DGCstrand[red](.5,1)(0,1.5)
\DGCstrand[red](.5,1)(1,1.5)
\DGCstrand[red](.5,0)(.5,.5)
\DGCdot{B}
\DGCstrand[red](.5,1)(.5,1.5)
\end{DGCpicture}
~+~
\begin{DGCpicture}
\DGCstrand[red](0,0)(.5,.5)
\DGCstrand[red](1,0)(.5,.5)
\DGCdot{B}
\DGCstrand[Red](.5,.5)(.5,1)
\DGCstrand[red](.5,1)(0,1.5)
\DGCstrand[red](.5,1)(1,1.5)
\DGCstrand[red](.5,0)(.5,.5)
\DGCstrand[red](.5,1)(.5,1.5)
\end{DGCpicture}
\ ,
\end{equation}

\begin{equation}
\label{thicksymmetricrelations2}
\begin{DGCpicture}
\DGCstrand[red](0,0)(.5,.5)
\DGCstrand[red](1,0)(.5,.5)
\DGCstrand[Red](.5,.5)(.5,1)
\DGCstrand[red](.5,1)(0,1.5)
\DGCdot{E}
\DGCstrand[red](.5,1)(1,1.5)
\DGCstrand[red](.5,0)(.5,.5)
\DGCstrand[red](.5,1)(.5,1.5)
\DGCdot{E}
\end{DGCpicture}
~+~
\begin{DGCpicture}
\DGCstrand[red](0,0)(.5,.5)
\DGCstrand[red](1,0)(.5,.5)
\DGCstrand[Red](.5,.5)(.5,1)
\DGCstrand[red](.5,1)(0,1.5)
\DGCdot{E}
\DGCstrand[red](.5,1)(1,1.5)
\DGCdot{E}
\DGCstrand[red](.5,0)(.5,.5)
\DGCstrand[red](.5,1)(.5,1.5)
\end{DGCpicture}
~+~
\begin{DGCpicture}
\DGCstrand[red](0,0)(.5,.5)
\DGCstrand[red](1,0)(.5,.5)
\DGCstrand[Red](.5,.5)(.5,1)
\DGCstrand[red](.5,1)(0,1.5)
\DGCstrand[red](.5,1)(1,1.5)
\DGCdot{E}
\DGCstrand[red](.5,0)(.5,.5)
\DGCstrand[red](.5,1)(.5,1.5)
\DGCdot{E}
\end{DGCpicture}
~=~
\begin{DGCpicture}
\DGCstrand[red](0,0)(.5,.5)
\DGCdot{B}
\DGCstrand[red](1,0)(.5,.5)
\DGCstrand[Red](.5,.5)(.5,1)
\DGCstrand[red](.5,1)(0,1.5)
\DGCstrand[red](.5,1)(1,1.5)
\DGCstrand[red](.5,0)(.5,.5)
\DGCdot{B}
\DGCstrand[red](.5,1)(.5,1.5)
\end{DGCpicture}
~+~
\begin{DGCpicture}
\DGCstrand[red](0,0)(.5,.5)
\DGCdot{B}
\DGCstrand[red](1,0)(.5,.5)
\DGCdot{B}
\DGCstrand[Red](.5,.5)(.5,1)
\DGCstrand[red](.5,1)(0,1.5)
\DGCstrand[red](.5,1)(1,1.5)
\DGCstrand[red](.5,0)(.5,.5)
\DGCstrand[red](.5,1)(.5,1.5)
\end{DGCpicture}
~+~
\begin{DGCpicture}
\DGCstrand[red](0,0)(.5,.5)
\DGCstrand[red](1,0)(.5,.5)
\DGCdot{B}
\DGCstrand[Red](.5,.5)(.5,1)
\DGCstrand[red](.5,1)(0,1.5)
\DGCstrand[red](.5,1)(1,1.5)
\DGCstrand[red](.5,0)(.5,.5)
\DGCdot{B}
\DGCstrand[red](.5,1)(.5,1.5)
\end{DGCpicture}
\ ,
\end{equation}

\begin{equation}
\label{thicksymmetricrelations3}
\begin{DGCpicture}
\DGCstrand[red](0,0)(.5,.5)
\DGCstrand[red](1,0)(.5,.5)
\DGCstrand[Red](.5,.5)(.5,1)
\DGCstrand[red](.5,1)(0,1.5)
\DGCdot{E}
\DGCstrand[red](.5,1)(1,1.5)
\DGCdot{E}
\DGCstrand[red](.5,0)(.5,.5)
\DGCstrand[red](.5,1)(.5,1.5)
\DGCdot{E}
\end{DGCpicture}
~=~
\begin{DGCpicture}
\DGCstrand[red](0,0)(.5,.5)
\DGCdot{B}
\DGCstrand[red](1,0)(.5,.5)
\DGCdot{B}
\DGCstrand[Red](.5,.5)(.5,1)
\DGCstrand[red](.5,1)(0,1.5)
\DGCstrand[red](.5,1)(1,1.5)
\DGCstrand[red](.5,0)(.5,.5)
\DGCdot{B}
\DGCstrand[red](.5,1)(.5,1.5)
\end{DGCpicture}
\ .
\end{equation}

The equations in \eqref{crossing-slide-splitter3} in terms of diagrams are
\begin{equation}
\label{thickblackthickred1}
\begin{DGCpicture}
\DGCstrand(0,0)(0,1.25)(2,1.75)
\DGCstrand[red](1,0)(1.5,0.5)[$^{i}$`{\ }]
\DGCstrand[red](1.5,0)(1.5,0.5)[$^{i+1}$`{\ }]
\DGCstrand[red](2,0)(1.5,0.5)[$^{i+2}$`{\ }]
\DGCstrand[Red](1.5,0.5)(1.5,0.75)
\DGCstrand[red](1.5,0.75)(2,1.25)(1,1.75)
\DGCstrand[red](1.5,0.75)(1.5,1)(0.5,1.625)(0.5,1.75)
\DGCstrand[red](1.5,0.75)(0,1.5)(0,1.75)
\end{DGCpicture}
~=~
\begin{DGCpicture}
\DGCstrand(0,0)(2,0.5)(2,1.75)
\DGCstrand[red](1,0)(0,0.5)(0.5,1)[$^{i}$`{\ }]
\DGCstrand[red](1.5,0)(1.5,0.125)[$^{i+1}$`{\ }]
\DGCstrand[red](1.5,0.125)(0.5,0.75)(0.5,1)
\DGCstrand[red](2,0)(2,0.25)[$^{i+2}$`{\ }]
\DGCstrand[red](2,0.25)(0.5,1)
\DGCstrand[Red](0.5,1)(0.5,1.25)
\DGCstrand[red](0.5,1.25)(0,1.75)
\DGCstrand[red](0.5,1.25)(0.5,1.75)
\DGCstrand[red](0.5,1.25)(1,1.75)
\end{DGCpicture}
\ , \quad \quad \quad
\begin{DGCpicture}
\DGCstrand(3,0)(3,1.25)(1,1.75)
\DGCstrand[red](1,0)(1.5,0.5)[$^{i}$`{\ }]
\DGCstrand[red](1.5,0)(1.5,0.5)[$^{i+1}$`{\ }]
\DGCstrand[red](2,0)(1.5,0.5)[$^{i+2}$`{\ }]
\DGCstrand[Red](1.5,0.5)(1.5,0.75)
\DGCstrand[red](1.5,0.75)(3,1.5)(3,1.75)
\DGCstrand[red](1.5,0.75)(1.5,1)(2.5,1.625)(2.5,1.75)
\DGCstrand[red](1.5,0.75)(1,1.25)(2,1.75)
\end{DGCpicture}
~=~
\begin{DGCpicture}
\DGCstrand(1,0)(-1,0.5)(-1,1.75)
\DGCstrand[red](-1,0)(-1,0.25)[$^{i}$`{\ }]
\DGCstrand[red](-1,0.25)(0.5,1)
\DGCstrand[red](-0.5,0)(-0.5,0.125)[$^{i+1}$`{\ }]
\DGCstrand[red](-0.5,0.125)(0.5,0.75)(0.5,1)
\DGCstrand[red](0,0)(1,0.5)(0.5,1)[$^{i+2}$`{\ }]
\DGCstrand[Red](0.5,1)(0.5,1.25)
\DGCstrand[red](0.5,1.25)(0,1.75)
\DGCstrand[red](0.5,1.25)(0.5,1.75)
\DGCstrand[red](0.5,1.25)(1,1.75)
\end{DGCpicture}
\ .
\end{equation}

The bimodule $W_{i,i+1}$ naturally inherits a $p$-DG structure from $W$ where one sets the differential on the generator \eqref{thickthickgenerator} to be zero.  

\begin{rem}
We will sometimes denote the elements in the equalities of
in \eqref{thickblackthickred1} as a crossing between a black strand and a thick red strand.  We take a similar convention for diagrams in \eqref{blackthickred1}.
\end{rem}

%\begin{DGCpicture}
%\DGCstrand[red](0,0)(0,1)[$^{1}$`{\ }]
%\DGCdot{0.5}
%\DGCcoupon*(-.5,0.25)(0,0.75){$a_1$}

\subsection{Bases for bimodules}
We construct bases of the bimodules just introduced. 

Let
\begin{align*}
\aleph_1({\bf a},b,c)
&~=~ \prod_{k=1}^n x_k^{a_k} y^b\psi_i\otimes\psi_i x_i^c e_i\\
&=\begin{DGCpicture}
\DGCstrand[red](-1,0)(-1,1.5)[$^{1}$`{\ }]
\DGCdot{E}[u]{$a_1$}
\DGCcoupon*(-.75,0.5)(-.25,1){$\cdots$}
\DGCstrand(.5,0)(0,.75)(.5,1.5)
\DGCdot{E}[u]{$b$}
\DGCstrand[red](0,0)(.5,.5)[$^{i}$`{\ }]
\DGCdot{B}[l]{$c$}
\DGCstrand[red](1,0)(.5,.5)[$^{i+1}$`{\ }]
\DGCstrand[Red](.5,.5)(.5,1)
\DGCstrand[red](.5,1)(0,1.5)
\DGCdot{E}[u]{$a_i$}
\DGCstrand[red](.5,1)(1,1.5)
\DGCdot{E}[u]{$a_{i+1}$}
\DGCcoupon*(1.25,0.5)(1.75,1){$\cdots$}
\DGCstrand[red](2,0)(2,1.5)[$^{n}$`{\ }]
\DGCdot{E}[u]{$a_n$}
\end{DGCpicture}
\end{align*}

\begin{align*}
\aleph_2({\bf a})
&~=~\prod_{k=1}^n x_k^{a_k} \psi_{i+1}\otimes\psi_{i+1} e_i
\\
&=\begin{DGCpicture}
\DGCstrand[red](-1,0)(-1,1.5)[$^{1}$`{\ }]
\DGCdot{E}[u]{$a_1$}
\DGCcoupon*(-.75,0.5)(-.25,1){$\cdots$}
\DGCstrand(.5,0)(1,.75)(.5,1.5)
%\DGCdot{E}[u]{$b$}
\DGCstrand[red](0,0)(.5,.5)[$^{i}$`{\ }]
%\DGCdot{B}[l]{$c$}
\DGCstrand[red](1,0)(.5,.5)[$^{i+1}$`{\ }]
\DGCstrand[Red](.5,.5)(.5,1)
\DGCstrand[red](.5,1)(0,1.5)
\DGCdot{E}[u]{$a_i$}
\DGCstrand[red](.5,1)(1,1.5)
\DGCdot{E}[u]{$a_{i+1}$}
\DGCcoupon*(1.25,0.5)(1.75,1){$\cdots$}
\DGCstrand[red](2,0)(2,1.5)[$^{n}$`{\ }]
\DGCdot{E}[u]{$a_n$}
\end{DGCpicture}
\ .
\end{align*}
%\JS{I think we should replace what is below}
%For $0\leq j\leq n$, $j\not=i$ and $0 \leq k\leq n$, let
%\begin{align*}
%\aleph_3({\bf a},b,c,j,\ell)
%&~=~
%\begin{cases}
%\prod_{k=1}^n x_k^{a_k}y^b %\psi_{\ell}\psi_{\ell-1}\cdots\psi_{j+1}\otimes x_i^c e_j&\text{if %} j\leq \ell\\
%\prod_{k=1}^n x_k^{a_k}y^b\psi_{\ell+1}\psi_{\ell+2}\cdots\psi_{j}%\otimes x_i^c e_j &\text{if } j> \ell
%\end{cases}
%\\
%&=\begin{DGCpicture}
%%\DGCcoupon*(-2.75,.5)(-2.25,1){$\cdots$}
%\DGCstrand[red](-2,0)(-2,1.5)[$^{1}$`{\ }]
%\DGCdot{E}[u]{$a_1$}
%\DGCcoupon*(-1.75,1)(-1.25,1.5){$\cdots$}
%\DGCstrand(-1,0)(-1,.75)(2,1.5)
%\DGCdot{E}[u]{$b$}
%\DGCstrand[red](0,0)(.5,.5)[$^{i}$`{\ }]
%\DGCdot{B}[l]{$c$}
%\DGCstrand[red](1,0)(.5,.5)[$^{i+1}$`{\ }]
%\DGCstrand[Red](.5,.5)(.5,.75)
%\DGCstrand[red](.5,.75)(0,1.25)(0,1.5)
%\DGCdot{E}[u]{$a_i$}
%\DGCstrand[red](.5,.75)(1,1.25)(1,1.5)
%\DGCdot{E}[u]{$a_{i+1}$}
%\DGCcoupon*(2.25,0)(2.75,.5){$\cdots$}
%\DGCstrand[red](3,0)(3,1.5)[$^{n}$`{\ }]
%\DGCdot{E}[u]{$a_n$}
%%\DGCcoupon*(3.25,.5)(3.75,1){$\cdots$}
%\end{DGCpicture}
%\end{align*}
%\JS{Replace with this}
For $0\leq j\leq n$ and $0 \leq \ell \leq n$ unless $j=\ell =i$, let
\begin{align*}
\aleph_3({\bf a},b,c,j,\ell)
&~=~
\begin{cases}
\prod_{k=1}^n x_k^{a_k}y^b \psi_{\ell}\psi_{\ell-1}\cdots\psi_{j+1}\otimes x_i^c e_j&\text{if } j < \ell \quad\text{and}\quad j\not=i\\
\prod_{k=1}^n x_k^{a_k}y^b\psi_{\ell+1}\psi_{\ell+2}\cdots\psi_{j}\otimes x_i^c e_j &\text{if } j> \ell \quad\text{and}\quad j\not=i\\
\prod_{k=1}^n x_k^{a_k}y^b \otimes x_i^c e_j&\text{if } j = \ell, j \neq i\\
\prod_{k=1}^n x_k^{a_k}y^b \otimes x_i^c \psi_{\ell}\psi_{\ell-1}\cdots\psi_{i+1}e_i&\text{if } j < \ell \quad\text{and}\quad j=i\\
\prod_{k=1}^n x_k^{a_k}y^b\otimes x_i^c\psi_{\ell+1}\psi_{\ell+2}\cdots\psi_{i} e_i &\text{if } j> \ell \quad\text{and}\quad j=i.
\end{cases}
\end{align*}
For $j< \ell$ and $ j \neq  i$, diagramatically $\aleph_3({\bf a}, b, c, j, \ell)$ is given by
\begin{equation*}
\begin{DGCpicture}
%\DGCcoupon*(-2.75,.5)(-2.25,1){$\cdots$}
\DGCstrand[red](-2,0)(-2,1.5)[$^{1}$`{\ }]
\DGCdot{E}[u]{$a_1$}
\DGCcoupon*(-1.75,1)(-1.25,1.5){$\cdots$}
\DGCstrand(-1,0)(-1,.75)(2,1.5)
\DGCdot{E}[u]{$b$}
\DGCstrand[red](0,0)(.5,.5)[$^{i}$`{\ }]
\DGCdot{B}[l]{$c$}
\DGCstrand[red](1,0)(.5,.5)[$^{i+1}$`{\ }]
\DGCstrand[Red](.5,.5)(.5,.75)
\DGCstrand[red](.5,.75)(0,1.25)(0,1.5)
\DGCdot{E}[u]{$a_i$}
\DGCstrand[red](.5,.75)(1,1.25)(1,1.5)
\DGCdot{E}[u]{$a_{i+1}$}
\DGCcoupon*(2.25,0)(2.75,.5){$\cdots$}
\DGCstrand[red](3,0)(3,1.5)[$^{n}$`{\ }]
\DGCdot{E}[u]{$a_n$}
\DGCcoupon*(-.5,.5)(0,1){$\cdots$}
\DGCcoupon*(1.25,1.5)(1.75,1.75){$\cdots$}
\DGCstrand[red](-1.25,0)(-1.25,1.5)[$^{j}$`{\ }]
\DGCstrand[red](1.75,0)(1.75,1.5)[$^{\ell}$`{\ }]
\end{DGCpicture}
\ .
\end{equation*}
%where on the bottom, the endpoint of the black strand is directly to the right of the $j$th red strand, and on the top, the endpoint of the black strand is directly to the right of the $\ell$th red strand.

\begin{lem} \label{classical}
For any $b_1, b_2, b_3 \geq 0$, the elements
\begin{equation*}
\begin{DGCpicture}
\DGCstrand[red](0,0)(.5,.5)
\DGCdot{B}[d]{$b_1$}
\DGCstrand[red](1,0)(.5,.5)
\DGCdot{B}[d]{$b_2$}
\DGCstrand[Red](.5,.5)(.5,1)
\DGCstrand[red](.5,1)(0,1.5)
\DGCstrand[red](.5,1)(1,1.5)
\end{DGCpicture}
\quad \quad
\text{ and }
\quad \quad
\begin{DGCpicture}
\DGCstrand[red](0,0)(.5,.5)
\DGCdot{B}[d]{$b_1$}
\DGCstrand[red](1,0)(.5,.5)
\DGCdot{B}[d]{$b_3$}
\DGCstrand[Red](.5,.5)(.5,1)
\DGCstrand[red](.5,1)(0,1.5)
\DGCstrand[red](.5,1)(1,1.5)
\DGCstrand[red](.5,0)(.5,.5)
\DGCdot{B}[d]{$b_2$}
\DGCstrand[red](.5,1)(.5,1.5)
\end{DGCpicture}
\end{equation*}
could be written as a linear combination of elements of the form
\begin{equation*}
\begin{DGCpicture}
\DGCstrand[red](0,0)(.5,.5)
\DGCdot{B}[d]{$c_1$}
\DGCstrand[red](1,0)(.5,.5)
\DGCstrand[Red](.5,.5)(.5,1)
\DGCstrand[red](.5,1)(0,1.5)
\DGCdot{E}[u]{$a_1$}
\DGCstrand[red](.5,1)(1,1.5)
\DGCdot{E}[u]{$a_2$}
%\DGCcoupon*(2,0)(4,1.5){$a_1, a_2 \geq 0$, $c_1 \in \{0,1\}$}
\DGCcoupon*(1.5,0)(3.5,1.5){$\begin{matrix} a_1, a_2 \geq 0, \\ c_1 \in \{0,1\}, \end{matrix}$} 
\end{DGCpicture}
\quad \quad
\text{ and }
\quad \quad
\begin{DGCpicture}
\DGCstrand[red](0,0)(.5,.5)
\DGCdot{B}[d]{$c_1$}
\DGCstrand[red](1,0)(.5,.5)
\DGCstrand[Red](.5,.5)(.5,1)
\DGCstrand[red](.5,1)(0,1.5)
\DGCdot{E}[u]{$a_1$}
\DGCstrand[red](.5,1)(1,1.5)
\DGCdot{E}[u]{$a_2$}
\DGCstrand[red](.5,0)(.5,.5)
\DGCdot{B}[d]{$c_2$}
\DGCstrand[red](.5,1)(.5,1.5)
\DGCdot{E}[u]{$a_2$}
\DGCcoupon*(1.5,0)(4,1.5){$\begin{matrix} a_1, a_2, a_3 \geq 0, \\ c_1 \in \{0,1,2 \}, \\ c_2 \in \{0,1 \}, \end{matrix}$} 
\end{DGCpicture}
\end{equation*}
respectively.
\end{lem}

\begin{proof}
This is a well-known result.  See for example \cite[Section 2.2]{EliasKh}.
\end{proof}

\begin{prop} \label{W_ispan}
The set
%\[
%\{ \aleph_1({\bf a},b,c), \aleph_2({\bf a}), \aleph_3({\bf %a},b,c,j,\ell)
%%, SE((a_i),b,c) 
%| a_1, \ldots , a_n, b \in \mathbb{Z}, c \in \{0,1 \}, 0\leq j\leq %n, j\not=i, 0\leq \ell\leq n
%\}
%\ .
%\]
\[
\{ \aleph_1({\bf a},b,c), \aleph_2({\bf a}), \aleph_3({\bf a},b,c,j,\ell)
%, SE((a_i),b,c) 
| a_1, \ldots , a_n, b \in \mathbb{Z}, c \in \{0,1 \}, 0\leq j\leq n, 0\leq \ell\leq n
\}
\]
is a spanning set of $W_i$.
\end{prop}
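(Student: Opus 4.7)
The approach is to start with an arbitrary element $\omega = \sum_m a_m \otimes b_m$ of $W_i$, expand both tensor factors in the $NE/SE$ basis of $W$ from the preceding proposition, and then apply the sliding relations of Proposition~\ref{bimod-rel} to push as much material from the right tensor factor to the left as possible. The goal is to show that each summand reduces modulo the tensor relations to one of the standard forms $\aleph_1$, $\aleph_2$, or $\aleph_3$, from which a spanning statement follows by re-expanding the left factor in the $NE/SE$ basis.

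For the polynomial part of each basis element of $b_m$, observe that $\Bbbk[x_1,\ldots,x_n,y]$ is free of rank $2$ over the subalgebra generated by $\{x_\ell\}_{\ell\neq i,i+1}$, by $y$, and by $\{x_i+x_{i+1},\, x_ix_{i+1}\}$, with module basis $\{1, x_i\}$. Since this subalgebra slides across the tensor by Proposition~\ref{bimod-rel}, every polynomial prefactor on the right reduces modulo sliding to a single factor $x_i^c$ with $c\in\{0,1\}$. For the crossing part of a basis element of $b_m$, which is a monotonic chain $\psi_{\ell'}\psi_{\ell'-1}\cdots\psi_{k'+1}$ or its $SE$ analogue starting at an idempotent $e_{k'}$ with $k'\neq i$, individual factors $\psi_\ell$ with $\ell\neq i,i+1$ slide on their own, and contiguous pairs $\psi_i\psi_{i+1}$ or $\psi_{i+1}\psi_i$ slide provided the intermediate idempotent between them is not $e_i$.

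After all permissible slides have been performed, the remaining right tensor factor in each summand is one of the following (up to the $x_i^c$ prefactor): (i) a bare idempotent $e_j$ with $j\neq i$, matching the right factor of $\aleph_3$ in the case $j=\ell$; (ii) a short monotonic $\psi$-chain of the form $\psi_\ell\cdots\psi_{i+1}e_i$ or $\psi_{\ell+1}\cdots\psi_i e_i$ ending at $e_i$, matching $\aleph_3$ in the cases $j=i$; (iii) $\psi_i e_i$ possibly with $x_i^c$, matching $\aleph_1$; (iv) $\psi_{i+1}e_i$, matching $\aleph_2$. Re-expanding the left tensor factor (which now contains all the slid material) in the $NE/SE$ basis then identifies the summand as a linear combination of the listed $\aleph$-elements. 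The main technical obstacle lies in the case analysis for the $\psi$-chain in step~(ii): the sliding of $\psi_i\psi_{i+1}$ and $\psi_{i+1}\psi_i$ is idempotent-dependent, so chains that necessarily pass through $e_i$ as an intermediate state cannot be fully dismantled, and one must verify carefully that any such leftover chain matches one of the standard right factors of $\aleph_3$ with $j=i$ rather than producing a new type of configuration.
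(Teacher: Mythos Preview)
Your overall strategy is sound and matches the paper's implicit approach: slide everything symmetric in $x_i,x_{i+1}$ (and all $x_\ell$ for $\ell\neq i,i+1$, and $y$, and admissible $\psi$-chains) to the left, then re-expand. The first three cases (i)--(iii) of your reduction are fine, since the right-hand factors of $\aleph_1$ and $\aleph_3$ accommodate the leftover $x_i^c$ and the left-hand factors of $\aleph_1$ and $\aleph_3$ accommodate arbitrary $y^b$.

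The gap is in case (iv). Your claim that the residual right factor $\psi_{i+1}e_i$ ``up to $x_i^c$'' matches $\aleph_2$ is not correct when $c=1$: the right factor of $\aleph_2({\bf a})$ is exactly $\psi_{i+1}e_i$, with no $x_i$, and the left factor is $\prod_k x_k^{a_k}\,\psi_{i+1}$, with no $y^b$. So after your reduction you are left with elements of the form
\[
(\textrm{polynomial})\cdot\psi_{i+1}\;\otimes\;\psi_{i+1}x_i\,e_i
\quad\text{and}\quad
(\textrm{polynomial})\cdot y^b\,\psi_{i+1}\;\otimes\;\psi_{i+1}\,e_i\ \ (b\ge 1),
\]
neither of which is an $\aleph_2$, and neither of which your sliding argument further simplifies: $x_i$ alone does not slide across the tensor, and a $y^b$ on the left cannot be absorbed into the $y$-free left factor of $\aleph_2$. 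These are precisely the ``interesting cases'' the paper singles out. The paper handles them by a short but nontrivial diagrammatic computation: using the two Reidemeister-II type identities at the thick strand (one for each side), subtracting them, and invoking the symmetric-function relation \eqref{symmetricrelations}, one shows $\psi_{i+1}\otimes\psi_{i+1}x_i e_i$ is a combination of $\aleph_1$-type and $\aleph_2$-type elements; a second short computation then reduces the dotted swing-right element $y\,\psi_{i+1}\otimes\psi_{i+1}e_i$ to the previous case plus $\aleph_1$-type terms. You need to supply this step (or an equivalent algebraic identity) to close case (iv).
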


\begin{proof}
We will only highlight interesting features of the proof, which include cases where the black strand begins and ends in between the two red strands connected to the thick red strand.
First we will show that $\psi_{i+1}\otimes \psi_{i+1}x_i e_i$ is in the span of the proposed spanning set.
The diagram for this element is
\begin{equation}
\label{swringrightdotleft}
\begin{DGCpicture}
\DGCstrand(.5,0)(1,.75)(.5,1.5)
\DGCstrand[red](0,0)(.5,.5)[]
\DGCdot{B}[l]{$ $}
\DGCstrand[red](1,0)(.5,.5)[]
\DGCstrand[Red](.5,.5)(.5,1)
\DGCstrand[red](.5,1)(0,1.5)
\DGCstrand[red](.5,1)(1,1.5)
\end{DGCpicture}
\ .
\end{equation}
Using relations \eqref{R2rel}, we have the following equations
\begin{equation}
\label{spantrick1}
\begin{DGCpicture}
\DGCstrand(.5,0)(1,.75)(.5,1.5)
\DGCdot{B}[l]{$ $}
\DGCstrand[red](0,0)(.5,.5)[]
\DGCstrand[red](1,0)(.5,.5)[]
\DGCstrand[Red](.5,.5)(.5,1)
\DGCstrand[red](.5,1)(0,1.5)
\DGCstrand[red](.5,1)(1,1.5)
\end{DGCpicture}
~-~
\begin{DGCpicture}
\DGCstrand(.5,0)(1,.75)(.5,1.5)
\DGCstrand[red](0,0)(.5,.5)[]
\DGCdot{B}[l]{$ $}
\DGCstrand[red](1,0)(.5,.5)[]
\DGCstrand[Red](.5,.5)(.5,1)
\DGCstrand[red](.5,1)(0,1.5)
\DGCstrand[red](.5,1)(1,1.5)
\end{DGCpicture}
~=~
\begin{DGCpicture}
\DGCstrand(.5,0)(0,.75)(.5,1.5)
\DGCdot{E}[l]{$ $}
\DGCstrand[red](0,0)(.5,.5)[]
\DGCstrand[red](1,0)(.5,.5)[]
\DGCstrand[Red](.5,.5)(.5,1)
\DGCstrand[red](.5,1)(0,1.5)
\DGCstrand[red](.5,1)(1,1.5)
\end{DGCpicture}
~-~
\begin{DGCpicture}
\DGCstrand(.5,0)(0,.75)(.5,1.5)
\DGCstrand[red](0,0)(.5,.5)[]
\DGCstrand[red](1,0)(.5,.5)[]
\DGCstrand[Red](.5,.5)(.5,1)
\DGCstrand[red](.5,1)(0,1.5)
\DGCstrand[red](.5,1)(1,1.5)
\DGCdot{E}[l]{$ $}
\end{DGCpicture}
\ ,
\end{equation}
\begin{equation}
\label{spantrick2}
\begin{DGCpicture}
\DGCstrand(.5,0)(1,.75)(.5,1.5)
\DGCdot{E}[l]{$ $}
\DGCstrand[red](0,0)(.5,.5)[]
\DGCstrand[red](1,0)(.5,.5)[]
\DGCstrand[Red](.5,.5)(.5,1)
\DGCstrand[red](.5,1)(0,1.5)
\DGCstrand[red](.5,1)(1,1.5)
\end{DGCpicture}
~-~
\begin{DGCpicture}
\DGCstrand(.5,0)(1,.75)(.5,1.5)
\DGCstrand[red](0,0)(.5,.5)[]
\DGCstrand[red](1,0)(.5,.5)[]
\DGCstrand[Red](.5,.5)(.5,1)
\DGCstrand[red](.5,1)(0,1.5)
\DGCdot{E}[l]{$ $}
\DGCstrand[red](.5,1)(1,1.5)
\end{DGCpicture}
~=~
\begin{DGCpicture}
\DGCstrand(.5,0)(0,.75)(.5,1.5)
\DGCdot{B}[l]{$ $}
\DGCstrand[red](0,0)(.5,.5)[]
\DGCstrand[red](1,0)(.5,.5)[]
\DGCstrand[Red](.5,.5)(.5,1)
\DGCstrand[red](.5,1)(0,1.5)
\DGCstrand[red](.5,1)(1,1.5)
\end{DGCpicture}
~-~
\begin{DGCpicture}
\DGCstrand(.5,0)(0,.75)(.5,1.5)
\DGCstrand[red](0,0)(.5,.5)[]
\DGCstrand[red](1,0)(.5,.5)[]
\DGCdot{B}[l]{$ $}
\DGCstrand[Red](.5,.5)(.5,1)
\DGCstrand[red](.5,1)(0,1.5)
\DGCstrand[red](.5,1)(1,1.5)
\end{DGCpicture}
\ .
\end{equation}
Subtracting equation \eqref{spantrick2} from equation \eqref{spantrick1} yields
\begin{equation}
\label{spantrick3}
\begin{DGCpicture}
\DGCstrand(.5,0)(1,.75)(.5,1.5)
\DGCstrand[red](0,0)(.5,.5)[]
\DGCstrand[red](1,0)(.5,.5)[]
\DGCstrand[Red](.5,.5)(.5,1)
\DGCstrand[red](.5,1)(0,1.5)
\DGCdot{E}[l]{$ $}
\DGCstrand[red](.5,1)(1,1.5)
\end{DGCpicture}
~-~
\begin{DGCpicture}
\DGCstrand(.5,0)(1,.75)(.5,1.5)
\DGCstrand[red](0,0)(.5,.5)[]
\DGCdot{B}[l]{$ $}
\DGCstrand[red](1,0)(.5,.5)[]
\DGCstrand[Red](.5,.5)(.5,1)
\DGCstrand[red](.5,1)(0,1.5)
\DGCstrand[red](.5,1)(1,1.5)
\end{DGCpicture}
~=~
\begin{DGCpicture}
\DGCstrand(.5,0)(0,.75)(.5,1.5)
\DGCstrand[red](0,0)(.5,.5)[]
\DGCstrand[red](1,0)(.5,.5)[]
\DGCdot{B}[l]{$ $}
\DGCstrand[Red](.5,.5)(.5,1)
\DGCstrand[red](.5,1)(0,1.5)
\DGCstrand[red](.5,1)(1,1.5)
\end{DGCpicture}
~-~
\begin{DGCpicture}
\DGCstrand(.5,0)(0,.75)(.5,1.5)
\DGCstrand[red](0,0)(.5,.5)[]
\DGCstrand[red](1,0)(.5,.5)[]
\DGCstrand[Red](.5,.5)(.5,1)
\DGCstrand[red](.5,1)(0,1.5)
\DGCstrand[red](.5,1)(1,1.5)
\DGCdot{E}[l]{$ $}
\end{DGCpicture}
\ .
\end{equation}
Using symmetric function relations on \eqref{spantrick3} we get
\begin{equation*}
\begin{DGCpicture}
\DGCstrand(.5,0)(1,.75)(.5,1.5)
\DGCstrand[red](0,0)(.5,.5)[]
\DGCdot{B}[l]{$ $}
\DGCstrand[red](1,0)(.5,.5)[]
\DGCstrand[Red](.5,.5)(.5,1)
\DGCstrand[red](.5,1)(0,1.5)
\DGCstrand[red](.5,1)(1,1.5)
\end{DGCpicture}
~=~
\begin{DGCpicture}
\DGCstrand(.5,0)(1,.75)(.5,1.5)
\DGCstrand[red](0,0)(.5,.5)[]
\DGCstrand[red](1,0)(.5,.5)[]
\DGCstrand[Red](.5,.5)(.5,1)
\DGCstrand[red](.5,1)(0,1.5)
\DGCdot{E}[l]{$ $}
\DGCstrand[red](.5,1)(1,1.5)
\end{DGCpicture}
~+~
\begin{DGCpicture}
\DGCstrand(.5,0)(0,.75)(.5,1.5)
\DGCstrand[red](0,0)(.5,.5)[]
\DGCdot{B}[l]{$ $}
\DGCstrand[red](1,0)(.5,.5)[]
\DGCstrand[Red](.5,.5)(.5,1)
\DGCstrand[red](.5,1)(0,1.5)
\DGCstrand[red](.5,1)(1,1.5)
\end{DGCpicture}
~-~
\begin{DGCpicture}
\DGCstrand(.5,0)(0,.75)(.5,1.5)
\DGCstrand[red](0,0)(.5,.5)[]
\DGCstrand[red](1,0)(.5,.5)[]
\DGCstrand[Red](.5,.5)(.5,1)
\DGCstrand[red](.5,1)(0,1.5)
\DGCdot{E}[l]{$ $}
\DGCstrand[red](.5,1)(1,1.5)
\end{DGCpicture}
\ .
\end{equation*}
Thus a diagram with a portion as in \eqref{swringrightdotleft} is in the span.

Consider a portion of a diagram of the form
\begin{equation}
\begin{DGCpicture}
\DGCstrand(.5,0)(1,.75)(.5,1.5)
\DGCdot{B}[u]{$ $}
\DGCstrand[red](0,0)(.5,.5)[]
%\DGCdot{B}[l]{$c$}
\DGCstrand[red](1,0)(.5,.5)[]
\DGCstrand[Red](.5,.5)(.5,1)
\DGCstrand[red](.5,1)(0,1.5)
\DGCstrand[red](.5,1)(1,1.5)
\end{DGCpicture}
\ .
\end{equation}

Then using relations \eqref{R2rel}, we have
\begin{equation*}
\begin{DGCpicture}
\DGCstrand(.5,0)(1,.75)(.5,1.5)
\DGCdot{B}[u]{$ $}
\DGCstrand[red](0,0)(.5,.5)[]
%\DGCdot{B}[l]{$c$}
\DGCstrand[red](1,0)(.5,.5)[]
\DGCstrand[Red](.5,.5)(.5,1)
\DGCstrand[red](.5,1)(0,1.5)
\DGCstrand[red](.5,1)(1,1.5)
\end{DGCpicture}
~=~
\begin{DGCpicture}
\DGCstrand(.5,0)(1,.75)(.5,1.5)
\DGCstrand[red](0,0)(.5,.5)[]
\DGCdot{B}[l]{$ $}
\DGCstrand[red](1,0)(.5,.5)[]
\DGCstrand[Red](.5,.5)(.5,1)
\DGCstrand[red](.5,1)(0,1.5)
\DGCstrand[red](.5,1)(1,1.5)
\end{DGCpicture}
~+~
\begin{DGCpicture}
\DGCstrand(.5,0)(0,.75)(.5,1.5)
\DGCdot{E}[u]{$ $}
\DGCstrand[red](0,0)(.5,.5)[]
%\DGCdot{B}[l]{$c$}
\DGCstrand[red](1,0)(.5,.5)[]
\DGCstrand[Red](.5,.5)(.5,1)
\DGCstrand[red](.5,1)(0,1.5)
\DGCstrand[red](.5,1)(1,1.5)
%\DGCdot{E}[u]{$ $}
\end{DGCpicture}
~-~
\begin{DGCpicture}
\DGCstrand(.5,0)(0,.75)(.5,1.5)
%\DGCdot{E}[u]{$ $}
\DGCstrand[red](0,0)(.5,.5)[]
%\DGCdot{B}[l]{$c$}
\DGCstrand[red](1,0)(.5,.5)[]
\DGCstrand[Red](.5,.5)(.5,1)
\DGCstrand[red](.5,1)(0,1.5)
\DGCstrand[red](.5,1)(1,1.5)
\DGCdot{E}[u]{$ $}
\end{DGCpicture}
\end{equation*}
which shows that a picture (4.29) is in the span of the proposed spanning set since we already showed the first term above is already in the proposed spanning set.
\end{proof}

In order to prove that the elements in Proposition \ref{W_ispan} are actually a basis of $W_i$, we will construct a $(W(n,1),W(n,1))$-bimodule homomorphism 
$ \phi_i \colon W_i \rightarrow  \Hom_{\Bbbk}(V_n,V_n)$.

Consider the divided difference operator
\[
D_i \colon \Bbbk[y_j][x_1, \ldots , x_n] \rightarrow  \Bbbk[y_j][x_1, \ldots, x_n]
\quad \quad \quad
f \mapsto \frac{f - f^{s_i}}{x_i - x_{i+1}}
\]
where $f^{s_i}$ is the polynomial obtained from $f$ by exchanging the variables $x_i $ and $x_{i+1}$ and keeping all other variables the same.

\begin{prop} \label{propphi}
There is a bimodule homomorphism $ \phi_i \colon W_i \rightarrow  \Hom_{\Bbbk}(V_n,V_n)$ inherited from the representation of $W(n,1)$ on $V_n$ determined by
\begin{equation*}
\begin{DGCpicture}
\DGCstrand[red](-1,0)(-1,1.5)[$^{1}$`{\ }]
%\DGCdot{E}[u]{$a_1$}
\DGCcoupon*(-.75,0.5)(-.25,1){$\cdots$}
%\DGCstrand(.5,0)(0,.75)(.5,1.5)
%\DGCdot{E}[u]{$b$}
\DGCstrand[red](0,0)(.5,.5)[$^{i}$`{\ }]
%\DGCdot{B}[l]{$c$}
\DGCstrand[red](1,0)(.5,.5)[$^{i+1}$`{\ }]
\DGCstrand[Red](.5,.5)(.5,1)
\DGCstrand[red](.5,1)(0,1.5)
%\DGCdot{E}[u]{$a_i$}
\DGCstrand[red](.5,1)(1,1.5)
%\DGCdot{E}[u]{$a_{i+1}$}
\DGCcoupon*(1.25,0.5)(1.75,1){$\cdots$}
\DGCstrand[red](2,0)(2,1.5)[$^{n}$`{\ }]
%\DGCdot{E}[u]{$a_n$}
\end{DGCpicture}
~\colon~
f \in V_{n} 
~\mapsto
 D_i(f) \in V_{n} 
\ .
\end{equation*}
\end{prop}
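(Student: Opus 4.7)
The plan is to construct $\phi_i$ by prescribing its value on a generator of $W_i$ and extending by the bimodule structure, then verifying well-definedness. Since $W_i = W\hat{e}_i \otimes_{W((1^n)_{i,2},1)} \hat{e}_i W$ is generated as a $(W,W)$-bimodule by the element $\hat{e}_i \otimes \hat{e}_i = \sum_{j\neq i} e_j \otimes e_j$ — which is precisely what the thick splitter-merger diagram in the statement depicts — any bimodule map $\phi_i\colon W_i \to \End_\Bbbk(V_n)$ is determined by its value on this generator. I would declare
\[
\phi_i\!\left(\sum_{j\neq i} e_j\otimes e_j\right) := D_i
\]
and extend on simple tensors by $\phi_i(w_1\otimes w_2)(v) := \rho(w_1)\bigl(D_i(\rho(w_2)(v))\bigr)$, where $\rho\colon W(n,1)\to \End_\Bbbk(V_n)$ is the representation of Proposition \ref{repprop}. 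The bimodule property is then immediate from associativity.

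The substantive step is to show this is well-defined on the tensor product over $W((1^n)_{i,2},1)$. Writing $W':=W((1^n)_{i,2},1)$, this reduces to checking that $D_i$ commutes with $\rho(\Phi(b))$ for every generator $b$ of $W'$. I would verify this case by case: for $x_j$ with $j\neq i$, $\Phi(x_j)\in\{x_j, x_{j+1}\}$ is disjoint from $\{x_i, x_{i+1}\}$; the generator $y$ maps to $y$, which $D_i$ ignores; the symmetric generators $E(1)=\Phi^{-1}(x_i+x_{i+1})$ and $E(2)=\Phi^{-1}(x_i x_{i+1})$ land in $\Bbbk[x_i,x_{i+1}]^{s_i}$, which commutes with $D_i$ by the twisted Leibniz rule $D_i(fg) = D_i(f)g + f^{s_i}D_i(g)$; and the $\psi_j$ for $j\neq i$ involve thin crossings and polynomial factors $(y_{j\pm 1}-x_{j\pm 1})$ that never interfere with $x_i, x_{i+1}$.

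The crucial and longest check is for $\psi_i\in W'$, where $\Phi(\psi_i) = \psi_i\psi_{i+1}e_{i+1} + \psi_{i+1}\psi_i e_{i-1}$. Composing the shift formulas from Proposition \ref{repprop} shows that the first summand acts on $V_{n,i+1}$ as $f(\mathbf{x},y_{i+1})\mapsto f(\mathbf{x},y_{i-1})$ (a substitution of $y$'s, disjoint from $D_i$), and the second acts on $V_{n,i-1}$ as $f(\mathbf{x},y_{i-1})\mapsto (y_{i+1}-x_{i+1})(x_i-y_{i+1})f(\mathbf{x},y_{i+1})$. The polynomial factor $(y_{i+1}-x_{i+1})(x_i-y_{i+1})$ is symmetric in $x_i,x_{i+1}$, so by the twisted Leibniz rule it passes freely through $D_i$; combined with the fact that the $y$-substitution commutes with $D_i$ (different variables), this yields $D_i\circ\rho(\Phi(\psi_i)) = \rho(\Phi(\psi_i))\circ D_i$.

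Once commutativity is established, the balancing holds: for $b\in W'$, $\rho(w_1\Phi(b))\, D_i\, \rho(w_2) = \rho(w_1)\rho(\Phi(b))D_i\rho(w_2) = \rho(w_1)D_i\rho(\Phi(b)w_2)$, so $\phi_i$ descends to $W_i$. The formula in the statement is then $\phi_i$ applied to the generating element, which equals $D_i$ by definition. The main obstacle is the verification in the $\psi_i$ case above; the key observation that unlocks it is the symmetry of the factor $(y_{i+1}-x_{i+1})(x_i-y_{i+1})$ under $s_i$, which is a direct consequence of the specific normalization chosen in Proposition \ref{repprop}.
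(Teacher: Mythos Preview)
Your argument is correct and is exactly the ``routine check'' the paper alludes to: well-definedness of $\phi_i$ reduces to showing $D_i$ commutes with the image of $W((1^n)_{i,2},1)$ in $\End_\Bbbk(V_n)$, and you have identified the only nontrivial case ($\psi_i$) and the reason it works (symmetry of the resulting polynomial factor in $x_i,x_{i+1}$). One small correction: the factor you obtain from $\psi_{i+1}\psi_i e_{i-1}$ should be $(y_{i+1}-x_{i+1})(y_{i+1}-x_i)$ rather than $(y_{i+1}-x_{i+1})(x_i-y_{i+1})$, but this differs only by a sign and is still $s_i$-symmetric, so your conclusion is unaffected.
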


\begin{proof}
It is a routine check using the fact that symmetric functions are invariant under the action of a divided difference operator.
\end{proof}

\begin{prop}
\label{basis1}
The spanning set in Proposition \ref{W_ispan} is a basis of $W_i$.
\end{prop}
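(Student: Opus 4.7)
Since Proposition~\ref{W_ispan} already establishes spanning, the plan is to prove linear independence by showing that the images of the proposed basis elements under the bimodule homomorphism $\phi_i\colon W_i\to\Hom_\Bbbk(V_n,V_n)$ from the preceding proposition are $\Bbbk$-linearly independent operators; this simultaneously confirms injectivity of $\phi_i$ on the span.

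The first step is to observe that each spanning element has a unique right-most idempotent $e_j$ (determined by the starting position of the black strand) and a unique left-most idempotent $e_\ell$ (determined by its ending position). Because $e_j$ acts on $V_n$ as the projection onto $V_{n,j}$, each $\phi_i$-image restricts to a map $V_{n,j}\to V_{n,\ell}$, so independence reduces to a block-by-block statement in $\Hom_\Bbbk(V_{n,j},V_{n,\ell})$. For every block $(j,\ell)\neq(i,i)$ only the family $\aleph_3({\bf a},b,c,j,\ell)$ is present. Using the bimodule formula $\phi_i(u\otimes v)(f)=u\cdot D_i(v\cdot f)$, the explicit $\psi_m$-action of Proposition~\ref{repprop}, and the Leibniz rule $D_i(gh)=D_i(g)h+g^{s_i}D_i(h)$ for the divided difference, one writes $\phi_i(\aleph_3({\bf a},b,c,j,\ell))(f)$ as the product of the monomial $\prod_k x_k^{a_k}y_\ell^b$ with an $({\bf a},b)$-independent polynomial in $x_1,\dots,x_n,y_\ell$ that depends only on $c$ and $f$. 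Evaluating on test polynomials $f=x_i^N$ for $N\in\{0,1\}$ and extracting coefficients of the distinct monomials $\prod_k x_k^{a_k}y_\ell^b$ in $\Bbbk[x_1,\dots,x_n,y_\ell]$ then separates all parameters: the monomial decomposition isolates $({\bf a},b)$, while the values $D_i(x_i^c)\in\{0,1\}$ separate $c\in\{0,1\}$.

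The critical case is the block $(i,i)$, where both $\aleph_1({\bf a},b,c)$ and $\aleph_2({\bf a})$ contribute. Direct computation using the Leibniz identity yields
\begin{align*}
\phi_i(\aleph_1({\bf a},b,c))(f) &= \prod_{k=1}^n x_k^{a_k}\cdot y_i^b\cdot(y_i-x_i)\cdot [D_i(x_i^c f)](x,y_i),\\
\phi_i(\aleph_2({\bf a}))(f) &= \prod_{k=1}^n x_k^{a_k}\cdot\bigl[f(x,y_i)+(y_i-x_i)(D_if)(x,y_i)\bigr].
\end{align*}
Evaluated at $f=1$, $\phi_i(\aleph_2({\bf a}))(1)=\prod_k x_k^{a_k}$ contains no $y_i$, whereas $\phi_i(\aleph_1({\bf a},b,1))(1)$ contributes the monomial $\prod_k x_k^{a_k}y_i^{b+1}$ and $\phi_i(\aleph_1({\bf a},b,0))(1)=0$; a second evaluation on $f=x_i$ then separates the $c=0$ subfamily of $\aleph_1$ from $\aleph_2$ via the $y_i^b(y_i-x_i)$ prefactor. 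Linear independence across distinct $({\bf a},b)$ within each subfamily follows from the linear independence of the monomials $\prod_k x_k^{a_k}y_i^b$ in the polynomial ring. I expect the main obstacle to be precisely this $(i,i)$-block disentanglement of $\aleph_1$ and $\aleph_2$; the remaining blocks reduce to routine polynomial-independence bookkeeping.
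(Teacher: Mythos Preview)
Your approach is correct and coincides with the paper's: both prove linear independence by pushing a putative dependence relation through the bimodule homomorphism $\phi_i\colon W_i\to\Hom_\Bbbk(V_n,V_n)$, reducing first to fixed idempotent blocks and then evaluating on test polynomials in $V_n$. Your treatment is in fact more explicit than the paper's---you write out the formulas for $\phi_i(\aleph_1)$ and $\phi_i(\aleph_2)$ and spell out the second evaluation at $f=x_i$ that separates the $c=0$ subfamily of $\aleph_1$, whereas the paper compresses this into the phrase ``and thus $k^1_{a_1,a_2,b}=0$''.
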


\begin{proof}
Clearly a collection of vectors in this bimodule could be linearly dependent only if the configuration of boundary points are the same.  We will focus on the most interesting case.  For simplicity, assume there is a dependence relation of the form
\begin{equation}
\label{depend1}
k^1_{a_1,a_2,b}
\begin{DGCpicture}
\DGCstrand(.5,0)(0,.75)(.5,1.5)
\DGCdot{E}[u]{$b$}
\DGCstrand[red](0,0)(.5,.5)[$^{}$`{\ }]
%\DGCdot{B}[l]{$c$}
\DGCstrand[red](1,0)(.5,.5)[$^{}$`{\ }]
\DGCstrand[Red](.5,.5)(.5,1)
\DGCstrand[red](.5,1)(0,1.5)
\DGCdot{E}[u]{$a_1$}
\DGCstrand[red](.5,1)(1,1.5)
\DGCdot{E}[u]{$a_{2}$}
\end{DGCpicture}
~+~
k^2_{a_1,a_2,b}
\begin{DGCpicture}
\DGCstrand(.5,0)(0,.75)(.5,1.5)
\DGCdot{E}[u]{$b$}
\DGCstrand[red](0,0)(.5,.5)[$^{}$`{\ }]
\DGCdot{B}[l]{$ $}
\DGCstrand[red](1,0)(.5,.5)[$^{}$`{\ }]
\DGCstrand[Red](.5,.5)(.5,1)
\DGCstrand[red](.5,1)(0,1.5)
\DGCdot{E}[u]{$a_1$}
\DGCstrand[red](.5,1)(1,1.5)
\DGCdot{E}[u]{$a_{2}$}
\end{DGCpicture}
~+~
k^3_{a_1,a_2}
\begin{DGCpicture}
\DGCstrand(.5,0)(1,.75)(.5,1.5)
%\DGCdot{E}[u]{$b$}
\DGCstrand[red](0,0)(.5,.5)[$^{}$`{\ }]
%\DGCdot{B}[l]{$ $}
\DGCstrand[red](1,0)(.5,.5)[$^{}$`{\ }]
\DGCstrand[Red](.5,.5)(.5,1)
\DGCstrand[red](.5,1)(0,1.5)
\DGCdot{E}[u]{$a_1$}
\DGCstrand[red](.5,1)(1,1.5)
\DGCdot{E}[u]{$a_{2}$}
\end{DGCpicture}
~=~
0
\ .
\end{equation}
Applying the bimodule homomorphism $\phi_1$ from above on the element $1 $ yields
\begin{equation*}
k^2_{a_1,a_2,b} (y_1-x_1) x_1^{a_1} x_2^{a_2} y_1^b
+
k^3_{a_1,a_2} x_1^{a_1} x_2^{a_2} 
=
0
\ .
\end{equation*}
This implies that $k^2_{a_1,a_2,b}=k^3_{a_1,a_2}=0$ and thus 
$k^1_{a_1,a_2,b}=0$.  Therefore the elements in \eqref{depend1} are in fact linearly independent.  

Checking linear independence for the other elements in the spanning set proceeds in a similar manner.
\end{proof}

\begin{prop}
\label{W_ii+1span}
The following elements span $W_{i,i+1}$
\begin{equation*}
\beth_1({\bf a},b,c_i, c_{i+1})
~=~
\begin{DGCpicture}
\DGCstrand[red](-1,0)(-1,1.5)[$^{}$`{\ }]
\DGCdot{E}[u]{$a_1$}
\DGCcoupon*(-.75,0.5)(-.25,1){$\cdots$}
\DGCstrand(.25,0)(0,.75)(.25,1.5)
\DGCdot{.75}[ul]{$b$}
\DGCstrand[red](0,0)(.5,.5)[$^{}$`{\ }]
\DGCdot{B}[d]{$c_i$}
\DGCstrand[red](1,0)(.5,.5)[$^{}$`{\ }]
\DGCstrand[Red](.5,.5)(.5,1)
\DGCstrand[red](.5,0)(.5,1.5)[$^{}$`{\ }]
\DGCdot{B}[d]{$c_{i+1}$}
\DGCdot{E}[u]{$a_{i+1}$}
\DGCstrand[red](.5,1)(0,1.5)
\DGCdot{E}[ul]{$a_i$}
\DGCstrand[red](.5,1)(1,1.5)
\DGCdot{E}[ur]{$a_{i+2}$}
\DGCcoupon*(1.25,0.5)(1.75,1){$\cdots$}
\DGCstrand[red](2,0)(2,1.5)[$^{}$`{\ }]
\DGCdot{E}[u]{$a_n$}
\end{DGCpicture}
\quad \quad 
b \geq 0, c_i=0,1,2, c_{i+1}=0,1 ,
\end{equation*}

\begin{equation*}
\beth_2({\bf a},c_i, c_{i+1})
~=~
\begin{DGCpicture}
\DGCstrand[red](-1,0)(-1,1.5)[$^{}$`{\ }]
\DGCdot{E}[u]{$a_1$}
\DGCcoupon*(-.75,0.5)(-.25,1){$\cdots$}
\DGCstrand(.25,0)(.75,.4)(.75,.75)(.75,1.1)(.25,1.5)
%\DGCdot{.75}[ul]{$b$}
\DGCstrand[red](0,0)(.5,.5)[$^{}$`{\ }]
\DGCdot{B}[d]{$c_i$}
\DGCstrand[red](1,0)(.5,.5)[$^{}$`{\ }]
\DGCstrand[Red](.5,.5)(.5,1)
\DGCstrand[red](.5,0)(.5,1.5)[$^{}$`{\ }]
\DGCdot{B}[d]{$c_{i+1}$}
\DGCdot{E}[u]{$a_{i+1}$}
\DGCstrand[red](.5,1)(0,1.5)
\DGCdot{E}[ul]{$a_i$}
\DGCstrand[red](.5,1)(1,1.5)
\DGCdot{E}[ur]{$a_{i+2}$}
\DGCcoupon*(1.25,0.5)(1.75,1){$\cdots$}
\DGCstrand[red](2,0)(2,1.5)[$^{}$`{\ }]
\DGCdot{E}[u]{$a_n$}
\end{DGCpicture}
\quad \quad 
c_i=0, c_{i+1}=0,1 ,
\end{equation*}

\begin{equation*}
\beth_3({\bf a},b,c_i, c_{i+1})
~=~
\begin{DGCpicture}
\DGCstrand[red](-1,0)(-1,1.5)[$^{}$`{\ }]
\DGCdot{E}[u]{$a_1$}
\DGCcoupon*(-.75,0.5)(-.25,1){$\cdots$}
\DGCstrand(.75,0)(1,.75)(.75,1.5)
\DGCdot{.75}[l]{$b$}
\DGCstrand[red](0,0)(.5,.5)[$^{}$`{\ }]
\DGCdot{B}[d]{$c_i$}
\DGCstrand[red](1,0)(.5,.5)[$^{}$`{\ }]
\DGCstrand[Red](.5,.5)(.5,1)
\DGCstrand[red](.5,0)(.5,1.5)[$^{}$`{\ }]
\DGCdot{B}[d]{$c_{i+1}$}
\DGCdot{E}[u]{$a_{i+1}$}
\DGCstrand[red](.5,1)(0,1.5)
\DGCdot{E}[ul]{$a_i$}
\DGCstrand[red](.5,1)(1,1.5)
\DGCdot{E}[ur]{$a_{i+2}$}
\DGCcoupon*(1.25,0.5)(1.75,1){$\cdots$}
\DGCstrand[red](2,0)(2,1.5)[$^{}$`{\ }]
\DGCdot{E}[u]{$a_n$}
\end{DGCpicture}
\quad \quad 
b \geq 0, c_i=0,1,2, c_{i+1}=0,1 ,
\end{equation*}

\begin{equation*}
\beth_4({\bf a},c_i, c_{i+1})
~=~
\begin{DGCpicture}
\DGCstrand[red](-1,0)(-1,1.5)[$^{}$`{\ }]
\DGCdot{E}[u]{$a_1$}
\DGCcoupon*(-.75,0.5)(-.25,1){$\cdots$}
\DGCstrand(.75,0)(.25,.4)(.25,.75)(.25,1.1)(.75,1.5)
%\DGCdot{.75}[ul]{$b$}
\DGCstrand[red](0,0)(.5,.5)[$^{}$`{\ }]
\DGCdot{B}[d]{$c_i$}
\DGCstrand[red](1,0)(.5,.5)[$^{}$`{\ }]
\DGCstrand[Red](.5,.5)(.5,1)
\DGCstrand[red](.5,0)(.5,1.5)[$^{}$`{\ }]
\DGCdot{B}[d]{$c_{i+1}$}
\DGCdot{E}[u]{$a_{i+1}$}
\DGCstrand[red](.5,1)(0,1.5)
\DGCdot{E}[ul]{$a_i$}
\DGCstrand[red](.5,1)(1,1.5)
\DGCdot{E}[ur]{$a_{i+2}$}
\DGCcoupon*(1.25,0.5)(1.75,1){$\cdots$}
\DGCstrand[red](2,0)(2,1.5)[$^{}$`{\ }]
\DGCdot{E}[u]{$a_n$}
\end{DGCpicture}
\quad \quad 
c_i=0, c_{i+1}=0,1 ,
\end{equation*}

\begin{equation*}
\beth_5({\bf a},c_i, c_{i+1})
~=~
\begin{DGCpicture}
\DGCstrand[red](-1,0)(-1,1.5)[$^{}$`{\ }]
\DGCdot{E}[u]{$a_1$}
\DGCcoupon*(-.75,0.5)(-.25,1){$\cdots$}
\DGCstrand(.25,0)(.25,1.25)(.75,1.5)
\DGCstrand[red](0,0)(.5,.5)[$^{}$`{\ }]
\DGCdot{B}[d]{$c_i$}
\DGCstrand[red](1,0)(.5,.5)[$^{}$`{\ }]
\DGCstrand[Red](.5,.5)(.5,1)
\DGCstrand[red](.5,0)(.5,1.5)[$^{}$`{\ }]
\DGCdot{B}[d]{$c_{i+1}$}
\DGCdot{E}[u]{$a_{i+1}$}
\DGCstrand[red](.5,1)(0,1.5)
\DGCdot{E}[ul]{$a_i$}
\DGCstrand[red](.5,1)(1,1.5)
\DGCdot{E}[ur]{$a_{i+2}$}
\DGCcoupon*(1.25,0.5)(1.75,1){$\cdots$}
\DGCstrand[red](2,0)(2,1.5)[$^{}$`{\ }]
\DGCdot{E}[u]{$a_n$}
\end{DGCpicture}
\quad \quad 
(c_i, c_{i+1})=(0,0), (1,0), (0,1), (1,1) ,
\end{equation*}

\begin{equation*}
\beth_6({\bf a},b,c_i, c_{i+1})
~=~
\begin{DGCpicture}
\DGCstrand[red](-1,0)(-1,1.5)[$^{}$`{\ }]
\DGCdot{E}[u]{$a_1$}
\DGCcoupon*(-.75,0.5)(-.25,1){$\cdots$}
\DGCstrand(.25,0)(.75,.25)(.75,1.5)
\DGCdot{.75}[ur]{$b$}
\DGCstrand[red](0,0)(.5,.5)[$^{}$`{\ }]
\DGCdot{B}[d]{$c_i$}
\DGCstrand[red](1,0)(.5,.5)[$^{}$`{\ }]
\DGCstrand[Red](.5,.5)(.5,1)
\DGCstrand[red](.5,0)(.5,1.5)[$^{}$`{\ }]
\DGCdot{B}[d]{$c_{i+1}$}
\DGCdot{E}[u]{$a_{i+1}$}
\DGCstrand[red](.5,1)(0,1.5)
\DGCdot{E}[ul]{$a_i$}
\DGCstrand[red](.5,1)(1,1.5)
\DGCdot{E}[ur]{$a_{i+2}$}
\DGCcoupon*(1.25,0.5)(1.75,1){$\cdots$}
\DGCstrand[red](2,0)(2,1.5)[$^{}$`{\ }]
\DGCdot{E}[u]{$a_n$}
\end{DGCpicture}
\quad \quad
b \geq 0, c_i=0,1,2, c_{i+1}=0,1 ,
\end{equation*}

\begin{equation*}
\beth_7({\bf a},b,c_i, c_{i+1})
~=~
\begin{DGCpicture}
\DGCstrand[red](-1,0)(-1,1.5)[$^{}$`{\ }]
\DGCdot{E}[u]{$a_1$}
\DGCcoupon*(-.75,0.5)(-.25,1){$\cdots$}
%\DGCstrand(.75,0)(.75,1.25)(.25,1.5)
%\DGCdot{.75}[ur]{$b$}
\DGCstrand[red](0,0)(.5,.5)[$^{}$`{\ }]
\DGCdot{B}[d]{$c_i$}
\DGCstrand[red](1,0)(.5,.5)[$^{}$`{\ }]
\DGCstrand[Red](.5,.5)(.5,1)
\DGCstrand[red](.5,0)(.5,1.5)[$^{}$`{\ }]
\DGCdot{B}[d]{$c_{i+1}$}
\DGCdot{E}[u]{$a_{i+1}$}
\DGCstrand[red](.5,1)(0,1.5)
\DGCdot{E}[ul]{$a_i$}
\DGCstrand[red](.5,1)(1,1.5)
\DGCdot{E}[ur]{$a_{i+2}$}
\DGCcoupon*(1.25,0.5)(1.75,1){$\cdots$}
\DGCstrand[red](2,0)(2,1.5)[$^{}$`{\ }]
\DGCdot{E}[u]{$a_n$}
\DGCstrand(.75,0)(.25,.25)(.25,1.5)
\DGCdot{.75}[ul]{$b$}
\end{DGCpicture}
\quad \quad 
b \geq 0, c_i=0,1,2, c_{i+1}=0,1 ,
\end{equation*}

\begin{equation*}
\beth_8({\bf a},c_i, c_{i+1})
~=~
\begin{DGCpicture}
\DGCstrand[red](-1,0)(-1,1.5)[$^{}$`{\ }]
\DGCdot{E}[u]{$a_1$}
\DGCcoupon*(-.75,0.5)(-.25,1){$\cdots$}
\DGCstrand(.75,0)(.75,1.25)(.25,1.5)
%\DGCdot{.75}[ur]{$b$}
\DGCstrand[red](0,0)(.5,.5)[$^{}$`{\ }]
\DGCdot{B}[d]{$c_i$}
\DGCstrand[red](1,0)(.5,.5)[$^{}$`{\ }]
\DGCstrand[Red](.5,.5)(.5,1)
\DGCstrand[red](.5,0)(.5,1.5)[$^{}$`{\ }]
\DGCdot{B}[d]{$c_{i+1}$}
\DGCdot{E}[u]{$a_{i+1}$}
\DGCstrand[red](.5,1)(0,1.5)
\DGCdot{E}[ul]{$a_i$}
\DGCstrand[red](.5,1)(1,1.5)
\DGCdot{E}[ur]{$a_{i+2}$}
\DGCcoupon*(1.25,0.5)(1.75,1){$\cdots$}
\DGCstrand[red](2,0)(2,1.5)[$^{}$`{\ }]
\DGCdot{E}[u]{$a_n$}
\end{DGCpicture}
\quad \quad 
(c_i, c_{i+1})=(0,0), (1,0), (0,1), (2,0) ,
\end{equation*}

\begin{equation*}
\beth_9({\bf a},b,c_i, c_{i+1},j,\ell)
~=~
\begin{DGCpicture}
%\DGCcoupon*(-2.75,.5)(-2.25,1){$\cdots$}
\DGCstrand[red](-1.5,0)(-1.5,1.5)[$^{}$`{\ }]
\DGCdot{E}[u]{$a_1$}
\DGCcoupon*(-1.5,1)(0,1.5){$\cdots$}
\DGCstrand(-1,0)(-1,.25)(2,1.25)(2,1.5)
\DGCdot{E}[u]{$b$}
\DGCstrand[red](0,0)(.5,.5)[$^{}$`{\ }]
\DGCdot{B}[d]{$c_i$}
\DGCstrand[red](1,0)(.5,.5)[$^{}$`{\ }]
\DGCstrand[Red](.5,.5)(.5,1)
\DGCstrand[red](.5,0)(.5,1.5)[$^{}$`{\ }]
\DGCdot{B}[d]{$c_{i+1}$}
\DGCdot{E}[u]{$a_{i+1}$}
\DGCstrand[red](.5,1)(0,1.5)
\DGCdot{E}[ul]{$a_i$}
\DGCstrand[red](.5,1)(1,1.5)
\DGCdot{E}[ur]{$a_{i+2}$}
\DGCcoupon*(1,0)(2.5,.5){$\cdots$}
\DGCstrand[red](2.5,0)(2.5,1.5)[$^{}$`{\ }]
\DGCdot{E}[u]{$a_n$}
%\DGCcoupon*(3.25,.5)(3.75,1){$\cdots$}
\end{DGCpicture}
\quad \quad 
\begin{array}{l}b \geq 0, c_i=0,1,2, c_{i+1}=0,1,\\ 0\leq j,\ell\leq n\\
%\text{ unless }(j,\ell)=(i,i),(i,i+1),(i+1,i),(i+1,i+1) ,
\text{ unless }\{j,\ell\} \subset \{i,i+1\} ,
\end{array}
\end{equation*}
%\begin{equation*}
%X_{10}((a_i),b,c_i, c_{i+1})
%~=~
%\begin{DGCpicture}
%\DGCcoupon*(-2.75,.5)(-2.25,1){$\cdots$}
%\DGCstrand[red](-2,0)(-2,1.5)[$^{}$`{\ }]
%\DGCdot{E}[u]{$a_k$}
%\DGCcoupon*(-1.75,1)(-1.25,1.5){$\cdots$}
%\DGCstrand(3,0)(1,.75)(-1,1.5)
%\DGCdot{E}[u]{$b$}
%\DGCstrand[red](0,0)(.5,.5)[$^{}$`{\ }]
%\DGCdot{B}[d]{$c_i$}
%\DGCstrand[red](1,0)(.5,.5)[$^{}$`{\ }]
%\DGCstrand[Red](.5,.5)(.5,1)
%\DGCstrand[red](.5,0)(.5,1.5)[$^{}$`{\ }]
%\DGCdot{B}[d]{$c_{i+1}$}
%\DGCdot{E}[u]{$a_{i+1}$}
%\DGCstrand[red](.5,1)(0,1.5)
%\DGCdot{E}[ul]{$a_i$}
%\DGCstrand[red](.5,1)(1,1.5)
%\DGCdot{E}[ur]{$a_{i+2}$}
%\DGCcoupon*(1.25,0)(1.75,.5){$\cdots$}
%\DGCstrand[red](2,0)(2,1.5)[$^{}$`{\ }]
%\DGCdot{E}[u]{$a_l$}
%\DGCcoupon*(3.25,.5)(3.75,1){$\cdots$}
%\end{DGCpicture}
%\quad \quad 
%b \geq 0, c_i=0,1,2, c_{i+1}=0,1
%\end{equation*}
where the diagram in $\beth_9({\bf a},b,c_i,c_{i+1},j,\ell)$ represents any picture where the black strand begins after the $j$th red strand and ends after the $\ell$th red strand 
%outside the segments connecting red points $i$, $i+1$, and $i+2$ 
and has a minimal number of intersections with the red strands.
%where the diagram in $\beth_9({\bf a},b,c_i,c_{i+1},j,\ell)$ represents any picture where the black strand begins from the $(j+1)$-th strand or ends to the $(\ell+1)$-th strand outside the segments connecting red points $i$, $i+1$, and $i+2$ and has a minimal number of intersections with the red strands.
\end{prop}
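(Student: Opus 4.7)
The plan is to mimic the proof of Proposition \ref{W_ispan}, extending the argument to the wider thick-red configuration of $W_{i,i+1}$. Starting from an arbitrary element of $W_{i,i+1}$, which is a linear combination of elementary tensors $D_1\hat{e}_{i,i+1}\otimes \hat{e}_{i,i+1}D_2$ where $D_1, D_2$ range over the $NE$/$SE$ monomial basis of $W(n,1)$, I first invoke the commutation relations of Proposition \ref{bimod-rel3} to push everything that can be pushed to the right-hand factor of the tensor: the central generator $y$; all far-away dots $x_\ell$ and crossings $\psi_\ell$ with $\ell\notin\{i,i+1,i+2\}$; the symmetric-function relations \eqref{dot-slide-splitter3}; and the two full crossing words $\psi_{i+2}\psi_{i+1}\psi_i$ and $\psi_i\psi_{i+1}\psi_{i+2}$ from \eqref{crossing-slide-splitter3}. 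After these moves are exhausted, only dots and strictly partial crossing words involving positions $i, i+1, i+2$ remain on the left factor.

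Next, the dots on the three red strands feeding the splitter are reduced modulo the relations sliding $S_3$-symmetric functions in $x_i, x_{i+1}, x_{i+2}$ across the tensor. The key fact is that $\Bbbk[x_i, x_{i+1}, x_{i+2}]$ is free of rank $6$ over its symmetric subring, with Schubert-type basis $\{x_i^{c_i} x_{i+1}^{c_{i+1}} : 0\le c_i\le 2,\, 0\le c_{i+1}\le 1\}$; this explains the ranges for $c_i$ and $c_{i+1}$ appearing in all of the templates $\beth_1, \ldots, \beth_9$. Similarly, any residual product of $\psi$'s on the left is forced to be a strict subword of $\psi_{i+2}\psi_{i+1}\psi_i$ or $\psi_i\psi_{i+1}\psi_{i+2}$, so the only combinatorial data left to record is the trajectory of the black strand near the thick red region together with the pair $(j,\ell)$ of entry/exit positions relative to the surrounding red strands. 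Matching the resulting shapes against the nine templates is then a bookkeeping enumeration based on the relative position of the two endpoints of the black strand (either completely outside, threading through the splitter on one side, or straddling it).

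The main obstacle, exactly as in the proof of Proposition \ref{W_ispan}, will be the case where the black strand begins or ends strictly between two of the three red strands feeding the splitter. In such configurations the symmetric-function sliding identities cannot be applied directly, because the black strand blocks the dots from crossing the splitter cleanly. I expect to dispatch this case by the same add-and-subtract technique as in \eqref{spantrick1}--\eqref{spantrick3}: form pairs of diagrams differing only by the position of a single red dot, apply the thick-red symmetric identities \eqref{thicksymmetricrelations1}--\eqref{thicksymmetricrelations3} to produce a telescoping cancellation, and identify the residue with elements of the proposed spanning set. The three-strand analogue yields strictly more sub-cases (which is precisely why templates $\beth_1, \beth_2, \beth_3, \beth_4, \beth_6, \beth_7$ all appear), but the Schubert-basis bounds $c_i\le 2, c_{i+1}\le 1$ guarantee that the reduction procedure terminates after finitely many steps on each sub-case.
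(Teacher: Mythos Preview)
Your approach is essentially the same as the paper's: reduce red dots via the $S_3$-symmetric sliding relations of Proposition~\ref{bimod-rel3}, push far-away generators across the tensor, and handle the black-strand-through-the-splitter configurations by the add-and-subtract trick of \eqref{spantrick1}--\eqref{spantrick3}. The paper likewise focuses almost entirely on these last cases, carrying out explicit computations analogous to \eqref{dotreduction2}--\eqref{dotreduction7}.

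One point in your sketch is slightly off and worth correcting. You write that the Schubert-type bounds $c_i\le 2$, $c_{i+1}\le 1$ ``explain the ranges for $c_i$ and $c_{i+1}$ appearing in all of the templates $\beth_1,\ldots,\beth_9$.'' That is true for $\beth_1,\beth_3,\beth_6,\beth_7,\beth_9$, but not for $\beth_2,\beth_4,\beth_5,\beth_8$, which carry \emph{strictly tighter} restrictions (e.g.\ $c_i=0$ in $\beth_2$ and $\beth_4$; only four specific pairs in $\beth_5$ and $\beth_8$, with $(1,1)$ excluded from $\beth_8$). These further reductions are not consequences of the symmetric-function sliding alone; they come from the interaction with the black strand, and each must be established by an explicit add-and-subtract computation. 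The paper does exactly this: for instance it shows that a $\beth_2$-shaped diagram with $c_i=1$ is already a combination of $\beth_1$-type elements and a $\beth_2$ with $c_i=0$, and that the $(c_i,c_{i+1})=(1,1)$ case of $\beth_8$ lies in the span of the other templates. Your planned add-and-subtract argument will produce these identities, but you should expect to justify the tighter parameter ranges case by case rather than deduce them from the Schubert bound.
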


\begin{proof}
Once again, we highlight features of the proof that only involve cases where the black strand begins and ends in the segments connecting red boundary points connected to the thick red strands.
First note that by Lemma \ref{classical}, we could assume that the red dots appear in the following configuration
\begin{equation*}
\begin{DGCpicture}
\DGCstrand[red](-1,0)(-1,1.5)[$^{}$`{\ }]
\DGCdot{E}[u]{$a_1$}
\DGCcoupon*(-.75,0.5)(-.25,1){$\cdots$}
%\DGCstrand(.25,0)(0,.75)(.25,1.5)
%\DGCdot{.75}[ul]{$b$}
\DGCstrand[red](0,0)(.5,.5)[$^{}$`{\ }]
\DGCdot{B}[d]{$c_i$}
\DGCstrand[red](1,0)(.5,.5)[$^{}$`{\ }]
\DGCstrand[Red](.5,.5)(.5,1)
\DGCstrand[red](.5,0)(.5,1.5)[$^{}$`{\ }]
\DGCdot{B}[d]{$c_{i+1}$}
\DGCdot{E}[u]{$a_{i+1}$}
\DGCstrand[red](.5,1)(0,1.5)
\DGCdot{E}[ul]{$a_i$}
\DGCstrand[red](.5,1)(1,1.5)
\DGCdot{E}[ur]{$a_{i+2}$}
\DGCcoupon*(1.25,0.5)(1.75,1){$\cdots$}
\DGCstrand[red](2,0)(2,1.5)[$^{}$`{\ }]
\DGCdot{E}[u]{$a_n$}
\end{DGCpicture}
\ .
\end{equation*}
where all $a_i \in \mathbb{Z}_{\geq 0}$, $c_i \in \{0,1,2\}$, and $c_{i+1} \in \{0,1\}$.

Next we will explain why we could assume that in \eqref{dotreduction1}
that there are no black dots.
\begin{equation}
\label{dotreduction1}
\begin{DGCpicture}
\DGCstrand[red](-1,0)(-1,1.5)[$^{}$`{\ }]
\DGCdot{E}[u]{$a_1$}
\DGCcoupon*(-.75,0.5)(-.25,1){$\cdots$}
\DGCstrand(.25,0)(.75,.4)(.75,.75)(.75,1.1)(.25,1.5)
%\DGCdot{.75}[ul]{$b$}
\DGCstrand[red](0,0)(.5,.5)[$^{}$`{\ }]
\DGCdot{B}[d]{$c_i$}
\DGCstrand[red](1,0)(.5,.5)[$^{}$`{\ }]
\DGCstrand[Red](.5,.5)(.5,1)
\DGCstrand[red](.5,0)(.5,1.5)[$^{}$`{\ }]
\DGCdot{B}[d]{$c_{i+1}$}
\DGCdot{E}[u]{$a_{i+1}$}
\DGCstrand[red](.5,1)(0,1.5)
\DGCdot{E}[ul]{$a_i$}
\DGCstrand[red](.5,1)(1,1.5)
\DGCdot{E}[ur]{$a_{i+2}$}
\DGCcoupon*(1.25,0.5)(1.75,1){$\cdots$}
\DGCstrand[red](2,0)(2,1.5)[$^{}$`{\ }]
\DGCdot{E}[u]{$a_n$}
\end{DGCpicture}
\end{equation}
Using relations we have
\begin{equation}
\label{dotreduction2}
\begin{DGCpicture}
\DGCstrand(.25,0)(.75,.4)(.75,.75)(.75,1.1)(.25,1.5)
\DGCdot{B}[d]{$ $}
\DGCstrand[red](0,0)(.5,.5)[$^{}$`{\ }]
%\DGCdot{B}[d]{$ $}
\DGCstrand[red](1,0)(.5,.5)[$^{}$`{\ }]
\DGCstrand[Red](.5,.5)(.5,1)
\DGCstrand[red](.5,0)(.5,1.5)[$^{}$`{\ }]
%\DGCdot{B}[d]{$ $}
%\DGCdot{E}[u]{$ $}
\DGCstrand[red](.5,1)(0,1.5)
%\DGCdot{E}[ul]{$ $}
\DGCstrand[red](.5,1)(1,1.5)
%\DGCdot{E}[ur]{$ $}
\end{DGCpicture}
~=~
\begin{DGCpicture}
\DGCstrand(.25,0)(.75,.4)(.75,.75)(.75,1.1)(.25,1.5)
%\DGCdot{B}[d]{$ $}
\DGCstrand[red](0,0)(.5,.5)[$^{}$`{\ }]
\DGCdot{B}[d]{$ $}
\DGCstrand[red](1,0)(.5,.5)[$^{}$`{\ }]
\DGCstrand[Red](.5,.5)(.5,1)
\DGCstrand[red](.5,0)(.5,1.5)[$^{}$`{\ }]
%\DGCdot{B}[d]{$ $}
%\DGCdot{E}[u]{$ $}
\DGCstrand[red](.5,1)(0,1.5)
%\DGCdot{E}[ul]{$ $}
\DGCstrand[red](.5,1)(1,1.5)
%\DGCdot{E}[ur]{$ $}
\end{DGCpicture}
~+~
\begin{DGCpicture}
\DGCstrand(.25,0)(0,.75)(.25,1.5)
\DGCdot{E}[u]{$ 2 $}
\DGCstrand[red](0,0)(.5,.5)[$^{}$`{\ }]
%\DGCdot{B}[d]{$ $}
\DGCstrand[red](1,0)(.5,.5)[$^{}$`{\ }]
\DGCstrand[Red](.5,.5)(.5,1)
\DGCstrand[red](.5,0)(.5,1.5)[$^{}$`{\ }]
%\DGCdot{B}[d]{$ $}
%\DGCdot{E}[u]{$ $}
\DGCstrand[red](.5,1)(0,1.5)
%\DGCdot{E}[ul]{$ $}
\DGCstrand[red](.5,1)(1,1.5)
%\DGCdot{E}[ur]{$ $}
\end{DGCpicture}
~-~
\begin{DGCpicture}
\DGCstrand(.25,0)(0,.75)(.25,1.5)
\DGCdot{E}[u]{$ 1 $}
\DGCstrand[red](0,0)(.5,.5)[$^{}$`{\ }]
%\DGCdot{B}[d]{$ $}
\DGCstrand[red](1,0)(.5,.5)[$^{}$`{\ }]
\DGCstrand[Red](.5,.5)(.5,1)
\DGCstrand[red](.5,0)(.5,1.5)[$^{}$`{\ }]
\DGCdot{E}[u]{$ $}
\DGCstrand[red](.5,1)(0,1.5)
%\DGCdot{E}[u]{$ $}
\DGCstrand[red](.5,1)(1,1.5)
%\DGCdot{E}[ur]{$ $}
\end{DGCpicture}
~-~
\begin{DGCpicture}
\DGCstrand(.25,0)(0,.75)(.25,1.5)
\DGCdot{E}[u]{$ 1 $}
\DGCstrand[red](0,0)(.5,.5)[$^{}$`{\ }]
%\DGCdot{B}[d]{$ $}
\DGCstrand[red](1,0)(.5,.5)[$^{}$`{\ }]
\DGCstrand[Red](.5,.5)(.5,1)
\DGCstrand[red](.5,0)(.5,1.5)[$^{}$`{\ }]
%\DGCdot{E}[u]{$ $}
\DGCstrand[red](.5,1)(0,1.5)
%\DGCdot{E}[u]{$ $}
\DGCstrand[red](.5,1)(1,1.5)
\DGCdot{E}[u]{$ $}
\end{DGCpicture}
~+~
\begin{DGCpicture}
\DGCstrand(.25,0)(0,.75)(.25,1.5)
%\DGCdot{E}[u]{$ 1 $}
\DGCstrand[red](0,0)(.5,.5)[$^{}$`{\ }]
%\DGCdot{B}[d]{$ $}
\DGCstrand[red](1,0)(.5,.5)[$^{}$`{\ }]
\DGCstrand[Red](.5,.5)(.5,1)
\DGCstrand[red](.5,0)(.5,1.5)[$^{}$`{\ }]
\DGCdot{E}[u]{$ $}
\DGCstrand[red](.5,1)(0,1.5)
%\DGCdot{E}[u]{$ $}
\DGCstrand[red](.5,1)(1,1.5)
\DGCdot{E}[u]{$ $}
\end{DGCpicture}
\ ,
\end{equation}
so a black dot produces a linear combination of other elements in the proposed spanning set.

Next we will explain why we may assume that $c_i=0$ in \eqref{dotreduction1}.
As in \eqref{dotreduction2} we have a relation
\begin{equation}
\label{dotreduction3}
\begin{DGCpicture}
\DGCstrand(.25,0)(.75,.4)(.75,.75)(.75,1.1)(.25,1.5)
\DGCdot{E}[d]{$ $}
\DGCstrand[red](0,0)(.5,.5)[$^{}$`{\ }]
%\DGCdot{B}[d]{$ $}
\DGCstrand[red](1,0)(.5,.5)[$^{}$`{\ }]
\DGCstrand[Red](.5,.5)(.5,1)
\DGCstrand[red](.5,0)(.5,1.5)[$^{}$`{\ }]
%\DGCdot{B}[d]{$ $}
%\DGCdot{E}[u]{$ $}
\DGCstrand[red](.5,1)(0,1.5)
%\DGCdot{E}[ul]{$ $}
\DGCstrand[red](.5,1)(1,1.5)
%\DGCdot{E}[ur]{$ $}
\end{DGCpicture}
~=~
\begin{DGCpicture}
\DGCstrand(.25,0)(.75,.4)(.75,.75)(.75,1.1)(.25,1.5)
%\DGCdot{B}[d]{$ $}
\DGCstrand[red](0,0)(.5,.5)[$^{}$`{\ }]
%\DGCdot{B}[d]{$ $}
\DGCstrand[red](1,0)(.5,.5)[$^{}$`{\ }]
%\DGCdot{B}[u]{$ $}
\DGCstrand[Red](.5,.5)(.5,1)
\DGCstrand[red](.5,0)(.5,1.5)[$^{}$`{\ }]
%\DGCdot{B}[d]{$ $}
%\DGCdot{E}[u]{$ $}
\DGCstrand[red](.5,1)(0,1.5)
\DGCdot{E}[u]{$ $}
\DGCstrand[red](.5,1)(1,1.5)
%\DGCdot{E}[ur]{$ $}
\end{DGCpicture}
~+~
\begin{DGCpicture}
\DGCstrand(.25,0)(0,.75)(.25,1.5)
\DGCdot{B}[d]{$ 2 $}
\DGCstrand[red](0,0)(.5,.5)[$^{}$`{\ }]
%\DGCdot{B}[d]{$ $}
\DGCstrand[red](1,0)(.5,.5)[$^{}$`{\ }]
\DGCstrand[Red](.5,.5)(.5,1)
\DGCstrand[red](.5,0)(.5,1.5)[$^{}$`{\ }]
%\DGCdot{B}[d]{$ $}
%\DGCdot{E}[u]{$ $}
\DGCstrand[red](.5,1)(0,1.5)
%\DGCdot{E}[ul]{$ $}
\DGCstrand[red](.5,1)(1,1.5)
%\DGCdot{E}[ur]{$ $}
\end{DGCpicture}
~-~
\begin{DGCpicture}
\DGCstrand(.25,0)(0,.75)(.25,1.5)
\DGCdot{B}[d]{$ 1 $}
\DGCstrand[red](0,0)(.5,.5)[$^{}$`{\ }]
%\DGCdot{B}[d]{$ $}
\DGCstrand[red](1,0)(.5,.5)[$^{}$`{\ }]
\DGCstrand[Red](.5,.5)(.5,1)
\DGCstrand[red](.5,0)(.5,1.5)[$^{}$`{\ }]
\DGCdot{B}[d]{$ $}
\DGCstrand[red](.5,1)(0,1.5)
%\DGCdot{E}[u]{$ $}
\DGCstrand[red](.5,1)(1,1.5)
%\DGCdot{E}[ur]{$ $}
\end{DGCpicture}
~-~
\begin{DGCpicture}
\DGCstrand(.25,0)(0,.75)(.25,1.5)
\DGCdot{B}[d]{$ 1 $}
\DGCstrand[red](0,0)(.5,.5)[$^{}$`{\ }]
%\DGCdot{B}[d]{$ $}
\DGCstrand[red](1,0)(.5,.5)[$^{}$`{\ }]
\DGCdot{B}[d]{$ $}
\DGCstrand[Red](.5,.5)(.5,1)
\DGCstrand[red](.5,0)(.5,1.5)[$^{}$`{\ }]
%\DGCdot{B}[d]{$ $}
\DGCstrand[red](.5,1)(0,1.5)
%\DGCdot{E}[u]{$ $}
\DGCstrand[red](.5,1)(1,1.5)
%\DGCdot{E}[u]{$ $}
\end{DGCpicture}
~+~
\begin{DGCpicture}
\DGCstrand(.25,0)(0,.75)(.25,1.5)
%\DGCdot{E}[u]{$ 1 $}
\DGCstrand[red](0,0)(.5,.5)[$^{}$`{\ }]
%\DGCdot{B}[d]{$ $}
\DGCstrand[red](1,0)(.5,.5)[$^{}$`{\ }]
\DGCdot{B}[d]{$ $}
\DGCstrand[Red](.5,.5)(.5,1)
\DGCstrand[red](.5,0)(.5,1.5)[$^{}$`{\ }]
\DGCdot{B}[d]{$ $}
\DGCstrand[red](.5,1)(0,1.5)
%\DGCdot{E}[u]{$ $}
\DGCstrand[red](.5,1)(1,1.5)
%\DGCdot{E}[u]{$ $}
\end{DGCpicture}
\ .
\end{equation}
Subtracting \eqref{dotreduction3} from \eqref{dotreduction2} yields
\begin{equation*}
\begin{DGCpicture}
\DGCstrand(.25,0)(.75,.4)(.75,.75)(.75,1.1)(.25,1.5)
%\DGCdot{B}[d]{$ $}
\DGCstrand[red](0,0)(.5,.5)[$^{}$`{\ }]
\DGCdot{B}[d]{$ $}
\DGCstrand[red](1,0)(.5,.5)[$^{}$`{\ }]
\DGCstrand[Red](.5,.5)(.5,1)
\DGCstrand[red](.5,0)(.5,1.5)[$^{}$`{\ }]
%\DGCdot{B}[d]{$ $}
%\DGCdot{E}[u]{$ $}
\DGCstrand[red](.5,1)(0,1.5)
%\DGCdot{E}[ul]{$ $}
\DGCstrand[red](.5,1)(1,1.5)
%\DGCdot{E}[ur]{$ $}
\end{DGCpicture}
~=~
\begin{DGCpicture}
\DGCstrand(.25,0)(.75,.4)(.75,.75)(.75,1.1)(.25,1.5)
%\DGCdot{B}[d]{$ $}
\DGCstrand[red](0,0)(.5,.5)[$^{}$`{\ }]
%\DGCdot{B}[d]{$ $}
\DGCstrand[red](1,0)(.5,.5)[$^{}$`{\ }]
\DGCstrand[Red](.5,.5)(.5,1)
\DGCstrand[red](.5,0)(.5,1.5)[$^{}$`{\ }]
%\DGCdot{B}[d]{$ $}
%\DGCdot{E}[u]{$ $}
\DGCstrand[red](.5,1)(0,1.5)
\DGCdot{E}[u]{$ $}
\DGCstrand[red](.5,1)(1,1.5)
%\DGCdot{E}[ur]{$ $}
\end{DGCpicture}
%%%%%%%%%%%
~+~
\begin{DGCpicture}
\DGCstrand(.25,0)(0,.75)(.25,1.5)
\DGCdot{E}[u]{$ 1 $}
\DGCstrand[red](0,0)(.5,.5)[$^{}$`{\ }]
%\DGCdot{B}[d]{$ $}
\DGCstrand[red](1,0)(.5,.5)[$^{}$`{\ }]
\DGCstrand[Red](.5,.5)(.5,1)
\DGCstrand[red](.5,0)(.5,1.5)[$^{}$`{\ }]
\DGCdot{E}[u]{$ $}
\DGCstrand[red](.5,1)(0,1.5)
%\DGCdot{E}[u]{$ $}
\DGCstrand[red](.5,1)(1,1.5)
%\DGCdot{E}[ur]{$ $}
\end{DGCpicture}
~+~
\begin{DGCpicture}
\DGCstrand(.25,0)(0,.75)(.25,1.5)
\DGCdot{E}[u]{$ 1 $}
\DGCstrand[red](0,0)(.5,.5)[$^{}$`{\ }]
%\DGCdot{B}[d]{$ $}
\DGCstrand[red](1,0)(.5,.5)[$^{}$`{\ }]
\DGCstrand[Red](.5,.5)(.5,1)
\DGCstrand[red](.5,0)(.5,1.5)[$^{}$`{\ }]
%\DGCdot{E}[u]{$ $}
\DGCstrand[red](.5,1)(0,1.5)
%\DGCdot{E}[u]{$ $}
\DGCstrand[red](.5,1)(1,1.5)
\DGCdot{E}[u]{$ $}
\end{DGCpicture}
~-~
\begin{DGCpicture}
\DGCstrand(.25,0)(0,.75)(.25,1.5)
\DGCdot{B}[d]{$ 1 $}
\DGCstrand[red](0,0)(.5,.5)[$^{}$`{\ }]
%\DGCdot{B}[d]{$ $}
\DGCstrand[red](1,0)(.5,.5)[$^{}$`{\ }]
\DGCstrand[Red](.5,.5)(.5,1)
\DGCstrand[red](.5,0)(.5,1.5)[$^{}$`{\ }]
\DGCdot{B}[d]{$ $}
\DGCstrand[red](.5,1)(0,1.5)
%\DGCdot{E}[u]{$ $}
\DGCstrand[red](.5,1)(1,1.5)
%\DGCdot{E}[ur]{$ $}
\end{DGCpicture}
~-~
\begin{DGCpicture}
\DGCstrand(.25,0)(0,.75)(.25,1.5)
\DGCdot{B}[d]{$ 1 $}
\DGCstrand[red](0,0)(.5,.5)[$^{}$`{\ }]
%\DGCdot{B}[d]{$ $}
\DGCstrand[red](1,0)(.5,.5)[$^{}$`{\ }]
\DGCdot{B}[d]{$ $}
\DGCstrand[Red](.5,.5)(.5,1)
\DGCstrand[red](.5,0)(.5,1.5)[$^{}$`{\ }]
%\DGCdot{B}[d]{$ $}
\DGCstrand[red](.5,1)(0,1.5)
%\DGCdot{E}[u]{$ $}
\DGCstrand[red](.5,1)(1,1.5)
%\DGCdot{E}[u]{$ $}
\end{DGCpicture}
~-~
\begin{DGCpicture}
\DGCstrand(.25,0)(0,.75)(.25,1.5)
%\DGCdot{E}[u]{$ 1 $}
\DGCstrand[red](0,0)(.5,.5)[$^{}$`{\ }]
%\DGCdot{B}[d]{$ $}
\DGCstrand[red](1,0)(.5,.5)[$^{}$`{\ }]
\DGCstrand[Red](.5,.5)(.5,1)
\DGCstrand[red](.5,0)(.5,1.5)[$^{}$`{\ }]
\DGCdot{E}[u]{$ $}
\DGCstrand[red](.5,1)(0,1.5)
%\DGCdot{E}[u]{$ $}
\DGCstrand[red](.5,1)(1,1.5)
\DGCdot{E}[u]{$ $}
\end{DGCpicture}
~+~
\begin{DGCpicture}
\DGCstrand(.25,0)(0,.75)(.25,1.5)
%\DGCdot{E}[u]{$ 1 $}
\DGCstrand[red](0,0)(.5,.5)[$^{}$`{\ }]
%\DGCdot{B}[d]{$ $}
\DGCstrand[red](1,0)(.5,.5)[$^{}$`{\ }]
\DGCdot{B}[d]{$ $}
\DGCstrand[Red](.5,.5)(.5,1)
\DGCstrand[red](.5,0)(.5,1.5)[$^{}$`{\ }]
\DGCdot{B}[d]{$ $}
\DGCstrand[red](.5,1)(0,1.5)
%\DGCdot{E}[u]{$ $}
\DGCstrand[red](.5,1)(1,1.5)
%\DGCdot{E}[u]{$ $}
\end{DGCpicture}
\ .
\end{equation*}
By Lemma \ref{classical}, the third to last and last diagrams above are in the proposed span, and thus
\begin{equation*}
\begin{DGCpicture}
\DGCstrand(.25,0)(.75,.4)(.75,.75)(.75,1.1)(.25,1.5)
%\DGCdot{B}[d]{$ $}
\DGCstrand[red](0,0)(.5,.5)[$^{}$`{\ }]
\DGCdot{B}[d]{$ $}
\DGCstrand[red](1,0)(.5,.5)[$^{}$`{\ }]
\DGCstrand[Red](.5,.5)(.5,1)
\DGCstrand[red](.5,0)(.5,1.5)[$^{}$`{\ }]
%\DGCdot{B}[d]{$ $}
%\DGCdot{E}[u]{$ $}
\DGCstrand[red](.5,1)(0,1.5)
%\DGCdot{E}[ul]{$ $}
\DGCstrand[red](.5,1)(1,1.5)
%\DGCdot{E}[ur]{$ $}
\end{DGCpicture}
\end{equation*}
is in the span of other elements of the proposed spanning set.

Next we will show that we could assume that there are no black dots on diagrams of the form
\begin{equation}
\label{dotreduction4}
\begin{DGCpicture}
\DGCstrand(.75,0)(.75,1.25)(.25,1.5)
%\DGCdot{.75}[ur]{$ $}
\DGCstrand[red](0,0)(.5,.5)[$^{}$`{\ }]
%\DGCdot{B}[d]{$ $}
\DGCstrand[red](1,0)(.5,.5)[$^{}$`{\ }]
\DGCstrand[Red](.5,.5)(.5,1)
\DGCstrand[red](.5,0)(.5,1.5)[$^{}$`{\ }]
%\DGCdot{B}[d]{$ $}
%\DGCdot{E}[u]{$ $}
\DGCstrand[red](.5,1)(0,1.5)
%\DGCdot{E}[ul]{$ $}
\DGCstrand[red](.5,1)(1,1.5)
%\DGCdot{E}[ur]{$ $}
\end{DGCpicture}
\ .
\end{equation}
Using bimodule relations \eqref{R2rel} and \eqref{thickblackthickred1} we have
\begin{equation}
\label{dotreduction6}
\begin{DGCpicture}
\DGCstrand(.75,0)(.75,1.25)(.25,1.5)
\DGCdot{.75}[ur]{$ $}
\DGCstrand[red](0,0)(.5,.5)[$^{}$`{\ }]
%\DGCdot{B}[d]{$ $}
\DGCstrand[red](1,0)(.5,.5)[$^{}$`{\ }]
\DGCstrand[Red](.5,.5)(.5,1)
\DGCstrand[red](.5,0)(.5,1.5)[$^{}$`{\ }]
%\DGCdot{B}[d]{$ $}
%\DGCdot{E}[u]{$ $}
\DGCstrand[red](.5,1)(0,1.5)
%\DGCdot{E}[ul]{$ $}
\DGCstrand[red](.5,1)(1,1.5)
%\DGCdot{E}[ur]{$ $}
\end{DGCpicture}
~=~
\begin{DGCpicture}
\DGCstrand(.75,0)(.75,1.25)(.25,1.5)
%\DGCdot{.75}[ur]{$ $}
\DGCstrand[red](0,0)(.5,.5)[$^{}$`{\ }]
%\DGCdot{B}[d]{$ $}
\DGCstrand[red](1,0)(.5,.5)[$^{}$`{\ }]
\DGCstrand[Red](.5,.5)(.5,1)
\DGCstrand[red](.5,0)(.5,1.5)[$^{}$`{\ }]
%\DGCdot{B}[d]{$ $}
%\DGCdot{E}[u]{$ $}
\DGCstrand[red](.5,1)(0,1.5)
\DGCdot{E}[u]{$ $}
\DGCstrand[red](.5,1)(1,1.5)
%\DGCdot{E}[ur]{$ $}
\end{DGCpicture}
~+~
\begin{DGCpicture}
\DGCstrand(.75,0)(.25,.25)(.25,1.5)
\DGCdot{.75}[ul]{$ $}
\DGCstrand[red](0,0)(.5,.5)[$^{}$`{\ }]
%\DGCdot{B}[d]{$ $}
\DGCstrand[red](1,0)(.5,.5)[$^{}$`{\ }]
\DGCstrand[Red](.5,.5)(.5,1)
\DGCstrand[red](.5,0)(.5,1.5)[$^{}$`{\ }]
%\DGCdot{B}[d]{$ $}
%\DGCdot{E}[u]{$ $}
\DGCstrand[red](.5,1)(0,1.5)
%\DGCdot{E}[ul]{$ $}
\DGCstrand[red](.5,1)(1,1.5)
%\DGCdot{E}[ur]{$ $}
\end{DGCpicture}
~-~
\begin{DGCpicture}
\DGCstrand(.75,0)(.25,.25)(.25,1.5)
%\DGCdot{.75}[ul]{$ $}
\DGCstrand[red](0,0)(.5,.5)[$^{}$`{\ }]
%\DGCdot{B}[d]{$ $}
\DGCstrand[red](1,0)(.5,.5)[$^{}$`{\ }]
\DGCdot{B}[d]{$ $}
\DGCstrand[Red](.5,.5)(.5,1)
\DGCstrand[red](.5,0)(.5,1.5)[$^{}$`{\ }]
%\DGCdot{B}[d]{$ $}
%\DGCdot{E}[u]{$ $}
\DGCstrand[red](.5,1)(0,1.5)
%\DGCdot{B}[d]{$ $}
\DGCstrand[red](.5,1)(1,1.5)
%\DGCdot{E}[ur]{$ $}
\end{DGCpicture}
\ .
\end{equation}
Thus we may assume that there are no black dots on diagrams of the form \eqref{dotreduction4}.

Next we will show that elements of the form
\begin{equation}
\label{dotreduction5}
\begin{DGCpicture}
\DGCstrand(.75,0)(.75,1.25)(.25,1.5)
%\DGCdot{.75}[ur]{$ $}
\DGCstrand[red](0,0)(.5,.5)[$^{}$`{\ }]
\DGCdot{B}[d]{$ $}
\DGCstrand[red](1,0)(.5,.5)[$^{}$`{\ }]
\DGCstrand[Red](.5,.5)(.5,1)
\DGCstrand[red](.5,0)(.5,1.5)[$^{}$`{\ }]
\DGCdot{B}[d]{$ $}
%\DGCdot{E}[u]{$ $}
\DGCstrand[red](.5,1)(0,1.5)
%\DGCdot{E}[ul]{$ $}
\DGCstrand[red](.5,1)(1,1.5)
%\DGCdot{E}[ur]{$ $}
\end{DGCpicture}
\end{equation}
are already in the span of the proposed spanning set.
Using relations \eqref{R2rel}, the left side of \eqref{dotreduction7} below could be rewritten as two double crossings of the black strand and the two bottom left red strands. Relations \eqref{thickblackthickred1}, then allow us to move the black strand up through the thick red strand.  Applying relations \eqref{R2rel} then implies \eqref{dotreduction7}.
\begin{equation}
\label{dotreduction7}
\begin{DGCpicture}
\DGCstrand(.75,0)(.75,1.25)(.25,1.5)
\DGCdot{B}[d]{$ 2 $}
\DGCstrand[red](0,0)(.5,.5)[$^{}$`{\ }]
%\DGCdot{B}[d]{$ $}
\DGCstrand[red](1,0)(.5,.5)[$^{}$`{\ }]
\DGCstrand[Red](.5,.5)(.5,1)
\DGCstrand[red](.5,0)(.5,1.5)[$^{}$`{\ }]
%\DGCdot{B}[d]{$ $}
%\DGCdot{E}[u]{$ $}
\DGCstrand[red](.5,1)(0,1.5)
%\DGCdot{E}[ul]{$ $}
\DGCstrand[red](.5,1)(1,1.5)
%\DGCdot{E}[ur]{$ $}
\end{DGCpicture}
~-~
\begin{DGCpicture}
\DGCstrand(.75,0)(.75,1.25)(.25,1.5)
\DGCdot{.75}[ur]{$ $}
\DGCstrand[red](0,0)(.5,.5)[$^{}$`{\ }]
\DGCdot{B}[d]{$ $}
\DGCstrand[red](1,0)(.5,.5)[$^{}$`{\ }]
\DGCstrand[Red](.5,.5)(.5,1)
\DGCstrand[red](.5,0)(.5,1.5)[$^{}$`{\ }]
%\DGCdot{B}[d]{$ $}
%\DGCdot{E}[u]{$ $}
\DGCstrand[red](.5,1)(0,1.5)
%\DGCdot{E}[ul]{$ $}
\DGCstrand[red](.5,1)(1,1.5)
%\DGCdot{E}[ur]{$ $}
\end{DGCpicture}
~-~
\begin{DGCpicture}
\DGCstrand(.75,0)(.75,1.25)(.25,1.5)
\DGCdot{.75}[ur]{$ $}
\DGCstrand[red](0,0)(.5,.5)[$^{}$`{\ }]
%\DGCdot{B}[d]{$ $}
\DGCstrand[red](1,0)(.5,.5)[$^{}$`{\ }]
\DGCstrand[Red](.5,.5)(.5,1)
\DGCstrand[red](.5,0)(.5,1.5)[$^{}$`{\ }]
\DGCdot{B}[d]{$ $}
%\DGCdot{E}[u]{$ $}
\DGCstrand[red](.5,1)(0,1.5)
%\DGCdot{E}[ul]{$ $}
\DGCstrand[red](.5,1)(1,1.5)
%\DGCdot{E}[ur]{$ $}
\end{DGCpicture}
~+~
\begin{DGCpicture}
\DGCstrand(.75,0)(.75,1.25)(.25,1.5)
%\DGCdot{.75}[ur]{$ $}
\DGCstrand[red](0,0)(.5,.5)[$^{}$`{\ }]
\DGCdot{B}[d]{$ $}
\DGCstrand[red](1,0)(.5,.5)[$^{}$`{\ }]
\DGCstrand[Red](.5,.5)(.5,1)
\DGCstrand[red](.5,0)(.5,1.5)[$^{}$`{\ }]
\DGCdot{B}[d]{$ $}
%\DGCdot{E}[u]{$ $}
\DGCstrand[red](.5,1)(0,1.5)
%\DGCdot{E}[ul]{$ $}
\DGCstrand[red](.5,1)(1,1.5)
%\DGCdot{E}[ur]{$ $}
\end{DGCpicture}
~=~
\begin{DGCpicture}
\DGCstrand(.75,0)(.25,.25)(.25,1.5)
\DGCdot{B}[d]{$2 $}
\DGCstrand[red](0,0)(.5,.5)[$^{}$`{\ }]
%\DGCdot{B}[d]{$ $}
\DGCstrand[red](1,0)(.5,.5)[$^{}$`{\ }]
%\DGCdot{B}[d]{$ $}
\DGCstrand[Red](.5,.5)(.5,1)
\DGCstrand[red](.5,0)(.5,1.5)[$^{}$`{\ }]
%\DGCdot{B}[d]{$ $}
%\DGCdot{E}[u]{$ $}
\DGCstrand[red](.5,1)(0,1.5)
%\DGCdot{B}[d]{$ $}
\DGCstrand[red](.5,1)(1,1.5)
%\DGCdot{E}[ur]{$ $}
\end{DGCpicture}
~-~
\begin{DGCpicture}
\DGCstrand(.75,0)(.25,.25)(.25,1.5)
\DGCdot{.75}[d]{$ $}
\DGCstrand[red](0,0)(.5,.5)[$^{}$`{\ }]
%\DGCdot{B}[d]{$ $}
\DGCstrand[red](1,0)(.5,.5)[$^{}$`{\ }]
%\DGCdot{B}[d]{$ $}
\DGCstrand[Red](.5,.5)(.5,1)
\DGCstrand[red](.5,0)(.5,1.5)[$^{}$`{\ }]
%\DGCdot{B}[d]{$ $}
\DGCdot{E}[u]{$ $}
\DGCstrand[red](.5,1)(0,1.5)
%\DGCdot{B}[d]{$ $}
\DGCstrand[red](.5,1)(1,1.5)
%\DGCdot{E}[ur]{$ $}
\end{DGCpicture}
~-~
\begin{DGCpicture}
\DGCstrand(.75,0)(.25,.25)(.25,1.5)
\DGCdot{.75}[d]{$ $}
\DGCstrand[red](0,0)(.5,.5)[$^{}$`{\ }]
%\DGCdot{B}[d]{$ $}
\DGCstrand[red](1,0)(.5,.5)[$^{}$`{\ }]
%\DGCdot{B}[d]{$ $}
\DGCstrand[Red](.5,.5)(.5,1)
\DGCstrand[red](.5,0)(.5,1.5)[$^{}$`{\ }]
%\DGCdot{B}[d]{$ $}
%\DGCdot{E}[u]{$ $}
\DGCstrand[red](.5,1)(0,1.5)
%\DGCdot{B}[d]{$ $}
\DGCstrand[red](.5,1)(1,1.5)
\DGCdot{E}[ur]{$ $}
\end{DGCpicture}
~+~
\begin{DGCpicture}
\DGCstrand(.75,0)(.25,.25)(.25,1.5)
%\DGCdot{.75}[d]{$ $}
\DGCstrand[red](0,0)(.5,.5)[$^{}$`{\ }]
%\DGCdot{B}[d]{$ $}
\DGCstrand[red](1,0)(.5,.5)[$^{}$`{\ }]
%\DGCdot{B}[d]{$ $}
\DGCstrand[Red](.5,.5)(.5,1)
\DGCstrand[red](.5,0)(.5,1.5)[$^{}$`{\ }]
%\DGCdot{B}[d]{$ $}
\DGCdot{E}[u]{$ $}
\DGCstrand[red](.5,1)(0,1.5)
%\DGCdot{B}[d]{$ $}
\DGCstrand[red](.5,1)(1,1.5)
\DGCdot{E}[ur]{$ $}
\end{DGCpicture}
\ .
\end{equation}
Using \eqref{dotreduction7} and \eqref{dotreduction6} repeatedly, we get
\begin{equation*}
\begin{DGCpicture}
\DGCstrand(.75,0)(.75,1.25)(.25,1.5)
%\DGCdot{.75}[ur]{$ $}
\DGCstrand[red](0,0)(.5,.5)[$^{}$`{\ }]
\DGCdot{B}[d]{$ $}
\DGCstrand[red](1,0)(.5,.5)[$^{}$`{\ }]
\DGCstrand[Red](.5,.5)(.5,1)
\DGCstrand[red](.5,0)(.5,1.5)[$^{}$`{\ }]
\DGCdot{B}[d]{$ $}
%\DGCdot{E}[u]{$ $}
\DGCstrand[red](.5,1)(0,1.5)
%\DGCdot{E}[ul]{$ $}
\DGCstrand[red](.5,1)(1,1.5)
%\DGCdot{E}[ur]{$ $}
\end{DGCpicture}
~=~
\begin{DGCpicture}
\DGCstrand(.75,0)(.75,1.25)(.25,1.5)
%\DGCdot{.75}[ur]{$ $}
\DGCstrand[red](0,0)(.5,.5)[$^{}$`{\ }]
\DGCdot{B}[d]{$ $}
\DGCstrand[red](1,0)(.5,.5)[$^{}$`{\ }]
%\DGCdot{B}[d]{$ $}
\DGCstrand[Red](.5,.5)(.5,1)
\DGCstrand[red](.5,0)(.5,1.5)[$^{}$`{\ }]
%\DGCdot{B}[d]{$ $}
%\DGCdot{E}[u]{$ $}
\DGCstrand[red](.5,1)(0,1.5)
\DGCdot{E}[u]{$ $}
\DGCstrand[red](.5,1)(1,1.5)
%\DGCdot{E}[ur]{$ $}
\end{DGCpicture}
~+~
\begin{DGCpicture}
\DGCstrand(.75,0)(.75,1.25)(.25,1.5)
%\DGCdot{.75}[ur]{$ $}
\DGCstrand[red](0,0)(.5,.5)[$^{}$`{\ }]
%\DGCdot{B}[d]{$ $}
\DGCstrand[red](1,0)(.5,.5)[$^{}$`{\ }]
%\DGCdot{B}[d]{$ $}
\DGCstrand[Red](.5,.5)(.5,1)
\DGCstrand[red](.5,0)(.5,1.5)[$^{}$`{\ }]
\DGCdot{B}[d]{$ $}
%\DGCdot{E}[u]{$ $}
\DGCstrand[red](.5,1)(0,1.5)
\DGCdot{E}[u]{$ $}
\DGCstrand[red](.5,1)(1,1.5)
%\DGCdot{E}[ur]{$ $}
\end{DGCpicture}
~-~
\begin{DGCpicture}
\DGCstrand(.75,0)(.75,1.25)(.25,1.5)
%\DGCdot{.75}[ur]{$ $}
\DGCstrand[red](0,0)(.5,.5)[$^{}$`{\ }]
%\DGCdot{B}[d]{$ $}
\DGCstrand[red](1,0)(.5,.5)[$^{}$`{\ }]
%\DGCdot{B}[d]{$ $}
\DGCstrand[Red](.5,.5)(.5,1)
\DGCstrand[red](.5,0)(.5,1.5)[$^{}$`{\ }]
%\DGCdot{B}[d]{$ $}
%\DGCdot{E}[u]{$ $}
\DGCstrand[red](.5,1)(0,1.5)
\DGCdot{E}[u]{$2 $}
\DGCstrand[red](.5,1)(1,1.5)
%\DGCdot{E}[ur]{$ $}
\end{DGCpicture}
~-~
\begin{DGCpicture}
\DGCstrand(.75,0)(.25,.25)(.25,1.5)
%\DGCdot{.75}[d]{$ $}
\DGCstrand[red](0,0)(.5,.5)[$^{}$`{\ }]
\DGCdot{B}[d]{$ $}
\DGCstrand[red](1,0)(.5,.5)[$^{}$`{\ }]
\DGCdot{B}[d]{$ $}
\DGCstrand[Red](.5,.5)(.5,1)
\DGCstrand[red](.5,0)(.5,1.5)[$^{}$`{\ }]
%\DGCdot{B}[d]{$ $}
%\DGCdot{E}[u]{$ $}
\DGCstrand[red](.5,1)(0,1.5)
%\DGCdot{B}[d]{$ $}
\DGCstrand[red](.5,1)(1,1.5)
%\DGCdot{E}[ur]{$ $}
\end{DGCpicture}
~-~
\begin{DGCpicture}
\DGCstrand(.75,0)(.25,.25)(.25,1.5)
%\DGCdot{.75}[d]{$ $}
\DGCstrand[red](0,0)(.5,.5)[$^{}$`{\ }]
%\DGCdot{B}[d]{$ $}
\DGCstrand[red](1,0)(.5,.5)[$^{}$`{\ }]
\DGCdot{B}[d]{$ $}
\DGCstrand[Red](.5,.5)(.5,1)
\DGCstrand[red](.5,0)(.5,1.5)[$^{}$`{\ }]
\DGCdot{B}[d]{$ $}
%\DGCdot{E}[u]{$ $}
\DGCstrand[red](.5,1)(0,1.5)
%\DGCdot{B}[d]{$ $}
\DGCstrand[red](.5,1)(1,1.5)
%\DGCdot{E}[ur]{$ $}
\end{DGCpicture}
~+~
\begin{DGCpicture}
\DGCstrand(.75,0)(.25,.25)(.25,1.5)
%\DGCdot{.75}[d]{$ $}
\DGCstrand[red](0,0)(.5,.5)[$^{}$`{\ }]
%\DGCdot{B}[d]{$ $}
\DGCstrand[red](1,0)(.5,.5)[$^{}$`{\ }]
\DGCdot{B}[d]{$ $}
\DGCstrand[Red](.5,.5)(.5,1)
\DGCstrand[red](.5,0)(.5,1.5)[$^{}$`{\ }]
%\DGCdot{B}[d]{$ $}
%\DGCdot{E}[u]{$ $}
\DGCstrand[red](.5,1)(0,1.5)
\DGCdot{E}[d]{$ $}
\DGCstrand[red](.5,1)(1,1.5)
%\DGCdot{E}[ur]{$ $}
\end{DGCpicture}
~+~
\begin{DGCpicture}
\DGCstrand(.75,0)(.25,.25)(.25,1.5)
%\DGCdot{.75}[d]{$ $}
\DGCstrand[red](0,0)(.5,.5)[$^{}$`{\ }]
%\DGCdot{B}[d]{$ $}
\DGCstrand[red](1,0)(.5,.5)[$^{}$`{\ }]
%\DGCdot{B}[d]{$ $}
\DGCstrand[Red](.5,.5)(.5,1)
\DGCstrand[red](.5,0)(.5,1.5)[$^{}$`{\ }]
%\DGCdot{B}[d]{$ $}
\DGCdot{E}[u]{$ $}
\DGCstrand[red](.5,1)(0,1.5)
%\DGCdot{E}[d]{$ $}
\DGCstrand[red](.5,1)(1,1.5)
\DGCdot{E}[ur]{$ $}
\end{DGCpicture}
\ .
\end{equation*}
Thus \eqref{dotreduction5} is in the span of the proposed spanning set.

Similar techniques show that the other elements stated in the proposition complete a spanning set.
\end{proof}

In order to prove that the elements in Proposition \ref{W_ii+1span} are actually a basis of $W_{i,i+1}$, we use a $(W(n,1),W(n,1))$-bimodule homomorphism 
$ \gamma_{i,i+1} \colon W_{i,i+1} \rightarrow  \Hom_{\Bbbk}(V_n,V_n)$.

\begin{prop} \label{gammaprop}
There is a bimodule homomorphism $ \gamma_{i,i+1} \colon W_{i,i+1} \rightarrow  \Hom_{\Bbbk}(V_n,V_n)$ inherited from the representation of $W(n,1)$ on $V_n$ determined by
\begin{equation*}
\begin{DGCpicture}
\DGCstrand[red](-1,0)(-1,1.5)[$^{1}$`{\ }]
%\DGCdot{E}[u]{$a_1$}
\DGCcoupon*(-.75,0.5)(-.25,1){$\cdots$}
%\DGCstrand(.5,0)(0,.75)(.5,1.5)
%\DGCdot{E}[u]{$b$}
\DGCstrand[red](0,0)(.5,.5)[$^{i}$`{\ }]
%\DGCdot{B}[l]{$c$}
\DGCstrand[red](1,0)(.5,.5)[$^{i+2}$`{\ }]
\DGCstrand[Red](.5,.5)(.5,1)
\DGCstrand[red](.5,1)(0,1.5)
%\DGCdot{E}[u]{$a_i$}
\DGCstrand[red](.5,1)(1,1.5)
%\DGCdot{E}[u]{$a_{i+1}$}
\DGCcoupon*(1.25,0.5)(1.75,1){$\cdots$}
\DGCstrand[red](2,0)(2,1.5)[$^{n}$`{\ }]
%\DGCdot{E}[u]{$a_n$}
\DGCstrand[red](.5,0)(.5,1.5)[$^{i+1}$`{\ }]
\end{DGCpicture}
~\colon~
f \in V_{n} 
~\mapsto
 D_i D_{i+1} D_i(f) \in V_{n} 
\ .
\end{equation*}
\end{prop}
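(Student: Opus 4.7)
The plan is to define $\gamma_{i,i+1}$ on diagrams by declaring that all decorations sitting outside the thick red splitter (black dots, red dots on strands at positions outside $\{i,i+1,i+2\}$, crossings, and the black strand) act on $V_n$ exactly by the formulas of Proposition~\ref{repprop}, while the thick splitter \eqref{thickthickgenerator} is declared to act as the triple divided difference operator $D_i D_{i+1} D_i$ on $\Bbbk[y_j][x_1,\ldots,x_n]$. Since $W_{i,i+1}$ is generated, as a $(W(n,1),W(n,1))$-bimodule, by the elements $e_j \otimes e_j$ and the bimodule structure on the target $\Hom_{\Bbbk}(V_n,V_n)$ is the obvious one inherited from the action on $V_n$, checking that $\gamma_{i,i+1}$ is a bimodule homomorphism reduces to verifying that the recipe is well defined, i.e., respects the bimodule relations collected in Proposition~\ref{bimod-rel3}.

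The first block of relations to verify are the symmetric-function slides \eqref{dot-slide-splitter3}: any $S_3$-symmetric polynomial in $x_i, x_{i+1}, x_{i+2}$ must pass freely through the splitter. On $V_n$, this is the classical fact that $D_i D_{i+1} D_i$ commutes with multiplication by any polynomial which is symmetric under the transpositions $s_i$ and $s_{i+1}$ acting on $\{x_i, x_{i+1}, x_{i+2}\}$; this in turn is immediate from the identity $D_j f = f D_j$ for $s_j$-invariant $f$, applied successively. The second block, \eqref{crossing-slide-splitter3}, asserts that the length-three $\psi$-braid $\psi_{i+2}\psi_{i+1}\psi_i$ (respectively $\psi_i \psi_{i+1} \psi_{i+2}$) commutes with the splitter. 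On $V_n$ these are direct polynomial identities: using the explicit formulas for $\psi_j e_i$ and $\psi_{j+1}e_j$ in Proposition~\ref{repprop}, one checks that both sides send $f(\mathbf{x},y_j) \in V_{n,j}$ (for the relevant $j \notin \{i, i+1\}$) to the same element obtained by first symmetrizing in $x_i, x_{i+1}, x_{i+2}$ via $D_i D_{i+1} D_i$ and then substituting $y$-variables appropriately. The remaining relations of Proposition~\ref{bimod-rel3} (namely $e_k \otimes e_\ell = 0$ for $k \neq \ell$, $y \otimes 1 = 1 \otimes y$, and the commutation of far-away $x_\ell, \psi_\ell$) hold trivially since $D_i D_{i+1} D_i$ commutes with multiplication by and substitution of variables disjoint from $\{x_i, x_{i+1}, x_{i+2}\}$.

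The main obstacle is purely notational: keeping track of which side of the splitter the black strand enters on and which idempotent $e_j$ it connects to, since \eqref{crossing-slide-splitter3} has multiple cases depending on the position of the outgoing black strand relative to $i, i+1, i+2$. However, each case collapses to a short computation once one observes that $D_i D_{i+1} D_i$ is the longest divided difference for the local symmetric group $S_3$ acting on $\{x_i, x_{i+1}, x_{i+2}\}$, whence its characterizing property as the unique (up to scalar) degree-$(-3)$ operator annihilating non-invariants makes the symmetric function identities automatic. Thus the verification is routine, and the homomorphism $\gamma_{i,i+1}$ is well defined.
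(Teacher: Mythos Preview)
Your proposal is correct and is exactly the elaboration of the paper's one-line proof (``This is a routine calculation''): you identify the generators of $W_{i,i+1}$ as a bimodule, assign the thick splitter the operator $D_iD_{i+1}D_i$, and reduce well-definedness to the relations of Proposition~\ref{bimod-rel3}, which follow from the standard fact that the longest divided difference for the local $S_3$ commutes with $S_3$-symmetric polynomials in $x_i,x_{i+1},x_{i+2}$ together with the explicit $\psi$-formulas of Proposition~\ref{repprop}. One small imprecision: the phrase ``annihilating non-invariants'' is not quite the right characterization of $D_iD_{i+1}D_i$ --- what you actually use (and state correctly earlier) is that it commutes with multiplication by $S_3$-invariants and that the product $(y-x_i)(y-x_{i+1})(y-x_{i+2})$ arising from the $\psi$-braid is symmetric, which is all that is needed.
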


\begin{proof}
This is a routine calculation.
\end{proof}

The next two results contain important facts used later on, but the proofs are just lengthy computations that the reader interested in a smooth flow of the text could safely ignore.

\begin{prop}
\label{basis2}
The spanning set in Proposition \ref{W_ii+1span} is a basis of $W_{i,i+1}$.
\end{prop}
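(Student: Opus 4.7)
The proof plan is to imitate Proposition \ref{basis1} verbatim, replacing the homomorphism $\phi_i$ with the divided-difference-based homomorphism $\gamma_{i,i+1}$ and using its action on well-chosen test polynomials in $V_n$ to read off that the coefficients in any putative linear relation must all vanish.

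First I would reduce to a single ``boundary block.'' Any element of $W_{i,i+1}$ decomposes according to the pair of idempotents $e_j \otimes e_\ell$ at its top and bottom, and by the first relation of Proposition \ref{bimod-rel3} distinct boundary types are already linearly independent. So it suffices to fix a boundary configuration of the black strand (i.e., fix a pair $(j,\ell)$ of positions) and show independence within that block. For boundary configurations not involving positions $i,i+1$, all linear combinations reduce to a single family $\beth_9(\mathbf{a},b,c_i,c_{i+1},j,\ell)$, and the argument is a routine check using that $\gamma_{i,i+1}$ applied to $1 \in V_{n,j}$ yields $D_iD_{i+1}D_i(x_i^{c_i}x_{i+1}^{c_{i+1}}) \cdot x_1^{a_1}\cdots x_n^{a_n}y_j^b$ (with possibly a Weyl-type translation depending on $(j,\ell)$), and the nonzero images for the allowed ranges $c_i \in \{0,1,2\}$, $c_{i+1} \in \{0,1\}$ are classically linearly independent polynomials in $\Bbbk[\mathbf{x},y_\ell]$ (these correspond to a basis of coinvariants on three variables).

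Next I would handle the ``interesting'' boundary types, where the endpoints of the black strand both lie inside the thick region (indices $i$ or $i+1$). Here several of the families $\beth_1,\ldots,\beth_8$ can coexist within a single block, distinguished by the trajectory of the black strand and by the exponent ranges for $c_i,c_{i+1}$. For each such block I write out a hypothetical relation
\[
\sum_s k_s \, \beth_s(\mathbf{a}^{(s)},b^{(s)},c_i^{(s)},c_{i+1}^{(s)}) = 0 \ ,
\]
apply $\gamma_{i,i+1}$, and evaluate the resulting operator on the generator $1 \in V_{n,m}$ (for the appropriate $m \in \{i,i+1\}$ determined by the bottom endpoint). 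Using Proposition \ref{bimod-rel3} to normalize each $\beth_s$ to a standard form, the image is an explicit $\Bbbk$-polynomial in $x_1,\ldots,x_n,y_m$ whose shape encodes $s$, $\mathbf{a}^{(s)}$, $b^{(s)}$, $c_i^{(s)}$, $c_{i+1}^{(s)}$. The crucial point is that the three divided differences $D_iD_{i+1}D_i$ send the allowed monomials $x_i^{c_i}x_{i+1}^{c_{i+1}}$ (with the restricted ranges imposed in Proposition \ref{W_ii+1span}) to a basis of the Schubert-type polynomials in $x_i,x_{i+1},x_{i+2}$, so the polynomial images are linearly independent; this forces every $k_s = 0$.

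The main obstacle is bookkeeping rather than substance: the eight families $\beth_1,\ldots,\beth_8$ together with the different valid ranges of $(c_i,c_{i+1})$ have to be organized so that, once the boundary block is fixed, the images $\gamma_{i,i+1}(\beth_s)(1)$ really do form linearly independent polynomials in $V_n$. This amounts to verifying, by one systematic computation with $D_iD_{i+1}D_i$, that the normalized black-strand trajectories together with the admissible ranges $c_i \in \{0,1,2\}$, $c_{i+1} \in \{0,1\}$ parametrize a basis of the corresponding isotypic summand; this is a finite check, parallel to the one in Proposition \ref{basis1}, and requires no new ingredients beyond the faithfulness of the $V_n$-representation and the classical fact that $D_iD_{i+1}D_i$ is the longest Schubert operator on three variables.
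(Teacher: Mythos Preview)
Your overall strategy matches the paper's: reduce to fixed idempotent blocks and use the homomorphism $\gamma_{i,i+1}$ to detect linear independence. The gap is in your evaluation step. You propose to evaluate $\gamma_{i,i+1}(\beth_s)$ only on the generator $1 \in V_{n,m}$ and claim that $D_iD_{i+1}D_i$ sends the six monomials $x_i^{c_i}x_{i+1}^{c_{i+1}}$ (with $c_i\in\{0,1,2\}$, $c_{i+1}\in\{0,1\}$) to a basis of ``Schubert-type polynomials.'' That is false: $D_iD_{i+1}D_i$ lowers degree by three, so it annihilates every one of these monomials except $x_i^2x_{i+1}$, which maps to $1$. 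Evaluating on $1$ therefore kills almost all of your spanning elements and cannot separate them.

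What is actually true is that these six monomials form a basis of the coinvariant ring for $S_3$ acting on $\Bbbk[x_i,x_{i+1},x_{i+2}]$, and they are separated by the nondegenerate pairing $(f,g)\mapsto D_iD_{i+1}D_i(fg)$. To exploit this you must evaluate on a \emph{sequence} of test polynomials, not just $1$. This is exactly what the paper does: in each block it writes the dependence relation, applies $\gamma_{i,i+1}$, and evaluates successively on $1$, then $x_1$, then $x_2$, then $x_2x_3$, then $x_1x_2$ (or, in the second case, $1$, $x_1$, $x_2$, $x_1^2$, $x_1x_2$), each time forcing one or two coefficients to vanish and simplifying the relation. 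The different black-strand trajectories ($\beth_1,\ldots,\beth_8$) contribute different $(y-x_j)$ factors from Proposition~\ref{repprop}, and these are disentangled from the red-dot contributions only through this sequential evaluation. Your proposal needs this iterative peeling-off argument in place of the single evaluation on $1$.
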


\begin{proof}
If $\epsilon_1' v_1 \epsilon_1, \epsilon_2' v_2 \epsilon_2 \in W_{i,i+1}$, where $\epsilon_1, \epsilon_1', \epsilon_2, \epsilon_2'$ are idempotents, then there could only be a dependence relation if $\epsilon_1'=\epsilon_2'$ and $\epsilon_1=\epsilon_2$.  Using this fact, there are several cases to check.  We supply the details for two non-trivial cases.

First consider a dependence relation of the form
\begin{align} 
\label{Dep1}
&k^1_{a_1,a_2,a_3,b}
\begin{DGCpicture}
\DGCstrand(.25,0)(0,.75)(.25,1.5)
\DGCdot{.75}[ul]{$b$}
\DGCstrand[red](0,0)(.5,.5)[$^{}$`{\ }]
\DGCdot{B}[d]{$2$}
\DGCstrand[red](1,0)(.5,.5)[$^{}$`{\ }]
\DGCstrand[Red](.5,.5)(.5,1)
\DGCstrand[red](.5,0)(.5,1.5)[$^{}$`{\ }]
\DGCdot{B}[d]{${1}$}
\DGCdot{E}[u]{$a_{2}$}
\DGCstrand[red](.5,1)(0,1.5)
\DGCdot{E}[ul]{$a_1$}
\DGCstrand[red](.5,1)(1,1.5)
\DGCdot{E}[ur]{$a_{3}$}
\end{DGCpicture}
~+~
k^2_{a_1,a_2,a_3,b}
\begin{DGCpicture}
\DGCstrand(.25,0)(0,.75)(.25,1.5)
\DGCdot{.75}[ul]{$b$}
\DGCstrand[red](0,0)(.5,.5)[$^{}$`{\ }]
\DGCdot{B}[d]{$2$}
\DGCstrand[red](1,0)(.5,.5)[$^{}$`{\ }]
\DGCstrand[Red](.5,.5)(.5,1)
\DGCstrand[red](.5,0)(.5,1.5)[$^{}$`{\ }]
%\DGCdot{B}[d]{${1}$}
\DGCdot{E}[u]{$a_{2}$}
\DGCstrand[red](.5,1)(0,1.5)
\DGCdot{E}[ul]{$a_1$}
\DGCstrand[red](.5,1)(1,1.5)
\DGCdot{E}[ur]{$a_{3}$}
\end{DGCpicture}
~+~
k^3_{a_1,a_2,a_3,b}
\begin{DGCpicture}
\DGCstrand(.25,0)(0,.75)(.25,1.5)
\DGCdot{.75}[ul]{$b$}
\DGCstrand[red](0,0)(.5,.5)[$^{}$`{\ }]
\DGCdot{B}[d]{$1$}
\DGCstrand[red](1,0)(.5,.5)[$^{}$`{\ }]
\DGCstrand[Red](.5,.5)(.5,1)
\DGCstrand[red](.5,0)(.5,1.5)[$^{}$`{\ }]
\DGCdot{B}[d]{${1}$}
\DGCdot{E}[u]{$a_{2}$}
\DGCstrand[red](.5,1)(0,1.5)
\DGCdot{E}[ul]{$a_1$}
\DGCstrand[red](.5,1)(1,1.5)
\DGCdot{E}[ur]{$a_{3}$}
\end{DGCpicture} 
~+~
k^4_{a_1,a_2,a_3,b}
\begin{DGCpicture}
\DGCstrand(.25,0)(0,.75)(.25,1.5)
\DGCdot{.75}[ul]{$b$}
\DGCstrand[red](0,0)(.5,.5)[$^{}$`{\ }]
\DGCdot{B}[d]{$1$}
\DGCstrand[red](1,0)(.5,.5)[$^{}$`{\ }]
\DGCstrand[Red](.5,.5)(.5,1)
\DGCstrand[red](.5,0)(.5,1.5)[$^{}$`{\ }]
%\DGCdot{B}[d]{${1}$}
\DGCdot{E}[u]{$a_{2}$}
\DGCstrand[red](.5,1)(0,1.5)
\DGCdot{E}[ul]{$a_1$}
\DGCstrand[red](.5,1)(1,1.5)
\DGCdot{E}[ur]{$a_{3}$}
\end{DGCpicture} \\
~+~
&k^5_{a_1,a_2,a_3,b}
\begin{DGCpicture}
\DGCstrand(.25,0)(0,.75)(.25,1.5)
\DGCdot{.75}[ul]{$b$}
\DGCstrand[red](0,0)(.5,.5)[$^{}$`{\ }]
%\DGCdot{B}[d]{$1$}
\DGCstrand[red](1,0)(.5,.5)[$^{}$`{\ }]
\DGCstrand[Red](.5,.5)(.5,1)
\DGCstrand[red](.5,0)(.5,1.5)[$^{}$`{\ }]
\DGCdot{B}[d]{${1}$}
\DGCdot{E}[u]{$a_{2}$}
\DGCstrand[red](.5,1)(0,1.5)
\DGCdot{E}[ul]{$a_1$}
\DGCstrand[red](.5,1)(1,1.5)
\DGCdot{E}[ur]{$a_{3}$}
\end{DGCpicture}
~+~
k^6_{a_1,a_2,a_3,b}
\begin{DGCpicture}
\DGCstrand(.25,0)(0,.75)(.25,1.5)
\DGCdot{.75}[ul]{$b$}
\DGCstrand[red](0,0)(.5,.5)[$^{}$`{\ }]
%\DGCdot{B}[d]{$1$}
\DGCstrand[red](1,0)(.5,.5)[$^{}$`{\ }]
\DGCstrand[Red](.5,.5)(.5,1)
\DGCstrand[red](.5,0)(.5,1.5)[$^{}$`{\ }]
%\DGCdot{B}[d]{${1}$}
\DGCdot{E}[u]{$a_{2}$}
\DGCstrand[red](.5,1)(0,1.5)
\DGCdot{E}[ul]{$a_1$}
\DGCstrand[red](.5,1)(1,1.5)
\DGCdot{E}[ur]{$a_{3}$}
\end{DGCpicture}
~+~
k^7_{a_1,a_2,a_3}
\begin{DGCpicture}
\DGCstrand(.25,0)(.75,.4)(.75,.75)(.75,1.1)(.25,1.5)
%\DGCdot{.75}[ul]{$b$}
\DGCstrand[red](0,0)(.5,.5)[$^{}$`{\ }]
\DGCstrand[red](1,0)(.5,.5)[$^{}$`{\ }]
\DGCstrand[Red](.5,.5)(.5,1)
\DGCstrand[red](.5,0)(.5,1.5)[$^{}$`{\ }]
\DGCdot{B}[d]{$1$}
\DGCdot{E}[u]{$a_{2}$}
\DGCstrand[red](.5,1)(0,1.5)
\DGCdot{E}[ul]{$a_1$}
\DGCstrand[red](.5,1)(1,1.5)
\DGCdot{E}[ur]{$a_{3}$}
\end{DGCpicture}
~+~
k^8_{a_1,a_2,a_3}
\begin{DGCpicture}
\DGCstrand(.25,0)(.75,.4)(.75,.75)(.75,1.1)(.25,1.5)
%\DGCdot{.75}[ul]{$b$}
\DGCstrand[red](0,0)(.5,.5)[$^{}$`{\ }]
\DGCstrand[red](1,0)(.5,.5)[$^{}$`{\ }]
\DGCstrand[Red](.5,.5)(.5,1)
\DGCstrand[red](.5,0)(.5,1.5)[$^{}$`{\ }]
%\DGCdot{B}[d]{$1$}
\DGCdot{E}[u]{$a_{2}$}
\DGCstrand[red](.5,1)(0,1.5)
\DGCdot{E}[ul]{$a_1$}
\DGCstrand[red](.5,1)(1,1.5)
\DGCdot{E}[ur]{$a_{3}$}
\end{DGCpicture}
~=~
0 \ . \nonumber
\end{align}
Apply the homomorphism $\gamma_{i,i+1}$ to \eqref{Dep1} and evaluate on $1$ to get
\begin{equation*}
k^1_{a_1,a_2,a_3,b} (y_1-x_1) x_1^{a_1} x_2^{a_2} x_3^{a_3} y_1^b
+ 
k^7_{a_1,a_2,a_3} x_1^{a_1} x_2^{a_2} x_3^{a_3}
= 0 \ .
\end{equation*}
This forces
\begin{equation*}
k^1_{a_1,a_2,a_3,b}
= k^7_{a_1,a_2,a_3} = 0 \ .
\end{equation*}
Then the dependence relation becomes
\begin{align} 
\label{Dep2}
&k^2_{a_1,a_2,a_3,b}
\begin{DGCpicture}
\DGCstrand(.25,0)(0,.75)(.25,1.5)
\DGCdot{.75}[ul]{$b$}
\DGCstrand[red](0,0)(.5,.5)[$^{}$`{\ }]
\DGCdot{B}[d]{$2$}
\DGCstrand[red](1,0)(.5,.5)[$^{}$`{\ }]
\DGCstrand[Red](.5,.5)(.5,1)
\DGCstrand[red](.5,0)(.5,1.5)[$^{}$`{\ }]
%\DGCdot{B}[d]{${1}$}
\DGCdot{E}[u]{$a_{2}$}
\DGCstrand[red](.5,1)(0,1.5)
\DGCdot{E}[ul]{$a_1$}
\DGCstrand[red](.5,1)(1,1.5)
\DGCdot{E}[ur]{$a_{3}$}
\end{DGCpicture}
~+~
k^3_{a_1,a_2,a_3,b}
\begin{DGCpicture}
\DGCstrand(.25,0)(0,.75)(.25,1.5)
\DGCdot{.75}[ul]{$b$}
\DGCstrand[red](0,0)(.5,.5)[$^{}$`{\ }]
\DGCdot{B}[d]{$1$}
\DGCstrand[red](1,0)(.5,.5)[$^{}$`{\ }]
\DGCstrand[Red](.5,.5)(.5,1)
\DGCstrand[red](.5,0)(.5,1.5)[$^{}$`{\ }]
\DGCdot{B}[d]{${1}$}
\DGCdot{E}[u]{$a_{2}$}
\DGCstrand[red](.5,1)(0,1.5)
\DGCdot{E}[ul]{$a_1$}
\DGCstrand[red](.5,1)(1,1.5)
\DGCdot{E}[ur]{$a_{3}$}
\end{DGCpicture} 
~+~
k^4_{a_1,a_2,a_3,b}
\begin{DGCpicture}
\DGCstrand(.25,0)(0,.75)(.25,1.5)
\DGCdot{.75}[ul]{$b$}
\DGCstrand[red](0,0)(.5,.5)[$^{}$`{\ }]
\DGCdot{B}[d]{$1$}
\DGCstrand[red](1,0)(.5,.5)[$^{}$`{\ }]
\DGCstrand[Red](.5,.5)(.5,1)
\DGCstrand[red](.5,0)(.5,1.5)[$^{}$`{\ }]
%\DGCdot{B}[d]{${1}$}
\DGCdot{E}[u]{$a_{2}$}
\DGCstrand[red](.5,1)(0,1.5)
\DGCdot{E}[ul]{$a_1$}
\DGCstrand[red](.5,1)(1,1.5)
\DGCdot{E}[ur]{$a_{3}$}
\end{DGCpicture} \\
~+~
&k^5_{a_1,a_2,a_3,b}
\begin{DGCpicture}
\DGCstrand(.25,0)(0,.75)(.25,1.5)
\DGCdot{.75}[ul]{$b$}
\DGCstrand[red](0,0)(.5,.5)[$^{}$`{\ }]
%\DGCdot{B}[d]{$1$}
\DGCstrand[red](1,0)(.5,.5)[$^{}$`{\ }]
\DGCstrand[Red](.5,.5)(.5,1)
\DGCstrand[red](.5,0)(.5,1.5)[$^{}$`{\ }]
\DGCdot{B}[d]{${1}$}
\DGCdot{E}[u]{$a_{2}$}
\DGCstrand[red](.5,1)(0,1.5)
\DGCdot{E}[ul]{$a_1$}
\DGCstrand[red](.5,1)(1,1.5)
\DGCdot{E}[ur]{$a_{3}$}
\end{DGCpicture}
~+~
k^6_{a_1,a_2,a_3,b}
\begin{DGCpicture}
\DGCstrand(.25,0)(0,.75)(.25,1.5)
\DGCdot{.75}[ul]{$b$}
\DGCstrand[red](0,0)(.5,.5)[$^{}$`{\ }]
%\DGCdot{B}[d]{$1$}
\DGCstrand[red](1,0)(.5,.5)[$^{}$`{\ }]
\DGCstrand[Red](.5,.5)(.5,1)
\DGCstrand[red](.5,0)(.5,1.5)[$^{}$`{\ }]
%\DGCdot{B}[d]{${1}$}
\DGCdot{E}[u]{$a_{2}$}
\DGCstrand[red](.5,1)(0,1.5)
\DGCdot{E}[ul]{$a_1$}
\DGCstrand[red](.5,1)(1,1.5)
\DGCdot{E}[ur]{$a_{3}$}
\end{DGCpicture}
~+~
k^8_{a_1,a_2,a_3}
\begin{DGCpicture}
\DGCstrand(.25,0)(.75,.4)(.75,.75)(.75,1.1)(.25,1.5)
%\DGCdot{.75}[ul]{$b$}
\DGCstrand[red](0,0)(.5,.5)[$^{}$`{\ }]
\DGCstrand[red](1,0)(.5,.5)[$^{}$`{\ }]
\DGCstrand[Red](.5,.5)(.5,1)
\DGCstrand[red](.5,0)(.5,1.5)[$^{}$`{\ }]
%\DGCdot{B}[d]{$1$}
\DGCdot{E}[u]{$a_{2}$}
\DGCstrand[red](.5,1)(0,1.5)
\DGCdot{E}[ul]{$a_1$}
\DGCstrand[red](.5,1)(1,1.5)
\DGCdot{E}[ur]{$a_{3}$}
\end{DGCpicture}
~=~
0 \ . \nonumber
\end{align}
Now apply $\gamma_{i,i+1}$ to \eqref{Dep2} and evaluate on $x_1$ to get
\begin{equation*}
k^3_{a_1,a_2,a_3,b} (y_1-x_1) x_1^{a_1} x_2^{a_2} x_3^{a_3} y_1^b
%+
%k^8_{a_1,a_2,a_3}  x_1^{a_1} x_2^{a_2} x_3^{a_3}
=
0 \ .
\end{equation*}
This implies
\begin{equation*}
k^3_{a_1,a_2,a_3,b}
%=
%k^8_{a_1,a_2,a_3}
= 0
\end{equation*}
and the dependence relations becomes
\begin{align} 
\label{Dep3}
&k^2_{a_1,a_2,a_3,b}
\begin{DGCpicture}
\DGCstrand(.25,0)(0,.75)(.25,1.5)
\DGCdot{.75}[ul]{$b$}
\DGCstrand[red](0,0)(.5,.5)[$^{}$`{\ }]
\DGCdot{B}[d]{$2$}
\DGCstrand[red](1,0)(.5,.5)[$^{}$`{\ }]
\DGCstrand[Red](.5,.5)(.5,1)
\DGCstrand[red](.5,0)(.5,1.5)[$^{}$`{\ }]
%\DGCdot{B}[d]{${1}$}
\DGCdot{E}[u]{$a_{2}$}
\DGCstrand[red](.5,1)(0,1.5)
\DGCdot{E}[ul]{$a_1$}
\DGCstrand[red](.5,1)(1,1.5)
\DGCdot{E}[ur]{$a_{3}$}
\end{DGCpicture}
~+~
k^4_{a_1,a_2,a_3,b}
\begin{DGCpicture}
\DGCstrand(.25,0)(0,.75)(.25,1.5)
\DGCdot{.75}[ul]{$b$}
\DGCstrand[red](0,0)(.5,.5)[$^{}$`{\ }]
\DGCdot{B}[d]{$1$}
\DGCstrand[red](1,0)(.5,.5)[$^{}$`{\ }]
\DGCstrand[Red](.5,.5)(.5,1)
\DGCstrand[red](.5,0)(.5,1.5)[$^{}$`{\ }]
%\DGCdot{B}[d]{${1}$}
\DGCdot{E}[u]{$a_{2}$}
\DGCstrand[red](.5,1)(0,1.5)
\DGCdot{E}[ul]{$a_1$}
\DGCstrand[red](.5,1)(1,1.5)
\DGCdot{E}[ur]{$a_{3}$}
\end{DGCpicture} 
~+~
k^5_{a_1,a_2,a_3,b}
\begin{DGCpicture}
\DGCstrand(.25,0)(0,.75)(.25,1.5)
\DGCdot{.75}[ul]{$b$}
\DGCstrand[red](0,0)(.5,.5)[$^{}$`{\ }]
%\DGCdot{B}[d]{$1$}
\DGCstrand[red](1,0)(.5,.5)[$^{}$`{\ }]
\DGCstrand[Red](.5,.5)(.5,1)
\DGCstrand[red](.5,0)(.5,1.5)[$^{}$`{\ }]
\DGCdot{B}[d]{${1}$}
\DGCdot{E}[u]{$a_{2}$}
\DGCstrand[red](.5,1)(0,1.5)
\DGCdot{E}[ul]{$a_1$}
\DGCstrand[red](.5,1)(1,1.5)
\DGCdot{E}[ur]{$a_{3}$}
\end{DGCpicture} \\
~+~
&k^6_{a_1,a_2,a_3,b}
\begin{DGCpicture}
\DGCstrand(.25,0)(0,.75)(.25,1.5)
\DGCdot{.75}[ul]{$b$}
\DGCstrand[red](0,0)(.5,.5)[$^{}$`{\ }]
%\DGCdot{B}[d]{$1$}
\DGCstrand[red](1,0)(.5,.5)[$^{}$`{\ }]
\DGCstrand[Red](.5,.5)(.5,1)
\DGCstrand[red](.5,0)(.5,1.5)[$^{}$`{\ }]
%\DGCdot{B}[d]{${1}$}
\DGCdot{E}[u]{$a_{2}$}
\DGCstrand[red](.5,1)(0,1.5)
\DGCdot{E}[ul]{$a_1$}
\DGCstrand[red](.5,1)(1,1.5)
\DGCdot{E}[ur]{$a_{3}$}
\end{DGCpicture}
~+~
k^8_{a_1,a_2,a_3}
\begin{DGCpicture}
\DGCstrand(.25,0)(.75,.4)(.75,.75)(.75,1.1)(.25,1.5)
%\DGCdot{.75}[ul]{$b$}
\DGCstrand[red](0,0)(.5,.5)[$^{}$`{\ }]
\DGCstrand[red](1,0)(.5,.5)[$^{}$`{\ }]
\DGCstrand[Red](.5,.5)(.5,1)
\DGCstrand[red](.5,0)(.5,1.5)[$^{}$`{\ }]
%\DGCdot{B}[d]{$1$}
\DGCdot{E}[u]{$a_{2}$}
\DGCstrand[red](.5,1)(0,1.5)
\DGCdot{E}[ul]{$a_1$}
\DGCstrand[red](.5,1)(1,1.5)
\DGCdot{E}[ur]{$a_{3}$}
\end{DGCpicture}
~=~
0 \ .  \nonumber
\end{align}
Now apply $\gamma_{i,i+1}$ to \eqref{Dep3} and evaluate on $x_2$ to get
\begin{equation*}
k^2_{a_1,a_2,a_3,b} (y_1-x_1) x_1^{a_1} x_2^{a_2} x_3^{a_3} y_1^b 
+ k^8_{a_1,a_2,a_3}  x_1^{a_1} x_2^{a_2} x_3^{a_3}= 0
\ .
\end{equation*}
This implies that $k^2_{a_1,a_2,a_3,b}=k^8_{a_1,a_2,a_3}=0$ and the dependence relation becomes
\begin{equation} 
\label{Dep4}
k^4_{a_1,a_2,a_3,b}
\begin{DGCpicture}
\DGCstrand(.25,0)(0,.75)(.25,1.5)
\DGCdot{.75}[ul]{$b$}
\DGCstrand[red](0,0)(.5,.5)[$^{}$`{\ }]
\DGCdot{B}[d]{$1$}
\DGCstrand[red](1,0)(.5,.5)[$^{}$`{\ }]
\DGCstrand[Red](.5,.5)(.5,1)
\DGCstrand[red](.5,0)(.5,1.5)[$^{}$`{\ }]
%\DGCdot{B}[d]{${1}$}
\DGCdot{E}[u]{$a_{2}$}
\DGCstrand[red](.5,1)(0,1.5)
\DGCdot{E}[ul]{$a_1$}
\DGCstrand[red](.5,1)(1,1.5)
\DGCdot{E}[ur]{$a_{3}$}
\end{DGCpicture} 
~+~
k^5_{a_1,a_2,a_3,b}
\begin{DGCpicture}
\DGCstrand(.25,0)(0,.75)(.25,1.5)
\DGCdot{.75}[ul]{$b$}
\DGCstrand[red](0,0)(.5,.5)[$^{}$`{\ }]
%\DGCdot{B}[d]{$1$}
\DGCstrand[red](1,0)(.5,.5)[$^{}$`{\ }]
\DGCstrand[Red](.5,.5)(.5,1)
\DGCstrand[red](.5,0)(.5,1.5)[$^{}$`{\ }]
\DGCdot{B}[d]{${1}$}
\DGCdot{E}[u]{$a_{2}$}
\DGCstrand[red](.5,1)(0,1.5)
\DGCdot{E}[ul]{$a_1$}
\DGCstrand[red](.5,1)(1,1.5)
\DGCdot{E}[ur]{$a_{3}$}
\end{DGCpicture}
~+~
k^6_{a_1,a_2,a_3,b}
\begin{DGCpicture}
\DGCstrand(.25,0)(0,.75)(.25,1.5)
\DGCdot{.75}[ul]{$b$}
\DGCstrand[red](0,0)(.5,.5)[$^{}$`{\ }]
%\DGCdot{B}[d]{$1$}
\DGCstrand[red](1,0)(.5,.5)[$^{}$`{\ }]
\DGCstrand[Red](.5,.5)(.5,1)
\DGCstrand[red](.5,0)(.5,1.5)[$^{}$`{\ }]
%\DGCdot{B}[d]{${1}$}
\DGCdot{E}[u]{$a_{2}$}
\DGCstrand[red](.5,1)(0,1.5)
\DGCdot{E}[ul]{$a_1$}
\DGCstrand[red](.5,1)(1,1.5)
\DGCdot{E}[ur]{$a_{3}$}
\end{DGCpicture}
~=~
0 \ . 
\end{equation}
Now apply $\gamma_{i,i+1}$ to \eqref{Dep4} and evaluate on $x_2 x_3$ to get
\begin{equation*}
k^5_{a_1,a_2,a_3,b} (y_1-x_1) x_1^{a_1} x_2^{a_2} x_3^{a_3} y_1^b = 0
\ .
\end{equation*}
Thus $k_5 = 0$ and the dependence relation becomes
\begin{equation} 
\label{Dep5}
k^4_{a_1,a_2,a_3,b}
\begin{DGCpicture}
\DGCstrand(.25,0)(0,.75)(.25,1.5)
\DGCdot{.75}[ul]{$b$}
\DGCstrand[red](0,0)(.5,.5)[$^{}$`{\ }]
\DGCdot{B}[d]{$1$}
\DGCstrand[red](1,0)(.5,.5)[$^{}$`{\ }]
\DGCstrand[Red](.5,.5)(.5,1)
\DGCstrand[red](.5,0)(.5,1.5)[$^{}$`{\ }]
%\DGCdot{B}[d]{${1}$}
\DGCdot{E}[u]{$a_{2}$}
\DGCstrand[red](.5,1)(0,1.5)
\DGCdot{E}[ul]{$a_1$}
\DGCstrand[red](.5,1)(1,1.5)
\DGCdot{E}[ur]{$a_{3}$}
\end{DGCpicture} 
~+~
k^6_{a_1,a_2,a_3,b}
\begin{DGCpicture}
\DGCstrand(.25,0)(0,.75)(.25,1.5)
\DGCdot{.75}[ul]{$b$}
\DGCstrand[red](0,0)(.5,.5)[$^{}$`{\ }]
%\DGCdot{B}[d]{$1$}
\DGCstrand[red](1,0)(.5,.5)[$^{}$`{\ }]
\DGCstrand[Red](.5,.5)(.5,1)
\DGCstrand[red](.5,0)(.5,1.5)[$^{}$`{\ }]
%\DGCdot{B}[d]{${1}$}
\DGCdot{E}[u]{$a_{2}$}
\DGCstrand[red](.5,1)(0,1.5)
\DGCdot{E}[ul]{$a_1$}
\DGCstrand[red](.5,1)(1,1.5)
\DGCdot{E}[ur]{$a_{3}$}
\end{DGCpicture}
~=~
0 \ . 
\end{equation}
Apply $\gamma_{i,i+1}$ to \eqref{Dep5} and evaluate on $x_1 x_2$ to get 
\begin{equation*}
k^4_{a_1,a_2,a_3,b} (y_1-x_1) x_1^{a_1} x_2^{a_2} x_3^{a_3} y_1^b = 0
\ .
\end{equation*}
Thus $k^4_{a_1,a_2,a_3,b} = 0$ which implies $k^6_{a_1,a_2,a_3,b}=0$.
It is straightforward to show that each element in the spanning set gets sent to something non-zero under $\gamma_{i,i+1}$ by evaluating on an appropriate element. For instance,
\begin{equation}
    \gamma_{i, i+1} \left(~
    \begin{DGCpicture}
\DGCstrand(.25,0)(0,.75)(.25,1.5)
\DGCdot{.75}[ul]{$b$}
\DGCstrand[red](0,0)(.5,.5)[$^{}$`{\ }]
\DGCdot{B}[d]{$2$}
\DGCstrand[red](1,0)(.5,.5)[$^{}$`{\ }]
\DGCstrand[Red](.5,.5)(.5,1)
\DGCstrand[red](.5,0)(.5,1.5)[$^{}$`{\ }]
\DGCdot{B}[d]{${1}$}
\DGCdot{E}[u]{$a_{2}$}
\DGCstrand[red](.5,1)(0,1.5)
\DGCdot{E}[ul]{$a_1$}
\DGCstrand[red](.5,1)(1,1.5)
\DGCdot{E}[ur]{$a_{3}$}
\end{DGCpicture}
    ~\right) (1)
= (y_1-x_1) y_1^b x_1^{a_1} x_2^{a_2} x_3^{a_3 }.
\end{equation}
Thus these elements of the spanning set are non-zero and linearly independent.

Next we will consider a dependence relation of the form
\begin{align}
\label{Dep6}
&k^1_{a_1,a_2,a_3,b}
\begin{DGCpicture}
%\DGCstrand(.75,0)(.75,1.25)(.25,1.5)
%\DGCdot{.75}[ur]{$b$}
\DGCstrand[red](0,0)(.5,.5)[$^{}$`{\ }]
\DGCdot{B}[d]{$2$}
\DGCstrand[red](1,0)(.5,.5)[$^{}$`{\ }]
\DGCstrand[Red](.5,.5)(.5,1)
\DGCstrand[red](.5,0)(.5,1.5)[$^{}$`{\ }]
\DGCdot{B}[d]{$1$}
\DGCdot{E}[u]{$a_{2}$}
\DGCstrand[red](.5,1)(0,1.5)
\DGCdot{E}[ul]{$a_1$}
\DGCstrand[red](.5,1)(1,1.5)
\DGCdot{E}[ur]{$a_{3}$}
\DGCstrand(.75,0)(.25,.25)(.25,1.5)
\DGCdot{.75}[ul]{$b$}
\end{DGCpicture}
~+~
k^2_{a_1,a_2,a_3,b}
\begin{DGCpicture}
%\DGCstrand(.75,0)(.75,1.25)(.25,1.5)
%\DGCdot{.75}[ur]{$b$}
\DGCstrand[red](0,0)(.5,.5)[$^{}$`{\ }]
\DGCdot{B}[d]{$2$}
\DGCstrand[red](1,0)(.5,.5)[$^{}$`{\ }]
\DGCstrand[Red](.5,.5)(.5,1)
\DGCstrand[red](.5,0)(.5,1.5)[$^{}$`{\ }]
%\DGCdot{B}[d]{$1$}
\DGCdot{E}[u]{$a_{2}$}
\DGCstrand[red](.5,1)(0,1.5)
\DGCdot{E}[ul]{$a_1$}
\DGCstrand[red](.5,1)(1,1.5)
\DGCdot{E}[ur]{$a_{3}$}
\DGCstrand(.75,0)(.25,.25)(.25,1.5)
\DGCdot{.75}[ul]{$b$}
\end{DGCpicture}
~+~
k^3_{a_1,a_2,a_3,b}
\begin{DGCpicture}
%\DGCstrand(.75,0)(.75,1.25)(.25,1.5)
%\DGCdot{.75}[ur]{$b$}
\DGCstrand[red](0,0)(.5,.5)[$^{}$`{\ }]
\DGCdot{B}[d]{$1$}
\DGCstrand[red](1,0)(.5,.5)[$^{}$`{\ }]
\DGCstrand[Red](.5,.5)(.5,1)
\DGCstrand[red](.5,0)(.5,1.5)[$^{}$`{\ }]
\DGCdot{B}[d]{$1$}
\DGCdot{E}[u]{$a_{2}$}
\DGCstrand[red](.5,1)(0,1.5)
\DGCdot{E}[ul]{$a_1$}
\DGCstrand[red](.5,1)(1,1.5)
\DGCdot{E}[ur]{$a_{3}$}
\DGCstrand(.75,0)(.25,.25)(.25,1.5)
\DGCdot{.75}[ul]{$b$}
\end{DGCpicture}
~+~
k^4_{a_1,a_2,a_3,b}
\begin{DGCpicture}
%\DGCstrand(.75,0)(.75,1.25)(.25,1.5)
%\DGCdot{.75}[ur]{$b$}
\DGCstrand[red](0,0)(.5,.5)[$^{}$`{\ }]
\DGCdot{B}[d]{$1$}
\DGCstrand[red](1,0)(.5,.5)[$^{}$`{\ }]
\DGCstrand[Red](.5,.5)(.5,1)
\DGCstrand[red](.5,0)(.5,1.5)[$^{}$`{\ }]
%\DGCdot{B}[d]{$1$}
\DGCdot{E}[u]{$a_{2}$}
\DGCstrand[red](.5,1)(0,1.5)
\DGCdot{E}[ul]{$a_1$}
\DGCstrand[red](.5,1)(1,1.5)
\DGCdot{E}[ur]{$a_{3}$}
\DGCstrand(.75,0)(.25,.25)(.25,1.5)
\DGCdot{.75}[ul]{$b$}
\end{DGCpicture} \\
~+~
&k^5_{a_1,a_2,a_3,b}
\begin{DGCpicture}
%\DGCstrand(.75,0)(.75,1.25)(.25,1.5)
%\DGCdot{.75}[ur]{$b$}
\DGCstrand[red](0,0)(.5,.5)[$^{}$`{\ }]
%\DGCdot{B}[d]{$1$}
\DGCstrand[red](1,0)(.5,.5)[$^{}$`{\ }]
\DGCstrand[Red](.5,.5)(.5,1)
\DGCstrand[red](.5,0)(.5,1.5)[$^{}$`{\ }]
\DGCdot{B}[d]{$1$}
\DGCdot{E}[u]{$a_{2}$}
\DGCstrand[red](.5,1)(0,1.5)
\DGCdot{E}[ul]{$a_1$}
\DGCstrand[red](.5,1)(1,1.5)
\DGCdot{E}[ur]{$a_{3}$}
\DGCstrand(.75,0)(.25,.25)(.25,1.5)
\DGCdot{.75}[ul]{$b$}
\end{DGCpicture}
~+~
k^6_{a_1,a_2,a_3,b}
\begin{DGCpicture}
%\DGCstrand(.75,0)(.75,1.25)(.25,1.5)
%\DGCdot{.75}[ur]{$b$}
\DGCstrand[red](0,0)(.5,.5)[$^{}$`{\ }]
%\DGCdot{B}[d]{$1$}
\DGCstrand[red](1,0)(.5,.5)[$^{}$`{\ }]
\DGCstrand[Red](.5,.5)(.5,1)
\DGCstrand[red](.5,0)(.5,1.5)[$^{}$`{\ }]
%\DGCdot{B}[d]{$1$}
\DGCdot{E}[u]{$a_{2}$}
\DGCstrand[red](.5,1)(0,1.5)
\DGCdot{E}[ul]{$a_1$}
\DGCstrand[red](.5,1)(1,1.5)
\DGCdot{E}[ur]{$a_{3}$}
\DGCstrand(.75,0)(.25,.25)(.25,1.5)
\DGCdot{.75}[ul]{$b$}
\end{DGCpicture}
~+~
k^7_{a_1,a_2,a_3}
\begin{DGCpicture}
\DGCstrand(.75,0)(.75,1.25)(.25,1.5)
\DGCstrand[red](0,0)(.5,.5)[$^{}$`{\ }]
\DGCdot{B}[d]{$2$}
\DGCstrand[red](1,0)(.5,.5)[$^{}$`{\ }]
\DGCstrand[Red](.5,.5)(.5,1)
\DGCstrand[red](.5,0)(.5,1.5)[$^{}$`{\ }]
%\DGCdot{B}[d]{$1$}
\DGCdot{E}[u]{$a_{2}$}
\DGCstrand[red](.5,1)(0,1.5)
\DGCdot{E}[ul]{$a_1$}
\DGCstrand[red](.5,1)(1,1.5)
\DGCdot{E}[ur]{$a_{3}$}
\end{DGCpicture}
~+~
k^8_{a_1,a_2,a_3}
\begin{DGCpicture}
\DGCstrand(.75,0)(.75,1.25)(.25,1.5)
\DGCstrand[red](0,0)(.5,.5)[$^{}$`{\ }]
\DGCdot{B}[d]{$1$}
\DGCstrand[red](1,0)(.5,.5)[$^{}$`{\ }]
\DGCstrand[Red](.5,.5)(.5,1)
\DGCstrand[red](.5,0)(.5,1.5)[$^{}$`{\ }]
%\DGCdot{B}[d]{$1$}
\DGCdot{E}[u]{$a_{2}$}
\DGCstrand[red](.5,1)(0,1.5)
\DGCdot{E}[ul]{$a_1$}
\DGCstrand[red](.5,1)(1,1.5)
\DGCdot{E}[ur]{$a_{3}$}
\end{DGCpicture} \nonumber \\ 
~+~
&k^9_{a_1,a_2,a_3}
\begin{DGCpicture}
\DGCstrand(.75,0)(.75,1.25)(.25,1.5)
\DGCstrand[red](0,0)(.5,.5)[$^{}$`{\ }]
%\DGCdot{B}[d]{$1$}
\DGCstrand[red](1,0)(.5,.5)[$^{}$`{\ }]
\DGCstrand[Red](.5,.5)(.5,1)
\DGCstrand[red](.5,0)(.5,1.5)[$^{}$`{\ }]
\DGCdot{B}[d]{$1$}
\DGCdot{E}[u]{$a_{2}$}
\DGCstrand[red](.5,1)(0,1.5)
\DGCdot{E}[ul]{$a_1$}
\DGCstrand[red](.5,1)(1,1.5)
\DGCdot{E}[ur]{$a_{3}$}
\end{DGCpicture}
~+~
k^{10}_{a_1,a_2,a_3}
\begin{DGCpicture}
\DGCstrand(.75,0)(.75,1.25)(.25,1.5)
\DGCstrand[red](0,0)(.5,.5)[$^{}$`{\ }]
%\DGCdot{B}[d]{$1$}
\DGCstrand[red](1,0)(.5,.5)[$^{}$`{\ }]
\DGCstrand[Red](.5,.5)(.5,1)
\DGCstrand[red](.5,0)(.5,1.5)[$^{}$`{\ }]
%\DGCdot{B}[d]{$1$}
\DGCdot{E}[u]{$a_{2}$}
\DGCstrand[red](.5,1)(0,1.5)
\DGCdot{E}[ul]{$a_1$}
\DGCstrand[red](.5,1)(1,1.5)
\DGCdot{E}[ur]{$a_{3}$}
\end{DGCpicture}
~=~
0 \ . \nonumber
\end{align}
Applying the homomorphism $\gamma_{i,i+1}$ to \eqref{Dep6} and evaluating on $1$ yields the equation
\begin{equation*}
k^1_{a_1,a_2,a_3,b} (y_1-x_1) x_1^{a_1} x_2^{a_2} x_3^{a_3} y_1^b
+
k^7_{a_1,a_2,a_3} x_1^{a_1} x_2^{a_2} x_3^{a_3}
= 0 \ .
\end{equation*}
Thus $k_1 = k_7 = 0$ and the dependence relation becomes
\begin{align}
\label{Dep7}
&k^2_{a_1,a_2,a_3,b}
\begin{DGCpicture}
%\DGCstrand(.75,0)(.75,1.25)(.25,1.5)
%\DGCdot{.75}[ur]{$b$}
\DGCstrand[red](0,0)(.5,.5)[$^{}$`{\ }]
\DGCdot{B}[d]{$2$}
\DGCstrand[red](1,0)(.5,.5)[$^{}$`{\ }]
\DGCstrand[Red](.5,.5)(.5,1)
\DGCstrand[red](.5,0)(.5,1.5)[$^{}$`{\ }]
%\DGCdot{B}[d]{$1$}
\DGCdot{E}[u]{$a_{2}$}
\DGCstrand[red](.5,1)(0,1.5)
\DGCdot{E}[ul]{$a_1$}
\DGCstrand[red](.5,1)(1,1.5)
\DGCdot{E}[ur]{$a_{3}$}
\DGCstrand(.75,0)(.25,.25)(.25,1.5)
\DGCdot{.75}[ul]{$b$}
\end{DGCpicture}
~+~
k^3_{a_1,a_2,a_3,b}
\begin{DGCpicture}
%\DGCstrand(.75,0)(.75,1.25)(.25,1.5)
%\DGCdot{.75}[ur]{$b$}
\DGCstrand[red](0,0)(.5,.5)[$^{}$`{\ }]
\DGCdot{B}[d]{$1$}
\DGCstrand[red](1,0)(.5,.5)[$^{}$`{\ }]
\DGCstrand[Red](.5,.5)(.5,1)
\DGCstrand[red](.5,0)(.5,1.5)[$^{}$`{\ }]
\DGCdot{B}[d]{$1$}
\DGCdot{E}[u]{$a_{2}$}
\DGCstrand[red](.5,1)(0,1.5)
\DGCdot{E}[ul]{$a_1$}
\DGCstrand[red](.5,1)(1,1.5)
\DGCdot{E}[ur]{$a_{3}$}
\DGCstrand(.75,0)(.25,.25)(.25,1.5)
\DGCdot{.75}[ul]{$b$}
\end{DGCpicture}
~+~
k^4_{a_1,a_2,a_3,b}
\begin{DGCpicture}
%\DGCstrand(.75,0)(.75,1.25)(.25,1.5)
%\DGCdot{.75}[ur]{$b$}
\DGCstrand[red](0,0)(.5,.5)[$^{}$`{\ }]
\DGCdot{B}[d]{$1$}
\DGCstrand[red](1,0)(.5,.5)[$^{}$`{\ }]
\DGCstrand[Red](.5,.5)(.5,1)
\DGCstrand[red](.5,0)(.5,1.5)[$^{}$`{\ }]
%\DGCdot{B}[d]{$1$}
\DGCdot{E}[u]{$a_{2}$}
\DGCstrand[red](.5,1)(0,1.5)
\DGCdot{E}[ul]{$a_1$}
\DGCstrand[red](.5,1)(1,1.5)
\DGCdot{E}[ur]{$a_{3}$}
\DGCstrand(.75,0)(.25,.25)(.25,1.5)
\DGCdot{.75}[ul]{$b$}
\end{DGCpicture} 
~+~
k^5_{a_1,a_2,a_3,b}
\begin{DGCpicture}
%\DGCstrand(.75,0)(.75,1.25)(.25,1.5)
%\DGCdot{.75}[ur]{$b$}
\DGCstrand[red](0,0)(.5,.5)[$^{}$`{\ }]
%\DGCdot{B}[d]{$1$}
\DGCstrand[red](1,0)(.5,.5)[$^{}$`{\ }]
\DGCstrand[Red](.5,.5)(.5,1)
\DGCstrand[red](.5,0)(.5,1.5)[$^{}$`{\ }]
\DGCdot{B}[d]{$1$}
\DGCdot{E}[u]{$a_{2}$}
\DGCstrand[red](.5,1)(0,1.5)
\DGCdot{E}[ul]{$a_1$}
\DGCstrand[red](.5,1)(1,1.5)
\DGCdot{E}[ur]{$a_{3}$}
\DGCstrand(.75,0)(.25,.25)(.25,1.5)
\DGCdot{.75}[ul]{$b$}
\end{DGCpicture}  \\
~+~
&k^6_{a_1,a_2,a_3,b}
\begin{DGCpicture}
%\DGCstrand(.75,0)(.75,1.25)(.25,1.5)
%\DGCdot{.75}[ur]{$b$}
\DGCstrand[red](0,0)(.5,.5)[$^{}$`{\ }]
%\DGCdot{B}[d]{$1$}
\DGCstrand[red](1,0)(.5,.5)[$^{}$`{\ }]
\DGCstrand[Red](.5,.5)(.5,1)
\DGCstrand[red](.5,0)(.5,1.5)[$^{}$`{\ }]
%\DGCdot{B}[d]{$1$}
\DGCdot{E}[u]{$a_{2}$}
\DGCstrand[red](.5,1)(0,1.5)
\DGCdot{E}[ul]{$a_1$}
\DGCstrand[red](.5,1)(1,1.5)
\DGCdot{E}[ur]{$a_{3}$}
\DGCstrand(.75,0)(.25,.25)(.25,1.5)
\DGCdot{.75}[ul]{$b$}
\end{DGCpicture}
~+~
k^8_{a_1,a_2,a_3}
\begin{DGCpicture}
\DGCstrand(.75,0)(.75,1.25)(.25,1.5)
\DGCstrand[red](0,0)(.5,.5)[$^{}$`{\ }]
\DGCdot{B}[d]{$1$}
\DGCstrand[red](1,0)(.5,.5)[$^{}$`{\ }]
\DGCstrand[Red](.5,.5)(.5,1)
\DGCstrand[red](.5,0)(.5,1.5)[$^{}$`{\ }]
%\DGCdot{B}[d]{$1$}
\DGCdot{E}[u]{$a_{2}$}
\DGCstrand[red](.5,1)(0,1.5)
\DGCdot{E}[ul]{$a_1$}
\DGCstrand[red](.5,1)(1,1.5)
\DGCdot{E}[ur]{$a_{3}$}
\end{DGCpicture} 
~+~
k^9_{a_1,a_2,a_3}
\begin{DGCpicture}
\DGCstrand(.75,0)(.75,1.25)(.25,1.5)
\DGCstrand[red](0,0)(.5,.5)[$^{}$`{\ }]
%\DGCdot{B}[d]{$1$}
\DGCstrand[red](1,0)(.5,.5)[$^{}$`{\ }]
\DGCstrand[Red](.5,.5)(.5,1)
\DGCstrand[red](.5,0)(.5,1.5)[$^{}$`{\ }]
\DGCdot{B}[d]{$1$}
\DGCdot{E}[u]{$a_{2}$}
\DGCstrand[red](.5,1)(0,1.5)
\DGCdot{E}[ul]{$a_1$}
\DGCstrand[red](.5,1)(1,1.5)
\DGCdot{E}[ur]{$a_{3}$}
\end{DGCpicture}
~+~
k^{10}_{a_1,a_2,a_3}
\begin{DGCpicture}
\DGCstrand(.75,0)(.75,1.25)(.25,1.5)
\DGCstrand[red](0,0)(.5,.5)[$^{}$`{\ }]
%\DGCdot{B}[d]{$1$}
\DGCstrand[red](1,0)(.5,.5)[$^{}$`{\ }]
\DGCstrand[Red](.5,.5)(.5,1)
\DGCstrand[red](.5,0)(.5,1.5)[$^{}$`{\ }]
%\DGCdot{B}[d]{$1$}
\DGCdot{E}[u]{$a_{2}$}
\DGCstrand[red](.5,1)(0,1.5)
\DGCdot{E}[ul]{$a_1$}
\DGCstrand[red](.5,1)(1,1.5)
\DGCdot{E}[ur]{$a_{3}$}
\end{DGCpicture}
~=~
0 \ . \nonumber
\end{align}
Now apply $\gamma_{i,i+1}$ to \eqref{Dep7} and evaluate on $x_1$ to get
\begin{equation*}
k^3_{a_1,a_2,a_3,b} (y_1-x_1) x_1^{a_1} x_2^{a_2} x_3^{a_3} y_1^b
+
k^8_{a_1,a_2,a_3} x_1^{a_1} x_2^{a_2} x_3^{a_3}
= 0 \ .
\end{equation*}
Thus $k^3_{a_1,a_2,a_3,b} = k^8_{a_1,a_2,a_3} = 0 $ and the dependence relation becomes 
\begin{align}
\label{Dep8}
&k^2_{a_1,a_2,a_3,b}
\begin{DGCpicture}
%\DGCstrand(.75,0)(.75,1.25)(.25,1.5)
%\DGCdot{.75}[ur]{$b$}
\DGCstrand[red](0,0)(.5,.5)[$^{}$`{\ }]
\DGCdot{B}[d]{$2$}
\DGCstrand[red](1,0)(.5,.5)[$^{}$`{\ }]
\DGCstrand[Red](.5,.5)(.5,1)
\DGCstrand[red](.5,0)(.5,1.5)[$^{}$`{\ }]
%\DGCdot{B}[d]{$1$}
\DGCdot{E}[u]{$a_{2}$}
\DGCstrand[red](.5,1)(0,1.5)
\DGCdot{E}[ul]{$a_1$}
\DGCstrand[red](.5,1)(1,1.5)
\DGCdot{E}[ur]{$a_{3}$}
\DGCstrand(.75,0)(.25,.25)(.25,1.5)
\DGCdot{.75}[ul]{$b$}
\end{DGCpicture}
~+~
k^4_{a_1,a_2,a_3,b}
\begin{DGCpicture}
%\DGCstrand(.75,0)(.75,1.25)(.25,1.5)
%\DGCdot{.75}[ur]{$b$}
\DGCstrand[red](0,0)(.5,.5)[$^{}$`{\ }]
\DGCdot{B}[d]{$1$}
\DGCstrand[red](1,0)(.5,.5)[$^{}$`{\ }]
\DGCstrand[Red](.5,.5)(.5,1)
\DGCstrand[red](.5,0)(.5,1.5)[$^{}$`{\ }]
%\DGCdot{B}[d]{$1$}
\DGCdot{E}[u]{$a_{2}$}
\DGCstrand[red](.5,1)(0,1.5)
\DGCdot{E}[ul]{$a_1$}
\DGCstrand[red](.5,1)(1,1.5)
\DGCdot{E}[ur]{$a_{3}$}
\DGCstrand(.75,0)(.25,.25)(.25,1.5)
\DGCdot{.75}[ul]{$b$}
\end{DGCpicture} 
~+~
k^5_{a_1,a_2,a_3,b}
\begin{DGCpicture}
%\DGCstrand(.75,0)(.75,1.25)(.25,1.5)
%\DGCdot{.75}[ur]{$b$}
\DGCstrand[red](0,0)(.5,.5)[$^{}$`{\ }]
%\DGCdot{B}[d]{$1$}
\DGCstrand[red](1,0)(.5,.5)[$^{}$`{\ }]
\DGCstrand[Red](.5,.5)(.5,1)
\DGCstrand[red](.5,0)(.5,1.5)[$^{}$`{\ }]
\DGCdot{B}[d]{$1$}
\DGCdot{E}[u]{$a_{2}$}
\DGCstrand[red](.5,1)(0,1.5)
\DGCdot{E}[ul]{$a_1$}
\DGCstrand[red](.5,1)(1,1.5)
\DGCdot{E}[ur]{$a_{3}$}
\DGCstrand(.75,0)(.25,.25)(.25,1.5)
\DGCdot{.75}[ul]{$b$}
\end{DGCpicture}  \\
~+~
&k^6_{a_1,a_2,a_3,b}
\begin{DGCpicture}
%\DGCstrand(.75,0)(.75,1.25)(.25,1.5)
%\DGCdot{.75}[ur]{$b$}
\DGCstrand[red](0,0)(.5,.5)[$^{}$`{\ }]
%\DGCdot{B}[d]{$1$}
\DGCstrand[red](1,0)(.5,.5)[$^{}$`{\ }]
\DGCstrand[Red](.5,.5)(.5,1)
\DGCstrand[red](.5,0)(.5,1.5)[$^{}$`{\ }]
%\DGCdot{B}[d]{$1$}
\DGCdot{E}[u]{$a_{2}$}
\DGCstrand[red](.5,1)(0,1.5)
\DGCdot{E}[ul]{$a_1$}
\DGCstrand[red](.5,1)(1,1.5)
\DGCdot{E}[ur]{$a_{3}$}
\DGCstrand(.75,0)(.25,.25)(.25,1.5)
\DGCdot{.75}[ul]{$b$}
\end{DGCpicture}
~+~
k^9_{a_1,a_2,a_3}
\begin{DGCpicture}
\DGCstrand(.75,0)(.75,1.25)(.25,1.5)
\DGCstrand[red](0,0)(.5,.5)[$^{}$`{\ }]
%\DGCdot{B}[d]{$1$}
\DGCstrand[red](1,0)(.5,.5)[$^{}$`{\ }]
\DGCstrand[Red](.5,.5)(.5,1)
\DGCstrand[red](.5,0)(.5,1.5)[$^{}$`{\ }]
\DGCdot{B}[d]{$1$}
\DGCdot{E}[u]{$a_{2}$}
\DGCstrand[red](.5,1)(0,1.5)
\DGCdot{E}[ul]{$a_1$}
\DGCstrand[red](.5,1)(1,1.5)
\DGCdot{E}[ur]{$a_{3}$}
\end{DGCpicture}
~+~
k^{10}_{a_1,a_2,a_3}
\begin{DGCpicture}
\DGCstrand(.75,0)(.75,1.25)(.25,1.5)
\DGCstrand[red](0,0)(.5,.5)[$^{}$`{\ }]
%\DGCdot{B}[d]{$1$}
\DGCstrand[red](1,0)(.5,.5)[$^{}$`{\ }]
\DGCstrand[Red](.5,.5)(.5,1)
\DGCstrand[red](.5,0)(.5,1.5)[$^{}$`{\ }]
%\DGCdot{B}[d]{$1$}
\DGCdot{E}[u]{$a_{2}$}
\DGCstrand[red](.5,1)(0,1.5)
\DGCdot{E}[ul]{$a_1$}
\DGCstrand[red](.5,1)(1,1.5)
\DGCdot{E}[ur]{$a_{3}$}
\end{DGCpicture}
~=~
0 \ . \nonumber
\end{align}
Now apply $\gamma_{i,i+1}$ to \eqref{Dep8} and evaluate on $x_2$ to get
\begin{equation*}
k^2_{a_1,a_2,a_3,b} (y_1-x_1) x_1^{a_1} x_2^{a_2} x_3^{a_3} y_1^b
-
k^9_{a_1,a_2,a_3} x_1^{a_1} x_2^{a_2} x_3^{a_3}
= 0 \ .
\end{equation*}
Thus $k^2_{a_1,a_2,a_3,b}=k^9_{a_1,a_2,a_3}=0$ and the dependence relation becomes
\begin{equation}
\label{Dep9}
k^4_{a_1,a_2,a_3,b}
\begin{DGCpicture}
%\DGCstrand(.75,0)(.75,1.25)(.25,1.5)
%\DGCdot{.75}[ur]{$b$}
\DGCstrand[red](0,0)(.5,.5)[$^{}$`{\ }]
\DGCdot{B}[d]{$1$}
\DGCstrand[red](1,0)(.5,.5)[$^{}$`{\ }]
\DGCstrand[Red](.5,.5)(.5,1)
\DGCstrand[red](.5,0)(.5,1.5)[$^{}$`{\ }]
%\DGCdot{B}[d]{$1$}
\DGCdot{E}[u]{$a_{2}$}
\DGCstrand[red](.5,1)(0,1.5)
\DGCdot{E}[ul]{$a_1$}
\DGCstrand[red](.5,1)(1,1.5)
\DGCdot{E}[ur]{$a_{3}$}
\DGCstrand(.75,0)(.25,.25)(.25,1.5)
\DGCdot{.75}[ul]{$b$}
\end{DGCpicture} 
~+~
k^5_{a_1,a_2,a_3,b}
\begin{DGCpicture}
%\DGCstrand(.75,0)(.75,1.25)(.25,1.5)
%\DGCdot{.75}[ur]{$b$}
\DGCstrand[red](0,0)(.5,.5)[$^{}$`{\ }]
%\DGCdot{B}[d]{$1$}
\DGCstrand[red](1,0)(.5,.5)[$^{}$`{\ }]
\DGCstrand[Red](.5,.5)(.5,1)
\DGCstrand[red](.5,0)(.5,1.5)[$^{}$`{\ }]
\DGCdot{B}[d]{$1$}
\DGCdot{E}[u]{$a_{2}$}
\DGCstrand[red](.5,1)(0,1.5)
\DGCdot{E}[ul]{$a_1$}
\DGCstrand[red](.5,1)(1,1.5)
\DGCdot{E}[ur]{$a_{3}$}
\DGCstrand(.75,0)(.25,.25)(.25,1.5)
\DGCdot{.75}[ul]{$b$}
\end{DGCpicture}  
~+~
k^6_{a_1,a_2,a_3,b}
\begin{DGCpicture}
%\DGCstrand(.75,0)(.75,1.25)(.25,1.5)
%\DGCdot{.75}[ur]{$b$}
\DGCstrand[red](0,0)(.5,.5)[$^{}$`{\ }]
%\DGCdot{B}[d]{$1$}
\DGCstrand[red](1,0)(.5,.5)[$^{}$`{\ }]
\DGCstrand[Red](.5,.5)(.5,1)
\DGCstrand[red](.5,0)(.5,1.5)[$^{}$`{\ }]
%\DGCdot{B}[d]{$1$}
\DGCdot{E}[u]{$a_{2}$}
\DGCstrand[red](.5,1)(0,1.5)
\DGCdot{E}[ul]{$a_1$}
\DGCstrand[red](.5,1)(1,1.5)
\DGCdot{E}[ur]{$a_{3}$}
\DGCstrand(.75,0)(.25,.25)(.25,1.5)
\DGCdot{.75}[ul]{$b$}
\end{DGCpicture}
~+~
k^{10}_{a_1,a_2,a_3}
\begin{DGCpicture}
\DGCstrand(.75,0)(.75,1.25)(.25,1.5)
\DGCstrand[red](0,0)(.5,.5)[$^{}$`{\ }]
%\DGCdot{B}[d]{$1$}
\DGCstrand[red](1,0)(.5,.5)[$^{}$`{\ }]
\DGCstrand[Red](.5,.5)(.5,1)
\DGCstrand[red](.5,0)(.5,1.5)[$^{}$`{\ }]
%\DGCdot{B}[d]{$1$}
\DGCdot{E}[u]{$a_{2}$}
\DGCstrand[red](.5,1)(0,1.5)
\DGCdot{E}[ul]{$a_1$}
\DGCstrand[red](.5,1)(1,1.5)
\DGCdot{E}[ur]{$a_{3}$}
\end{DGCpicture}
~=~
0 \ . 
\end{equation}
Now apply $\gamma_{i,i+1}$ to \eqref{Dep9} and evaluate on $x_1^2$ to get
\begin{equation*}
k^5_{a_1,a_2,a_3,b} (y_1-x_1) x_1^{a_1} x_2^{a_2} x_3^{a_3} y_1^b
+
k^{10}_{a_1,a_2,a_3} x_1^{a_1} x_2^{a_2} x_3^{a_3}
= 0 \ .
\end{equation*}
Thus $ k^5_{a_1,a_2,a_3,b} = k^{10}_{a_1,a_2,a_3}=0$ and the dependence relation becomes 
\begin{equation}
\label{Dep10}
k^4_{a_1,a_2,a_3,b}
\begin{DGCpicture}
%\DGCstrand(.75,0)(.75,1.25)(.25,1.5)
%\DGCdot{.75}[ur]{$b$}
\DGCstrand[red](0,0)(.5,.5)[$^{}$`{\ }]
\DGCdot{B}[d]{$1$}
\DGCstrand[red](1,0)(.5,.5)[$^{}$`{\ }]
\DGCstrand[Red](.5,.5)(.5,1)
\DGCstrand[red](.5,0)(.5,1.5)[$^{}$`{\ }]
%\DGCdot{B}[d]{$1$}
\DGCdot{E}[u]{$a_{2}$}
\DGCstrand[red](.5,1)(0,1.5)
\DGCdot{E}[ul]{$a_1$}
\DGCstrand[red](.5,1)(1,1.5)
\DGCdot{E}[ur]{$a_{3}$}
\DGCstrand(.75,0)(.25,.25)(.25,1.5)
\DGCdot{.75}[ul]{$b$}
\end{DGCpicture}  
~+~
k^6_{a_1,a_2,a_3,b}
\begin{DGCpicture}
%\DGCstrand(.75,0)(.75,1.25)(.25,1.5)
%\DGCdot{.75}[ur]{$b$}
\DGCstrand[red](0,0)(.5,.5)[$^{}$`{\ }]
%\DGCdot{B}[d]{$1$}
\DGCstrand[red](1,0)(.5,.5)[$^{}$`{\ }]
\DGCstrand[Red](.5,.5)(.5,1)
\DGCstrand[red](.5,0)(.5,1.5)[$^{}$`{\ }]
%\DGCdot{B}[d]{$1$}
\DGCdot{E}[u]{$a_{2}$}
\DGCstrand[red](.5,1)(0,1.5)
\DGCdot{E}[ul]{$a_1$}
\DGCstrand[red](.5,1)(1,1.5)
\DGCdot{E}[ur]{$a_{3}$}
\DGCstrand(.75,0)(.25,.25)(.25,1.5)
\DGCdot{.75}[ul]{$b$}
\end{DGCpicture}
~=~
0 \ . 
\end{equation}
Finally apply $\gamma_{i,i+1}$ to \eqref{Dep10} and evaluate on $x_1 x_2$ to get
\begin{equation*}
k^4_{a_1,a_2,a_3,b} (y_1-x_1) x_1^{a_1} x_2^{a_2} x_3^{a_3} y_1^b
= 0 \ .
\end{equation*}
Thus $ k^4_{a_1,a_2,a_3,b}=0$ and consequently $k^6_{a_1,a_2,a_3,b}=0$.
Therefore these spanning elements are also linearly independent.

The other cases are checked in a similar fashion.
\end{proof}

\begin{prop}
\label{WiWi+1Wispan}
The following elements span the bimodule $W_i \otimes_{W} W_{i+1} \otimes_{W} W_i $.
%\JS{Below is part of a spanning set for $W_i \otimes W_{i+1} \otimes W_i $.
%I think these elements are linearly independent.  The last two pictures are shorthand for any picture where the black strand starts or ends outside the main region.}
\begin{equation*}
\gimel_1({\bf a},b,r,s,t)
~=~
\begin{DGCpicture}[scale={.7,.7}]
\DGCstrand[red](-2,0)(-2,4.5)[$^{1}$`{\ }]
\DGCdot{E}[u]{$a_{1}$}
\DGCcoupon*(-1.8,2)(-1.2,2.5){$\cdots$}
\DGCstrand[red](4,0)(4,4.5)[$^{l}$`{\ }]
\DGCdot{E}[u]{$a_{n}$}
\DGCcoupon*(3.2,2)(3.8,2.5){$\cdots$}
\DGCstrand[red](0,0)(.5,.5)[$^{i}$`{\ }]
\DGCdot{B}[l]{$t$}
\DGCstrand[red](1,0)(.5,.5)[$^{i+1}$`{\ }]
\DGCstrand[Red](.5,.5)(.5,1)
\DGCstrand[red](.5,1)(0,1.5)
\DGCstrand[red](.5,1)(1,1.5)
\DGCstrand[red](2,0)(2,1.5)[$^{i+2}$`{\ }]
\DGCstrand[red](1,1.5)(1.5,2)
\DGCdot{B}[l]{$s$}
\DGCstrand[red](2,1.5)(1.5,2)
\DGCstrand[Red](1.5,2)(1.5,2.5)
\DGCstrand[red](1.5,2.5)(1,3)
\DGCstrand[red](1.5,2.5)(2,3)
\DGCstrand[red](0,1.5)(0,3)
\DGCdot{E}[r]{$r$}
%%%%
\DGCstrand[red](0,3)(.5,3.5)
\DGCstrand[red](1,3)(.5,3.5)
\DGCstrand[Red](.5,3.5)(.5,4)
\DGCstrand[red](.5,4)(0,4.5)
\DGCdot{E}[ul]{$a_{i}$}
\DGCstrand[red](.5,4)(1,4.5)
\DGCdot{E}[ur]{$a_{i+1}$}
\DGCstrand[red](2,3)(2,4.5)
\DGCdot{E}[ur]{$a_{i+2}$}
%\DGCcoupon*(-2.75,2)(-2.25,2.5){$\cdots$}
%\DGCcoupon*(4.25,2)(4.75,2.5){$\cdots$}
\DGCstrand(.5,0)(-.5,.75)(-.5,3.75)(.5,4.5)
%\DGCstrand(.5,0)(-.5,2.25)(.5,4.5) replace above
\DGCdot{E}[u]{$b$}
\end{DGCpicture}
\quad \quad
a_i \in \mathbb{Z}_{\geq 0}, 
b \in \mathbb{Z}_{\geq 0}, 
r,s,t \in \{0, 1\} ,
\end{equation*}

\begin{equation*}
\gimel_2({\bf a},s)
~=~
\begin{DGCpicture}[scale={.7,.7}]
\DGCstrand[red](-2,0)(-2,4.5)[$^{1}$`{\ }]
\DGCdot{E}[u]{$a_{1}$}
\DGCcoupon*(-1.8,2)(-1.2,2.5){$\cdots$}
\DGCstrand[red](4,0)(4,4.5)[$^{n}$`{\ }]
\DGCdot{E}[u]{$a_{n}$}
\DGCcoupon*(3.2,2)(3.8,2.5){$\cdots$}
\DGCstrand[red](0,0)(.5,.5)[$^{i}$`{\ }]
%\DGCdot{B}[l]{$t$}
\DGCstrand[red](1,0)(.5,.5)[$^{i+1}$`{\ }]
\DGCstrand[Red](.5,.5)(.5,1)
\DGCstrand[red](.5,1)(0,1.5)
\DGCstrand[red](.5,1)(1,1.5)
\DGCstrand[red](2,0)(2,1.5)[$^{i+2}$`{\ }]
\DGCstrand[red](1,1.5)(1.5,2)
\DGCdot{B}[r]{$s$}
\DGCstrand[red](2,1.5)(1.5,2)
\DGCstrand[Red](1.5,2)(1.5,2.5)
\DGCstrand[red](1.5,2.5)(1,3)
\DGCstrand[red](1.5,2.5)(2,3)
\DGCstrand[red](0,1.5)(0,3)
%\DGCdot{E}[r]{$r$}
%%%%
\DGCstrand[red](0,3)(.5,3.5)
\DGCstrand[red](1,3)(.5,3.5)
\DGCstrand[Red](.5,3.5)(.5,4)
\DGCstrand[red](.5,4)(0,4.5)
\DGCdot{E}[u]{$a_{i}$}
\DGCstrand[red](.5,4)(1,4.5)
\DGCdot{E}[u]{$a_{i+1}$}
\DGCstrand[red](2,3)(2,4.5)
\DGCdot{E}[u]{$a_{i+2}$}
%\DGCcoupon*(-2.75,2)(-2.25,2.5){$\cdots$}
%\DGCcoupon*(4.25,2)(4.75,2.5){$\cdots$}
\DGCstrand(.5,0)(0.75,.75)(0.75,3.75)(.5,4.5)
%\DGCstrand(.5,0)(0.75,2.25)(.5,4.5) replace above
%\DGCdot{E}[u]{$b$}
\end{DGCpicture}
\quad \quad 
a_i \in \mathbb{Z}_{\geq 0}, 
%b \in \mathbb{Z}_{\geq 0}, 
s \in \{0, 1\} ,
\end{equation*}

\begin{equation*}
\gimel_3({\bf a})
~=~
\begin{DGCpicture}[scale={.7,.7}]
\DGCstrand[red](-2,0)(-2,4.5)[$^{1}$`{\ }]
\DGCdot{E}[u]{$a_{1}$}
\DGCcoupon*(-1.8,2)(-1.2,2.5){$\cdots$}
\DGCstrand[red](4,0)(4,4.5)[$^{n}$`{\ }]
\DGCdot{E}[u]{$a_{n}$}
\DGCcoupon*(3.2,2)(3.8,2.5){$\cdots$}
\DGCstrand[red](0,0)(.5,.5)[$^{i}$`{\ }]
%\DGCdot{B}[l]{$t$}
\DGCstrand[red](1,0)(.5,.5)[$^{i+1}$`{\ }]
\DGCstrand[Red](.5,.5)(.5,1)
\DGCstrand[red](.5,1)(0,1.5)
\DGCstrand[red](.5,1)(1,1.5)
\DGCstrand[red](2,0)(2,1.5)[$^{i+2}$`{\ }]
\DGCstrand[red](1,1.5)(1.5,2)
%\DGCdot{B}[r]{$s$}
\DGCstrand[red](2,1.5)(1.5,2)
\DGCstrand[Red](1.5,2)(1.5,2.5)
\DGCstrand[red](1.5,2.5)(1,3)
\DGCstrand[red](1.5,2.5)(2,3)
\DGCstrand[red](0,1.5)(0,3)
%\DGCdot{E}[r]{$r$}
%%%%
\DGCstrand[red](0,3)(.5,3.5)
\DGCstrand[red](1,3)(.5,3.5)
\DGCstrand[Red](.5,3.5)(.5,4)
\DGCstrand[red](.5,4)(0,4.5)
\DGCdot{E}[u]{$a_{i}$}
\DGCstrand[red](.5,4)(1,4.5)
\DGCdot{E}[u]{$a_{i+1}$}
\DGCstrand[red](2,3)(2,4.5)
\DGCdot{E}[u]{$a_{i+2}$}
%\DGCcoupon*(-2.75,2)(-2.25,2.5){$\cdots$}
%\DGCcoupon*(4.25,2)(4.75,2.5){$\cdots$}
\DGCstrand(.5,0)(2.5,.75)(2.5,3.75)(.5,4.5)
%\DGCstrand(.5,0)(2.5,2.25)(.5,4.5) replace above
%\DGCdot{E}[u]{$b$}
\end{DGCpicture}
\quad \quad
a_i \in \mathbb{Z}_{\geq 0},
%b \in \mathbb{Z}_{\geq 0}, 
\end{equation*}

\begin{equation*}
\gimel_4({\bf a},b,r,s,t)
~=~
\begin{DGCpicture}[scale={.7,.7}]
\DGCstrand[red](-2,0)(-2,4.5)[$^{1}$`{\ }]
\DGCdot{E}[u]{$a_{1}$}
\DGCcoupon*(-1.8,2)(-1.2,2.5){$\cdots$}
\DGCstrand[red](4,0)(4,4.5)[$^{n}$`{\ }]
\DGCdot{E}[u]{$a_{n}$}
\DGCcoupon*(3.2,2)(3.8,2.5){$\cdots$}
\DGCstrand[red](0,0)(.5,.5)[$^{i}$`{\ }]
\DGCdot{B}[l]{$t$}
\DGCstrand[red](1,0)(.5,.5)[$^{i+1}$`{\ }]
\DGCstrand[Red](.5,.5)(.5,1)
\DGCstrand[red](.5,1)(0,1.5)
\DGCstrand[red](.5,1)(1,1.5)
\DGCstrand[red](2,0)(2,1.5)[$^{i+2}$`{\ }]
\DGCstrand[red](1,1.5)(1.5,2)
\DGCdot{B}[l]{$s$}
\DGCstrand[red](2,1.5)(1.5,2)
\DGCstrand[Red](1.5,2)(1.5,2.5)
\DGCstrand[red](1.5,2.5)(1,3)
\DGCstrand[red](1.5,2.5)(2,3)
\DGCstrand[red](0,1.5)(0,3)
\DGCdot{E}[r]{$r$}
%%%%
\DGCstrand[red](0,3)(.5,3.5)
\DGCstrand[red](1,3)(.5,3.5)
\DGCstrand[Red](.5,3.5)(.5,4)
\DGCstrand[red](.5,4)(0,4.5)
\DGCdot{E}[ul]{$a_{i}$}
\DGCstrand[red](.5,4)(1,4.5)
\DGCdot{E}[ul]{$a_{i+1}$}
\DGCstrand[red](2,3)(2,4.5)
\DGCdot{E}[ur]{$a_{i+2}$}
%\DGCcoupon*(-2.75,2)(-2.25,2.5){$\cdots$}
%\DGCcoupon*(4.25,2)(4.75,2.5){$\cdots$}
\DGCstrand(1.5,0)(1.25,.75)(1.25,3.75)(1.5,4.5)
%\DGCstrand(1.5,0)(1,2.25)(1.5,4.5) replace above
\DGCdot{E}[u]{$b$}
\end{DGCpicture}
\quad \quad
a_i \in \mathbb{Z}_{\geq 0}, 
b \in \mathbb{Z}_{\geq 0}, 
r,s,t \in \{0, 1\} ,
\end{equation*}

\begin{equation*}
\gimel_5({\bf a},r,t)
~=~
\begin{DGCpicture}[scale={.7,.7}]
\DGCstrand[red](-2,0)(-2,4.5)[$^{1}$`{\ }]
\DGCdot{E}[u]{$a_{1}$}
\DGCcoupon*(-1.8,2)(-1.2,2.5){$\cdots$}
\DGCstrand[red](4,0)(4,4.5)[$^{n}$`{\ }]
\DGCdot{E}[u]{$a_{n}$}
\DGCcoupon*(3.2,2)(3.8,2.5){$\cdots$}
\DGCstrand[red](0,0)(.5,.5)[$^{i}$`{\ }]
\DGCdot{B}[l]{$t$}
\DGCstrand[red](1,0)(.5,.5)[$^{i+1}$`{\ }]
\DGCstrand[Red](.5,.5)(.5,1)
\DGCstrand[red](.5,1)(0,1.5)
\DGCstrand[red](.5,1)(1,1.5)
\DGCstrand[red](2,0)(2,1.5)[$^{i+2}$`{\ }]
\DGCstrand[red](1,1.5)(1.5,2)
%\DGCdot{B}[l]{$s$}
\DGCstrand[red](2,1.5)(1.5,2)
\DGCstrand[Red](1.5,2)(1.5,2.5)
\DGCstrand[red](1.5,2.5)(1,3)
\DGCstrand[red](1.5,2.5)(2,3)
\DGCstrand[red](0,1.5)(0,3)
\DGCdot{E}[r]{$r$}
%%%%
\DGCstrand[red](0,3)(.5,3.5)
\DGCstrand[red](1,3)(.5,3.5)
\DGCstrand[Red](.5,3.5)(.5,4)
\DGCstrand[red](.5,4)(0,4.5)
\DGCdot{E}[u]{$a_{i}$}
\DGCstrand[red](.5,4)(1,4.5)
\DGCdot{E}[u]{$a_{i+1}$}
\DGCstrand[red](2,3)(2,4.5)
\DGCdot{E}[u]{$a_{i+2}$}
%\DGCcoupon*(-2.75,2)(-2.25,2.5){$\cdots$}
%\DGCcoupon*(4.25,2)(4.75,2.5){$\cdots$}
\DGCstrand(1.5,0)(1.75,.75)(1.75,3.75)(1.5,4.5)
%\DGCstrand(1.5,0)(2,2.25)(1.5,4.5) replace above
%\DGCdot{E}[u]{$b$}
\end{DGCpicture}
\quad \quad
a_i \in \mathbb{Z}_{\geq 0}, 
%b \in \mathbb{Z}_{\geq 0}, 
r,t \in \{0, 1\} ,
\end{equation*}

\begin{equation*}
\gimel_6({\bf a},b,r,s,t)
~=~
\begin{DGCpicture}[scale={.7,.7}]
\DGCstrand[red](-2,0)(-2,4.5)[$^{1}$`{\ }]
\DGCdot{E}[u]{$a_{1}$}
\DGCcoupon*(-1.8,2)(-1.2,2.5){$\cdots$}
\DGCstrand[red](4,0)(4,4.5)[$^{n}$`{\ }]
\DGCdot{E}[u]{$a_{n}$}
\DGCcoupon*(3.2,2)(3.8,2.5){$\cdots$}
\DGCstrand[red](0,0)(.5,.5)[$^{i}$`{\ }]
\DGCdot{B}[l]{$t$}
\DGCstrand[red](1,0)(.5,.5)[$^{i+1}$`{\ }]
\DGCstrand[Red](.5,.5)(.5,1)
\DGCstrand[red](.5,1)(0,1.5)
\DGCstrand[red](.5,1)(1,1.5)
\DGCstrand[red](2,0)(2,1.5)[$^{i+2}$`{\ }]
\DGCstrand[red](1,1.5)(1.5,2)
\DGCdot{B}[l]{$s$}
\DGCstrand[red](2,1.5)(1.5,2)
\DGCstrand[Red](1.5,2)(1.5,2.5)
\DGCstrand[red](1.5,2.5)(1,3)
\DGCstrand[red](1.5,2.5)(2,3)
\DGCstrand[red](0,1.5)(0,3)
\DGCdot{E}[r]{$r$}
%%%%
\DGCstrand[red](0,3)(.5,3.5)
\DGCstrand[red](1,3)(.5,3.5)
\DGCstrand[Red](.5,3.5)(.5,4)
\DGCstrand[red](.5,4)(0,4.5)
\DGCdot{E}[ul]{$a_{i}$}
\DGCstrand[red](.5,4)(1,4.5)
\DGCdot{E}[ur]{$a_{i+1}$}
\DGCstrand[red](2,3)(2,4.5)
\DGCdot{E}[ur]{$a_{i+2}$}
%\DGCcoupon*(-2.75,2)(-2.25,2.5){$\cdots$}
%\DGCcoupon*(4.25,2)(4.75,2.5){$\cdots$}
\DGCstrand(1.5,0)(1.25,.75)(1.25,3.75)(.5,4.5)
%\DGCstrand(1.5,0)(1,4.25)(.5,4.5) replace above
\DGCdot{E}[u]{$b$}
\end{DGCpicture}
\quad \quad
a_i \in \mathbb{Z}_{\geq 0}, 
b \in \mathbb{Z}_{\geq 0}, 
r,s,t \in \{0, 1\} ,
\end{equation*}

\begin{equation*}
\gimel_7({\bf a},r,t)
~=~
\begin{DGCpicture}[scale={.7,.7}]
\DGCstrand[red](-2,0)(-2,4.5)[$^{1}$`{\ }]
\DGCdot{E}[u]{$a_{1}$}
\DGCcoupon*(-1.8,2)(-1.2,2.5){$\cdots$}
\DGCstrand[red](4,0)(4,4.5)[$^{n}$`{\ }]
\DGCdot{E}[u]{$a_{n}$}
\DGCcoupon*(3.2,2)(3.8,2.5){$\cdots$}
\DGCstrand[red](0,0)(.5,.5)[$^{i}$`{\ }]
\DGCdot{B}[l]{$t$}
\DGCstrand[red](1,0)(.5,.5)[$^{i+1}$`{\ }]
\DGCstrand[Red](.5,.5)(.5,1)
\DGCstrand[red](.5,1)(0,1.5)
\DGCstrand[red](.5,1)(1,1.5)
\DGCstrand[red](2,0)(2,1.5)[$^{i+2}$`{\ }]
\DGCstrand[red](1,1.5)(1.5,2)
%\DGCdot{B}[l]{$s$}
\DGCstrand[red](2,1.5)(1.5,2)
\DGCstrand[Red](1.5,2)(1.5,2.5)
\DGCstrand[red](1.5,2.5)(1,3)
\DGCstrand[red](1.5,2.5)(2,3)
\DGCstrand[red](0,1.5)(0,3)
\DGCdot{E}[r]{$r$}
%%%%
\DGCstrand[red](0,3)(.5,3.5)
\DGCstrand[red](1,3)(.5,3.5)
\DGCstrand[Red](.5,3.5)(.5,4)
\DGCstrand[red](.5,4)(0,4.5)
\DGCdot{E}[u]{$a_{i}$}
\DGCstrand[red](.5,4)(1,4.5)
\DGCdot{E}[u]{$a_{i+1}$}
\DGCstrand[red](2,3)(2,4.5)
\DGCdot{E}[u]{$a_{i+2}$}
%\DGCcoupon*(-2.75,2)(-2.25,2.5){$\cdots$}
%\DGCcoupon*(4.25,2)(4.75,2.5){$\cdots$}
\DGCstrand(1.5,0)(1.75,.75)(1.75,3.75)(.5,4.5)
%\DGCstrand(1.5,0)(2,2.25)(.5,4.5) replace above
%\DGCdot{E}[u]{$b$}
\end{DGCpicture}
\quad \quad
a_i \in \mathbb{Z}_{\geq 0}, 
%b \in \mathbb{Z}_{\geq 0}, 
r,t \in \{0, 1\} ,
\end{equation*}

\begin{equation*}
\gimel_8({\bf a},t)
~=~
\begin{DGCpicture}[scale={.7,.7}]
\DGCstrand[red](-2,0)(-2,4.5)[$^{1}$`{\ }]
\DGCdot{E}[u]{$a_{1}$}
\DGCcoupon*(-1.8,2)(-1.2,2.5){$\cdots$}
\DGCstrand[red](4,0)(4,4.5)[$^{n}$`{\ }]
\DGCdot{E}[u]{$a_{n}$}
\DGCcoupon*(3.2,2)(3.8,2.5){$\cdots$}
\DGCstrand[red](0,0)(.5,.5)[$^{i}$`{\ }]
\DGCdot{B}[l]{$t$}
\DGCstrand[red](1,0)(.5,.5)[$^{i+1}$`{\ }]
\DGCstrand[Red](.5,.5)(.5,1)
\DGCstrand[red](.5,1)(0,1.5)
\DGCstrand[red](.5,1)(1,1.5)
\DGCstrand[red](2,0)(2,1.5)[$^{i+2}$`{\ }]
\DGCstrand[red](1,1.5)(1.5,2)
%\DGCdot{B}[l]{$s$}
\DGCstrand[red](2,1.5)(1.5,2)
\DGCstrand[Red](1.5,2)(1.5,2.5)
\DGCstrand[red](1.5,2.5)(1,3)
\DGCstrand[red](1.5,2.5)(2,3)
\DGCstrand[red](0,1.5)(0,3)
%\DGCdot{E}[r]{$r$}
%%%%
\DGCstrand[red](0,3)(.5,3.5)
\DGCstrand[red](1,3)(.5,3.5)
\DGCstrand[Red](.5,3.5)(.5,4)
\DGCstrand[red](.5,4)(0,4.5)
\DGCdot{E}[u]{$a_{i}$}
\DGCstrand[red](.5,4)(1,4.5)
\DGCdot{E}[u]{$a_{i+1}$}
\DGCstrand[red](2,3)(2,4.5)
\DGCdot{E}[u]{$a_{i+2}$}
%\DGCcoupon*(-2.75,2)(-2.25,2.5){$\cdots$}
%\DGCcoupon*(4.25,2)(4.75,2.5){$\cdots$}
\DGCstrand(1.5,0)(1.5,1.25)(-.5,2)(-.5,3.75)(.5,4.5)
%\DGCstrand(1.5,0)(-.5,2.25)(.5,4.5) replace above
%\DGCdot{E}[u]{$b$}
\end{DGCpicture}
\quad \quad
a_i \in \mathbb{Z}_{\geq 0}, 
%b \in \mathbb{Z}_{\geq 0}, 
t \in \{0, 1\} ,
\end{equation*}

\begin{equation*}
\gimel_9({\bf a},b,r,s,t)
~=~
\begin{DGCpicture}[scale={.7,.7}]
\DGCstrand[red](-2,0)(-2,4.5)[$^{1}$`{\ }]
\DGCdot{E}[u]{$a_{1}$}
\DGCcoupon*(-1.8,2)(-1.2,2.5){$\cdots$}
\DGCstrand[red](4,0)(4,4.5)[$^{n}$`{\ }]
\DGCdot{E}[u]{$a_{n}$}
\DGCcoupon*(3.2,2)(3.8,2.5){$\cdots$}
\DGCstrand[red](0,0)(.5,.5)[$^{i}$`{\ }]
\DGCdot{B}[l]{$t$}
\DGCstrand[red](1,0)(.5,.5)[$^{i+1}$`{\ }]
\DGCstrand[Red](.5,.5)(.5,1)
\DGCstrand[red](.5,1)(0,1.5)
\DGCstrand[red](.5,1)(1,1.5)
\DGCstrand[red](2,0)(2,1.5)[$^{i+2}$`{\ }]
\DGCstrand[red](1,1.5)(1.5,2)
\DGCdot{B}[l]{$s$}
\DGCstrand[red](2,1.5)(1.5,2)
\DGCstrand[Red](1.5,2)(1.5,2.5)
\DGCstrand[red](1.5,2.5)(1,3)
\DGCstrand[red](1.5,2.5)(2,3)
\DGCstrand[red](0,1.5)(0,3)
\DGCdot{E}[r]{$r$}
%%%%
\DGCstrand[red](0,3)(.5,3.5)
\DGCstrand[red](1,3)(.5,3.5)
\DGCstrand[Red](.5,3.5)(.5,4)
\DGCstrand[red](.5,4)(0,4.5)
\DGCdot{E}[ul]{$a_{i}$}
\DGCstrand[red](.5,4)(1,4.5)
\DGCdot{E}[ul]{$a_{i+1}$}
\DGCstrand[red](2,3)(2,4.5)
\DGCdot{E}[ur]{$a_{i+2}$}
%\DGCcoupon*(-2.75,2)(-2.25,2.5){$\cdots$}
%\DGCcoupon*(4.25,2)(4.75,2.5){$\cdots$}
%\DGCstrand(1.5,0)(1,4.25)(.5,4.5)
\DGCstrand(.5,0)(1.25,.75)(1.25,3.75)(1.5,4.5)
%\DGCstrand(.5,0)(1,.25)(1.5,4.5) replace above
\DGCdot{E}[u]{$b$}
\end{DGCpicture}
\quad \quad
a_i \in \mathbb{Z}_{\geq 0}, 
b \in \mathbb{Z}_{\geq 0}, 
r,s,t \in \{0, 1\} ,
\end{equation*}

\begin{equation*}
\gimel_{10}({\bf a},r,t)
~=~
\begin{DGCpicture}[scale={.7,.7}]
\DGCstrand[red](-2,0)(-2,4.5)[$^{1}$`{\ }]
\DGCdot{E}[u]{$a_{1}$}
\DGCcoupon*(-1.8,2)(-1.2,2.5){$\cdots$}
\DGCstrand[red](4,0)(4,4.5)[$^{n}$`{\ }]
\DGCdot{E}[u]{$a_{n}$}
\DGCcoupon*(3.2,2)(3.8,2.5){$\cdots$}
\DGCstrand[red](0,0)(.5,.5)[$^{i}$`{\ }]
\DGCdot{B}[l]{$t$}
\DGCstrand[red](1,0)(.5,.5)[$^{i+1}$`{\ }]
\DGCstrand[Red](.5,.5)(.5,1)
\DGCstrand[red](.5,1)(0,1.5)
\DGCstrand[red](.5,1)(1,1.5)
\DGCstrand[red](2,0)(2,1.5)[$^{i+2}$`{\ }]
\DGCstrand[red](1,1.5)(1.5,2)
%\DGCdot{B}[l]{$s$}
\DGCstrand[red](2,1.5)(1.5,2)
\DGCstrand[Red](1.5,2)(1.5,2.5)
\DGCstrand[red](1.5,2.5)(1,3)
\DGCstrand[red](1.5,2.5)(2,3)
\DGCstrand[red](0,1.5)(0,3)
\DGCdot{E}[r]{$r$}
%%%%
\DGCstrand[red](0,3)(.5,3.5)
\DGCstrand[red](1,3)(.5,3.5)
\DGCstrand[Red](.5,3.5)(.5,4)
\DGCstrand[red](.5,4)(0,4.5)
\DGCdot{E}[u]{$a_{i}$}
\DGCstrand[red](.5,4)(1,4.5)
\DGCdot{E}[u]{$a_{i+1}$}
\DGCstrand[red](2,3)(2,4.5)
\DGCdot{E}[u]{$a_{i+2}$}
%\DGCcoupon*(-2.75,2)(-2.25,2.5){$\cdots$}
%\DGCcoupon*(4.25,2)(4.75,2.5){$\cdots$}
%\DGCstrand(1.5,0)(2,2.25)(.5,4.5)
\DGCstrand(.5,0)(1.75,.75)(1.75,3.75)(1.5,4.5)
%\DGCstrand(.5,0)(2,2.25)(1.5,4.5) replace above
%\DGCdot{E}[u]{$b$}
\end{DGCpicture}
\quad \quad
a_i \in \mathbb{Z}_{\geq 0}, 
%b \in \mathbb{Z}_{\geq 0}, 
r,t \in \{0, 1\} ,
\end{equation*}

\begin{equation*}
\gimel_{11}({\bf a},r)
~=~
\begin{DGCpicture}[scale={.7,.7}]
\DGCstrand[red](-2,0)(-2,4.5)[$^{1}$`{\ }]
\DGCdot{E}[u]{$a_{1}$}
\DGCcoupon*(-1.8,2)(-1.2,2.5){$\cdots$}
\DGCstrand[red](4,0)(4,4.5)[$^{n}$`{\ }]
\DGCdot{E}[u]{$a_{n}$}
\DGCcoupon*(3.2,2)(3.8,2.5){$\cdots$}
\DGCstrand[red](0,0)(.5,.5)[$^{i}$`{\ }]
%\DGCdot{B}[l]{$t$}
\DGCstrand[red](1,0)(.5,.5)[$^{i+1}$`{\ }]
\DGCstrand[Red](.5,.5)(.5,1)
\DGCstrand[red](.5,1)(0,1.5)
\DGCstrand[red](.5,1)(1,1.5)
\DGCstrand[red](2,0)(2,1.5)[$^{i+2}$`{\ }]
\DGCstrand[red](1,1.5)(1.5,2)
%\DGCdot{B}[l]{$s$}
\DGCstrand[red](2,1.5)(1.5,2)
\DGCstrand[Red](1.5,2)(1.5,2.5)
\DGCstrand[red](1.5,2.5)(1,3)
\DGCstrand[red](1.5,2.5)(2,3)
\DGCstrand[red](0,1.5)(0,3)
%\DGCdot{E}[r]{$r$}
\DGCdot{2.25}[r]{$r$}
%%%%
\DGCstrand[red](0,3)(.5,3.5)
\DGCstrand[red](1,3)(.5,3.5)
\DGCstrand[Red](.5,3.5)(.5,4)
\DGCstrand[red](.5,4)(0,4.5)
\DGCdot{E}[u]{$a_{i}$}
\DGCstrand[red](.5,4)(1,4.5)
\DGCdot{E}[u]{$a_{i+1}$}
\DGCstrand[red](2,3)(2,4.5)
\DGCdot{E}[u]{$a_{i+2}$}
%\DGCcoupon*(-2.75,2)(-2.25,2.5){$\cdots$}
%\DGCcoupon*(4.25,2)(4.75,2.5){$\cdots$}
\DGCstrand(.5,0)(-.5,.75)(-.5,2.5)(1.5,3.25)(1.5,4.5)
%\DGCstrand(.5,0)(-.5,2.25)(1.5,4.5) replace above
%\DGCdot{E}[u]{$b$}
\end{DGCpicture}
\quad \quad
a_i \in \mathbb{Z}_{\geq 0}, 
%b \in \mathbb{Z}_{\geq 0}, 
r \in \{0, 1\} ,
\end{equation*}

\begin{equation*}
\gimel_{12}({\bf a},b,r,s,t,j,\ell)
~=~
\begin{DGCpicture}[scale={.7,.7}]
\DGCstrand[red](-2,0)(-2,4.5)[$^{1}$`{\ }]
\DGCdot{E}[u]{$a_{1}$}
\DGCcoupon*(-1.8,2)(-1.2,2.5){$\cdots$}
\DGCstrand[red](4,0)(4,4.5)[$^{n}$`{\ }]
\DGCdot{E}[u]{$a_{n}$}
\DGCcoupon*(3.2,2)(3.8,2.5){$\cdots$}
\DGCstrand[red](0,0)(.5,.5)[$^{i}$`{\ }]
\DGCdot{B}[l]{$t$}
\DGCstrand[red](1,0)(.5,.5)[$^{i+1}$`{\ }]
\DGCstrand[Red](.5,.5)(.5,1)
\DGCstrand[red](.5,1)(0,1.5)
\DGCstrand[red](.5,1)(1,1.5)
\DGCstrand[red](2,0)(2,1.5)[$^{i+2}$`{\ }]
\DGCstrand[red](1,1.5)(1.5,2)
\DGCdot{B}[l]{$s$}
\DGCstrand[red](2,1.5)(1.5,2)
\DGCstrand[Red](1.5,2)(1.5,2.5)
\DGCstrand[red](1.5,2.5)(1,3)
\DGCstrand[red](1.5,2.5)(2,3)
\DGCstrand[red](0,1.5)(0,3)
\DGCdot{E}[r]{$r$}
%%%%
\DGCstrand[red](0,3)(.5,3.5)
\DGCstrand[red](1,3)(.5,3.5)
\DGCstrand[Red](.5,3.5)(.5,4)
\DGCstrand[red](.5,4)(0,4.5)
\DGCdot{E}[ul]{$a_{i}$}
\DGCstrand[red](.5,4)(1,4.5)
\DGCdot{E}[ul]{$a_{i+1}$}
\DGCstrand[red](2,3)(2,4.5)
\DGCdot{E}[u]{$a_{i+2}$}
%\DGCcoupon*(-2.75,2)(-2.25,2.5){$\cdots$}
%\DGCcoupon*(4.25,2)(4.75,2.5){$\cdots$}
%\DGCstrand(1.5,0)(1,4.25)(.5,4.5)
\DGCstrand(-1,0)(-1,2.45)(3,3.25)(3,4.5)
%\DGCstrand(-1,0)(1,.75)(3,4.5) replace above
\DGCdot{E}[u]{$b$}
\end{DGCpicture}
\quad \quad
\begin{array}{l}a_i \in \mathbb{Z}_{\geq 0}, 
b \in \mathbb{Z}_{\geq 0}, 
r,s,t \in \{0, 1\}\\ 0\leq j,\ell\leq n\\
\text{ unless }(j,\ell)=(i,i),(i,i+1),(i+1,i),(i+1,i+1) ,
\end{array}
\end{equation*}
%\begin{equation*}
%\begin{DGCpicture}[scale={.7,.7}]
%\DGCstrand[red](-2,0)(-2,4.5)[$^{k}$`{\ }]
%\DGCdot{E}[u]{$a_{k}$}
%\DGCcoupon*(-1.75,2)(-1.25,2.5){$\cdots$}
%\DGCstrand[red](4,0)(4,4.5)[$^{l}$`{\ }]
%\DGCdot{E}[u]{$a_{l}$}
%\DGCcoupon*(3.25,2)(3.75,2.5){$\cdots$}
%\DGCstrand[red](0,0)(.5,.5)[$^{i}$`{\ }]
%\DGCdot{B}[l]{$t$}
%\DGCstrand[red](1,0)(.5,.5)[$^{i+1}$`{\ }]
%\DGCstrand[Red](.5,.5)(.5,1)
%\DGCstrand[red](.5,1)(0,1.5)
%\DGCstrand[red](.5,1)(1,1.5)
%\DGCstrand[red](2,0)(2,1.5)[$^{i+2}$`{\ }]
%%
%\DGCstrand[red](1,1.5)(1.5,2)
%\DGCdot{B}[r]{$s$}
%\DGCstrand[red](2,1.5)(1.5,2)
%\DGCstrand[Red](1.5,2)(1.5,2.5)
%\DGCstrand[red](1.5,2.5)(1,3)
%\DGCstrand[red](1.5,2.5)(2,3)
%\DGCstrand[red](0,1.5)(0,3)
%\DGCdot{E}[r]{$r$}
%%%%
%\DGCstrand[red](0,3)(.5,3.5)
%\DGCstrand[red](1,3)(.5,3.5)
%\DGCstrand[Red](.5,3.5)(.5,4)
%\DGCstrand[red](.5,4)(0,4.5)
%\DGCdot{E}[ul]{$a_{i}$}
%\DGCstrand[red](.5,4)(1,4.5)
%\DGCdot{E}[ul]{$a_{i+1}$}
%\DGCstrand[red](2,3)(2,4.5)
%\DGCdot{E}[u]{$a_{i+2}$}
%\DGCcoupon*(-2.75,2)(-2.25,2.5){$\cdots$}
%\DGCcoupon*(4.25,2)(4.75,2.5){$\cdots$}
%\DGCstrand(1.5,0)(1,4.25)(.5,4.5)
%\DGCstrand(-1,4.5)(1,.75)(3,0)
%\DGCdot{E}[u]{$b$}
%\end{DGCpicture}
%\quad \quad
%a_i \in \mathbb{Z}_{\geq 0}, 
%b \in \mathbb{Z}_{\geq 0}, 
%r,s,t \in \{0, 1\}
%\end{equation*}
%where the diagram in $\gimel_{12}({\bf a},b,r,s,t,j,\ell)$ represents any picture where the black strand begins from the $j+1$-th strand or ends to the $\ell+1$-th strand outside the segments connecting red points $i$, $i+1$, and $i+2$. Note that it could equally well be positioned in a northwest-southeast configuration.
%%%
where the diagram in $\gimel_{12}({\bf a},b,r,s,t,j,\ell)$ represents any picture where the black strand begins after the $j$th red strand and ends after the $\ell$th red strand 
%outside the segments connecting red points $i$, $i+1$, and $i+2$ 
and has a minimal number of intersections with the red strands. Note that it could equally well be positioned in a northwest-southeast configuration.
\end{prop}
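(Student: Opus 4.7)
The plan is to reduce an arbitrary element of $W_i \otimes W_{i+1} \otimes W_i$ to a linear combination of the listed diagrams by successively applying the spanning arguments from Proposition \ref{W_ispan}, together with the bimodule relations in Proposition \ref{bimod-rel}. As a first step, I observe that $W_i$ is generated as a $(W,W)$-bimodule by the splitter idempotents $e_j\otimes e_j$, so that any element of the triple tensor product can be represented diagrammatically as a sequence of three splitter regions (positioned at heights $(i,i+1)$, $(i+1,i+2)$, $(i,i+1)$) interspersed with arbitrary $W$-diagrams. The goal is to put these intermediate $W$-diagrams into a normal form.

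Next I would process the red dots. Using the symmetric function relations \eqref{symmetricrelations} for the bottom and top splitters and the analogous relations for the middle splitter (a translation of \eqref{thicksymmetricrelations1}), every red dot can be pushed upward through each splitter, at the cost of producing additional terms. This allows one to collect all red dots on the outgoing red strands at the top of the diagram, with the sole exception of the dots sitting on the ``internal" red strand that threads through the middle splitter region and on the two red stumps just below each splitter, whose powers one reduces to $\{0,1\}$ by repeatedly substituting the defining relation $\psi_j^2 e(\mathbf{i})=(x_j-y)e(\mathbf{i})$ from \eqref{r2-like2}. This accounts for the parameters $r,s,t\in\{0,1\}$ in the listed families.

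Then I would analyze the black strand. Using the relations \eqref{Wrelations} inside each of the three tensor factors together with the sliding relations \eqref{crossing-slide-splitter} and \eqref{blackthickred1} for the outer two splitters and their triple analogues \eqref{crossing-slide-splitter3} and \eqref{thickblackthickred1} for the middle splitter, one can transport the black strand freely between tensor factors (modulo dots) and reduce the number of crossings with the red strands to the minimum compatible with its endpoints. This forces the black strand into one of a finite number of canonical trajectories through the three splitter regions. The eleven families $\gimel_1,\dots,\gimel_{11}$ correspond exactly to the canonical trajectories when both endpoints of the black strand lie in the columns $i,i+1,i+2$, while $\gimel_{12}$ gathers those where at least one endpoint is external.

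The main obstacle will be the internal case, in which the black strand has both endpoints among the three central columns and possibly forms ``hooks" between consecutive splitter regions. There, the black strand can weave through the splitters in many ways, and each local maximum or minimum must be absorbed using the same type of manipulations as in equations \eqref{spantrick1}--\eqref{spantrick3} from the proof of Proposition \ref{W_ispan} and equations \eqref{dotreduction1}--\eqref{dotreduction7} from the proof of Proposition \ref{W_ii+1span}, now performed three times (once per splitter) and with careful bookkeeping of how dots move between the three splitter levels. Once these reductions are carried out for each topological type of trajectory, the resulting diagram matches one of $\gimel_1,\ldots,\gimel_{12}$ up to the red-dot normal form already established, completing the spanning argument.
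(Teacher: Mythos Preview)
Your overall strategy is the same as the paper's: reduce an arbitrary diagram to the listed normal forms by pushing red dots to the top modulo the Soergel-type symmetric relations, then straighten the black strand by case analysis on its endpoints, using the same local manipulations as in Propositions~\ref{W_ispan} and~\ref{W_ii+1span}. The paper carries this out by exhibiting explicit diagrammatic identities for each of the problematic black-strand trajectories, exactly as you anticipate in your final paragraph.

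Two small corrections to your write-up. First, all three splitters in $W_i\otimes W_{i+1}\otimes W_i$ are \emph{two}-strand splitters (at positions $(i,i+1)$, $(i+1,i+2)$, $(i,i+1)$), so the relevant relations for the middle factor are again those of Proposition~\ref{bimod-rel} and equations~\eqref{symmetricrelations}, \eqref{blackthickred1}; the three-strand relations \eqref{thicksymmetricrelations1}, \eqref{crossing-slide-splitter3}, \eqref{thickblackthickred1} belong to $W_{i,i+1}$ and do not apply here. Second, the reduction of $r,s,t$ to $\{0,1\}$ does not come from $\psi_j^2=(x_j-y)$ but from the classical Soergel-bimodule fact (the ``classical arguments'' of \cite{EliasKh}) that $\Bbbk[x_i,x_{i+1}]$ is free of rank two over $\Bbbk[x_i,x_{i+1}]^{S_2}$, which via \eqref{symmetricrelations} lets you trade higher powers on the short red stumps for symmetric polynomials that slide through the splitter.
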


\begin{proof}
By Lemma \ref{classical}, we may assume that any red dot configuration in the part of the picture connecting the $i$, $i+1$, and $i+2$ red boundary points is a linear combination of elements
\begin{equation}
\label{rbrr1}
\begin{DGCpicture}[scale={.7,.7}]
\DGCstrand[red](0,0)(.5,.5)[$ $`{\ }]
\DGCdot{B}[l]{$t$}
\DGCstrand[red](1,0)(.5,.5)[$ $`{\ }]
\DGCstrand[Red](.5,.5)(.5,1)
\DGCstrand[red](.5,1)(0,1.5)
\DGCstrand[red](.5,1)(1,1.5)
\DGCstrand[red](2,0)(2,1.5)[$ $`{\ }]
\DGCstrand[red](1,1.5)(1.5,2)
\DGCdot{B}[l]{$s$}
\DGCstrand[red](2,1.5)(1.5,2)
\DGCstrand[Red](1.5,2)(1.5,2.5)
\DGCstrand[red](1.5,2.5)(1,3)
\DGCstrand[red](1.5,2.5)(2,3)
\DGCstrand[red](0,1.5)(0,3)
\DGCdot{E}[r]{$r$}
%%%%
\DGCstrand[red](0,3)(.5,3.5)
\DGCstrand[red](1,3)(.5,3.5)
\DGCstrand[Red](.5,3.5)(.5,4)
\DGCstrand[red](.5,4)(0,4.5)
\DGCdot{E}[u]{$a_{i}$}
\DGCstrand[red](.5,4)(1,4.5)
\DGCdot{E}[ur]{$a_{i+1}$}
\DGCstrand[red](2,3)(2,4.5)
\DGCdot{E}[ur]{$a_{i+2}$}
%\DGCstrand(.5,0)(-.5,2.25)(.5,4.5)
%\DGCdot{E}[u]{$b$}
\end{DGCpicture}
\end{equation}
where $r,s,t \in \{0,1\}$.  
Next we will show that we do not need any black dots on a diagram of the form
\begin{equation}
\label{rbrr2}
\begin{DGCpicture}[scale={.7,.7}]
\DGCstrand[red](0,0)(.5,.5)[$ $`{\ }]
%\DGCdot{B}[l]{$t$}
\DGCstrand[red](1,0)(.5,.5)[$ $`{\ }]
\DGCstrand[Red](.5,.5)(.5,1)
\DGCstrand[red](.5,1)(0,1.5)
\DGCstrand[red](.5,1)(1,1.5)
\DGCstrand[red](2,0)(2,1.5)[$ $`{\ }]
\DGCstrand[red](1,1.5)(1.5,2)
%\DGCdot{B}[l]{$s$}
\DGCstrand[red](2,1.5)(1.5,2)
\DGCstrand[Red](1.5,2)(1.5,2.5)
\DGCstrand[red](1.5,2.5)(1,3)
\DGCstrand[red](1.5,2.5)(2,3)
\DGCstrand[red](0,1.5)(0,3)
%\DGCdot{E}[r]{$r$}
%%%%
\DGCstrand[red](0,3)(.5,3.5)
\DGCstrand[red](1,3)(.5,3.5)
\DGCstrand[Red](.5,3.5)(.5,4)
\DGCstrand[red](.5,4)(0,4.5)
\DGCdot{E}[u]{$a_{i}$}
\DGCstrand[red](.5,4)(1,4.5)
\DGCdot{E}[ur]{$a_{i+1}$}
\DGCstrand[red](2,3)(2,4.5)
\DGCdot{E}[ur]{$a_{i+2}$}
\DGCstrand(.5,0)(1.25,.75)(1.25,3.75)(.5,4.5)
%\DGCstrand(.5,0)(1,2.25)(.5,4.5) replace above
\DGCdot{E}[u]{$b$}
\end{DGCpicture}
\ .
\end{equation}
This follows from the calculation
\begin{equation*}
\label{rbrr3}
\begin{DGCpicture}[scale={.7,.7}]
\DGCstrand[red](0,0)(.5,.5)[$ $`{\ }]
%\DGCdot{B}[l]{$t$}
\DGCstrand[red](1,0)(.5,.5)[$ $`{\ }]
\DGCstrand[Red](.5,.5)(.5,1)
\DGCstrand[red](.5,1)(0,1.5)
\DGCstrand[red](.5,1)(1,1.5)
\DGCstrand[red](2,0)(2,1.5)[$ $`{\ }]
\DGCstrand[red](1,1.5)(1.5,2)
%\DGCdot{B}[l]{$s$}
\DGCstrand[red](2,1.5)(1.5,2)
\DGCstrand[Red](1.5,2)(1.5,2.5)
\DGCstrand[red](1.5,2.5)(1,3)
\DGCstrand[red](1.5,2.5)(2,3)
\DGCstrand[red](0,1.5)(0,3)
%\DGCdot{E}[r]{$r$}
%%%%
\DGCstrand[red](0,3)(.5,3.5)
\DGCstrand[red](1,3)(.5,3.5)
\DGCstrand[Red](.5,3.5)(.5,4)
\DGCstrand[red](.5,4)(0,4.5)
%\DGCdot{E}[u]{$a_{i}$}
\DGCstrand[red](.5,4)(1,4.5)
%\DGCdot{E}[u]{$a_{i+1}$}
\DGCstrand[red](2,3)(2,4.5)
%\DGCdot{E}[u]{$a_{i+2}$}
\DGCstrand(.5,0)(1.25,.75)(1.25,3.75)(.5,4.5)
%\DGCstrand(.5,0)(1,2.25)(.5,4.5) replace above
\DGCdot{E}[u]{$ $}
\end{DGCpicture}
~-~
\begin{DGCpicture}[scale={.7,.7}]
\DGCstrand[red](0,0)(.5,.5)[$ $`{\ }]
%\DGCdot{B}[l]{$t$}
\DGCstrand[red](1,0)(.5,.5)[$ $`{\ }]
\DGCstrand[Red](.5,.5)(.5,1)
\DGCstrand[red](.5,1)(0,1.5)
\DGCstrand[red](.5,1)(1,1.5)
\DGCstrand[red](2,0)(2,1.5)[$ $`{\ }]
\DGCstrand[red](1,1.5)(1.5,2)
%\DGCdot{B}[l]{$s$}
\DGCstrand[red](2,1.5)(1.5,2)
\DGCstrand[Red](1.5,2)(1.5,2.5)
\DGCstrand[red](1.5,2.5)(1,3)
\DGCstrand[red](1.5,2.5)(2,3)
\DGCstrand[red](0,1.5)(0,3)
%\DGCdot{E}[r]{$r$}
%%%%
\DGCstrand[red](0,3)(.5,3.5)
\DGCstrand[red](1,3)(.5,3.5)
\DGCstrand[Red](.5,3.5)(.5,4)
\DGCstrand[red](.5,4)(0,4.5)
\DGCdot{E}[u]{$ $}
\DGCstrand[red](.5,4)(1,4.5)
%\DGCdot{E}[u]{$a_{i+1}$}
\DGCstrand[red](2,3)(2,4.5)
%\DGCdot{E}[u]{$a_{i+2}$}
\DGCstrand(.5,0)(1.25,.75)(1.25,3.75)(.5,4.5)
%\DGCstrand(.5,0)(1,2.25)(.5,4.5) replace above
%\DGCdot{E}[u]{$ $}
\end{DGCpicture}
~=~
\begin{DGCpicture}[scale={.7,.7}]
\DGCstrand[red](0,0)(.5,.5)[$ $`{\ }]
%\DGCdot{B}[l]{$t$}
\DGCstrand[red](1,0)(.5,.5)[$ $`{\ }]
\DGCstrand[Red](.5,.5)(.5,1)
\DGCstrand[red](.5,1)(0,1.5)
\DGCstrand[red](.5,1)(1,1.5)
\DGCstrand[red](2,0)(2,1.5)[$ $`{\ }]
\DGCstrand[red](1,1.5)(1.5,2)
%\DGCdot{B}[l]{$s$}
\DGCstrand[red](2,1.5)(1.5,2)
\DGCstrand[Red](1.5,2)(1.5,2.5)
\DGCstrand[red](1.5,2.5)(1,3)
\DGCstrand[red](1.5,2.5)(2,3)
\DGCstrand[red](0,1.5)(0,3)
%\DGCdot{E}[r]{$r$}
%%%%
\DGCstrand[red](0,3)(.5,3.5)
\DGCstrand[red](1,3)(.5,3.5)
\DGCstrand[Red](.5,3.5)(.5,4)
\DGCstrand[red](.5,4)(0,4.5)
%\DGCdot{E}[u]{$a_{i}$}
\DGCstrand[red](.5,4)(1,4.5)
%\DGCdot{E}[u]{$a_{i+1}$}
\DGCstrand[red](2,3)(2,4.5)
%\DGCdot{E}[u]{$a_{i+2}$}
\DGCstrand(.5,0)(-.5,.75)(-.5,3.75)(.5,4.5)
%\DGCstrand(.5,0)(-.5,2.25)(.5,4.5) replace above
\DGCdot{E}[u]{$2 $}
\end{DGCpicture}
~-~
\begin{DGCpicture}[scale={.7,.7}]
\DGCstrand[red](0,0)(.5,.5)[$ $`{\ }]
%\DGCdot{B}[l]{$t$}
\DGCstrand[red](1,0)(.5,.5)[$ $`{\ }]
\DGCdot{B}[l]{$ $}
\DGCstrand[Red](.5,.5)(.5,1)
\DGCstrand[red](.5,1)(0,1.5)
\DGCstrand[red](.5,1)(1,1.5)
\DGCstrand[red](2,0)(2,1.5)[$ $`{\ }]
\DGCstrand[red](1,1.5)(1.5,2)
%\DGCdot{B}[l]{$s$}
\DGCstrand[red](2,1.5)(1.5,2)
\DGCstrand[Red](1.5,2)(1.5,2.5)
\DGCstrand[red](1.5,2.5)(1,3)
\DGCstrand[red](1.5,2.5)(2,3)
\DGCstrand[red](0,1.5)(0,3)
%\DGCdot{E}[r]{$r$}
%%%%
\DGCstrand[red](0,3)(.5,3.5)
\DGCstrand[red](1,3)(.5,3.5)
\DGCstrand[Red](.5,3.5)(.5,4)
\DGCstrand[red](.5,4)(0,4.5)
%\DGCdot{E}[u]{$a_{i}$}
\DGCstrand[red](.5,4)(1,4.5)
%\DGCdot{E}[u]{$a_{i+1}$}
\DGCstrand[red](2,3)(2,4.5)
%\DGCdot{E}[u]{$a_{i+2}$}
\DGCstrand(.5,0)(-.5,.75)(-.5,3.75)(.5,4.5)
%\DGCstrand(.5,0)(-.5,2.25)(.5,4.5) replace above
\DGCdot{E}[u]{$1 $}
\end{DGCpicture} 
~-~
\begin{DGCpicture}[scale={.7,.7}]
\DGCstrand[red](0,0)(.5,.5)[$ $`{\ }]
%\DGCdot{B}[l]{$t$}
\DGCstrand[red](1,0)(.5,.5)[$ $`{\ }]
%\DGCdot{B}[l]{$ $}
\DGCstrand[Red](.5,.5)(.5,1)
\DGCstrand[red](.5,1)(0,1.5)
\DGCstrand[red](.5,1)(1,1.5)
\DGCstrand[red](2,0)(2,1.5)[$ $`{\ }]
\DGCstrand[red](1,1.5)(1.5,2)
%\DGCdot{B}[l]{$s$}
\DGCstrand[red](2,1.5)(1.5,2)
\DGCstrand[Red](1.5,2)(1.5,2.5)
\DGCstrand[red](1.5,2.5)(1,3)
\DGCdot{E}[r]{$ $}
\DGCstrand[red](1.5,2.5)(2,3)
\DGCstrand[red](0,1.5)(0,3)
%\DGCdot{E}[r]{$r$}
%%%%
\DGCstrand[red](0,3)(.5,3.5)
\DGCstrand[red](1,3)(.5,3.5)
\DGCstrand[Red](.5,3.5)(.5,4)
\DGCstrand[red](.5,4)(0,4.5)
%\DGCdot{E}[u]{$a_{i}$}
\DGCstrand[red](.5,4)(1,4.5)
%\DGCdot{E}[u]{$a_{i+1}$}
\DGCstrand[red](2,3)(2,4.5)
%\DGCdot{E}[u]{$a_{i+2}$}
\DGCstrand(.5,0)(-.5,.75)(-.5,3.75)(.5,4.5)
%\DGCstrand(.5,0)(-.5,2.25)(.5,4.5) replace above
\DGCdot{E}[u]{$1 $}
\end{DGCpicture}
~+~
\begin{DGCpicture}[scale={.7,.7}]
\DGCstrand[red](0,0)(.5,.5)[$ $`{\ }]
%\DGCdot{B}[l]{$t$}
\DGCstrand[red](1,0)(.5,.5)[$ $`{\ }]
\DGCdot{B}[l]{$ $}
\DGCstrand[Red](.5,.5)(.5,1)
\DGCstrand[red](.5,1)(0,1.5)
\DGCstrand[red](.5,1)(1,1.5)
\DGCstrand[red](2,0)(2,1.5)[$ $`{\ }]
\DGCstrand[red](1,1.5)(1.5,2)
% \DGCdot{B}[l]{$ $}
\DGCstrand[red](2,1.5)(1.5,2)
\DGCstrand[Red](1.5,2)(1.5,2.5)
\DGCstrand[red](1.5,2.5)(1,3)
\DGCdot{E}[r]{$ $}
\DGCstrand[red](1.5,2.5)(2,3)
\DGCstrand[red](0,1.5)(0,3)
%\DGCdot{E}[r]{$r$}
%%%%
\DGCstrand[red](0,3)(.5,3.5)
\DGCstrand[red](1,3)(.5,3.5)
\DGCstrand[Red](.5,3.5)(.5,4)
\DGCstrand[red](.5,4)(0,4.5)
%\DGCdot{E}[u]{$a_{i}$}
\DGCstrand[red](.5,4)(1,4.5)
%\DGCdot{E}[u]{$a_{i+1}$}
\DGCstrand[red](2,3)(2,4.5)
%\DGCdot{E}[u]{$a_{i+2}$}
\DGCstrand(.5,0)(-.5,.75)(-.5,3.75)(.5,4.5)
%\DGCstrand(.5,0)(-.5,2.25)(.5,4.5) replace above
%\DGCdot{E}[u]{$1 $}
\end{DGCpicture}
\ .
\end{equation*}
Next note that
\begin{equation*}
\label{rbrr4}
\begin{DGCpicture}[scale={.7,.7}]
\DGCstrand[red](0,0)(.5,.5)[$ $`{\ }]
%\DGCdot{B}[l]{$t$}
\DGCstrand[red](1,0)(.5,.5)[$ $`{\ }]
\DGCstrand[Red](.5,.5)(.5,1)
\DGCstrand[red](.5,1)(0,1.5)
\DGCstrand[red](.5,1)(1,1.5)
\DGCstrand[red](2,0)(2,1.5)[$ $`{\ }]
\DGCstrand[red](1,1.5)(1.5,2)
%\DGCdot{B}[l]{$s$}
\DGCstrand[red](2,1.5)(1.5,2)
\DGCstrand[Red](1.5,2)(1.5,2.5)
\DGCstrand[red](1.5,2.5)(1,3)
\DGCstrand[red](1.5,2.5)(2,3)
\DGCstrand[red](0,1.5)(0,3)
%\DGCdot{E}[r]{$r$}
%%%%
\DGCstrand[red](0,3)(.5,3.5)
\DGCstrand[red](1,3)(.5,3.5)
\DGCstrand[Red](.5,3.5)(.5,4)
\DGCstrand[red](.5,4)(0,4.5)
%\DGCdot{E}[u]{$a_{i}$}
\DGCstrand[red](.5,4)(1,4.5)
%\DGCdot{E}[u]{$a_{i+1}$}
\DGCstrand[red](2,3)(2,4.5)
%\DGCdot{E}[u]{$a_{i+2}$}
\DGCstrand(.5,0)(1.25,.75)(1.25,3.75)(.5,4.5)
%\DGCstrand(.5,0)(1,2.25)(.5,4.5) replace above
\DGCdot{E}[u]{$ $}
\end{DGCpicture}
~-~
\begin{DGCpicture}[scale={.7,.7}]
\DGCstrand[red](0,0)(.5,.5)[$ $`{\ }]
%\DGCdot{B}[l]{$t$}
\DGCstrand[red](1,0)(.5,.5)[$ $`{\ }]
\DGCstrand[Red](.5,.5)(.5,1)
\DGCstrand[red](.5,1)(0,1.5)
\DGCstrand[red](.5,1)(1,1.5)
\DGCstrand[red](2,0)(2,1.5)[$ $`{\ }]
\DGCstrand[red](1,1.5)(1.5,2)
%\DGCdot{B}[l]{$s$}
\DGCstrand[red](2,1.5)(1.5,2)
\DGCstrand[Red](1.5,2)(1.5,2.5)
\DGCstrand[red](1.5,2.5)(1,3)
\DGCstrand[red](1.5,2.5)(2,3)
\DGCstrand[red](0,1.5)(0,3)
\DGCdot{E}[r]{$ $}
%%%%
\DGCstrand[red](0,3)(.5,3.5)
\DGCstrand[red](1,3)(.5,3.5)
\DGCstrand[Red](.5,3.5)(.5,4)
\DGCstrand[red](.5,4)(0,4.5)
%\DGCdot{E}[u]{$ $}
\DGCstrand[red](.5,4)(1,4.5)
%\DGCdot{E}[u]{$a_{i+1}$}
\DGCstrand[red](2,3)(2,4.5)
%\DGCdot{E}[u]{$a_{i+2}$}
\DGCstrand(.5,0)(1.25,.75)(1.25,3.75)(.5,4.5)
%\DGCstrand(.5,0)(1,2.25)(.5,4.5) replace above
%\DGCdot{E}[u]{$ $}
\end{DGCpicture}
~=~
\begin{DGCpicture}[scale={.7,.7}]
\DGCstrand[red](0,0)(.5,.5)[$ $`{\ }]
%\DGCdot{B}[l]{$t$}
\DGCstrand[red](1,0)(.5,.5)[$ $`{\ }]
\DGCstrand[Red](.5,.5)(.5,1)
\DGCstrand[red](.5,1)(0,1.5)
\DGCstrand[red](.5,1)(1,1.5)
\DGCstrand[red](2,0)(2,1.5)[$ $`{\ }]
\DGCstrand[red](1,1.5)(1.5,2)
%\DGCdot{B}[l]{$s$}
\DGCstrand[red](2,1.5)(1.5,2)
\DGCstrand[Red](1.5,2)(1.5,2.5)
\DGCstrand[red](1.5,2.5)(1,3)
\DGCstrand[red](1.5,2.5)(2,3)
\DGCstrand[red](0,1.5)(0,3)
%\DGCdot{E}[r]{$r$}
%%%%
\DGCstrand[red](0,3)(.5,3.5)
\DGCstrand[red](1,3)(.5,3.5)
\DGCstrand[Red](.5,3.5)(.5,4)
\DGCstrand[red](.5,4)(0,4.5)
%\DGCdot{E}[u]{$a_{i}$}
\DGCstrand[red](.5,4)(1,4.5)
%\DGCdot{E}[u]{$a_{i+1}$}
\DGCstrand[red](2,3)(2,4.5)
%\DGCdot{E}[u]{$a_{i+2}$}
\DGCstrand(.5,0)(-.5,.75)(-.5,3.75)(.5,4.5)
%\DGCstrand(.5,0)(-.5,2.25)(.5,4.5) replace above
\DGCdot{E}[u]{$2 $}
\end{DGCpicture}
~-~
\begin{DGCpicture}[scale={.7,.7}]
\DGCstrand[red](0,0)(.5,.5)[$ $`{\ }]
%\DGCdot{B}[l]{$t$}
\DGCstrand[red](1,0)(.5,.5)[$ $`{\ }]
\DGCdot{B}[l]{$ $}
\DGCstrand[Red](.5,.5)(.5,1)
\DGCstrand[red](.5,1)(0,1.5)
\DGCstrand[red](.5,1)(1,1.5)
\DGCstrand[red](2,0)(2,1.5)[$ $`{\ }]
\DGCstrand[red](1,1.5)(1.5,2)
%\DGCdot{B}[l]{$s$}
\DGCstrand[red](2,1.5)(1.5,2)
\DGCstrand[Red](1.5,2)(1.5,2.5)
\DGCstrand[red](1.5,2.5)(1,3)
\DGCstrand[red](1.5,2.5)(2,3)
\DGCstrand[red](0,1.5)(0,3)
%\DGCdot{E}[r]{$r$}
%%%%
\DGCstrand[red](0,3)(.5,3.5)
\DGCstrand[red](1,3)(.5,3.5)
\DGCstrand[Red](.5,3.5)(.5,4)
\DGCstrand[red](.5,4)(0,4.5)
%\DGCdot{E}[u]{$a_{i}$}
\DGCstrand[red](.5,4)(1,4.5)
%\DGCdot{E}[u]{$a_{i+1}$}
\DGCstrand[red](2,3)(2,4.5)
%\DGCdot{E}[u]{$a_{i+2}$}
\DGCstrand(.5,0)(-.5,.75)(-.5,3.75)(.5,4.5)
%\DGCstrand(.5,0)(-.5,2.25)(.5,4.5) replace above
\DGCdot{E}[u]{$1 $}
\end{DGCpicture} 
~-~
\begin{DGCpicture}[scale={.7,.7}]
\DGCstrand[red](0,0)(.5,.5)[$ $`{\ }]
%\DGCdot{B}[l]{$t$}
\DGCstrand[red](1,0)(.5,.5)[$ $`{\ }]
%\DGCdot{B}[l]{$ $}
\DGCstrand[Red](.5,.5)(.5,1)
\DGCstrand[red](.5,1)(0,1.5)
\DGCstrand[red](.5,1)(1,1.5)
\DGCstrand[red](2,0)(2,1.5)[$ $`{\ }]
\DGCstrand[red](1,1.5)(1.5,2)
%\DGCdot{B}[l]{$s$}
\DGCstrand[red](2,1.5)(1.5,2)
\DGCstrand[Red](1.5,2)(1.5,2.5)
\DGCstrand[red](1.5,2.5)(1,3)
%\DGCdot{E}[r]{$ $}
\DGCstrand[red](1.5,2.5)(2,3)
\DGCstrand[red](0,1.5)(0,3)
%\DGCdot{E}[r]{$r$}
%%%%
\DGCstrand[red](0,3)(.5,3.5)
\DGCstrand[red](1,3)(.5,3.5)
\DGCstrand[Red](.5,3.5)(.5,4)
\DGCstrand[red](.5,4)(0,4.5)
%\DGCdot{E}[u]{$a_{i}$}
\DGCstrand[red](.5,4)(1,4.5)
\DGCdot{E}[u]{$ $}
\DGCstrand[red](2,3)(2,4.5)
%\DGCdot{E}[u]{$a_{i+2}$}
\DGCstrand(.5,0)(-.5,.75)(-.5,3.75)(.5,4.5)
%\DGCstrand(.5,0)(-.5,2.25)(.5,4.5) replace above
\DGCdot{E}[u]{$1 $}
\end{DGCpicture}
~+~
\begin{DGCpicture}[scale={.7,.7}]
\DGCstrand[red](0,0)(.5,.5)[$ $`{\ }]
%\DGCdot{B}[l]{$t$}
\DGCstrand[red](1,0)(.5,.5)[$ $`{\ }]
\DGCdot{B}[l]{$ $}
\DGCstrand[Red](.5,.5)(.5,1)
\DGCstrand[red](.5,1)(0,1.5)
\DGCstrand[red](.5,1)(1,1.5)
\DGCstrand[red](2,0)(2,1.5)[$ $`{\ }]
\DGCstrand[red](1,1.5)(1.5,2)
%\DGCdot{B}[l]{$ $}
\DGCstrand[red](2,1.5)(1.5,2)
\DGCstrand[Red](1.5,2)(1.5,2.5)
\DGCstrand[red](1.5,2.5)(1,3)
%\DGCdot{E}[r]{$ $}
\DGCstrand[red](1.5,2.5)(2,3)
\DGCstrand[red](0,1.5)(0,3)
%\DGCdot{E}[r]{$r$}
%%%%
\DGCstrand[red](0,3)(.5,3.5)
\DGCstrand[red](1,3)(.5,3.5)
\DGCstrand[Red](.5,3.5)(.5,4)
\DGCstrand[red](.5,4)(0,4.5)
%\DGCdot{E}[u]{$a_{i}$}
\DGCstrand[red](.5,4)(1,4.5)
\DGCdot{E}[u]{$ $}
\DGCstrand[red](2,3)(2,4.5)
%\DGCdot{E}[u]{$a_{i+2}$}
\DGCstrand(.5,0)(-.5,.75)(-.5,3.75)(.5,4.5)
%\DGCstrand(.5,0)(-.5,2.25)(.5,4.5) replace above
%\DGCdot{E}[u]{$1 $}
\end{DGCpicture}
\ .
\end{equation*}
Thus 
\begin{equation*}
\begin{DGCpicture}[scale={.7,.7}]
\DGCstrand[red](0,0)(.5,.5)[$ $`{\ }]
%\DGCdot{B}[l]{$t$}
\DGCstrand[red](1,0)(.5,.5)[$ $`{\ }]
\DGCstrand[Red](.5,.5)(.5,1)
\DGCstrand[red](.5,1)(0,1.5)
\DGCstrand[red](.5,1)(1,1.5)
\DGCstrand[red](2,0)(2,1.5)[$ $`{\ }]
\DGCstrand[red](1,1.5)(1.5,2)
%\DGCdot{B}[l]{$s$}
\DGCstrand[red](2,1.5)(1.5,2)
\DGCstrand[Red](1.5,2)(1.5,2.5)
\DGCstrand[red](1.5,2.5)(1,3)
\DGCstrand[red](1.5,2.5)(2,3)
\DGCstrand[red](0,1.5)(0,3)
\DGCdot{E}[r]{$ $}
%%%%
\DGCstrand[red](0,3)(.5,3.5)
\DGCstrand[red](1,3)(.5,3.5)
\DGCstrand[Red](.5,3.5)(.5,4)
\DGCstrand[red](.5,4)(0,4.5)
%\DGCdot{E}[u]{$ $}
\DGCstrand[red](.5,4)(1,4.5)
%\DGCdot{E}[u]{$a_{i+1}$}
\DGCstrand[red](2,3)(2,4.5)
%\DGCdot{E}[u]{$a_{i+2}$}
\DGCstrand(.5,0)(1.25,.75)(1.25,3.75)(.5,4.5)
%\DGCstrand(.5,0)(1,2.25)(.5,4.5) replace above
%\DGCdot{E}[u]{$ $}
\end{DGCpicture}
\end{equation*}
is already in the span of the proposed spanning set.  Similarly,
\begin{equation*}
\begin{DGCpicture}[scale={.7,.7}]
\DGCstrand[red](0,0)(.5,.5)[$ $`{\ }]
\DGCdot{B}[l]{$ $}
\DGCstrand[red](1,0)(.5,.5)[$ $`{\ }]
\DGCstrand[Red](.5,.5)(.5,1)
\DGCstrand[red](.5,1)(0,1.5)
\DGCstrand[red](.5,1)(1,1.5)
\DGCstrand[red](2,0)(2,1.5)[$ $`{\ }]
\DGCstrand[red](1,1.5)(1.5,2)
%\DGCdot{B}[l]{$s$}
\DGCstrand[red](2,1.5)(1.5,2)
\DGCstrand[Red](1.5,2)(1.5,2.5)
\DGCstrand[red](1.5,2.5)(1,3)
\DGCstrand[red](1.5,2.5)(2,3)
\DGCstrand[red](0,1.5)(0,3)
%\DGCdot{E}[r]{$ $}
%%%%
\DGCstrand[red](0,3)(.5,3.5)
\DGCstrand[red](1,3)(.5,3.5)
\DGCstrand[Red](.5,3.5)(.5,4)
\DGCstrand[red](.5,4)(0,4.5)
%\DGCdot{E}[u]{$ $}
\DGCstrand[red](.5,4)(1,4.5)
%\DGCdot{E}[u]{$a_{i+1}$}
\DGCstrand[red](2,3)(2,4.5)
%\DGCdot{E}[u]{$a_{i+2}$}
\DGCstrand(.5,0)(1.25,.75)(1.25,3.75)(.5,4.5)
%\DGCstrand(.5,0)(1,2.25)(.5,4.5) replace above
%\DGCdot{E}[u]{$ $}
\end{DGCpicture}
\end{equation*}
is also already in the span of the proposed spanning set.

Next we will show that we do not need black dots on a diagram of the form
\begin{equation*}
\begin{DGCpicture}[scale={.7,.7}]
\DGCstrand[red](0,0)(.5,.5)[$^{i}$`{\ }]
%\DGCdot{B}[l]{$t$}
\DGCstrand[red](1,0)(.5,.5)[$^{i+1}$`{\ }]
\DGCstrand[Red](.5,.5)(.5,1)
\DGCstrand[red](.5,1)(0,1.5)
\DGCstrand[red](.5,1)(1,1.5)
\DGCstrand[red](2,0)(2,1.5)[$^{i+2}$`{\ }]
\DGCstrand[red](1,1.5)(1.5,2)
%\DGCdot{B}[r]{$s$}
\DGCstrand[red](2,1.5)(1.5,2)
\DGCstrand[Red](1.5,2)(1.5,2.5)
\DGCstrand[red](1.5,2.5)(1,3)
\DGCstrand[red](1.5,2.5)(2,3)
\DGCstrand[red](0,1.5)(0,3)
%\DGCdot{E}[r]{$r$}
%%%%
\DGCstrand[red](0,3)(.5,3.5)
\DGCstrand[red](1,3)(.5,3.5)
\DGCstrand[Red](.5,3.5)(.5,4)
\DGCstrand[red](.5,4)(0,4.5)
\DGCdot{E}[u]{$a_{i}$}
\DGCstrand[red](.5,4)(1,4.5)
\DGCdot{E}[u]{$a_{i+1}$}
\DGCstrand[red](2,3)(2,4.5)
\DGCdot{E}[u]{$a_{i+2}$}
\DGCstrand(.5,0)(2.5,.75)(2.5,3.75)(.5,4.5)
%\DGCstrand(.5,0)(2.5,2.25)(.5,4.5) replace above
%\DGCdot{E}[u]{$b$}
\end{DGCpicture}
\ .
\end{equation*}
This follows from the calculation
\begin{equation*}
\begin{DGCpicture}[scale={.7,.7}]
\DGCstrand[red](0,0)(.5,.5)[$^{}$`{\ }]
%\DGCdot{B}[l]{$t$}
\DGCstrand[red](1,0)(.5,.5)[$^{}$`{\ }]
\DGCstrand[Red](.5,.5)(.5,1)
\DGCstrand[red](.5,1)(0,1.5)
\DGCstrand[red](.5,1)(1,1.5)
\DGCstrand[red](2,0)(2,1.5)[$^{}$`{\ }]
\DGCstrand[red](1,1.5)(1.5,2)
%\DGCdot{B}[r]{$s$}
\DGCstrand[red](2,1.5)(1.5,2)
\DGCstrand[Red](1.5,2)(1.5,2.5)
\DGCstrand[red](1.5,2.5)(1,3)
\DGCstrand[red](1.5,2.5)(2,3)
\DGCstrand[red](0,1.5)(0,3)
%\DGCdot{E}[r]{$r$}
%%%%
\DGCstrand[red](0,3)(.5,3.5)
\DGCstrand[red](1,3)(.5,3.5)
\DGCstrand[Red](.5,3.5)(.5,4)
\DGCstrand[red](.5,4)(0,4.5)
%\DGCdot{E}[u]{$a_{i}$}
\DGCstrand[red](.5,4)(1,4.5)
%\DGCdot{E}[u]{$a_{i+1}$}
\DGCstrand[red](2,3)(2,4.5)
%\DGCdot{E}[u]{$a_{i+2}$}
\DGCstrand(.5,0)(2.5,.75)(2.5,3.75)(.5,4.5)
%\DGCstrand(.5,0)(2.5,2.25)(.5,4.5) replace above
\DGCdot{E}[u]{$ $}
\end{DGCpicture}
~-~
\begin{DGCpicture}[scale={.7,.7}]
\DGCstrand[red](0,0)(.5,.5)[$^{}$`{\ }]
%\DGCdot{B}[l]{$t$}
\DGCstrand[red](1,0)(.5,.5)[$^{}$`{\ }]
\DGCstrand[Red](.5,.5)(.5,1)
\DGCstrand[red](.5,1)(0,1.5)
\DGCstrand[red](.5,1)(1,1.5)
\DGCstrand[red](2,0)(2,1.5)[$^{}$`{\ }]
\DGCstrand[red](1,1.5)(1.5,2)
%\DGCdot{B}[r]{$s$}
\DGCstrand[red](2,1.5)(1.5,2)
\DGCstrand[Red](1.5,2)(1.5,2.5)
\DGCstrand[red](1.5,2.5)(1,3)
\DGCstrand[red](1.5,2.5)(2,3)
\DGCstrand[red](0,1.5)(0,3)
%\DGCdot{E}[r]{$r$}
%%%%
\DGCstrand[red](0,3)(.5,3.5)
\DGCstrand[red](1,3)(.5,3.5)
\DGCstrand[Red](.5,3.5)(.5,4)
\DGCstrand[red](.5,4)(0,4.5)
\DGCdot{E}[u]{$ $}
\DGCstrand[red](.5,4)(1,4.5)
%\DGCdot{E}[u]{$a_{i+1}$}
\DGCstrand[red](2,3)(2,4.5)
%\DGCdot{E}[u]{$a_{i+2}$}
\DGCstrand(.5,0)(2.5,.75)(2.5,3.75)(.5,4.5)
%\DGCstrand(.5,0)(2.5,2.25)(.5,4.5) replace above
%\DGCdot{E}[u]{$ $}
\end{DGCpicture}
~=~
\begin{DGCpicture}[scale={.7,.7}]
\DGCstrand[red](0,0)(.5,.5)[$ $`{\ }]
%\DGCdot{B}[l]{$t$}
\DGCstrand[red](1,0)(.5,.5)[$ $`{\ }]
\DGCstrand[Red](.5,.5)(.5,1)
\DGCstrand[red](.5,1)(0,1.5)
\DGCstrand[red](.5,1)(1,1.5)
\DGCstrand[red](2,0)(2,1.5)[$ $`{\ }]
\DGCstrand[red](1,1.5)(1.5,2)
%\DGCdot{B}[l]{$s$}
\DGCstrand[red](2,1.5)(1.5,2)
\DGCstrand[Red](1.5,2)(1.5,2.5)
\DGCstrand[red](1.5,2.5)(1,3)
\DGCstrand[red](1.5,2.5)(2,3)
\DGCstrand[red](0,1.5)(0,3)
%\DGCdot{E}[r]{$r$}
%%%%
\DGCstrand[red](0,3)(.5,3.5)
\DGCstrand[red](1,3)(.5,3.5)
\DGCstrand[Red](.5,3.5)(.5,4)
\DGCstrand[red](.5,4)(0,4.5)
%\DGCdot{E}[u]{$a_{i}$}
\DGCstrand[red](.5,4)(1,4.5)
%\DGCdot{E}[u]{$a_{i+1}$}
\DGCstrand[red](2,3)(2,4.5)
%\DGCdot{E}[u]{$a_{i+2}$}
\DGCstrand(.5,0)(-.5,.75)(-.5,3.75)(.5,4.5)
%\DGCstrand(.5,0)(-.5,2.25)(.5,4.5) replace above
\DGCdot{E}[u]{$2 $}
\end{DGCpicture}
~-~
\begin{DGCpicture}[scale={.7,.7}]
\DGCstrand[red](0,0)(.5,.5)[$ $`{\ }]
%\DGCdot{B}[l]{$t$}
\DGCstrand[red](1,0)(.5,.5)[$ $`{\ }]
\DGCdot{B}[l]{$ $}
\DGCstrand[Red](.5,.5)(.5,1)
\DGCstrand[red](.5,1)(0,1.5)
\DGCstrand[red](.5,1)(1,1.5)
\DGCstrand[red](2,0)(2,1.5)[$ $`{\ }]
\DGCstrand[red](1,1.5)(1.5,2)
%\DGCdot{B}[l]{$s$}
\DGCstrand[red](2,1.5)(1.5,2)
\DGCstrand[Red](1.5,2)(1.5,2.5)
\DGCstrand[red](1.5,2.5)(1,3)
\DGCstrand[red](1.5,2.5)(2,3)
\DGCstrand[red](0,1.5)(0,3)
%\DGCdot{E}[r]{$r$}
%%%%
\DGCstrand[red](0,3)(.5,3.5)
\DGCstrand[red](1,3)(.5,3.5)
\DGCstrand[Red](.5,3.5)(.5,4)
\DGCstrand[red](.5,4)(0,4.5)
%\DGCdot{E}[u]{$a_{i}$}
\DGCstrand[red](.5,4)(1,4.5)
%\DGCdot{E}[u]{$a_{i+1}$}
\DGCstrand[red](2,3)(2,4.5)
%\DGCdot{E}[u]{$a_{i+2}$}
\DGCstrand(.5,0)(-.5,.75)(-.5,3.75)(.5,4.5)
%\DGCstrand(.5,0)(-.5,2.25)(.5,4.5) replace above
\DGCdot{E}[u]{$1 $}
\end{DGCpicture} 
~-~
\begin{DGCpicture}[scale={.7,.7}]
\DGCstrand[red](0,0)(.5,.5)[$ $`{\ }]
%\DGCdot{B}[l]{$t$}
\DGCstrand[red](1,0)(.5,.5)[$ $`{\ }]
%\DGCdot{B}[l]{$ $}
\DGCstrand[Red](.5,.5)(.5,1)
\DGCstrand[red](.5,1)(0,1.5)
\DGCstrand[red](.5,1)(1,1.5)
\DGCstrand[red](2,0)(2,1.5)[$ $`{\ }]
\DGCdot{B}[l]{$ $}
\DGCstrand[red](1,1.5)(1.5,2)
%\DGCdot{B}[l]{$s$}
\DGCstrand[red](2,1.5)(1.5,2)
\DGCstrand[Red](1.5,2)(1.5,2.5)
\DGCstrand[red](1.5,2.5)(1,3)
%\DGCdot{E}[r]{$ $}
\DGCstrand[red](1.5,2.5)(2,3)
\DGCstrand[red](0,1.5)(0,3)
%\DGCdot{E}[r]{$r$}
%%%%
\DGCstrand[red](0,3)(.5,3.5)
\DGCstrand[red](1,3)(.5,3.5)
\DGCstrand[Red](.5,3.5)(.5,4)
\DGCstrand[red](.5,4)(0,4.5)
%\DGCdot{E}[u]{$a_{i}$}
\DGCstrand[red](.5,4)(1,4.5)
%\DGCdot{E}[u]{$ $}
\DGCstrand[red](2,3)(2,4.5)
%\DGCdot{E}[u]{$a_{i+2}$}
\DGCstrand(.5,0)(-.5,.75)(-.5,3.75)(.5,4.5)
%\DGCstrand(.5,0)(-.5,2.25)(.5,4.5) replace above
\DGCdot{E}[u]{$1 $}
\end{DGCpicture}
~+~
\begin{DGCpicture}[scale={.7,.7}]
\DGCstrand[red](0,0)(.5,.5)[$ $`{\ }]
%\DGCdot{B}[l]{$t$}
\DGCstrand[red](1,0)(.5,.5)[$ $`{\ }]
\DGCdot{B}[l]{$ $}
\DGCstrand[Red](.5,.5)(.5,1)
\DGCstrand[red](.5,1)(0,1.5)
\DGCstrand[red](.5,1)(1,1.5)
\DGCstrand[red](2,0)(2,1.5)[$ $`{\ }]
\DGCdot{B}[l]{$ $}
\DGCstrand[red](1,1.5)(1.5,2)
%\DGCdot{B}[l]{$ $}
\DGCstrand[red](2,1.5)(1.5,2)
\DGCstrand[Red](1.5,2)(1.5,2.5)
\DGCstrand[red](1.5,2.5)(1,3)
%\DGCdot{E}[r]{$ $}
\DGCstrand[red](1.5,2.5)(2,3)
\DGCstrand[red](0,1.5)(0,3)
%\DGCdot{E}[r]{$r$}
%%%%
\DGCstrand[red](0,3)(.5,3.5)
\DGCstrand[red](1,3)(.5,3.5)
\DGCstrand[Red](.5,3.5)(.5,4)
\DGCstrand[red](.5,4)(0,4.5)
%\DGCdot{E}[u]{$a_{i}$}
\DGCstrand[red](.5,4)(1,4.5)
%\DGCdot{E}[u]{$ $}
\DGCstrand[red](2,3)(2,4.5)
%\DGCdot{E}[u]{$a_{i+2}$}
\DGCstrand(.5,0)(-.5,.75)(-.5,3.75)(.5,4.5)
%\DGCstrand(.5,0)(-.5,2.25)(.5,4.5) replace above
%\DGCdot{E}[u]{$1 $}
\end{DGCpicture}
\ .
\end{equation*}
Next note that
\begin{equation*}
\begin{DGCpicture}[scale={.7,.7}]
\DGCstrand[red](0,0)(.5,.5)[$^{}$`{\ }]
%\DGCdot{B}[l]{$t$}
\DGCstrand[red](1,0)(.5,.5)[$^{}$`{\ }]
\DGCstrand[Red](.5,.5)(.5,1)
\DGCstrand[red](.5,1)(0,1.5)
\DGCstrand[red](.5,1)(1,1.5)
\DGCstrand[red](2,0)(2,1.5)[$^{}$`{\ }]
\DGCstrand[red](1,1.5)(1.5,2)
%\DGCdot{B}[r]{$s$}
\DGCstrand[red](2,1.5)(1.5,2)
\DGCstrand[Red](1.5,2)(1.5,2.5)
\DGCstrand[red](1.5,2.5)(1,3)
\DGCstrand[red](1.5,2.5)(2,3)
\DGCstrand[red](0,1.5)(0,3)
%\DGCdot{E}[r]{$r$}
%%%%
\DGCstrand[red](0,3)(.5,3.5)
\DGCstrand[red](1,3)(.5,3.5)
\DGCstrand[Red](.5,3.5)(.5,4)
\DGCstrand[red](.5,4)(0,4.5)
%\DGCdot{E}[u]{$a_{i}$}
\DGCstrand[red](.5,4)(1,4.5)
%\DGCdot{E}[u]{$a_{i+1}$}
\DGCstrand[red](2,3)(2,4.5)
%\DGCdot{E}[u]{$a_{i+2}$}
\DGCstrand(.5,0)(2.5,.75)(2.5,3.75)(.5,4.5)
%\DGCstrand(.5,0)(2.5,2.25)(.5,4.5) replace above
\DGCdot{E}[u]{$ $}
\end{DGCpicture}
~-~
\begin{DGCpicture}[scale={.7,.7}]
\DGCstrand[red](0,0)(.5,.5)[$^{}$`{\ }]
%\DGCdot{B}[l]{$t$}
\DGCstrand[red](1,0)(.5,.5)[$^{}$`{\ }]
\DGCstrand[Red](.5,.5)(.5,1)
\DGCstrand[red](.5,1)(0,1.5)
\DGCstrand[red](.5,1)(1,1.5)
\DGCstrand[red](2,0)(2,1.5)[$^{}$`{\ }]
\DGCstrand[red](1,1.5)(1.5,2)
\DGCdot{B}[r]{$ $}
\DGCstrand[red](2,1.5)(1.5,2)
\DGCstrand[Red](1.5,2)(1.5,2.5)
\DGCstrand[red](1.5,2.5)(1,3)
\DGCstrand[red](1.5,2.5)(2,3)
\DGCstrand[red](0,1.5)(0,3)
%\DGCdot{E}[r]{$r$}
%%%%
\DGCstrand[red](0,3)(.5,3.5)
\DGCstrand[red](1,3)(.5,3.5)
\DGCstrand[Red](.5,3.5)(.5,4)
\DGCstrand[red](.5,4)(0,4.5)
%\DGCdot{E}[u]{$ $}
\DGCstrand[red](.5,4)(1,4.5)
%\DGCdot{E}[u]{$a_{i+1}$}
\DGCstrand[red](2,3)(2,4.5)
%\DGCdot{E}[u]{$a_{i+2}$}
\DGCstrand(.5,0)(2.5,.75)(2.5,3.75)(.5,4.5)
%\DGCstrand(.5,0)(2.5,2.25)(.5,4.5) replace above
%\DGCdot{E}[u]{$ $}
\end{DGCpicture}
~=~
\begin{DGCpicture}[scale={.7,.7}]
\DGCstrand[red](0,0)(.5,.5)[$ $`{\ }]
%\DGCdot{B}[l]{$t$}
\DGCstrand[red](1,0)(.5,.5)[$ $`{\ }]
\DGCstrand[Red](.5,.5)(.5,1)
\DGCstrand[red](.5,1)(0,1.5)
\DGCstrand[red](.5,1)(1,1.5)
\DGCstrand[red](2,0)(2,1.5)[$ $`{\ }]
\DGCstrand[red](1,1.5)(1.5,2)
%\DGCdot{B}[l]{$ $}
\DGCstrand[red](2,1.5)(1.5,2)
\DGCstrand[Red](1.5,2)(1.5,2.5)
\DGCstrand[red](1.5,2.5)(1,3)
\DGCstrand[red](1.5,2.5)(2,3)
\DGCstrand[red](0,1.5)(0,3)
%\DGCdot{E}[r]{$r$}
%%%%
\DGCstrand[red](0,3)(.5,3.5)
\DGCstrand[red](1,3)(.5,3.5)
\DGCstrand[Red](.5,3.5)(.5,4)
\DGCstrand[red](.5,4)(0,4.5)
%\DGCdot{E}[u]{$a_{i}$}
\DGCstrand[red](.5,4)(1,4.5)
%\DGCdot{E}[u]{$a_{i+1}$}
\DGCstrand[red](2,3)(2,4.5)
%\DGCdot{E}[u]{$a_{i+2}$}
\DGCstrand(.5,0)(1.25,.75)(1.25,3.75)(.5,4.5)
%\DGCstrand(.5,0)(1,2.25)(.5,4.5) replace above
\DGCdot{E}[u]{$ $}
\end{DGCpicture}
~-~
\begin{DGCpicture}[scale={.7,.7}]
\DGCstrand[red](0,0)(.5,.5)[$ $`{\ }]
%\DGCdot{B}[l]{$t$}
\DGCstrand[red](1,0)(.5,.5)[$ $`{\ }]
%\DGCdot{B}[l]{$ $}
\DGCstrand[Red](.5,.5)(.5,1)
\DGCstrand[red](.5,1)(0,1.5)
\DGCstrand[red](.5,1)(1,1.5)
\DGCstrand[red](2,0)(2,1.5)[$ $`{\ }]
\DGCstrand[red](1,1.5)(1.5,2)
%\DGCdot{B}[l]{$s$}
\DGCstrand[red](2,1.5)(1.5,2)
\DGCstrand[Red](1.5,2)(1.5,2.5)
\DGCstrand[red](1.5,2.5)(1,3)
\DGCstrand[red](1.5,2.5)(2,3)
\DGCstrand[red](0,1.5)(0,3)
%\DGCdot{E}[r]{$r$}
%%%%
\DGCstrand[red](0,3)(.5,3.5)
\DGCstrand[red](1,3)(.5,3.5)
\DGCstrand[Red](.5,3.5)(.5,4)
\DGCstrand[red](.5,4)(0,4.5)
%\DGCdot{E}[u]{$a_{i}$}
\DGCstrand[red](.5,4)(1,4.5)
%\DGCdot{E}[u]{$a_{i+1}$}
\DGCstrand[red](2,3)(2,4.5)
\DGCdot{E}[u]{$ $}
\DGCstrand(.5,0)(1.25,.75)(1.25,3.75)(.5,4.5)
%\DGCstrand(.5,0)(1,2.25)(.5,4.5) replace above
%\DGCdot{E}[u]{$1 $}
\end{DGCpicture} 
\end{equation*}
which shows that 
\begin{equation*}
\begin{DGCpicture}[scale={.7,.7}]
\DGCstrand[red](0,0)(.5,.5)[$^{}$`{\ }]
%\DGCdot{B}[l]{$t$}
\DGCstrand[red](1,0)(.5,.5)[$^{}$`{\ }]
\DGCstrand[Red](.5,.5)(.5,1)
\DGCstrand[red](.5,1)(0,1.5)
\DGCstrand[red](.5,1)(1,1.5)
\DGCstrand[red](2,0)(2,1.5)[$^{}$`{\ }]
\DGCstrand[red](1,1.5)(1.5,2)
\DGCdot{B}[r]{$ $}
\DGCstrand[red](2,1.5)(1.5,2)
\DGCstrand[Red](1.5,2)(1.5,2.5)
\DGCstrand[red](1.5,2.5)(1,3)
\DGCstrand[red](1.5,2.5)(2,3)
\DGCstrand[red](0,1.5)(0,3)
%\DGCdot{E}[r]{$r$}
%%%%
\DGCstrand[red](0,3)(.5,3.5)
\DGCstrand[red](1,3)(.5,3.5)
\DGCstrand[Red](.5,3.5)(.5,4)
\DGCstrand[red](.5,4)(0,4.5)
%\DGCdot{E}[u]{$ $}
\DGCstrand[red](.5,4)(1,4.5)
%\DGCdot{E}[u]{$a_{i+1}$}
\DGCstrand[red](2,3)(2,4.5)
%\DGCdot{E}[u]{$a_{i+2}$}
\DGCstrand(.5,0)(2.5,.75)(2.5,3.75)(.5,4.5)
%\DGCstrand(.5,0)(2.5,2.25)(.5,4.5) replace above
%\DGCdot{E}[u]{$ $}
\end{DGCpicture}
\end{equation*}
is in the span of the proposed spanning set.  
The equality
\begin{equation*}
\begin{DGCpicture}[scale={.7,.7}]
\DGCstrand[red](0,0)(.5,.5)[$^{}$`{\ }]
%\DGCdot{B}[l]{$t$}
\DGCstrand[red](1,0)(.5,.5)[$^{}$`{\ }]
\DGCstrand[Red](.5,.5)(.5,1)
\DGCstrand[red](.5,1)(0,1.5)
\DGCstrand[red](.5,1)(1,1.5)
\DGCstrand[red](2,0)(2,1.5)[$^{}$`{\ }]
\DGCstrand[red](1,1.5)(1.5,2)
%\DGCdot{B}[r]{$s$}
\DGCstrand[red](2,1.5)(1.5,2)
\DGCstrand[Red](1.5,2)(1.5,2.5)
\DGCstrand[red](1.5,2.5)(1,3)
\DGCstrand[red](1.5,2.5)(2,3)
\DGCstrand[red](0,1.5)(0,3)
%\DGCdot{E}[r]{$r$}
%%%%
\DGCstrand[red](0,3)(.5,3.5)
\DGCstrand[red](1,3)(.5,3.5)
\DGCstrand[Red](.5,3.5)(.5,4)
\DGCstrand[red](.5,4)(0,4.5)
%\DGCdot{E}[u]{$a_{i}$}
\DGCstrand[red](.5,4)(1,4.5)
%\DGCdot{E}[u]{$a_{i+1}$}
\DGCstrand[red](2,3)(2,4.5)
%\DGCdot{E}[u]{$a_{i+2}$}
\DGCstrand(.5,0)(2.5,.75)(2.5,3.75)(.5,4.5)
%\DGCstrand(.5,0)(2.5,2.25)(.5,4.5) replace above
\DGCdot{E}[u]{$ $}
\end{DGCpicture}
~-~
\begin{DGCpicture}[scale={.7,.7}]
\DGCstrand[red](0,0)(.5,.5)[$^{}$`{\ }]
\DGCdot{B}[l]{$ $}
\DGCstrand[red](1,0)(.5,.5)[$^{}$`{\ }]
\DGCstrand[Red](.5,.5)(.5,1)
\DGCstrand[red](.5,1)(0,1.5)
\DGCstrand[red](.5,1)(1,1.5)
\DGCstrand[red](2,0)(2,1.5)[$^{}$`{\ }]
\DGCstrand[red](1,1.5)(1.5,2)
%\DGCdot{B}[r]{$s$}
\DGCstrand[red](2,1.5)(1.5,2)
\DGCstrand[Red](1.5,2)(1.5,2.5)
\DGCstrand[red](1.5,2.5)(1,3)
\DGCstrand[red](1.5,2.5)(2,3)
\DGCstrand[red](0,1.5)(0,3)
%\DGCdot{E}[r]{$r$}
%%%%
\DGCstrand[red](0,3)(.5,3.5)
\DGCstrand[red](1,3)(.5,3.5)
\DGCstrand[Red](.5,3.5)(.5,4)
\DGCstrand[red](.5,4)(0,4.5)
%\DGCdot{E}[u]{$ $}
\DGCstrand[red](.5,4)(1,4.5)
%\DGCdot{E}[u]{$a_{i+1}$}
\DGCstrand[red](2,3)(2,4.5)
%\DGCdot{E}[u]{$a_{i+2}$}
\DGCstrand(.5,0)(2.5,.75)(2.5,3.75)(.5,4.5)
%\DGCstrand(.5,0)(2.5,2.25)(.5,4.5) replace above
%\DGCdot{E}[u]{$ $}
\end{DGCpicture}
~=~
\begin{DGCpicture}[scale={.7,.7}]
\DGCstrand[red](0,0)(.5,.5)[$ $`{\ }]
%\DGCdot{B}[l]{$t$}
\DGCstrand[red](1,0)(.5,.5)[$ $`{\ }]
\DGCstrand[Red](.5,.5)(.5,1)
\DGCstrand[red](.5,1)(0,1.5)
\DGCstrand[red](.5,1)(1,1.5)
\DGCstrand[red](2,0)(2,1.5)[$ $`{\ }]
\DGCstrand[red](1,1.5)(1.5,2)
%\DGCdot{B}[l]{$s$}
\DGCstrand[red](2,1.5)(1.5,2)
\DGCstrand[Red](1.5,2)(1.5,2.5)
\DGCstrand[red](1.5,2.5)(1,3)
\DGCstrand[red](1.5,2.5)(2,3)
\DGCstrand[red](0,1.5)(0,3)
%\DGCdot{E}[r]{$r$}
%%%%
\DGCstrand[red](0,3)(.5,3.5)
\DGCstrand[red](1,3)(.5,3.5)
\DGCstrand[Red](.5,3.5)(.5,4)
\DGCstrand[red](.5,4)(0,4.5)
%\DGCdot{E}[u]{$a_{i}$}
\DGCstrand[red](.5,4)(1,4.5)
%\DGCdot{E}[u]{$ $}
\DGCstrand[red](2,3)(2,4.5)
%\DGCdot{E}[u]{$a_{i+2}$}
\DGCstrand(.5,0)(-.5,.75)(-.5,3.75)(.5,4.5)
%\DGCstrand(.5,0)(-.5,2.25)(.5,4.5) replace above
\DGCdot{E}[u]{$2 $}
\end{DGCpicture}
~-~
\begin{DGCpicture}[scale={.7,.7}]
\DGCstrand[red](0,0)(.5,.5)[$ $`{\ }]
%\DGCdot{B}[l]{$t$}
\DGCstrand[red](1,0)(.5,.5)[$ $`{\ }]
%\DGCdot{B}[l]{$ $}
\DGCstrand[Red](.5,.5)(.5,1)
\DGCstrand[red](.5,1)(0,1.5)
\DGCstrand[red](.5,1)(1,1.5)
\DGCstrand[red](2,0)(2,1.5)[$ $`{\ }]
\DGCstrand[red](1,1.5)(1.5,2)
%\DGCdot{B}[l]{$s$}
\DGCstrand[red](2,1.5)(1.5,2)
\DGCstrand[Red](1.5,2)(1.5,2.5)
\DGCstrand[red](1.5,2.5)(1,3)
\DGCstrand[red](1.5,2.5)(2,3)
\DGCstrand[red](0,1.5)(0,3)
%\DGCdot{E}[r]{$r$}
%%%%
\DGCstrand[red](0,3)(.5,3.5)
\DGCstrand[red](1,3)(.5,3.5)
\DGCstrand[Red](.5,3.5)(.5,4)
\DGCstrand[red](.5,4)(0,4.5)
%\DGCdot{E}[u]{$a_{i}$}
\DGCstrand[red](.5,4)(1,4.5)
\DGCdot{E}[u]{$ $}
\DGCstrand[red](2,3)(2,4.5)
%\DGCdot{E}[u]{$a_{i+2}$}
\DGCstrand(.5,0)(-.5,.75)(-.5,3.75)(.5,4.5)
%\DGCstrand(.5,0)(-.5,2.25)(.5,4.5) replace above
\DGCdot{E}[u]{$1 $}
\end{DGCpicture} 
~-~
\begin{DGCpicture}[scale={.7,.7}]
\DGCstrand[red](0,0)(.5,.5)[$ $`{\ }]
%\DGCdot{B}[l]{$t$}
\DGCstrand[red](1,0)(.5,.5)[$ $`{\ }]
%\DGCdot{B}[l]{$ $}
\DGCstrand[Red](.5,.5)(.5,1)
\DGCstrand[red](.5,1)(0,1.5)
\DGCstrand[red](.5,1)(1,1.5)
\DGCstrand[red](2,0)(2,1.5)[$ $`{\ }]
%\DGCdot{B}[l]{$ $}
%%
\DGCstrand[red](1,1.5)(1.5,2)
%\DGCdot{B}[l]{$s$}
\DGCstrand[red](2,1.5)(1.5,2)
\DGCstrand[Red](1.5,2)(1.5,2.5)
\DGCstrand[red](1.5,2.5)(1,3)
%\DGCdot{E}[r]{$ $}
\DGCstrand[red](1.5,2.5)(2,3)
\DGCstrand[red](0,1.5)(0,3)
%\DGCdot{E}[r]{$r$}
%%%%
\DGCstrand[red](0,3)(.5,3.5)
\DGCstrand[red](1,3)(.5,3.5)
\DGCstrand[Red](.5,3.5)(.5,4)
\DGCstrand[red](.5,4)(0,4.5)
%\DGCdot{E}[u]{$a_{i}$}
\DGCstrand[red](.5,4)(1,4.5)
%\DGCdot{E}[u]{$ $}
\DGCstrand[red](2,3)(2,4.5)
\DGCdot{E}[u]{$ $}
\DGCstrand(.5,0)(-.5,.75)(-.5,3.75)(.5,4.5)
%\DGCstrand(.5,0)(-.5,2.25)(.5,4.5) replace above
\DGCdot{E}[u]{$1 $}
\end{DGCpicture}
~+~
\begin{DGCpicture}[scale={.7,.7}]
\DGCstrand[red](0,0)(.5,.5)[$ $`{\ }]
%\DGCdot{B}[l]{$t$}
\DGCstrand[red](1,0)(.5,.5)[$ $`{\ }]
%\DGCdot{B}[l]{$ $}
\DGCstrand[Red](.5,.5)(.5,1)
\DGCstrand[red](.5,1)(0,1.5)
\DGCstrand[red](.5,1)(1,1.5)
\DGCstrand[red](2,0)(2,1.5)[$ $`{\ }]
%\DGCdot{B}[l]{$ $}
%%
\DGCstrand[red](1,1.5)(1.5,2)
%\DGCdot{B}[l]{$ $}
\DGCstrand[red](2,1.5)(1.5,2)
\DGCstrand[Red](1.5,2)(1.5,2.5)
\DGCstrand[red](1.5,2.5)(1,3)
%\DGCdot{E}[r]{$ $}
\DGCstrand[red](1.5,2.5)(2,3)
\DGCstrand[red](0,1.5)(0,3)
%\DGCdot{E}[r]{$r$}
%%%%
\DGCstrand[red](0,3)(.5,3.5)
\DGCstrand[red](1,3)(.5,3.5)
\DGCstrand[Red](.5,3.5)(.5,4)
\DGCstrand[red](.5,4)(0,4.5)
%\DGCdot{E}[u]{$ $}
\DGCstrand[red](.5,4)(1,4.5)
\DGCdot{E}[u]{$ $}
\DGCstrand[red](2,3)(2,4.5)
\DGCdot{E}[u]{$ $}
\DGCstrand(.5,0)(-.5,.75)(-.5,3.75)(.5,4.5)
%\DGCstrand(.5,0)(-.5,2.25)(.5,4.5) replace above
%\DGCdot{E}[u]{$1 $}
\end{DGCpicture}
\end{equation*}
implies that 
\begin{equation*}
\begin{DGCpicture}[scale={.7,.7}]
\DGCstrand[red](0,0)(.5,.5)[$^{}$`{\ }]
\DGCdot{B}[l]{$ $}
\DGCstrand[red](1,0)(.5,.5)[$^{}$`{\ }]
\DGCstrand[Red](.5,.5)(.5,1)
\DGCstrand[red](.5,1)(0,1.5)
\DGCstrand[red](.5,1)(1,1.5)
\DGCstrand[red](2,0)(2,1.5)[$^{}$`{\ }]
\DGCstrand[red](1,1.5)(1.5,2)
%\DGCdot{B}[r]{$s$}
\DGCstrand[red](2,1.5)(1.5,2)
\DGCstrand[Red](1.5,2)(1.5,2.5)
\DGCstrand[red](1.5,2.5)(1,3)
\DGCstrand[red](1.5,2.5)(2,3)
\DGCstrand[red](0,1.5)(0,3)
%\DGCdot{E}[r]{$r$}
%%%%
\DGCstrand[red](0,3)(.5,3.5)
\DGCstrand[red](1,3)(.5,3.5)
\DGCstrand[Red](.5,3.5)(.5,4)
\DGCstrand[red](.5,4)(0,4.5)
%\DGCdot{E}[u]{$ $}
\DGCstrand[red](.5,4)(1,4.5)
%\DGCdot{E}[u]{$a_{i+1}$}
\DGCstrand[red](2,3)(2,4.5)
%\DGCdot{E}[u]{$a_{i+2}$}
\DGCstrand(.5,0)(2.5,.75)(2.5,3.75)(.5,4.5)
%\DGCstrand(.5,0)(2.5,2.25)(.5,4.5) replace above
%\DGCdot{E}[u]{$ $}
\end{DGCpicture}
\end{equation*}
is in the span of the proposed spanning set.
A longer computation using the equality
\begin{equation*}
\begin{DGCpicture}[scale={.7,.7}]
\DGCstrand[red](0,0)(.5,.5)[$^{}$`{\ }]
%\DGCdot{B}[l]{$t$}
\DGCstrand[red](1,0)(.5,.5)[$^{}$`{\ }]
\DGCstrand[Red](.5,.5)(.5,1)
\DGCstrand[red](.5,1)(0,1.5)
\DGCstrand[red](.5,1)(1,1.5)
\DGCstrand[red](2,0)(2,1.5)[$^{}$`{\ }]
\DGCstrand[red](1,1.5)(1.5,2)
%\DGCdot{B}[r]{$s$}
\DGCstrand[red](2,1.5)(1.5,2)
\DGCstrand[Red](1.5,2)(1.5,2.5)
\DGCstrand[red](1.5,2.5)(1,3)
\DGCstrand[red](1.5,2.5)(2,3)
\DGCstrand[red](0,1.5)(0,3)
%\DGCdot{E}[r]{$r$}
%%%%
\DGCstrand[red](0,3)(.5,3.5)
\DGCstrand[red](1,3)(.5,3.5)
\DGCstrand[Red](.5,3.5)(.5,4)
\DGCstrand[red](.5,4)(0,4.5)
%\DGCdot{E}[u]{$a_{i}$}
\DGCstrand[red](.5,4)(1,4.5)
%\DGCdot{E}[u]{$a_{i+1}$}
\DGCstrand[red](2,3)(2,4.5)
%\DGCdot{E}[u]{$a_{i+2}$}
\DGCstrand(.5,0)(2.5,.75)(2.5,3.75)(.5,4.5)
%\DGCstrand(.5,0)(2.5,2.25)(.5,4.5) replace above
\DGCdot{E}[u]{$ $}
\end{DGCpicture}
~-~
\begin{DGCpicture}[scale={.7,.7}]
\DGCstrand[red](0,0)(.5,.5)[$^{}$`{\ }]
%\DGCdot{B}[l]{$t$}
\DGCstrand[red](1,0)(.5,.5)[$^{}$`{\ }]
\DGCstrand[Red](.5,.5)(.5,1)
\DGCstrand[red](.5,1)(0,1.5)
\DGCstrand[red](.5,1)(1,1.5)
\DGCstrand[red](2,0)(2,1.5)[$^{}$`{\ }]
\DGCstrand[red](1,1.5)(1.5,2)
%\DGCdot{B}[r]{$ $}
\DGCstrand[red](2,1.5)(1.5,2)
\DGCstrand[Red](1.5,2)(1.5,2.5)
\DGCstrand[red](1.5,2.5)(1,3)
\DGCstrand[red](1.5,2.5)(2,3)
\DGCstrand[red](0,1.5)(0,3)
%\DGCdot{E}[r]{$r$}
%%%%
\DGCstrand[red](0,3)(.5,3.5)
\DGCstrand[red](1,3)(.5,3.5)
\DGCdot{B}[r]{$ $}
\DGCstrand[Red](.5,3.5)(.5,4)
\DGCstrand[red](.5,4)(0,4.5)
%\DGCdot{E}[u]{$ $}
\DGCstrand[red](.5,4)(1,4.5)
%\DGCdot{E}[u]{$a_{i+1}$}
\DGCstrand[red](2,3)(2,4.5)
%\DGCdot{E}[u]{$a_{i+2}$}
\DGCstrand(.5,0)(2.5,.75)(2.5,3.75)(.5,4.5)
%\DGCstrand(.5,0)(2.5,2.25)(.5,4.5) replace above
%\DGCdot{E}[u]{$ $}
\end{DGCpicture}
~=~
\begin{DGCpicture}[scale={.7,.7}]
\DGCstrand[red](0,0)(.5,.5)[$ $`{\ }]
%\DGCdot{B}[l]{$t$}
\DGCstrand[red](1,0)(.5,.5)[$ $`{\ }]
\DGCstrand[Red](.5,.5)(.5,1)
\DGCstrand[red](.5,1)(0,1.5)
\DGCstrand[red](.5,1)(1,1.5)
\DGCstrand[red](2,0)(2,1.5)[$ $`{\ }]
\DGCstrand[red](1,1.5)(1.5,2)
%\DGCdot{B}[l]{$ $}
\DGCstrand[red](2,1.5)(1.5,2)
\DGCstrand[Red](1.5,2)(1.5,2.5)
\DGCstrand[red](1.5,2.5)(1,3)
\DGCstrand[red](1.5,2.5)(2,3)
\DGCstrand[red](0,1.5)(0,3)
%\DGCdot{E}[r]{$r$}
%%%%
\DGCstrand[red](0,3)(.5,3.5)
\DGCstrand[red](1,3)(.5,3.5)
\DGCstrand[Red](.5,3.5)(.5,4)
\DGCstrand[red](.5,4)(0,4.5)
%\DGCdot{E}[u]{$a_{i}$}
\DGCstrand[red](.5,4)(1,4.5)
%\DGCdot{E}[u]{$a_{i+1}$}
\DGCstrand[red](2,3)(2,4.5)
%\DGCdot{E}[u]{$a_{i+2}$}
\DGCstrand(.5,0)(1.25,.75)(1.25,3.75)(.5,4.5)
%\DGCstrand(.5,0)(1,2.25)(.5,4.5) replace above
\DGCdot{E}[u]{$ $}
\end{DGCpicture}
~-~
\begin{DGCpicture}[scale={.7,.7}]
\DGCstrand[red](0,0)(.5,.5)[$ $`{\ }]
%\DGCdot{B}[l]{$t$}
\DGCstrand[red](1,0)(.5,.5)[$ $`{\ }]
%\DGCdot{B}[l]{$ $}
\DGCstrand[Red](.5,.5)(.5,1)
\DGCstrand[red](.5,1)(0,1.5)
\DGCstrand[red](.5,1)(1,1.5)
\DGCstrand[red](2,0)(2,1.5)[$ $`{\ }]
\DGCdot{B}[u]{$ $}
\DGCstrand[red](1,1.5)(1.5,2)
%\DGCdot{B}[l]{$s$}
\DGCstrand[red](2,1.5)(1.5,2)
\DGCstrand[Red](1.5,2)(1.5,2.5)
\DGCstrand[red](1.5,2.5)(1,3)
\DGCstrand[red](1.5,2.5)(2,3)
\DGCstrand[red](0,1.5)(0,3)
%\DGCdot{E}[r]{$r$}
%%%%
\DGCstrand[red](0,3)(.5,3.5)
\DGCstrand[red](1,3)(.5,3.5)
\DGCstrand[Red](.5,3.5)(.5,4)
\DGCstrand[red](.5,4)(0,4.5)
%\DGCdot{E}[u]{$a_{i}$}
\DGCstrand[red](.5,4)(1,4.5)
%\DGCdot{E}[u]{$a_{i+1}$}
\DGCstrand[red](2,3)(2,4.5)
%\DGCdot{E}[u]{$ $}
\DGCstrand(.5,0)(1.25,.75)(1.25,3.75)(.5,4.5)
%\DGCstrand(.5,0)(1,2.25)(.5,4.5) replace above
%\DGCdot{E}[u]{$1 $}
\end{DGCpicture} 
\end{equation*}
implies that
\begin{equation*}
\begin{DGCpicture}[scale={.7,.7}]
\DGCstrand[red](0,0)(.5,.5)[$^{}$`{\ }]
%\DGCdot{B}[l]{$t$}
\DGCstrand[red](1,0)(.5,.5)[$^{}$`{\ }]
\DGCstrand[Red](.5,.5)(.5,1)
\DGCstrand[red](.5,1)(0,1.5)
\DGCstrand[red](.5,1)(1,1.5)
\DGCstrand[red](2,0)(2,1.5)[$^{}$`{\ }]
\DGCstrand[red](1,1.5)(1.5,2)
%\DGCdot{B}[r]{$ $}
\DGCstrand[red](2,1.5)(1.5,2)
\DGCstrand[Red](1.5,2)(1.5,2.5)
\DGCstrand[red](1.5,2.5)(1,3)
\DGCstrand[red](1.5,2.5)(2,3)
\DGCstrand[red](0,1.5)(0,3)
\DGCdot{E}[r]{$ $}
%%%%
\DGCstrand[red](0,3)(.5,3.5)
\DGCstrand[red](1,3)(.5,3.5)
\DGCstrand[Red](.5,3.5)(.5,4)
\DGCstrand[red](.5,4)(0,4.5)
%\DGCdot{E}[u]{$ $}
\DGCstrand[red](.5,4)(1,4.5)
%\DGCdot{E}[u]{$a_{i+1}$}
\DGCstrand[red](2,3)(2,4.5)
%\DGCdot{E}[u]{$a_{i+2}$}
\DGCstrand(.5,0)(2.5,.75)(2.5,3.75)(.5,4.5)
%\DGCstrand(.5,0)(2.5,2.25)(.5,4.5) replace above
%\DGCdot{E}[u]{$ $}
\end{DGCpicture}
\end{equation*}
is in the proposed span.

Now consider elements of the form
\begin{equation*}
\begin{DGCpicture}[scale={.7,.7}]
\DGCstrand[red](0,0)(.5,.5)[$ $`{\ }]
\DGCdot{B}[l]{$t$}
\DGCstrand[red](1,0)(.5,.5)[$ $`{\ }]
\DGCstrand[Red](.5,.5)(.5,1)
\DGCstrand[red](.5,1)(0,1.5)
\DGCstrand[red](.5,1)(1,1.5)
\DGCstrand[red](2,0)(2,1.5)[$ $`{\ }]
\DGCstrand[red](1,1.5)(1.5,2)
\DGCdot{B}[l]{$s$}
\DGCstrand[red](2,1.5)(1.5,2)
\DGCstrand[Red](1.5,2)(1.5,2.5)
\DGCstrand[red](1.5,2.5)(1,3)
\DGCstrand[red](1.5,2.5)(2,3)
\DGCstrand[red](0,1.5)(0,3)
\DGCdot{E}[r]{$r$}
%%%%
\DGCstrand[red](0,3)(.5,3.5)
\DGCstrand[red](1,3)(.5,3.5)
\DGCstrand[Red](.5,3.5)(.5,4)
\DGCstrand[red](.5,4)(0,4.5)
\DGCdot{E}[ul]{$a_{i}$}
\DGCstrand[red](.5,4)(1,4.5)
\DGCdot{E}[ul]{$a_{i+1}$}
\DGCstrand[red](2,3)(2,4.5)
\DGCdot{E}[ur]{$a_{i+2}$}
\DGCstrand(1.5,0)(1.25,.75)(1.25,3.75)(1.5,4.5)
%\DGCstrand(1.5,0)(1,2.25)(1.5,4.5) replace above
\DGCdot{E}[u]{$b$}
\end{DGCpicture}
\ .
\end{equation*}
By Lemma \ref{classical}, one may assume that $r,s,t \in \{0,1\}$ and $a_i, a_{i+1}, a_{i+2}, b \in \mathbb{Z}_{\geq 0}$.
Next note that the equation
\begin{equation*}
\begin{DGCpicture}[scale={.7,.7}]
\DGCstrand[red](0,0)(.5,.5)[$ $`{\ }]
%\DGCdot{B}[l]{$t$}
\DGCstrand[red](1,0)(.5,.5)[$ $`{\ }]
\DGCstrand[Red](.5,.5)(.5,1)
\DGCstrand[red](.5,1)(0,1.5)
\DGCstrand[red](.5,1)(1,1.5)
\DGCstrand[red](2,0)(2,1.5)[$ $`{\ }]
\DGCstrand[red](1,1.5)(1.5,2)
%\DGCdot{B}[l]{$s$}
\DGCstrand[red](2,1.5)(1.5,2)
\DGCstrand[Red](1.5,2)(1.5,2.5)
\DGCstrand[red](1.5,2.5)(1,3)
\DGCstrand[red](1.5,2.5)(2,3)
\DGCstrand[red](0,1.5)(0,3)
%\DGCdot{E}[r]{$r$}
%%%%
\DGCstrand[red](0,3)(.5,3.5)
\DGCstrand[red](1,3)(.5,3.5)
\DGCstrand[Red](.5,3.5)(.5,4)
\DGCstrand[red](.5,4)(0,4.5)
%\DGCdot{E}[u]{$a_{i}$}
\DGCstrand[red](.5,4)(1,4.5)
%\DGCdot{E}[u]{$a_{i+1}$}
\DGCstrand[red](2,3)(2,4.5)
%\DGCdot{E}[u]{$a_{i+2}$}
\DGCstrand(1.5,0)(1.75,.75)(1.75,3.75)(1.5,4.5)
%\DGCstrand(1.5,0)(2,2.25)(1.5,4.5) replace above
\DGCdot{E}[u]{$ $}
\end{DGCpicture}
~-~
\begin{DGCpicture}[scale={.7,.7}]
\DGCstrand[red](0,0)(.5,.5)[$ $`{\ }]
%\DGCdot{B}[l]{$t$}
\DGCstrand[red](1,0)(.5,.5)[$ $`{\ }]
\DGCstrand[Red](.5,.5)(.5,1)
\DGCstrand[red](.5,1)(0,1.5)
\DGCstrand[red](.5,1)(1,1.5)
\DGCstrand[red](2,0)(2,1.5)[$ $`{\ }]
\DGCstrand[red](1,1.5)(1.5,2)
%\DGCdot{B}[l]{$s$}
\DGCstrand[red](2,1.5)(1.5,2)
\DGCstrand[Red](1.5,2)(1.5,2.5)
\DGCstrand[red](1.5,2.5)(1,3)
\DGCdot{E}[r]{$ $}
\DGCstrand[red](1.5,2.5)(2,3)
\DGCstrand[red](0,1.5)(0,3)
%\DGCdot{E}[r]{$r$}
%%%%
\DGCstrand[red](0,3)(.5,3.5)
\DGCstrand[red](1,3)(.5,3.5)
\DGCstrand[Red](.5,3.5)(.5,4)
\DGCstrand[red](.5,4)(0,4.5)
%\DGCdot{E}[u]{$a_{i}$}
\DGCstrand[red](.5,4)(1,4.5)
%\DGCdot{E}[u]{$a_{i+1}$}
\DGCstrand[red](2,3)(2,4.5)
%\DGCdot{E}[u]{$a_{i+2}$}
\DGCstrand(1.5,0)(1.75,.75)(1.75,3.75)(1.5,4.5)
%\DGCstrand(1.5,0)(2,2.25)(1.5,4.5) replace above
%\DGCdot{E}[u]{$b$}
\end{DGCpicture}
~=~
\begin{DGCpicture}[scale={.7,.7}]
\DGCstrand[red](0,0)(.5,.5)[$ $`{\ }]
%\DGCdot{B}[l]{$t$}
\DGCstrand[red](1,0)(.5,.5)[$ $`{\ }]
\DGCstrand[Red](.5,.5)(.5,1)
\DGCstrand[red](.5,1)(0,1.5)
\DGCstrand[red](.5,1)(1,1.5)
\DGCstrand[red](2,0)(2,1.5)[$ $`{\ }]
\DGCstrand[red](1,1.5)(1.5,2)
%\DGCdot{B}[l]{$s$}
\DGCstrand[red](2,1.5)(1.5,2)
\DGCstrand[Red](1.5,2)(1.5,2.5)
\DGCstrand[red](1.5,2.5)(1,3)
\DGCstrand[red](1.5,2.5)(2,3)
\DGCstrand[red](0,1.5)(0,3)
%\DGCdot{E}[r]{$r$}
%%%%
\DGCstrand[red](0,3)(.5,3.5)
\DGCstrand[red](1,3)(.5,3.5)
\DGCstrand[Red](.5,3.5)(.5,4)
\DGCstrand[red](.5,4)(0,4.5)
%\DGCdot{E}[u]{$a_{i}$}
\DGCstrand[red](.5,4)(1,4.5)
%\DGCdot{E}[u]{$a_{i+1}$}
\DGCstrand[red](2,3)(2,4.5)
%\DGCdot{E}[u]{$a_{i+2}$}
\DGCstrand(1.5,0)(1.25,.75)(1.25,3.75)(1.5,4.5)
%\DGCstrand(1.5,0)(1,2.25)(1.5,4.5) replace above
\DGCdot{E}[u]{$ $}
\end{DGCpicture}
~-~
\begin{DGCpicture}[scale={.7,.7}]
\DGCstrand[red](0,0)(.5,.5)[$ $`{\ }]
%\DGCdot{B}[l]{$t$}
\DGCstrand[red](1,0)(.5,.5)[$ $`{\ }]
\DGCstrand[Red](.5,.5)(.5,1)
\DGCstrand[red](.5,1)(0,1.5)
\DGCstrand[red](.5,1)(1,1.5)
\DGCstrand[red](2,0)(2,1.5)[$ $`{\ }]
\DGCdot{B}[l]{$ $}
\DGCstrand[red](1,1.5)(1.5,2)
%\DGCdot{B}[l]{$s$}
\DGCstrand[red](2,1.5)(1.5,2)
\DGCstrand[Red](1.5,2)(1.5,2.5)
\DGCstrand[red](1.5,2.5)(1,3)
\DGCstrand[red](1.5,2.5)(2,3)
\DGCstrand[red](0,1.5)(0,3)
%\DGCdot{E}[r]{$r$}
%%%%
\DGCstrand[red](0,3)(.5,3.5)
\DGCstrand[red](1,3)(.5,3.5)
\DGCstrand[Red](.5,3.5)(.5,4)
\DGCstrand[red](.5,4)(0,4.5)
%\DGCdot{E}[u]{$a_{i}$}
\DGCstrand[red](.5,4)(1,4.5)
%\DGCdot{E}[u]{$a_{i+1}$}
\DGCstrand[red](2,3)(2,4.5)
%\DGCdot{E}[u]{$a_{i+2}$}
\DGCstrand(1.5,0)(1.25,.75)(1.25,3.75)(1.5,4.5)
%\DGCstrand(1.5,0)(1,2.25)(1.5,4.5) replace above
%\DGCdot{E}[u]{$ $}
\end{DGCpicture}
\end{equation*}
implies that a black dot on a diagram 
\begin{equation*}
\begin{DGCpicture}[scale={.7,.7}]
\DGCstrand[red](0,0)(.5,.5)[$ $`{\ }]
%\DGCdot{B}[l]{$t$}
\DGCstrand[red](1,0)(.5,.5)[$ $`{\ }]
\DGCstrand[Red](.5,.5)(.5,1)
\DGCstrand[red](.5,1)(0,1.5)
\DGCstrand[red](.5,1)(1,1.5)
\DGCstrand[red](2,0)(2,1.5)[$ $`{\ }]
\DGCstrand[red](1,1.5)(1.5,2)
%\DGCdot{B}[l]{$s$}
\DGCstrand[red](2,1.5)(1.5,2)
\DGCstrand[Red](1.5,2)(1.5,2.5)
\DGCstrand[red](1.5,2.5)(1,3)
\DGCstrand[red](1.5,2.5)(2,3)
\DGCstrand[red](0,1.5)(0,3)
%\DGCdot{E}[r]{$r$}
%%%%
\DGCstrand[red](0,3)(.5,3.5)
\DGCstrand[red](1,3)(.5,3.5)
\DGCstrand[Red](.5,3.5)(.5,4)
\DGCstrand[red](.5,4)(0,4.5)
%\DGCdot{E}[u]{$a_{i}$}
\DGCstrand[red](.5,4)(1,4.5)
%\DGCdot{E}[u]{$a_{i+1}$}
\DGCstrand[red](2,3)(2,4.5)
%\DGCdot{E}[u]{$a_{i+2}$}
\DGCstrand(1.5,0)(1.75,.75)(1.75,3.75)(1.5,4.5)
%\DGCstrand(1.5,0)(2,2.25)(1.5,4.5) replace above
\DGCdot{E}[u]{$ $}
\end{DGCpicture}
\end{equation*}
is already in the span of the proposed spanning set.
The equation
\begin{equation*}
\begin{DGCpicture}[scale={.7,.7}]
\DGCstrand[red](0,0)(.5,.5)[$ $`{\ }]
%\DGCdot{B}[l]{$t$}
\DGCstrand[red](1,0)(.5,.5)[$ $`{\ }]
\DGCstrand[Red](.5,.5)(.5,1)
\DGCstrand[red](.5,1)(0,1.5)
\DGCstrand[red](.5,1)(1,1.5)
\DGCstrand[red](2,0)(2,1.5)[$ $`{\ }]
\DGCstrand[red](1,1.5)(1.5,2)
%\DGCdot{B}[l]{$s$}
\DGCstrand[red](2,1.5)(1.5,2)
\DGCstrand[Red](1.5,2)(1.5,2.5)
\DGCstrand[red](1.5,2.5)(1,3)
\DGCstrand[red](1.5,2.5)(2,3)
\DGCstrand[red](0,1.5)(0,3)
%\DGCdot{E}[r]{$r$}
%%%%
\DGCstrand[red](0,3)(.5,3.5)
\DGCstrand[red](1,3)(.5,3.5)
\DGCstrand[Red](.5,3.5)(.5,4)
\DGCstrand[red](.5,4)(0,4.5)
%\DGCdot{E}[u]{$a_{i}$}
\DGCstrand[red](.5,4)(1,4.5)
%\DGCdot{E}[u]{$a_{i+1}$}
\DGCstrand[red](2,3)(2,4.5)
%\DGCdot{E}[u]{$a_{i+2}$}
\DGCstrand(1.5,0)(1.75,.75)(1.75,3.75)(1.5,4.5)
%\DGCstrand(1.5,0)(2,2.25)(1.5,4.5) replace above
\DGCdot{E}[u]{$ $}
\end{DGCpicture}
~-~
\begin{DGCpicture}[scale={.7,.7}]
\DGCstrand[red](0,0)(.5,.5)[$ $`{\ }]
%\DGCdot{B}[l]{$t$}
\DGCstrand[red](1,0)(.5,.5)[$ $`{\ }]
\DGCstrand[Red](.5,.5)(.5,1)
\DGCstrand[red](.5,1)(0,1.5)
\DGCstrand[red](.5,1)(1,1.5)
\DGCstrand[red](2,0)(2,1.5)[$ $`{\ }]
\DGCstrand[red](1,1.5)(1.5,2)
\DGCdot{B}[l]{$ $}
\DGCstrand[red](2,1.5)(1.5,2)
\DGCstrand[Red](1.5,2)(1.5,2.5)
\DGCstrand[red](1.5,2.5)(1,3)
%\DGCdot{E}[r]{$ $}
\DGCstrand[red](1.5,2.5)(2,3)
\DGCstrand[red](0,1.5)(0,3)
%\DGCdot{E}[r]{$r$}
%%%%
\DGCstrand[red](0,3)(.5,3.5)
\DGCstrand[red](1,3)(.5,3.5)
\DGCstrand[Red](.5,3.5)(.5,4)
\DGCstrand[red](.5,4)(0,4.5)
%\DGCdot{E}[u]{$a_{i}$}
\DGCstrand[red](.5,4)(1,4.5)
%\DGCdot{E}[u]{$a_{i+1}$}
\DGCstrand[red](2,3)(2,4.5)
%\DGCdot{E}[u]{$a_{i+2}$}
\DGCstrand(1.5,0)(1.75,.75)(1.75,3.75)(1.5,4.5)
%\DGCstrand(1.5,0)(2,2.25)(1.5,4.5) replace above
%\DGCdot{E}[u]{$b$}
\end{DGCpicture}
~=~
\begin{DGCpicture}[scale={.7,.7}]
\DGCstrand[red](0,0)(.5,.5)[$ $`{\ }]
%\DGCdot{B}[l]{$t$}
\DGCstrand[red](1,0)(.5,.5)[$ $`{\ }]
\DGCstrand[Red](.5,.5)(.5,1)
\DGCstrand[red](.5,1)(0,1.5)
\DGCstrand[red](.5,1)(1,1.5)
\DGCstrand[red](2,0)(2,1.5)[$ $`{\ }]
\DGCstrand[red](1,1.5)(1.5,2)
%\DGCdot{B}[l]{$s$}
\DGCstrand[red](2,1.5)(1.5,2)
\DGCstrand[Red](1.5,2)(1.5,2.5)
\DGCstrand[red](1.5,2.5)(1,3)
\DGCstrand[red](1.5,2.5)(2,3)
\DGCstrand[red](0,1.5)(0,3)
%\DGCdot{E}[r]{$r$}
%%%%
\DGCstrand[red](0,3)(.5,3.5)
\DGCstrand[red](1,3)(.5,3.5)
\DGCstrand[Red](.5,3.5)(.5,4)
\DGCstrand[red](.5,4)(0,4.5)
%\DGCdot{E}[u]{$a_{i}$}
\DGCstrand[red](.5,4)(1,4.5)
%\DGCdot{E}[u]{$a_{i+1}$}
\DGCstrand[red](2,3)(2,4.5)
%\DGCdot{E}[u]{$a_{i+2}$}
\DGCstrand(1.5,0)(1.25,.75)(1.25,3.75)(1.5,4.5)
%\DGCstrand(1.5,0)(1,2.25)(1.5,4.5) replace above
\DGCdot{E}[u]{$ $}
\end{DGCpicture}
~-~
\begin{DGCpicture}[scale={.7,.7}]
\DGCstrand[red](0,0)(.5,.5)[$ $`{\ }]
%\DGCdot{B}[l]{$t$}
\DGCstrand[red](1,0)(.5,.5)[$ $`{\ }]
\DGCstrand[Red](.5,.5)(.5,1)
\DGCstrand[red](.5,1)(0,1.5)
\DGCstrand[red](.5,1)(1,1.5)
\DGCstrand[red](2,0)(2,1.5)[$ $`{\ }]
%\DGCdot{B}[l]{$ $}
%%
\DGCstrand[red](1,1.5)(1.5,2)
%\DGCdot{B}[l]{$s$}
\DGCstrand[red](2,1.5)(1.5,2)
\DGCstrand[Red](1.5,2)(1.5,2.5)
\DGCstrand[red](1.5,2.5)(1,3)
\DGCstrand[red](1.5,2.5)(2,3)
\DGCstrand[red](0,1.5)(0,3)
%\DGCdot{E}[r]{$r$}
%%%%
\DGCstrand[red](0,3)(.5,3.5)
\DGCstrand[red](1,3)(.5,3.5)
\DGCstrand[Red](.5,3.5)(.5,4)
\DGCstrand[red](.5,4)(0,4.5)
%\DGCdot{E}[u]{$a_{i}$}
\DGCstrand[red](.5,4)(1,4.5)
%\DGCdot{E}[u]{$a_{i+1}$}
\DGCstrand[red](2,3)(2,4.5)
\DGCdot{E}[u]{$ $}
\DGCstrand(1.5,0)(1.25,.75)(1.25,3.75)(1.5,4.5)
%\DGCstrand(1.5,0)(1,2.25)(1.5,4.5) replace above
%\DGCdot{E}[u]{$ $}
\end{DGCpicture}
\end{equation*}
implies that 
\begin{equation*}
\begin{DGCpicture}[scale={.7,.7}]
\DGCstrand[red](0,0)(.5,.5)[$ $`{\ }]
%\DGCdot{B}[l]{$t$}
\DGCstrand[red](1,0)(.5,.5)[$ $`{\ }]
\DGCstrand[Red](.5,.5)(.5,1)
\DGCstrand[red](.5,1)(0,1.5)
\DGCstrand[red](.5,1)(1,1.5)
\DGCstrand[red](2,0)(2,1.5)[$ $`{\ }]
%\DGCdot{B}[l]{$ $}
%%
\DGCstrand[red](1,1.5)(1.5,2)
\DGCdot{B}[l]{$ $}
\DGCstrand[red](2,1.5)(1.5,2)
\DGCstrand[Red](1.5,2)(1.5,2.5)
\DGCstrand[red](1.5,2.5)(1,3)
\DGCstrand[red](1.5,2.5)(2,3)
\DGCstrand[red](0,1.5)(0,3)
%\DGCdot{E}[r]{$r$}
%%%%
\DGCstrand[red](0,3)(.5,3.5)
\DGCstrand[red](1,3)(.5,3.5)
\DGCstrand[Red](.5,3.5)(.5,4)
\DGCstrand[red](.5,4)(0,4.5)
%\DGCdot{E}[u]{$a_{i}$}
\DGCstrand[red](.5,4)(1,4.5)
%\DGCdot{E}[u]{$a_{i+1}$}
\DGCstrand[red](2,3)(2,4.5)
%\DGCdot{E}[u]{$ $}
\DGCstrand(1.5,0)(1.75,.75)(1.75,3.75)(1.5,4.5)
%\DGCstrand(1.5,0)(2,2.25)(1.5,4.5) replace above
%\DGCdot{E}[u]{$ $}
\end{DGCpicture}
\end{equation*}
is already in the span of the proposed spanning set.

By Lemma \ref{classical}, we may assume that dots on diagrams of the following form have the configuration
\begin{equation*}
\begin{DGCpicture}[scale={.7,.7}]
\DGCstrand[red](0,0)(.5,.5)[$^{i}$`{\ }]
\DGCdot{B}[l]{$t$}
\DGCstrand[red](1,0)(.5,.5)[$^{i+1}$`{\ }]
\DGCstrand[Red](.5,.5)(.5,1)
\DGCstrand[red](.5,1)(0,1.5)
\DGCstrand[red](.5,1)(1,1.5)
\DGCstrand[red](2,0)(2,1.5)[$^{i+2}$`{\ }]
\DGCstrand[red](1,1.5)(1.5,2)
\DGCdot{B}[l]{$s$}
\DGCstrand[red](2,1.5)(1.5,2)
\DGCstrand[Red](1.5,2)(1.5,2.5)
\DGCstrand[red](1.5,2.5)(1,3)
\DGCstrand[red](1.5,2.5)(2,3)
\DGCstrand[red](0,1.5)(0,3)
\DGCdot{E}[r]{$r$}
%%%%
\DGCstrand[red](0,3)(.5,3.5)
\DGCstrand[red](1,3)(.5,3.5)
\DGCstrand[Red](.5,3.5)(.5,4)
\DGCstrand[red](.5,4)(0,4.5)
\DGCdot{E}[ul]{$a_{i}$}
\DGCstrand[red](.5,4)(1,4.5)
\DGCdot{E}[u]{$a_{i+1}$}
\DGCstrand[red](2,3)(2,4.5)
\DGCdot{E}[u]{$a_{i+2}$}
\DGCstrand(1.5,0)(1.25,.75)(1.25,3.75)(.5,4.5)
%\DGCstrand(1.5,0)(1,4.25)(.5,4.5)
\DGCdot{E}[ul]{$b$}
\end{DGCpicture}
\end{equation*}
where $r,s,t \in \{0,1\}$ and $b, a_i, a_{i+1}, a_{i+2} \in \mathbb{Z}_{\geq 0}$.
Next note that
\begin{equation*}
\begin{DGCpicture}[scale={.7,.7}]
\DGCstrand[red](0,0)(.5,.5)[$ $`{\ }]
%\DGCdot{B}[l]{$t$}
\DGCstrand[red](1,0)(.5,.5)[$ $`{\ }]
\DGCstrand[Red](.5,.5)(.5,1)
\DGCstrand[red](.5,1)(0,1.5)
\DGCstrand[red](.5,1)(1,1.5)
\DGCstrand[red](2,0)(2,1.5)[$ $`{\ }]
\DGCstrand[red](1,1.5)(1.5,2)
%\DGCdot{B}[l]{$s$}
\DGCstrand[red](2,1.5)(1.5,2)
\DGCstrand[Red](1.5,2)(1.5,2.5)
\DGCstrand[red](1.5,2.5)(1,3)
\DGCstrand[red](1.5,2.5)(2,3)
\DGCstrand[red](0,1.5)(0,3)
%\DGCdot{E}[r]{$r$}
%%%%
\DGCstrand[red](0,3)(.5,3.5)
\DGCstrand[red](1,3)(.5,3.5)
\DGCstrand[Red](.5,3.5)(.5,4)
\DGCstrand[red](.5,4)(0,4.5)
%\DGCdot{E}[ul]{$a_{i}$}
\DGCstrand[red](.5,4)(1,4.5)
%\DGCdot{E}[u]{$a_{i+1}$}
\DGCstrand[red](2,3)(2,4.5)
%\DGCdot{E}[u]{$a_{i+2}$}
\DGCstrand(1.5,0)(1.75,.75)(1.75,3.75)(.5,4.5)
%\DGCstrand(1.5,0)(2,2.25)(.5,4.5) replace above
\DGCdot{E}[u]{$ $}
\end{DGCpicture}
~-~
\begin{DGCpicture}[scale={.7,.7}]
\DGCstrand[red](0,0)(.5,.5)[$ $`{\ }]
%\DGCdot{B}[l]{$t$}
\DGCstrand[red](1,0)(.5,.5)[$ $`{\ }]
\DGCstrand[Red](.5,.5)(.5,1)
\DGCstrand[red](.5,1)(0,1.5)
\DGCstrand[red](.5,1)(1,1.5)
\DGCstrand[red](2,0)(2,1.5)[$ $`{\ }]
\DGCstrand[red](1,1.5)(1.5,2)
%\DGCdot{B}[l]{$s$}
\DGCstrand[red](2,1.5)(1.5,2)
\DGCstrand[Red](1.5,2)(1.5,2.5)
\DGCstrand[red](1.5,2.5)(1,3)
\DGCdot{E}[u]{$ $}
\DGCstrand[red](1.5,2.5)(2,3)
\DGCstrand[red](0,1.5)(0,3)
%\DGCdot{E}[r]{$r$}
%%%%
\DGCstrand[red](0,3)(.5,3.5)
\DGCstrand[red](1,3)(.5,3.5)
\DGCstrand[Red](.5,3.5)(.5,4)
\DGCstrand[red](.5,4)(0,4.5)
%\DGCdot{E}[ul]{$a_{i}$}
\DGCstrand[red](.5,4)(1,4.5)
%\DGCdot{E}[u]{$a_{i+1}$}
\DGCstrand[red](2,3)(2,4.5)
%\DGCdot{E}[u]{$a_{i+2}$}
\DGCstrand(1.5,0)(1.75,.75)(1.75,3.75)(.5,4.5)
%\DGCstrand(1.5,0)(2,2.25)(.5,4.5) replace above
%\DGCdot{E}[u]{$ $}
\end{DGCpicture}
~=~
\begin{DGCpicture}[scale={.7,.7}]
\DGCstrand[red](0,0)(.5,.5)[$ $`{\ }]
%\DGCdot{B}[l]{$t$}
\DGCstrand[red](1,0)(.5,.5)[$ $`{\ }]
\DGCstrand[Red](.5,.5)(.5,1)
\DGCstrand[red](.5,1)(0,1.5)
\DGCstrand[red](.5,1)(1,1.5)
\DGCstrand[red](2,0)(2,1.5)[$ $`{\ }]
\DGCstrand[red](1,1.5)(1.5,2)
%\DGCdot{B}[l]{$s$}
\DGCstrand[red](2,1.5)(1.5,2)
\DGCstrand[Red](1.5,2)(1.5,2.5)
\DGCstrand[red](1.5,2.5)(1,3)
\DGCstrand[red](1.5,2.5)(2,3)
\DGCstrand[red](0,1.5)(0,3)
%\DGCdot{E}[r]{$r$}
%%%%
\DGCstrand[red](0,3)(.5,3.5)
\DGCstrand[red](1,3)(.5,3.5)
\DGCstrand[Red](.5,3.5)(.5,4)
\DGCstrand[red](.5,4)(0,4.5)
%\DGCdot{E}[ul]{$a_{i}$}
\DGCstrand[red](.5,4)(1,4.5)
%\DGCdot{E}[u]{$a_{i+1}$}
\DGCstrand[red](2,3)(2,4.5)
%\DGCdot{E}[u]{$a_{i+2}$}
\DGCstrand(1.5,0)(1.25,.75)(1.25,3.75)(.5,4.5)
%\DGCstrand(1.5,0)(1,4.25)(.5,4.5) replace above
\DGCdot{E}[ul]{$ $}
\end{DGCpicture}
~-~
\begin{DGCpicture}[scale={.7,.7}]
\DGCstrand[red](0,0)(.5,.5)[$ $`{\ }]
%\DGCdot{B}[l]{$t$}
\DGCstrand[red](1,0)(.5,.5)[$ $`{\ }]
\DGCstrand[Red](.5,.5)(.5,1)
\DGCstrand[red](.5,1)(0,1.5)
\DGCstrand[red](.5,1)(1,1.5)
\DGCstrand[red](2,0)(2,1.5)[$ $`{\ }]
\DGCdot{B}[u]{$ $}
\DGCstrand[red](1,1.5)(1.5,2)
%\DGCdot{B}[l]{$s$}
\DGCstrand[red](2,1.5)(1.5,2)
\DGCstrand[Red](1.5,2)(1.5,2.5)
\DGCstrand[red](1.5,2.5)(1,3)
\DGCstrand[red](1.5,2.5)(2,3)
\DGCstrand[red](0,1.5)(0,3)
%\DGCdot{E}[r]{$r$}
%%%%
\DGCstrand[red](0,3)(.5,3.5)
\DGCstrand[red](1,3)(.5,3.5)
\DGCstrand[Red](.5,3.5)(.5,4)
\DGCstrand[red](.5,4)(0,4.5)
%\DGCdot{E}[ul]{$a_{i}$}
\DGCstrand[red](.5,4)(1,4.5)
%\DGCdot{E}[u]{$a_{i+1}$}
\DGCstrand[red](2,3)(2,4.5)
%\DGCdot{E}[u]{$ $}
\DGCstrand(1.5,0)(1.25,.75)(1.25,3.75)(.5,4.5)
%\DGCstrand(1.5,0)(1,4.25)(.5,4.5) replace above
%\DGCdot{E}[ul]{$ $}
\end{DGCpicture}
\end{equation*}
implies that
\begin{equation*}
\begin{DGCpicture}[scale={.7,.7}]
\DGCstrand[red](0,0)(.5,.5)[$ $`{\ }]
%\DGCdot{B}[l]{$t$}
\DGCstrand[red](1,0)(.5,.5)[$ $`{\ }]
\DGCstrand[Red](.5,.5)(.5,1)
\DGCstrand[red](.5,1)(0,1.5)
\DGCstrand[red](.5,1)(1,1.5)
\DGCstrand[red](2,0)(2,1.5)[$ $`{\ }]
\DGCstrand[red](1,1.5)(1.5,2)
%\DGCdot{B}[l]{$s$}
\DGCstrand[red](2,1.5)(1.5,2)
\DGCstrand[Red](1.5,2)(1.5,2.5)
\DGCstrand[red](1.5,2.5)(1,3)
\DGCstrand[red](1.5,2.5)(2,3)
\DGCstrand[red](0,1.5)(0,3)
%\DGCdot{E}[r]{$r$}
%%%%
\DGCstrand[red](0,3)(.5,3.5)
\DGCstrand[red](1,3)(.5,3.5)
\DGCstrand[Red](.5,3.5)(.5,4)
\DGCstrand[red](.5,4)(0,4.5)
%\DGCdot{E}[ul]{$a_{i}$}
\DGCstrand[red](.5,4)(1,4.5)
%\DGCdot{E}[u]{$a_{i+1}$}
\DGCstrand[red](2,3)(2,4.5)
%\DGCdot{E}[u]{$a_{i+2}$}
\DGCstrand(1.5,0)(1.75,.75)(1.75,3.75)(.5,4.5)
%\DGCstrand(1.5,0)(2,2.25)(.5,4.5) replace above
\DGCdot{E}[u]{$ $}
\end{DGCpicture}
\end{equation*}
is in the span of the proposed spanning set.
The equation
\begin{equation*}
\begin{DGCpicture}[scale={.7,.7}]
\DGCstrand[red](0,0)(.5,.5)[$ $`{\ }]
%\DGCdot{B}[l]{$t$}
\DGCstrand[red](1,0)(.5,.5)[$ $`{\ }]
\DGCstrand[Red](.5,.5)(.5,1)
\DGCstrand[red](.5,1)(0,1.5)
\DGCstrand[red](.5,1)(1,1.5)
\DGCstrand[red](2,0)(2,1.5)[$ $`{\ }]
\DGCstrand[red](1,1.5)(1.5,2)
%\DGCdot{B}[l]{$s$}
\DGCstrand[red](2,1.5)(1.5,2)
\DGCstrand[Red](1.5,2)(1.5,2.5)
\DGCstrand[red](1.5,2.5)(1,3)
\DGCstrand[red](1.5,2.5)(2,3)
\DGCstrand[red](0,1.5)(0,3)
%\DGCdot{E}[r]{$r$}
%%%%
\DGCstrand[red](0,3)(.5,3.5)
\DGCstrand[red](1,3)(.5,3.5)
\DGCstrand[Red](.5,3.5)(.5,4)
\DGCstrand[red](.5,4)(0,4.5)
%\DGCdot{E}[ul]{$a_{i}$}
\DGCstrand[red](.5,4)(1,4.5)
%\DGCdot{E}[u]{$a_{i+1}$}
\DGCstrand[red](2,3)(2,4.5)
%\DGCdot{E}[u]{$a_{i+2}$}
\DGCstrand(1.5,0)(1.75,.75)(1.75,3.75)(.5,4.5)
%\DGCstrand(1.5,0)(2,2.25)(.5,4.5) replace above
\DGCdot{E}[u]{$ $}
\end{DGCpicture}
~-~
\begin{DGCpicture}[scale={.7,.7}]
\DGCstrand[red](0,0)(.5,.5)[$ $`{\ }]
%\DGCdot{B}[l]{$t$}
\DGCstrand[red](1,0)(.5,.5)[$ $`{\ }]
\DGCstrand[Red](.5,.5)(.5,1)
\DGCstrand[red](.5,1)(0,1.5)
\DGCstrand[red](.5,1)(1,1.5)
\DGCstrand[red](2,0)(2,1.5)[$ $`{\ }]
\DGCstrand[red](1,1.5)(1.5,2)
\DGCdot{B}[l]{$ $}
\DGCstrand[red](2,1.5)(1.5,2)
\DGCstrand[Red](1.5,2)(1.5,2.5)
\DGCstrand[red](1.5,2.5)(1,3)
%\DGCdot{E}[u]{$ $}
\DGCstrand[red](1.5,2.5)(2,3)
\DGCstrand[red](0,1.5)(0,3)
%\DGCdot{E}[r]{$r$}
%%%%
\DGCstrand[red](0,3)(.5,3.5)
\DGCstrand[red](1,3)(.5,3.5)
\DGCstrand[Red](.5,3.5)(.5,4)
\DGCstrand[red](.5,4)(0,4.5)
%\DGCdot{E}[ul]{$a_{i}$}
\DGCstrand[red](.5,4)(1,4.5)
%\DGCdot{E}[u]{$a_{i+1}$}
\DGCstrand[red](2,3)(2,4.5)
%\DGCdot{E}[u]{$a_{i+2}$}
\DGCstrand(1.5,0)(1.75,.75)(1.75,3.75)(.5,4.5)
%\DGCstrand(1.5,0)(2,2.25)(.5,4.5) replace above
%\DGCdot{E}[u]{$ $}
\end{DGCpicture}
~=~
\begin{DGCpicture}[scale={.7,.7}]
\DGCstrand[red](0,0)(.5,.5)[$ $`{\ }]
%\DGCdot{B}[l]{$t$}
\DGCstrand[red](1,0)(.5,.5)[$ $`{\ }]
\DGCstrand[Red](.5,.5)(.5,1)
\DGCstrand[red](.5,1)(0,1.5)
\DGCstrand[red](.5,1)(1,1.5)
\DGCstrand[red](2,0)(2,1.5)[$ $`{\ }]
\DGCstrand[red](1,1.5)(1.5,2)
%\DGCdot{B}[l]{$s$}
\DGCstrand[red](2,1.5)(1.5,2)
\DGCstrand[Red](1.5,2)(1.5,2.5)
\DGCstrand[red](1.5,2.5)(1,3)
\DGCstrand[red](1.5,2.5)(2,3)
\DGCstrand[red](0,1.5)(0,3)
%\DGCdot{E}[r]{$r$}
%%%%
\DGCstrand[red](0,3)(.5,3.5)
\DGCstrand[red](1,3)(.5,3.5)
\DGCstrand[Red](.5,3.5)(.5,4)
\DGCstrand[red](.5,4)(0,4.5)
%\DGCdot{E}[ul]{$a_{i}$}
\DGCstrand[red](.5,4)(1,4.5)
%\DGCdot{E}[u]{$a_{i+1}$}
\DGCstrand[red](2,3)(2,4.5)
%\DGCdot{E}[u]{$a_{i+2}$}
\DGCstrand(1.5,0)(1.25,.75)(1.25,3.75)(.5,4.5)
%\DGCstrand(1.5,0)(1,4.25)(.5,4.5) replace above
\DGCdot{E}[ul]{$ $}
\end{DGCpicture}
~-~
\begin{DGCpicture}[scale={.7,.7}]
\DGCstrand[red](0,0)(.5,.5)[$ $`{\ }]
%\DGCdot{B}[l]{$t$}
\DGCstrand[red](1,0)(.5,.5)[$ $`{\ }]
\DGCstrand[Red](.5,.5)(.5,1)
\DGCstrand[red](.5,1)(0,1.5)
\DGCstrand[red](.5,1)(1,1.5)
\DGCstrand[red](2,0)(2,1.5)[$ $`{\ }]
\DGCstrand[red](1,1.5)(1.5,2)
%\DGCdot{B}[l]{$s$}
\DGCstrand[red](2,1.5)(1.5,2)
\DGCstrand[Red](1.5,2)(1.5,2.5)
\DGCstrand[red](1.5,2.5)(1,3)
\DGCstrand[red](1.5,2.5)(2,3)
\DGCstrand[red](0,1.5)(0,3)
%\DGCdot{E}[r]{$r$}
%%%%
\DGCstrand[red](0,3)(.5,3.5)
\DGCstrand[red](1,3)(.5,3.5)
\DGCstrand[Red](.5,3.5)(.5,4)
\DGCstrand[red](.5,4)(0,4.5)
%\DGCdot{E}[ul]{$a_{i}$}
\DGCstrand[red](.5,4)(1,4.5)
%\DGCdot{E}[u]{$a_{i+1}$}
\DGCstrand[red](2,3)(2,4.5)
\DGCdot{E}[u]{$ $}
\DGCstrand(1.5,0)(1.25,.75)(1.25,3.75)(.5,4.5)
%\DGCstrand(1.5,0)(1,4.25)(.5,4.5) replace above
%\DGCdot{E}[ul]{$ $}
\end{DGCpicture}
\end{equation*}
implies that 
\begin{equation*}
\begin{DGCpicture}[scale={.7,.7}]
\DGCstrand[red](0,0)(.5,.5)[$ $`{\ }]
%\DGCdot{B}[l]{$t$}
\DGCstrand[red](1,0)(.5,.5)[$ $`{\ }]
\DGCstrand[Red](.5,.5)(.5,1)
\DGCstrand[red](.5,1)(0,1.5)
\DGCstrand[red](.5,1)(1,1.5)
\DGCstrand[red](2,0)(2,1.5)[$ $`{\ }]
\DGCstrand[red](1,1.5)(1.5,2)
\DGCdot{B}[l]{$ $}
\DGCstrand[red](2,1.5)(1.5,2)
\DGCstrand[Red](1.5,2)(1.5,2.5)
\DGCstrand[red](1.5,2.5)(1,3)
%\DGCdot{E}[u]{$ $}
\DGCstrand[red](1.5,2.5)(2,3)
\DGCstrand[red](0,1.5)(0,3)
%\DGCdot{E}[r]{$r$}
%%%%
\DGCstrand[red](0,3)(.5,3.5)
\DGCstrand[red](1,3)(.5,3.5)
\DGCstrand[Red](.5,3.5)(.5,4)
\DGCstrand[red](.5,4)(0,4.5)
%\DGCdot{E}[ul]{$a_{i}$}
\DGCstrand[red](.5,4)(1,4.5)
%\DGCdot{E}[u]{$a_{i+1}$}
\DGCstrand[red](2,3)(2,4.5)
%\DGCdot{E}[u]{$a_{i+2}$}
\DGCstrand(1.5,0)(1.75,.75)(1.75,3.75)(.5,4.5)
%\DGCstrand(1.5,0)(2,2.25)(.5,4.5) replace above
%\DGCdot{E}[u]{$ $}
\end{DGCpicture}
\end{equation*}
is in the span.

Next note that
\begin{equation*}
\begin{DGCpicture}[scale={.7,.7}]
\DGCstrand[red](0,0)(.5,.5)[$ $`{\ }]
%\DGCdot{B}[l]{$t$}
\DGCstrand[red](1,0)(.5,.5)[$ $`{\ }]
\DGCstrand[Red](.5,.5)(.5,1)
\DGCstrand[red](.5,1)(0,1.5)
\DGCstrand[red](.5,1)(1,1.5)
\DGCstrand[red](2,0)(2,1.5)[$ $`{\ }]
\DGCstrand[red](1,1.5)(1.5,2)
%\DGCdot{B}[l]{$s$}
\DGCstrand[red](2,1.5)(1.5,2)
\DGCstrand[Red](1.5,2)(1.5,2.5)
\DGCstrand[red](1.5,2.5)(1,3)
\DGCstrand[red](1.5,2.5)(2,3)
\DGCstrand[red](0,1.5)(0,3)
%\DGCdot{E}[r]{$r$}
%%%%
\DGCstrand[red](0,3)(.5,3.5)
\DGCstrand[red](1,3)(.5,3.5)
\DGCstrand[Red](.5,3.5)(.5,4)
\DGCstrand[red](.5,4)(0,4.5)
%\DGCdot{E}[u]{$a_{i}$}
\DGCstrand[red](.5,4)(1,4.5)
%\DGCdot{E}[u]{$a_{i+1}$}
\DGCstrand[red](2,3)(2,4.5)
%\DGCdot{E}[u]{$a_{i+2}$}
\DGCstrand(1.5,0)(1.5,1.25)(-.5,2)(-.5,3.75)(.5,4.5)
%\DGCstrand(1.5,0)(-.5,2.25)(.5,4.5) replace above
\DGCdot{E}[u]{$ $}
\end{DGCpicture}
~-~
\begin{DGCpicture}[scale={.7,.7}]
\DGCstrand[red](0,0)(.5,.5)[$ $`{\ }]
%\DGCdot{B}[l]{$t$}
\DGCstrand[red](1,0)(.5,.5)[$ $`{\ }]
\DGCstrand[Red](.5,.5)(.5,1)
\DGCstrand[red](.5,1)(0,1.5)
\DGCstrand[red](.5,1)(1,1.5)
\DGCstrand[red](2,0)(2,1.5)[$ $`{\ }]
\DGCstrand[red](1,1.5)(1.5,2)
%\DGCdot{B}[l]{$s$}
\DGCstrand[red](2,1.5)(1.5,2)
\DGCstrand[Red](1.5,2)(1.5,2.5)
\DGCstrand[red](1.5,2.5)(1,3)
\DGCstrand[red](1.5,2.5)(2,3)
\DGCstrand[red](0,1.5)(0,3)
%\DGCdot{E}[r]{$r$}
%%%%
\DGCstrand[red](0,3)(.5,3.5)
\DGCstrand[red](1,3)(.5,3.5)
\DGCstrand[Red](.5,3.5)(.5,4)
\DGCstrand[red](.5,4)(0,4.5)
%\DGCdot{E}[u]{$a_{i}$}
\DGCstrand[red](.5,4)(1,4.5)
\DGCdot{E}[u]{$ $}
\DGCstrand[red](2,3)(2,4.5)
%\DGCdot{E}[u]{$a_{i+2}$}
\DGCstrand(1.5,0)(1.5,1.25)(-.5,2)(-.5,3.75)(.5,4.5)
%\DGCstrand(1.5,0)(-.5,2.25)(.5,4.5) replace above
%\DGCdot{E}[u]{$ $}
\end{DGCpicture}
~=~
\begin{DGCpicture}[scale={.7,.7}]
\DGCstrand[red](0,0)(.5,.5)[$ $`{\ }]
%\DGCdot{B}[l]{$t$}
\DGCstrand[red](1,0)(.5,.5)[$ $`{\ }]
\DGCstrand[Red](.5,.5)(.5,1)
\DGCstrand[red](.5,1)(0,1.5)
\DGCstrand[red](.5,1)(1,1.5)
\DGCstrand[red](2,0)(2,1.5)[$ $`{\ }]
\DGCstrand[red](1,1.5)(1.5,2)
%\DGCdot{B}[l]{$s$}
\DGCstrand[red](2,1.5)(1.5,2)
\DGCstrand[Red](1.5,2)(1.5,2.5)
\DGCstrand[red](1.5,2.5)(1,3)
\DGCstrand[red](1.5,2.5)(2,3)
\DGCstrand[red](0,1.5)(0,3)
%\DGCdot{E}[r]{$r$}
%%%%
\DGCstrand[red](0,3)(.5,3.5)
\DGCstrand[red](1,3)(.5,3.5)
\DGCstrand[Red](.5,3.5)(.5,4)
\DGCstrand[red](.5,4)(0,4.5)
%\DGCdot{E}[ul]{$a_{i}$}
\DGCstrand[red](.5,4)(1,4.5)
%\DGCdot{E}[u]{$a_{i+1}$}
\DGCstrand[red](2,3)(2,4.5)
%\DGCdot{E}[u]{$a_{i+2}$}
\DGCstrand(1.5,0)(1.25,.75)(1.25,3.75)(.5,4.5)
%\DGCstrand(1.5,0)(1,4.25)(.5,4.5) replace above
\DGCdot{E}[ul]{$ $}
\end{DGCpicture}
~-~
\begin{DGCpicture}[scale={.7,.7}]
\DGCstrand[red](0,0)(.5,.5)[$ $`{\ }]
%\DGCdot{B}[l]{$t$}
\DGCstrand[red](1,0)(.5,.5)[$ $`{\ }]
\DGCstrand[Red](.5,.5)(.5,1)
\DGCstrand[red](.5,1)(0,1.5)
\DGCstrand[red](.5,1)(1,1.5)
\DGCstrand[red](2,0)(2,1.5)[$ $`{\ }]
\DGCstrand[red](1,1.5)(1.5,2)
%\DGCdot{B}[l]{$s$}
\DGCstrand[red](2,1.5)(1.5,2)
\DGCstrand[Red](1.5,2)(1.5,2.5)
\DGCstrand[red](1.5,2.5)(1,3)
\DGCstrand[red](1.5,2.5)(2,3)
\DGCstrand[red](0,1.5)(0,3)
\DGCdot{E}[r]{$ $}
%%%%
\DGCstrand[red](0,3)(.5,3.5)
\DGCstrand[red](1,3)(.5,3.5)
\DGCstrand[Red](.5,3.5)(.5,4)
\DGCstrand[red](.5,4)(0,4.5)
%\DGCdot{E}[ul]{$a_{i}$}
\DGCstrand[red](.5,4)(1,4.5)
%\DGCdot{E}[u]{$a_{i+1}$}
\DGCstrand[red](2,3)(2,4.5)
%\DGCdot{E}[u]{$ $}
\DGCstrand(1.5,0)(1.25,.75)(1.25,3.75)(.5,4.5)
%\DGCstrand(1.5,0)(1,4.25)(.5,4.5) replace above
%\DGCdot{E}[ul]{$ $}
\end{DGCpicture}
\ .
\end{equation*}
Thus a diagram with a black dot on
\begin{equation*}
\begin{DGCpicture}[scale={.7,.7}]
\DGCstrand[red](0,0)(.5,.5)[$ $`{\ }]
%\DGCdot{B}[l]{$t$}
\DGCstrand[red](1,0)(.5,.5)[$ $`{\ }]
\DGCstrand[Red](.5,.5)(.5,1)
\DGCstrand[red](.5,1)(0,1.5)
\DGCstrand[red](.5,1)(1,1.5)
\DGCstrand[red](2,0)(2,1.5)[$ $`{\ }]
\DGCstrand[red](1,1.5)(1.5,2)
%\DGCdot{B}[l]{$s$}
\DGCstrand[red](2,1.5)(1.5,2)
\DGCstrand[Red](1.5,2)(1.5,2.5)
\DGCstrand[red](1.5,2.5)(1,3)
\DGCstrand[red](1.5,2.5)(2,3)
\DGCstrand[red](0,1.5)(0,3)
%\DGCdot{E}[r]{$r$}
%%%%
\DGCstrand[red](0,3)(.5,3.5)
\DGCstrand[red](1,3)(.5,3.5)
\DGCstrand[Red](.5,3.5)(.5,4)
\DGCstrand[red](.5,4)(0,4.5)
%\DGCdot{E}[u]{$a_{i}$}
\DGCstrand[red](.5,4)(1,4.5)
%\DGCdot{E}[u]{$a_{i+1}$}
\DGCstrand[red](2,3)(2,4.5)
%\DGCdot{E}[u]{$a_{i+2}$}
\DGCstrand(1.5,0)(1.5,1.25)(-.5,2)(-.5,3.75)(.5,4.5)
%\DGCstrand(1.5,0)(-.5,2.25)(.5,4.5) replace above
%\DGCdot{E}[u]{$ $}
\end{DGCpicture}
\end{equation*}
is already in the span of the proposed spanning set.
The equation
\begin{equation*}
\begin{DGCpicture}[scale={.7,.7}]
\DGCstrand[red](0,0)(.5,.5)[$ $`{\ }]
%\DGCdot{B}[l]{$t$}
\DGCstrand[red](1,0)(.5,.5)[$ $`{\ }]
\DGCstrand[Red](.5,.5)(.5,1)
\DGCstrand[red](.5,1)(0,1.5)
\DGCstrand[red](.5,1)(1,1.5)
\DGCstrand[red](2,0)(2,1.5)[$ $`{\ }]
\DGCstrand[red](1,1.5)(1.5,2)
%\DGCdot{B}[l]{$s$}
\DGCstrand[red](2,1.5)(1.5,2)
\DGCstrand[Red](1.5,2)(1.5,2.5)
\DGCstrand[red](1.5,2.5)(1,3)
\DGCstrand[red](1.5,2.5)(2,3)
\DGCstrand[red](0,1.5)(0,3)
%\DGCdot{E}[r]{$r$}
%%%%
\DGCstrand[red](0,3)(.5,3.5)
\DGCstrand[red](1,3)(.5,3.5)
\DGCstrand[Red](.5,3.5)(.5,4)
\DGCstrand[red](.5,4)(0,4.5)
%\DGCdot{E}[u]{$a_{i}$}
\DGCstrand[red](.5,4)(1,4.5)
%\DGCdot{E}[u]{$a_{i+1}$}
\DGCstrand[red](2,3)(2,4.5)
%\DGCdot{E}[u]{$a_{i+2}$}
\DGCstrand(1.5,0)(1.5,1.25)(-.5,2)(-.5,3.75)(.5,4.5)
%\DGCstrand(1.5,0)(-.5,2.25)(.5,4.5) replace above
\DGCdot{E}[u]{$ $}
\end{DGCpicture}
~-~
\begin{DGCpicture}[scale={.7,.7}]
\DGCstrand[red](0,0)(.5,.5)[$ $`{\ }]
%\DGCdot{B}[l]{$t$}
\DGCstrand[red](1,0)(.5,.5)[$ $`{\ }]
\DGCstrand[Red](.5,.5)(.5,1)
\DGCstrand[red](.5,1)(0,1.5)
\DGCstrand[red](.5,1)(1,1.5)
\DGCstrand[red](2,0)(2,1.5)[$ $`{\ }]
\DGCstrand[red](1,1.5)(1.5,2)
%\DGCdot{B}[l]{$s$}
\DGCstrand[red](2,1.5)(1.5,2)
\DGCstrand[Red](1.5,2)(1.5,2.5)
\DGCstrand[red](1.5,2.5)(1,3)
\DGCdot{E}[l]{$ $}
\DGCstrand[red](1.5,2.5)(2,3)
\DGCstrand[red](0,1.5)(0,3)
%\DGCdot{E}[r]{$r$}
%%%%
\DGCstrand[red](0,3)(.5,3.5)
\DGCstrand[red](1,3)(.5,3.5)
\DGCstrand[Red](.5,3.5)(.5,4)
\DGCstrand[red](.5,4)(0,4.5)
%\DGCdot{E}[u]{$a_{i}$}
\DGCstrand[red](.5,4)(1,4.5)
%\DGCdot{E}[u]{$ $}
\DGCstrand[red](2,3)(2,4.5)
%\DGCdot{E}[u]{$a_{i+2}$}
\DGCstrand(1.5,0)(1.5,1.25)(-.5,2)(-.5,3.75)(.5,4.5)
%\DGCstrand(1.5,0)(-.5,2.25)(.5,4.5) replace above
%\DGCdot{E}[u]{$ $}
\end{DGCpicture}
~=~
\begin{DGCpicture}[scale={.7,.7}]
\DGCstrand[red](0,0)(.5,.5)[$ $`{\ }]
%\DGCdot{B}[l]{$t$}
\DGCstrand[red](1,0)(.5,.5)[$ $`{\ }]
\DGCstrand[Red](.5,.5)(.5,1)
\DGCstrand[red](.5,1)(0,1.5)
\DGCstrand[red](.5,1)(1,1.5)
\DGCstrand[red](2,0)(2,1.5)[$ $`{\ }]
\DGCstrand[red](1,1.5)(1.5,2)
%\DGCdot{B}[l]{$s$}
\DGCstrand[red](2,1.5)(1.5,2)
\DGCstrand[Red](1.5,2)(1.5,2.5)
\DGCstrand[red](1.5,2.5)(1,3)
\DGCstrand[red](1.5,2.5)(2,3)
\DGCstrand[red](0,1.5)(0,3)
%\DGCdot{E}[r]{$r$}
%%%%
\DGCstrand[red](0,3)(.5,3.5)
\DGCstrand[red](1,3)(.5,3.5)
\DGCstrand[Red](.5,3.5)(.5,4)
\DGCstrand[red](.5,4)(0,4.5)
%\DGCdot{E}[ul]{$a_{i}$}
\DGCstrand[red](.5,4)(1,4.5)
%\DGCdot{E}[u]{$a_{i+1}$}
\DGCstrand[red](2,3)(2,4.5)
%\DGCdot{E}[u]{$a_{i+2}$}
\DGCstrand(1.5,0)(1.25,.75)(1.25,3.75)(.5,4.5)
%\DGCstrand(1.5,0)(1,4.25)(.5,4.5) replace above
\DGCdot{E}[ul]{$ $}
\end{DGCpicture}
~-~
\begin{DGCpicture}[scale={.7,.7}]
\DGCstrand[red](0,0)(.5,.5)[$ $`{\ }]
%\DGCdot{B}[l]{$t$}
\DGCstrand[red](1,0)(.5,.5)[$ $`{\ }]
\DGCstrand[Red](.5,.5)(.5,1)
\DGCstrand[red](.5,1)(0,1.5)
\DGCstrand[red](.5,1)(1,1.5)
\DGCstrand[red](2,0)(2,1.5)[$ $`{\ }]
\DGCstrand[red](1,1.5)(1.5,2)
%\DGCdot{B}[l]{$s$}
\DGCstrand[red](2,1.5)(1.5,2)
\DGCstrand[Red](1.5,2)(1.5,2.5)
\DGCstrand[red](1.5,2.5)(1,3)
\DGCstrand[red](1.5,2.5)(2,3)
\DGCstrand[red](0,1.5)(0,3)
%\DGCdot{E}[r]{$ $}
%%%%
\DGCstrand[red](0,3)(.5,3.5)
\DGCstrand[red](1,3)(.5,3.5)
\DGCstrand[Red](.5,3.5)(.5,4)
\DGCstrand[red](.5,4)(0,4.5)
\DGCdot{E}[ul]{$ $}
\DGCstrand[red](.5,4)(1,4.5)
%\DGCdot{E}[u]{$a_{i+1}$}
\DGCstrand[red](2,3)(2,4.5)
%\DGCdot{E}[u]{$ $}
\DGCstrand(1.5,0)(1.25,.75)(1.25,3.75)(.5,4.5)
%\DGCstrand(1.5,0)(1,4.25)(.5,4.5) replace above
%\DGCdot{E}[ul]{$ $}
\end{DGCpicture}
\end{equation*}
implies that
\begin{equation*}
\begin{DGCpicture}[scale={.7,.7}]
\DGCstrand[red](0,0)(.5,.5)[$ $`{\ }]
%\DGCdot{B}[l]{$t$}
\DGCstrand[red](1,0)(.5,.5)[$ $`{\ }]
\DGCstrand[Red](.5,.5)(.5,1)
\DGCstrand[red](.5,1)(0,1.5)
\DGCstrand[red](.5,1)(1,1.5)
\DGCstrand[red](2,0)(2,1.5)[$ $`{\ }]
\DGCstrand[red](1,1.5)(1.5,2)
%\DGCdot{B}[l]{$s$}
\DGCstrand[red](2,1.5)(1.5,2)
\DGCstrand[Red](1.5,2)(1.5,2.5)
\DGCstrand[red](1.5,2.5)(1,3)
%\DGCdot{E}[l]{$ $}
\DGCstrand[red](1.5,2.5)(2,3)
\DGCstrand[red](0,1.5)(0,3)
\DGCdot{E}[r]{$ $}
%%%%
\DGCstrand[red](0,3)(.5,3.5)
\DGCstrand[red](1,3)(.5,3.5)
\DGCstrand[Red](.5,3.5)(.5,4)
\DGCstrand[red](.5,4)(0,4.5)
%\DGCdot{E}[u]{$a_{i}$}
\DGCstrand[red](.5,4)(1,4.5)
%\DGCdot{E}[u]{$ $}
\DGCstrand[red](2,3)(2,4.5)
%\DGCdot{E}[u]{$a_{i+2}$}
\DGCstrand(1.5,0)(1.5,1.25)(-.5,2)(-.5,3.75)(.5,4.5)
%\DGCstrand(1.5,0)(-.5,2.25)(.5,4.5) replace above
%\DGCdot{E}[u]{$ $}
\end{DGCpicture}
\end{equation*}
is in the span.
The equation
\begin{equation*}
\begin{DGCpicture}[scale={.7,.7}]
\DGCstrand[red](0,0)(.5,.5)[$ $`{\ }]
%\DGCdot{B}[l]{$t$}
\DGCstrand[red](1,0)(.5,.5)[$ $`{\ }]
\DGCstrand[Red](.5,.5)(.5,1)
\DGCstrand[red](.5,1)(0,1.5)
\DGCstrand[red](.5,1)(1,1.5)
\DGCstrand[red](2,0)(2,1.5)[$ $`{\ }]
\DGCstrand[red](1,1.5)(1.5,2)
%\DGCdot{B}[l]{$s$}
\DGCstrand[red](2,1.5)(1.5,2)
\DGCstrand[Red](1.5,2)(1.5,2.5)
\DGCstrand[red](1.5,2.5)(1,3)
\DGCstrand[red](1.5,2.5)(2,3)
\DGCstrand[red](0,1.5)(0,3)
%\DGCdot{E}[r]{$r$}
%%%%
\DGCstrand[red](0,3)(.5,3.5)
\DGCstrand[red](1,3)(.5,3.5)
\DGCstrand[Red](.5,3.5)(.5,4)
\DGCstrand[red](.5,4)(0,4.5)
%\DGCdot{E}[u]{$a_{i}$}
\DGCstrand[red](.5,4)(1,4.5)
%\DGCdot{E}[u]{$a_{i+1}$}
\DGCstrand[red](2,3)(2,4.5)
%\DGCdot{E}[u]{$a_{i+2}$}
\DGCstrand(1.5,0)(1.5,1.25)(-.5,2)(-.5,3.75)(.5,4.5)
%\DGCstrand(1.5,0)(-.5,2.25)(.5,4.5) replace above
\DGCdot{E}[u]{$ $}
\end{DGCpicture}
~-~
\begin{DGCpicture}[scale={.7,.7}]
\DGCstrand[red](0,0)(.5,.5)[$ $`{\ }]
%\DGCdot{B}[l]{$t$}
\DGCstrand[red](1,0)(.5,.5)[$ $`{\ }]
\DGCstrand[Red](.5,.5)(.5,1)
\DGCstrand[red](.5,1)(0,1.5)
\DGCstrand[red](.5,1)(1,1.5)
\DGCstrand[red](2,0)(2,1.5)[$ $`{\ }]
\DGCdot{B}[l]{$ $}
\DGCstrand[red](1,1.5)(1.5,2)
%\DGCdot{B}[l]{$s$}
\DGCstrand[red](2,1.5)(1.5,2)
\DGCstrand[Red](1.5,2)(1.5,2.5)
\DGCstrand[red](1.5,2.5)(1,3)
%\DGCdot{E}[l]{$ $}
\DGCstrand[red](1.5,2.5)(2,3)
\DGCstrand[red](0,1.5)(0,3)
%\DGCdot{E}[r]{$r$}
%%%%
\DGCstrand[red](0,3)(.5,3.5)
\DGCstrand[red](1,3)(.5,3.5)
\DGCstrand[Red](.5,3.5)(.5,4)
\DGCstrand[red](.5,4)(0,4.5)
%\DGCdot{E}[u]{$a_{i}$}
\DGCstrand[red](.5,4)(1,4.5)
%\DGCdot{E}[u]{$ $}
\DGCstrand[red](2,3)(2,4.5)
%\DGCdot{E}[u]{$a_{i+2}$}
\DGCstrand(1.5,0)(1.5,1.25)(-.5,2)(-.5,3.75)(.5,4.5)
%\DGCstrand(1.5,0)(-.5,2.25)(.5,4.5) replace above
%\DGCdot{E}[u]{$ $}
\end{DGCpicture}
~=~
\begin{DGCpicture}[scale={.7,.7}]
\DGCstrand[red](0,0)(.5,.5)[$ $`{\ }]
%\DGCdot{B}[l]{$t$}
\DGCstrand[red](1,0)(.5,.5)[$ $`{\ }]
\DGCstrand[Red](.5,.5)(.5,1)
\DGCstrand[red](.5,1)(0,1.5)
\DGCstrand[red](.5,1)(1,1.5)
\DGCstrand[red](2,0)(2,1.5)[$ $`{\ }]
\DGCstrand[red](1,1.5)(1.5,2)
%\DGCdot{B}[l]{$s$}
\DGCstrand[red](2,1.5)(1.5,2)
\DGCstrand[Red](1.5,2)(1.5,2.5)
\DGCstrand[red](1.5,2.5)(1,3)
\DGCstrand[red](1.5,2.5)(2,3)
\DGCstrand[red](0,1.5)(0,3)
%\DGCdot{E}[r]{$r$}
%%%%
\DGCstrand[red](0,3)(.5,3.5)
\DGCstrand[red](1,3)(.5,3.5)
\DGCstrand[Red](.5,3.5)(.5,4)
\DGCstrand[red](.5,4)(0,4.5)
%\DGCdot{E}[ul]{$a_{i}$}
\DGCstrand[red](.5,4)(1,4.5)
%\DGCdot{E}[u]{$a_{i+1}$}
\DGCstrand[red](2,3)(2,4.5)
%\DGCdot{E}[u]{$a_{i+2}$}
\DGCstrand(1.5,0)(1.75,.75)(1.75,3.75)(.5,4.5)
%\DGCstrand(1.5,0)(2.5,2.25)(.5,4.5) replace above
\DGCdot{E}[ul]{$ $}
\end{DGCpicture}
~-~
\begin{DGCpicture}[scale={.7,.7}]
\DGCstrand[red](0,0)(.5,.5)[$ $`{\ }]
%\DGCdot{B}[l]{$t$}
\DGCstrand[red](1,0)(.5,.5)[$ $`{\ }]
\DGCstrand[Red](.5,.5)(.5,1)
\DGCstrand[red](.5,1)(0,1.5)
\DGCstrand[red](.5,1)(1,1.5)
\DGCstrand[red](2,0)(2,1.5)[$ $`{\ }]
\DGCstrand[red](1,1.5)(1.5,2)
%\DGCdot{B}[l]{$s$}
\DGCstrand[red](2,1.5)(1.5,2)
\DGCstrand[Red](1.5,2)(1.5,2.5)
\DGCstrand[red](1.5,2.5)(1,3)
\DGCstrand[red](1.5,2.5)(2,3)
\DGCstrand[red](0,1.5)(0,3)
%\DGCdot{E}[r]{$ $}
%%%%
\DGCstrand[red](0,3)(.5,3.5)
\DGCstrand[red](1,3)(.5,3.5)
\DGCstrand[Red](.5,3.5)(.5,4)
\DGCstrand[red](.5,4)(0,4.5)
\DGCdot{E}[ul]{$ $}
\DGCstrand[red](.5,4)(1,4.5)
%\DGCdot{E}[u]{$a_{i+1}$}
\DGCstrand[red](2,3)(2,4.5)
%\DGCdot{E}[u]{$ $}
\DGCstrand(1.5,0)(1.75,.75)(1.75,3.75)(.5,4.5)
%\DGCstrand(1.5,0)(2.5,2.25)(.5,4.5) replace above
%\DGCdot{E}[ul]{$ $}
\end{DGCpicture}
\end{equation*}
implies that
\begin{equation*}
\begin{DGCpicture}[scale={.7,.7}]
\DGCstrand[red](0,0)(.5,.5)[$ $`{\ }]
%\DGCdot{B}[l]{$t$}
\DGCstrand[red](1,0)(.5,.5)[$ $`{\ }]
\DGCstrand[Red](.5,.5)(.5,1)
\DGCstrand[red](.5,1)(0,1.5)
\DGCstrand[red](.5,1)(1,1.5)
\DGCstrand[red](2,0)(2,1.5)[$ $`{\ }]
%\DGCdot{B}[l]{$ $}
%%
\DGCstrand[red](1,1.5)(1.5,2)
\DGCdot{B}[l]{$ $}
\DGCstrand[red](2,1.5)(1.5,2)
\DGCstrand[Red](1.5,2)(1.5,2.5)
\DGCstrand[red](1.5,2.5)(1,3)
%\DGCdot{E}[l]{$ $}
\DGCstrand[red](1.5,2.5)(2,3)
\DGCstrand[red](0,1.5)(0,3)
%\DGCdot{E}[r]{$r$}
%%%%
\DGCstrand[red](0,3)(.5,3.5)
\DGCstrand[red](1,3)(.5,3.5)
\DGCstrand[Red](.5,3.5)(.5,4)
\DGCstrand[red](.5,4)(0,4.5)
%\DGCdot{E}[u]{$a_{i}$}
\DGCstrand[red](.5,4)(1,4.5)
%\DGCdot{E}[u]{$ $}
\DGCstrand[red](2,3)(2,4.5)
%\DGCdot{E}[u]{$a_{i+2}$}
\DGCstrand(1.5,0)(1.5,1.5)(-.5,2.25)(-.5,3.75)(.5,4.5)
%\DGCstrand(1.5,0)(-.5,2.25)(.5,4.5) replace above
%\DGCdot{E}[u]{$ $}
\end{DGCpicture}
\end{equation*}
is in the span.

Similar manipulations show that the set of elements in the statement of the proposition, do in fact provide a spanning set for the bimodule.
\end{proof}
We will prove that the spanning set above is indeed a basis.  This will utilize a bimodule homomorphism introduced in the next section.

\subsection{Bimodule homomorphisms} \label{secbihoms}
There is a bimodule homomorphism
\begin{equation*}
\epsilon_i \colon W_i \rightarrow W
\end{equation*}
determined by
\begin{equation}
\label{defofepsilon}
\begin{DGCpicture}
\DGCstrand[red](0,0)(.5,.5)
\DGCstrand[red](1,0)(.5,.5)
\DGCstrand[Red](.5,.5)(.5,1)
\DGCstrand[red](.5,1)(0,1.5)
\DGCstrand[red](.5,1)(1,1.5)
\end{DGCpicture}
~\mapsto~
\begin{DGCpicture}
\DGCstrand[red](0,0)(0,1.5)
\DGCstrand[red](1,0)(1,1.5)
\end{DGCpicture}
\ .
\end{equation}
It is clear that $\epsilon_i$ commutes with $\partial$.

There is a bimodule homomorphism
\begin{equation*}
\iota_i \colon W \rightarrow W_i^{-e_1} \{-2 \}
\end{equation*}
determined by 
\begin{equation*}
\begin{DGCpicture}
\DGCstrand[red](0,0)(0,1.5)
\DGCstrand[red](1,0)(1,1.5)
\end{DGCpicture}
~\mapsto~
\begin{DGCpicture}
\DGCstrand[red](0,0)(.5,.5)
\DGCstrand[red](1,0)(.5,.5)
\DGCstrand[Red](.5,.5)(.5,1)
\DGCstrand[red](.5,1)(0,1.5)
\DGCdot{E}
\DGCstrand[red](.5,1)(1,1.5)
\end{DGCpicture}
~-~
\begin{DGCpicture}
\DGCstrand[red](0,0)(.5,.5)
\DGCstrand[red](1,0)(.5,.5)
\DGCdot{B}
\DGCstrand[Red](.5,.5)(.5,1)
\DGCstrand[red](.5,1)(0,1.5)
\DGCstrand[red](.5,1)(1,1.5)
\end{DGCpicture}
, \\
\quad \quad \quad \quad
\begin{DGCpicture}
\DGCstrand[red](0,0)(0,1.5)
\DGCstrand(.5,0)(.5,1.5)
\DGCstrand[red](1,0)(1,1.5)
\end{DGCpicture}
~\mapsto~
\begin{DGCpicture}
\DGCstrand[red](0,0)(.5,.5)
\DGCstrand[red](1,0)(.5,.5)
\DGCstrand[Red](.5,.5)(.5,1)
\DGCstrand[red](.5,1)(0,1.5)
\DGCstrand[red](.5,1)(1,1.5)
\DGCstrand(.5,0)(1,.75)(.5,1.5)
\end{DGCpicture}
~-~
\begin{DGCpicture}
\DGCstrand[red](0,0)(.5,.5)
\DGCstrand[red](1,0)(.5,.5)
\DGCstrand[Red](.5,.5)(.5,1)
\DGCstrand[red](.5,1)(0,1.5)
\DGCstrand[red](.5,1)(1,1.5)
\DGCstrand(.5,0)(0,.75)(.5,1.5)
\end{DGCpicture}
\ .
\end{equation*}
The fact that $\iota_i$ is a well-defined bimodule homomorphism is shown in \cite[Proposition 17]{KhovSussan}.  

It is clear that $\iota_i \colon W \rightarrow W_i \{-2 \}$ does not commute with $\partial$.  However, we do have the following variation.

\begin{prop}
The bimodule homomorphism $\iota_i \colon W \rightarrow W_i^{-e_1} \{-2 \}$ commutes with the action of $\partial$.
\end{prop}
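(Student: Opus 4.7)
The plan is to verify $\partial\circ\iota_i=\iota_i\circ\partial$ by reducing to a calculation on a small set of bimodule generators. Since $\iota_i$ is already known to be a $(W,W)$-bimodule homomorphism and $\partial$ is a derivation on both source and target, the expression $\partial\circ\iota_i-\iota_i\circ\partial$ is itself a bimodule homomorphism. Because $W$ is cyclic as a bimodule over itself (generated by $1=\sum_j e_j$), it suffices to check $\partial(\iota_i(e_j))=0$ for each idempotent $e_j$ separately, noting $\iota_i(\partial(e_j))=0$.

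A useful reformulation is to write the twisted differential on $W_i^{-e_1}$ as $\partial_\omega=\partial_0+\omega\cdot$, where $\partial_0$ denotes the tensor differential and $\omega:=-(x_i+x_{i+1})$ is central in $W$ by \eqref{central}. Centrality guarantees that $\partial_\omega$ is a $(W,W)$-bimodule derivation, which reduces the verification to bare Leibniz-rule computation combined with the bimodule relations of Proposition \ref{bimod-rel}.

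First I would handle the generic case $j\notin\{i,i+1\}$. Here $\iota_i(e_j)$ is the difference of two diagrams, one with a dot on the upper $i$-th strand at the thick red splitter and one with a dot on the lower $i$-th strand, expressible as $x_i\otimes e_j - 1\otimes x_i e_j$. Applying $\partial_\omega$ and using the slide relations $(x_i+x_{i+1})\otimes e_k=1\otimes(x_i+x_{i+1})e_k$ and $x_ix_{i+1}\otimes e_k=1\otimes x_ix_{i+1}e_k$ from Proposition \ref{bimod-rel} should produce $\partial_\omega(x_i\otimes e_j)=-1\otimes x_ix_{i+1}e_j=\partial_\omega(1\otimes x_ie_j)$, so the difference vanishes. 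The role of the twist $\omega\cdot$ is exactly to absorb the non-symmetric part of $x_i$ that fails to slide through the thick red splitter, leaving only the elementary-symmetric combinations that do slide.

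The main obstacle will be the case $j=i$, where the black strand passes between the $i$-th and $(i+1)$-th red strands and $\iota_i(e_i)$ is the difference of two diagrams routing the black strand to the right or left of the thick red strand. Here $\iota_i(e_i)$ algebraically involves the generators $\psi_i$ and $\psi_{i+1}$, and the formula $\partial(\psi_j)=x_j\psi_je_{j-1}+y\psi_je_j$ injects both an $x$-type term and a $y$-type term into the Leibniz expansion. The plan is to simplify the resulting expression using the quadratic relation $\psi_j^2e(\mathbf{i})=(x_j-y)e(\mathbf{i})$ from \eqref{r2-like2} together with the central bimodule relations; the twist contribution $\omega\cdot$ should then cancel the residual $y$-terms against the $-(x_i+x_{i+1})$ shifts, giving zero. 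This cancellation is precisely why the twist is taken to be $-e_1=-(x_i+x_{i+1})$, and verifying it requires somewhat patient diagrammatic bookkeeping — in particular, tracking which idempotents $e_{j\pm1}$ in the derivation formula for $\psi_j$ contribute on which side of the thick red strand, and using that $(x_i+x_{i+1})$ commutes past $\psi_i\otimes\psi_i e_i$ and $\psi_{i+1}\otimes\psi_{i+1}e_i$ via the bimodule relations.
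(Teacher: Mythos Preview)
Your approach is essentially the same as the paper's: both reduce to checking $\partial(\iota_i(e_j))=0$ on each idempotent and then compute directly, with the paper working diagrammatically and you working algebraically via the reformulation $\partial_\omega=\partial_0+\omega\cdot$. Your treatment of the generic case $j\neq i$ is correct (there is no separate case $j=i+1$; also, the second dot in $\iota_i(e_j)$ actually sits on the lower $(i{+}1)$st red strand rather than the $i$th, but your computation goes through unchanged since the argument is symmetric in $x_i,x_{i+1}$).

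For the case $j=i$, your outline is right in spirit but the mechanism you describe is slightly off. Carrying out the Leibniz expansion with the twist gives $\partial_\omega(R-L)=(y-x_i)R-(y-x_{i+1})L$, where $R$ and $L$ are the right- and left-routed diagrams; the $y$-terms do \emph{not} cancel against the twist. What makes this vanish is the identity $(y-x_i)R=(y-x_{i+1})L$, and the essential ingredient is not merely the quadratic relation \eqref{r2-like2} but the double-crossing slide relation $\psi_i\psi_{i+1}\otimes e_j=1\otimes\psi_i\psi_{i+1}e_j$ from Proposition~\ref{bimod-rel} (equivalently the diagrammatic relation \eqref{blackthickred1}), which lets you rewrite $(y-x_i)R=\psi_i^2R$ as $L\,\psi_{i+1}^2=(y-x_{i+1})L$. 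The paper performs exactly this manipulation diagrammatically in its final step, reducing $\partial(X)$ to a difference of two identical ``figure-eight'' diagrams via \eqref{blackthickred1}.
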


\begin{proof}
We first check the proposition in the case that there is no black strand in between the $i$th and $(i+1)$st red strands.  By definition
\begin{equation*}
\iota_i \circ  \partial
\left(~
\begin{DGCpicture}
\DGCstrand[red](0,0)(0,1.5)
\DGCstrand[red](1,0)(1,1.5)
\end{DGCpicture}
~\right)
~=~
0
\ .
\end{equation*}
On the other hand,
\begin{align*}
\partial \circ \iota_i
\left(~
\begin{DGCpicture}
\DGCstrand[red](0,0)(0,1.5)
\DGCstrand[red](1,0)(1,1.5)
\end{DGCpicture}
~\right)
&~=~
\partial
\left(~
\begin{DGCpicture}
\DGCstrand[red](0,0)(.5,.5)
\DGCstrand[red](1,0)(.5,.5)
\DGCstrand[Red](.5,.5)(.5,1)
\DGCstrand[red](.5,1)(0,1.5)
\DGCdot{E}
\DGCstrand[red](.5,1)(1,1.5)
\end{DGCpicture}
~-~
\begin{DGCpicture}
\DGCstrand[red](0,0)(.5,.5)
\DGCstrand[red](1,0)(.5,.5)
\DGCdot{B}
\DGCstrand[Red](.5,.5)(.5,1)
\DGCstrand[red](.5,1)(0,1.5)
\DGCstrand[red](.5,1)(1,1.5)
\end{DGCpicture}
~\right) \\
&~=~
%%%%
\begin{DGCpicture}
\DGCstrand[red](0,0)(.5,.5)
\DGCstrand[red](1,0)(.5,.5)
\DGCstrand[Red](.5,.5)(.5,1)
\DGCstrand[red](.5,1)(0,1.5)
\DGCdot{E}[l]{$2$}
\DGCstrand[red](.5,1)(1,1.5)
\end{DGCpicture}
~-~
\begin{DGCpicture}
\DGCstrand[red](0,0)(.5,.5)
\DGCstrand[red](1,0)(.5,.5)
\DGCdot{B}[r]{$2$}
\DGCstrand[Red](.5,.5)(.5,1)
\DGCstrand[red](.5,1)(0,1.5)
\DGCstrand[red](.5,1)(1,1.5)
\end{DGCpicture}
~-~
\begin{DGCpicture}
\DGCstrand[red](0,0)(.5,.5)
\DGCstrand[red](1,0)(.5,.5)
\DGCstrand[Red](.5,.5)(.5,1)
\DGCstrand[red](.5,1)(0,1.5)
\DGCdot{E}[l]{$2$}
\DGCstrand[red](.5,1)(1,1.5)
\end{DGCpicture}
~-~
\begin{DGCpicture}
\DGCstrand[red](0,0)(.5,.5)
\DGCstrand[red](1,0)(.5,.5)
\DGCstrand[Red](.5,.5)(.5,1)
\DGCstrand[red](.5,1)(0,1.5)
\DGCdot{E}
\DGCstrand[red](.5,1)(1,1.5)
\DGCdot{E}
\end{DGCpicture}
~+~
\begin{DGCpicture}
\DGCstrand[red](0,0)(.5,.5)
\DGCdot{B}
\DGCstrand[red](1,0)(.5,.5)
\DGCdot{B}
\DGCstrand[Red](.5,.5)(.5,1)
\DGCstrand[red](.5,1)(0,1.5)
\DGCstrand[red](.5,1)(1,1.5)
\end{DGCpicture}
~+~
\begin{DGCpicture}
\DGCstrand[red](0,0)(.5,.5)
\DGCstrand[red](1,0)(.5,.5)
\DGCdot{B}[r]{$2$}
\DGCstrand[Red](.5,.5)(.5,1)
\DGCstrand[red](.5,1)(0,1.5)
\DGCstrand[red](.5,1)(1,1.5)
\end{DGCpicture} \\
&~=~ 0
\end{align*}
where the four last terms come from the twisted $p$-DG structure on $W_i^{-e_1}$ (see \eqref{defofWitwisted}).

The second case we must consider is when the black strand lies between the $i$th and $(i+1)$st red strands.  Once again
\begin{equation*}
\iota_i \circ  \partial
\left(~
\begin{DGCpicture}
\DGCstrand[red](0,0)(0,1.5)
\DGCstrand(.5,0)(.5,1.5)
\DGCstrand[red](1,0)(1,1.5)
\end{DGCpicture}
~\right)
~=~
0
\ .
\end{equation*}
Let
\begin{equation*}
X~=~
\begin{DGCpicture}
\DGCstrand[red](0,0)(.5,.5)
\DGCstrand[red](1,0)(.5,.5)
\DGCstrand[Red](.5,.5)(.5,1)
\DGCstrand[red](.5,1)(0,1.5)
\DGCstrand[red](.5,1)(1,1.5)
\DGCstrand(.5,0)(1,.75)(.5,1.5)
\end{DGCpicture}
~-~
\begin{DGCpicture}
\DGCstrand[red](0,0)(.5,.5)
\DGCstrand[red](1,0)(.5,.5)
\DGCstrand[Red](.5,.5)(.5,1)
\DGCstrand[red](.5,1)(0,1.5)
\DGCstrand[red](.5,1)(1,1.5)
\DGCstrand(.5,0)(0,.75)(.5,1.5)
\end{DGCpicture}
~\in~ W_i^{-e_1}
\ .
\end{equation*}
We must show that $\partial(X)=0$.  By definition
\begin{equation}
\label{dX1}
\partial(X)
~=~
\begin{DGCpicture}
\DGCstrand[red](0,0)(.5,.5)
\DGCstrand[red](1,0)(.5,.5)
\DGCstrand[Red](.5,.5)(.5,1)
\DGCstrand[red](.5,1)(0,1.5)
\DGCstrand[red](.5,1)(1,1.5)
\DGCstrand(.5,0)(1,.75)(.5,1.5)
\DGCdot{}
%\DGCdot{B}[r]{$2$}
\end{DGCpicture}
~+~
\begin{DGCpicture}
\DGCstrand[red](0,0)(.5,.5)
\DGCstrand[red](1,0)(.5,.5)
\DGCdot{B}[r]{}
\DGCstrand[Red](.5,.5)(.5,1)
\DGCstrand[red](.5,1)(0,1.5)
\DGCstrand[red](.5,1)(1,1.5)
\DGCstrand(.5,0)(1,.75)(.5,1.5)
\end{DGCpicture}
~-~
\begin{DGCpicture}
\DGCstrand[red](0,0)(.5,.5)
\DGCstrand[red](1,0)(.5,.5)
\DGCstrand[Red](.5,.5)(.5,1)
\DGCstrand[red](.5,1)(0,1.5)
\DGCstrand[red](.5,1)(1,1.5)
\DGCstrand(.5,0)(0,.75)(.5,1.5)
\DGCdot{}
\end{DGCpicture}
~-~
\begin{DGCpicture}
\DGCstrand[red](0,0)(.5,.5)
\DGCstrand[red](1,0)(.5,.5)
\DGCstrand[Red](.5,.5)(.5,1)
\DGCstrand[red](.5,1)(0,1.5)
\DGCdot{E}[r]{}
\DGCstrand[red](.5,1)(1,1.5)
\DGCstrand(.5,0)(0,.75)(.5,1.5)
\end{DGCpicture}
~-~
\begin{DGCpicture}
\DGCstrand[red](0,0)(.5,.5)
\DGCstrand[red](1,0)(.5,.5)
\DGCstrand[Red](.5,.5)(.5,1)
\DGCstrand[red](.5,1)(0,1.5)
\DGCdot{E}[r]{}
\DGCstrand[red](.5,1)(1,1.5)
\DGCstrand(.5,0)(1,.75)(.5,1.5)
\end{DGCpicture}
~-~
\begin{DGCpicture}
\DGCstrand[red](0,0)(.5,.5)
\DGCstrand[red](1,0)(.5,.5)
\DGCstrand[Red](.5,.5)(.5,1)
\DGCstrand[red](.5,1)(0,1.5)
\DGCstrand[red](.5,1)(1,1.5)
\DGCdot{E}[r]{}
\DGCstrand(.5,0)(1,.75)(.5,1.5)
\end{DGCpicture}
~+~
\begin{DGCpicture}
\DGCstrand[red](0,0)(.5,.5)
\DGCstrand[red](1,0)(.5,.5)
\DGCstrand[Red](.5,.5)(.5,1)
\DGCstrand[red](.5,1)(0,1.5)
\DGCdot{E}[r]{}
\DGCstrand[red](.5,1)(1,1.5)
\DGCstrand(.5,0)(0,.75)(.5,1.5)
\end{DGCpicture}
~+~
\begin{DGCpicture}
\DGCstrand[red](0,0)(.5,.5)
\DGCstrand[red](1,0)(.5,.5)
\DGCstrand[Red](.5,.5)(.5,1)
\DGCstrand[red](.5,1)(0,1.5)
\DGCstrand[red](.5,1)(1,1.5)
\DGCdot{E}[r]{}
\DGCstrand(.5,0)(0,.75)(.5,1.5)
\end{DGCpicture}
\end{equation}
where the last four terms comes from the twisting on $W_i^{-e_1}$.  It follows that
\begin{equation}
\label{dX2}
\partial(X)
~=~
\begin{DGCpicture}
\DGCstrand[red](0,0)(.5,.5)
\DGCstrand[red](1,0)(.5,.5)
\DGCstrand[Red](.5,.5)(.5,1)
\DGCstrand[red](.5,1)(0,1.5)
\DGCstrand[red](.5,1)(1,1.5)
\DGCstrand(.5,0)(1,.75)(.5,1.5)
\DGCdot{}
%\DGCdot{B}[r]{$2$}
\end{DGCpicture}
~-~
\begin{DGCpicture}
\DGCstrand[red](0,0)(.5,.5)
\DGCdot{B}[r]{}
\DGCstrand[red](1,0)(.5,.5)
\DGCstrand[Red](.5,.5)(.5,1)
\DGCstrand[red](.5,1)(0,1.5)
\DGCstrand[red](.5,1)(1,1.5)
\DGCstrand(.5,0)(1,.75)(.5,1.5)
\end{DGCpicture}
~-~
\begin{DGCpicture}
\DGCstrand[red](0,0)(.5,.5)
\DGCstrand[red](1,0)(.5,.5)
\DGCstrand[Red](.5,.5)(.5,1)
\DGCstrand[red](.5,1)(0,1.5)
\DGCstrand[red](.5,1)(1,1.5)
\DGCstrand(.5,0)(0,.75)(.5,1.5)
\DGCdot{E}
\end{DGCpicture}
~+~
\begin{DGCpicture}
\DGCstrand[red](0,0)(.5,.5)
\DGCstrand[red](1,0)(.5,.5)
\DGCstrand[Red](.5,.5)(.5,1)
\DGCstrand[red](.5,1)(0,1.5)
\DGCstrand[red](.5,1)(1,1.5)
\DGCdot{E}[r]{}
\DGCstrand(.5,0)(0,.75)(.5,1.5)
\end{DGCpicture}
\end{equation}
where the second, fifth, and sixth terms in \eqref{dX1} combine to become the second term in \eqref{dX2}.  Using the first set of relations of \eqref{Wrelations} and \eqref{blackthickred1}, we get that the above simplifies to 
\begin{equation*}
\partial(X)
~=~
\begin{DGCpicture}
\DGCstrand[red](0,0)(.5,.5)
\DGCstrand[red](1,0)(.5,.5)
\DGCstrand[Red](.5,.5)(.5,1)
\DGCstrand[red](.5,1)(0,1.5)
\DGCstrand[red](.5,1)(1,1.5)
\DGCstrand(.5,0)(.2,.5)(.8,1)(.5,1.5)
\end{DGCpicture}
~-~
\begin{DGCpicture}
\DGCstrand[red](0,0)(.5,.5)
\DGCstrand[red](1,0)(.5,.5)
\DGCstrand[Red](.5,.5)(.5,1)
\DGCstrand[red](.5,1)(0,1.5)
\DGCstrand[red](.5,1)(1,1.5)
\DGCstrand(.5,0)(.2,.5)(.8,1)(.5,1.5)
\end{DGCpicture}
~=~
0
. \\
\end{equation*}
\end{proof}

\begin{prop}
There are bimodule homomorphisms
$\alpha_{i,i+1} \colon W_{i,i+1} \rightarrow W_i \otimes_W W_{i+1} \otimes_W W_i $ 
and
$\alpha_{i+1,i} \colon W_{i,i+1} \rightarrow W_{i+1} \otimes_W W_{i} \otimes_W W_{i+1} $ defined on the bimodule generator \eqref{thickthickgenerator} by
\begin{equation}
\alpha_{i,i+1} \colon 
\label{defofalphaii+1}
\begin{DGCpicture}[scale={.7,.7}]
\DGCstrand[red](0,0)(1,1)
\DGCstrand[red](1,0)(1,1)
\DGCstrand[red](2,0)(1,1)
\DGCstrand[Red](1,1)(1,3)
\DGCstrand[red](1,3)(0,4)
\DGCstrand[red](1,3)(1,4)
\DGCstrand[red](1,3)(2,4)
\end{DGCpicture}
~\mapsto~
\begin{DGCpicture}[scale={.7,.7}]
\DGCstrand[red](0,0)(.5,.5)
\DGCstrand[red](1,0)(.5,.5)
\DGCstrand[Red](.5,.5)(.5,1)
\DGCstrand[red](.5,1)(0,1.5)
\DGCstrand[red](.5,1)(1,1.5)
\DGCstrand[red](2,0)(2,1.5)
\DGCstrand[red](1,1.5)(1.5,2)
\DGCstrand[red](2,1.5)(1.5,2)
\DGCstrand[Red](1.5,2)(1.5,2.5)
\DGCstrand[red](1.5,2.5)(1,3)
\DGCstrand[red](1.5,2.5)(2,3)
\DGCstrand[red](0,1.5)(0,3)
%%%%
\DGCstrand[red](0,3)(.5,3.5)
\DGCstrand[red](1,3)(.5,3.5)
\DGCstrand[Red](.5,3.5)(.5,4)
\DGCstrand[red](.5,4)(0,4.5)
\DGCstrand[red](.5,4)(1,4.5)
\DGCstrand[red](2,3)(2,4.5)
\end{DGCpicture}
\end{equation}

\begin{equation}
\alpha_{i+1,i} \colon 
\begin{DGCpicture}[scale={.7,.7}]
\DGCstrand[red](0,0)(1,1)
\DGCstrand[red](1,0)(1,1)
\DGCstrand[red](2,0)(1,1)
\DGCstrand[Red](1,1)(1,3)
\DGCstrand[red](1,3)(0,4)
\DGCstrand[red](1,3)(1,4)
\DGCstrand[red](1,3)(2,4)
\end{DGCpicture}
~\mapsto~
\begin{DGCpicture}[scale={.7,.7}]
\DGCstrand[red](0,1.5)(.5,2)
\DGCstrand[red](1,1.5)(.5,2)
\DGCstrand[Red](.5,2)(.5,2.5)
\DGCstrand[red](.5,2.5)(0,3)
\DGCstrand[red](.5,2.5)(1,3)
\DGCstrand[red](2,1.5)(2,3)
\DGCstrand[red](1,0)(1.5,.5)
\DGCstrand[red](2,0)(1.5,.5)
\DGCstrand[Red](1.5,.5)(1.5,1)
\DGCstrand[red](1.5,1)(1,1.5)
\DGCstrand[red](1.5,1)(2,1.5)
\DGCstrand[red](0,0)(0,1.5)
%%%%
\DGCstrand[red](1,3)(1.5,3.5)
\DGCstrand[red](2,3)(1.5,3.5)
\DGCstrand[Red](1.5,3.5)(1.5,4)
\DGCstrand[red](1.5,4)(1,4.5)
\DGCstrand[red](1.5,4)(2,4.5)
\DGCstrand[red](0,3)(0,4.5)
\end{DGCpicture} \ .
\end{equation}
These homomorphisms commute with the action of $\partial$.
\end{prop}

\begin{proof}
It is straightforward to check that these are bimodule maps.
The differential $\partial$ annihilates both the generator \eqref{thickthickgenerator} and its image under the homomorphisms.  
\end{proof}

There is a bimodule homomorphism $W_i \otimes_W W_{i+1} \otimes_W W_i  \rightarrow W_i \lbrace 2 \rbrace$ defined as a composition of the maps $\epsilon$ and $\blacktriangle$ constructed in \cite{KhovSussan}.  It is defined on generators in \eqref{surj1} and \eqref{surj2}.

\begin{equation}
\label{surj1}
\begin{DGCpicture}[scale={.7,.7}]
\DGCstrand[red](0,0)(.5,.5)
\DGCstrand[red](1,0)(.5,.5)
\DGCstrand[Red](.5,.5)(.5,1)
\DGCstrand[red](.5,1)(0,1.5)
\DGCstrand[red](.5,1)(1,1.5)
\DGCstrand[red](2,0)(2,1.5)
\DGCstrand[red](1,1.5)(1.5,2)
\DGCstrand[red](2,1.5)(1.5,2)
\DGCstrand[Red](1.5,2)(1.5,2.5)
\DGCstrand[red](1.5,2.5)(1,3)
\DGCstrand[red](1.5,2.5)(2,3)
\DGCstrand[red](0,1.5)(0,3)
%%%%
\DGCstrand[red](0,3)(.5,3.5)
\DGCstrand[red](1,3)(.5,3.5)
\DGCstrand[Red](.5,3.5)(.5,4)
\DGCstrand[red](.5,4)(0,4.5)
\DGCstrand[red](.5,4)(1,4.5)
\DGCstrand[red](2,3)(2,4.5)
\end{DGCpicture}
%%%%%%%%%%
\mapsto
0
\  ,
\quad \quad \quad
\begin{DGCpicture}[scale={.7,.7}]
\DGCstrand[red](0,0)(.5,.5)
\DGCstrand[red](1,0)(.5,.5)
\DGCstrand[Red](.5,.5)(.5,1)
\DGCstrand[red](.5,1)(0,1.5)
\DGCstrand[red](.5,1)(1,1.5)
\DGCstrand[red](2,0)(2,1.5)
\DGCstrand[red](1,1.5)(1.5,2)
\DGCstrand[red](2,1.5)(1.5,2)
\DGCstrand[Red](1.5,2)(1.5,2.5)
\DGCstrand[red](1.5,2.5)(1,3)
\DGCstrand[red](1.5,2.5)(2,3)
\DGCstrand[red](0,1.5)(0,3)
\DGCdot{2.25}[r]{$ $}
%%%%
\DGCstrand[red](0,3)(.5,3.5)
\DGCstrand[red](1,3)(.5,3.5)
\DGCstrand[Red](.5,3.5)(.5,4)
\DGCstrand[red](.5,4)(0,4.5)
\DGCstrand[red](.5,4)(1,4.5)
\DGCstrand[red](2,3)(2,4.5)
\end{DGCpicture}
%%%%%%%%%%
\mapsto
\begin{DGCpicture}[scale={.7,.7}]
\DGCstrand[red](0,0)(.5,1.5)
\DGCstrand[red](1,0)(.5,1.5)
\DGCstrand[Red](.5,1.5)(.5,3)
\DGCstrand[red](.5,3)(0,4.5)
\DGCstrand[red](.5,3)(1,4.5)
\DGCstrand[red](2,0)(2,4.5)
\end{DGCpicture}
\end{equation}

\begin{equation}
\label{surj2}
\begin{DGCpicture}[scale={.7,.7}]
\DGCstrand[black](1.5,0)(.5,2.25)(1.5,4.5)
%%%%%
\DGCstrand[red](0,0)(.5,.5)
\DGCstrand[red](1,0)(.5,.5)
\DGCstrand[Red](.5,.5)(.5,1)
\DGCstrand[red](.5,1)(0,1.5)
\DGCstrand[red](.5,1)(1,1.5)
\DGCstrand[red](2,0)(2,1.5)
\DGCstrand[red](1,1.5)(1.5,2)
\DGCstrand[red](2,1.5)(1.5,2)
\DGCstrand[Red](1.5,2)(1.5,2.5)
\DGCstrand[red](1.5,2.5)(1,3)
\DGCstrand[red](1.5,2.5)(2,3)
\DGCstrand[red](0,1.5)(0,3)
%%%%
\DGCstrand[red](0,3)(.5,3.5)
\DGCstrand[red](1,3)(.5,3.5)
\DGCstrand[Red](.5,3.5)(.5,4)
\DGCstrand[red](.5,4)(0,4.5)
\DGCstrand[red](.5,4)(1,4.5)
\DGCstrand[red](2,3)(2,4.5)
\end{DGCpicture}
%%%%%%%%%%
\mapsto
\begin{DGCpicture}[scale={.7,.7}]
\DGCstrand[black](1.5,0)(1.5,4.5)
\DGCstrand[red](0,0)(.5,1.5)
\DGCstrand[red](1,0)(.5,1.5)
\DGCstrand[Red](.5,1.5)(.5,3)
\DGCstrand[red](.5,3)(0,4.5)
\DGCstrand[red](.5,3)(1,4.5)
\DGCstrand[red](2,0)(2,4.5)
\end{DGCpicture}
\  ,
\quad \quad \quad
\begin{DGCpicture}[scale={.7,.7}]
\DGCstrand[black](1.5,0)(.5,2.25)(1.5,4.5) %%%%%
\DGCstrand[red](0,0)(.5,.5)
\DGCstrand[red](1,0)(.5,.5)
\DGCstrand[Red](.5,.5)(.5,1)
\DGCstrand[red](.5,1)(0,1.5)
\DGCstrand[red](.5,1)(1,1.5)
\DGCstrand[red](2,0)(2,1.5)
\DGCstrand[red](1,1.5)(1.5,2)
\DGCstrand[red](2,1.5)(1.5,2)
\DGCstrand[Red](1.5,2)(1.5,2.5)
\DGCstrand[red](1.5,2.5)(1,3)
\DGCstrand[red](1.5,2.5)(2,3)
\DGCstrand[red](0,1.5)(0,3)
\DGCdot{2.25}[r]{$ $}
%%%%
\DGCstrand[red](0,3)(.5,3.5)
\DGCstrand[red](1,3)(.5,3.5)
\DGCstrand[Red](.5,3.5)(.5,4)
\DGCstrand[red](.5,4)(0,4.5)
\DGCstrand[red](.5,4)(1,4.5)
\DGCstrand[red](2,3)(2,4.5)
\end{DGCpicture}
%%%%%%%%%%
\mapsto
\begin{DGCpicture}[scale={.7,.7}]
\DGCstrand[black](1.5,0)(1.5,4.5)
\DGCdot{2.25}[r]{$ $}
\DGCstrand[red](0,0)(.5,1.5)
\DGCstrand[red](1,0)(.5,1.5)
\DGCstrand[Red](.5,1.5)(.5,3)
\DGCstrand[red](.5,3)(0,4.5)
\DGCstrand[red](.5,3)(1,4.5)
\DGCstrand[red](2,0)(2,4.5)
\end{DGCpicture}
\end{equation}

The proof of the next result is similar to the proofs of Propositions \ref{basis1} and \ref{basis2}, so we just supply the general strategy.
\begin{prop}
\label{basis3}
The spanning set in Proposition \ref{WiWi+1Wispan} is a basis of $W_i \otimes_W W_{i+1} \otimes_W W_i$.
\end{prop}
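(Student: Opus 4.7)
The plan is to follow the same template used for Propositions \ref{basis1} and \ref{basis2}. First, I would construct a $(W(n,1),W(n,1))$-bimodule homomorphism
$$
\delta_{i,i+1,i}\colon W_i\otimes W_{i+1}\otimes W_i \longrightarrow \Hom_{\Bbbk}(V_n,V_n)
$$
by sending the triple splitter generator to the operator $D_i\circ D_{i+1}\circ D_i$ on $V_n$ and extending by the bimodule action inherited from the representation in Proposition \ref{repprop}. Well-definedness follows from applying the bimodule maps $\phi_i$, $\phi_{i+1}$, $\phi_i$ from Section \ref{secbihoms} factor by factor; equivalently, $\delta_{i,i+1,i}$ can be obtained as $\gamma_{i,i+1}\circ\pi$ for the natural projection $\pi\colon W_i\otimes W_{i+1}\otimes W_i \to W_{i,i+1}$ coming from the compatibility in \eqref{defofalphaii+1}.

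Next, as in Proposition \ref{basis2}, I would observe that any proposed linear dependence among the elements $\gimel_1,\ldots,\gimel_{12}$ decomposes according to the boundary idempotents: only terms sharing the same left and right idempotent $e_j\otimes\cdots\otimes e_k$ can combine. For each such fixed configuration, the dependence relation then involves only finitely many of the twelve families. I would apply $\delta_{i,i+1,i}$ to both sides and evaluate the resulting endomorphism on test monomials $x_1^{a_1}\cdots x_n^{a_n}$ with carefully chosen exponents; by Proposition \ref{repprop}, each application of $\psi_{i+1}e_i$ produces a characteristic factor of the form $(y_{i+1}-x_{i+1})$ while $\psi_i e_i$ merely shifts $y_i\mapsto y_{i-1}$. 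This discriminating factor, combined with the polynomial weight in the $x_k$'s and $y_j$'s, lets us peel off coefficients one at a time, in exactly the way the coefficients $k^1,\ldots,k^{10}$ were killed sequentially in the proof of Proposition \ref{basis2}.

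The peel-off proceeds in three stages corresponding to the three internal red-dot parameters $t$, $s$, $r\in\{0,1\}$. Evaluating on the constant $1$ separates families with no internal dots from those with a black-strand loop through the central triangle ($\gimel_1$ vs.\ $\gimel_3$, etc.), since only the latter acquire a $(y_{i+1}-x_{i+1})$ factor. Then evaluating successively on $x_i$, $x_{i+1}$, $x_{i+2}$, and products thereof separates the $t=1$, $s=1$, $r=1$ subfamilies from their undotted counterparts. Finally, pairing with powers of $y^b$ distinguishes among elements with the same $(r,s,t)$ but different black-strand dot counts $b$. The families $\gimel_{12}$ — where the black strand lives entirely outside the central triangular region — decouple from the others and are linearly independent by the exact argument used in Proposition \ref{W_ispan} together with the faithfulness established in Proposition \ref{basis1}.

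The main obstacle will be sheer bookkeeping: there are many more boundary configurations and spanning families than in Propositions \ref{basis1} and \ref{basis2}, and for each matching-idempotent sub-case one must identify precisely which test monomial kills which coefficient. However, the underlying mechanism is uniform — every interaction of the black strand with a splitter generator produces an extra $(y_j-x_j)$ factor under $\delta_{i,i+1,i}$, and three nested layers of divided differences supply enough polynomial weight to resolve the three independent internal dot levels $t,s,r$ — so the pattern of the proof of Proposition \ref{basis2} carries through essentially unchanged, just with more instances to verify.
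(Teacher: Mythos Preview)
Your overall strategy---reduce to fixed-idempotent pieces and evaluate a bimodule homomorphism into operators on $V_n$ at well-chosen test monomials---matches the paper's template, but the specific homomorphism you use differs from the paper's, and one claim about it is wrong.

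The map $\alpha_{i,i+1}$ in \eqref{defofalphaii+1} is an \emph{injection} $W_{i,i+1}\hookrightarrow W_i\otimes W_{i+1}\otimes W_i$, not a projection, so there is no ``natural projection $\pi$'' in the direction you state. A bimodule splitting $W_i\otimes W_{i+1}\otimes W_i\to W_{i,i+1}$ does exist (the paper writes it down in \eqref{split1}--\eqref{split2}), but even using it, $\gamma_{i,i+1}\circ(\text{split})$ is \emph{not} your factor-by-factor map $\delta_{i,i+1,i}$: the splitting annihilates the $r=1$ dotted generator, whereas $\delta_{i,i+1,i}$ sends it to $D_i\, x_i\, D_{i+1}D_i = x_{i+1}D_iD_{i+1}D_i + D_{i+1}D_i \neq 0$. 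So the ``equivalently'' clause should be dropped; your $\delta_{i,i+1,i}$ built as the composite of $\phi_i$, $\phi_{i+1}$, $\phi_i$ is well defined on its own and is the map you actually want.

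The paper takes a different route. Rather than a single operator-valued map, it uses \emph{two} bimodule homomorphisms out of $W_i\otimes W_{i+1}\otimes W_i$: the surjection onto $W_i\{2\}$ given in \eqref{surj1}--\eqref{surj2}, and $\gamma_{i,i+1}$ on its kernel (which is $W_{i,i+1}$ via $\alpha_{i,i+1}$). In effect the paper is exploiting the short exact sequence of Lemma~\ref{sesWii+1} to reduce Proposition~\ref{basis3} to the already-proved Propositions~\ref{basis1} and~\ref{basis2}. This buys a cleaner peel-off: the surjection to $W_i$ immediately separates the $r=1$ families from the $r=0$ families, after which the independence follows from the known bases of $W_i$ and $W_{i,i+1}$. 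Your single-map approach is also viable, but because $\delta_{i,i+1,i}$ mixes the $r=0$ and $r=1$ contributions (via $D_i x_i = x_{i+1}D_i + 1$) the case-by-case elimination is messier than you suggest, and you would need to carry it out rather than asserting that ``the pattern carries through unchanged.''
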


\begin{proof}
Let $B_{j}^i $ be elements in the spanning set which are in the image of multiplication on the left by $e_i$ and on the right by $e_{j}$.  

In order to prove  that the elements of $B_{i+1}^i$ are linearly independent, one writes down a dependence relation and uses the bimodule homomorphisms $\phi_i$ from Proposition \ref{propphi}, and the bimodule homomorphism defined in \eqref{surj1} and \eqref{surj2} to conclude that all the coefficients in the dependence relation are zero.
One proves in a similar way the linear independence of elements in $B_i^i$ and $B_i^{i+1}$.

By applying the element
\[
\begin{DGCpicture}
\DGCstrand[red](0,0)(0,1)[$^{1}$`{\ }]
\DGCcoupon*(.25,0.25)(.75,0.75){$\cdots$}
\DGCstrand[red](1,0)(2,1)[$^{i+1}$`{\ }]
\DGCstrand(2,0)(1,1)
\DGCstrand[red](3,0)(3,1)[$^{i+2}$`{\ }]
\DGCcoupon*(3.25,0.25)(3.75,0.75){$\cdots$}
\DGCstrand[red](4,0)(4,1)[$^{n}$`{\ }]
\end{DGCpicture}
\ ,
\]
on top, the linear independence of elements in $B_{i+1}^{i+1} $ follows from the linear independence of $B_{i+1}^i$.

Showing that all other sets $B^i_j$ are linearly independent is routine.
\end{proof}

\subsection{Braid group action}
As a consequence of the results of Section \ref{secbihoms}, there are complexes of $(W,W)\# H$-modules
\begin{equation*}
\Sigma_i=
\xymatrix{
W_i \ar[r]^{\epsilon_i} & W
}
, \\
\quad \quad \quad 
\Sigma_i'=
\xymatrix{
W \ar[r]^{\iota_i \quad \quad} & W_i^{-e_1} \{-2 \}
}
. \\
\end{equation*} 

\begin{lem} \label{BB=B+B}
There exists an isomorphism of $(W,W)\# H$-modules
\[
W_i \otimes_W W_i \cong W_i \oplus W_i^{e_1} \{ 2  \}
\]
\end{lem}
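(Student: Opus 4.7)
The strategy is to construct two explicit $(W,W)$-bimodule homomorphisms
\[
\alpha \colon W_i \longrightarrow W_i \otimes_W W_i, \qquad
\beta \colon W_i^{x_i+x_{i+1}}\{2\} \longrightarrow W_i \otimes_W W_i,
\]
show that $\alpha \oplus \beta$ is an isomorphism of bimodules, and verify compatibility with the $p$-differential $\partial$ in each case. The map $\alpha$ will be defined on the generator $e_i \otimes e_i$ of $W_i$ by a ``dot-in-the-middle'' antisymmetric combination of the form
\[
\alpha(e_i \otimes e_i) \;=\; (e_i\otimes e_i)\otimes_W (x_i e_i \otimes e_i) \;-\; (e_i \otimes x_i e_i)\otimes_W (e_i \otimes e_i),
\]
which diagrammatically is a difference of two stacked split-merge Y-shapes with a red dot placed either just above the lower merge or just below the upper split. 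The map $\beta$, on the other hand, will be the naive ``stacking'' sending $e_i \otimes e_i$ to $(e_i\otimes e_i)\otimes_W (e_i \otimes e_i)$.

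Well-definedness of $\alpha$ and $\beta$ as bimodule maps reduces to checking compatibility with the sliding relations of Proposition \ref{bimod-rel}. In particular, one uses $(x_i+x_{i+1})\otimes e_j = 1\otimes(x_i+x_{i+1})e_j$ and $x_i x_{i+1}\otimes e_j = 1\otimes x_i x_{i+1} e_j$ to push polynomials past the splitters; the antisymmetric form of $\alpha$ is precisely what is needed to kill any ambiguity coming from these identifications.

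To establish that $\alpha \oplus \beta$ is an isomorphism of $(W,W)$-bimodules, I would exhibit an explicit basis of $W_i \otimes_W W_i$ by stacking basis elements from Proposition \ref{W_ispan} and normalizing the middle tensor factor using Proposition \ref{bimod-rel}; this will produce exactly two copies of the basis of $W_i$ (one in degree matching the cohomology generator $1$ and one matching the degree of $x_i$), in bijection with a basis of $W_i \oplus W_i\{2\}$ on the nose. Injectivity can then be verified by composing with $\phi_i \otimes \phi_i$ evaluated on $V_n$ against a family of test elements, with a dot inserted in the middle to distinguish the two summands, following the pattern of the proof of Proposition \ref{basis1}.

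The main obstacle will be the $p$-DG compatibility, in particular pinpointing the correct twist $x_i+x_{i+1}$ on the second summand. A direct application of the Leibniz rule to $\beta(e_i \otimes e_i)$, using that $\partial$ of the thick red split-merge vanishes, produces a term equal to $(x_i+x_{i+1})\cdot \beta(e_i\otimes e_i)$; this is exactly the condition for $\beta$ to commute with the twisted differential on $W_i^{x_i+x_{i+1}}$, which justifies the notation in the statement. For $\alpha$, the antisymmetric combination is specifically designed so that the would-be twist cancels between the two summands, and $\alpha$ commutes with the untwisted $\partial$ on $W_i$ directly by an analogous Leibniz computation.
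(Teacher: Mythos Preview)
Your overall strategy---build explicit inclusions, check they are bimodule maps, verify $\partial$-compatibility---is the right one and is exactly what the references \cite{KRWitt} and \cite{QiSussan4} do.  The paper itself does not carry out this computation; it observes that the black strand plays no role (as in \cite[Lemma~5]{KhovSussan}) and then invokes those references directly.  So your plan is a reasonable unpacking of the cited proof.  However, the concrete formulas you wrote down fail.

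\emph{The formula for $\alpha$ is identically zero.}  In $W_i\otimes_W W_i$, the relation coming from tensoring over $W$ says that the right $W$-action on the first factor agrees with the left $W$-action on the second.  Hence
\[
(e_j\otimes x_i e_j)\otimes_W(e_j\otimes e_j) \;=\; (e_j\otimes e_j)\otimes_W(x_i e_j\otimes e_j),
\]
and your two terms cancel.  Diagrammatically, a red dot ``just above the lower split'' and a red dot ``just below the upper merge'' sit in the same middle region and are equal.  A correct antisymmetric element must compare a dot in the middle with a dot on an \emph{outer} strand, for instance
\[
\xi \;=\; (e_j\otimes x_i e_j)\otimes_W(e_j\otimes e_j)\;-\;(x_i e_j\otimes e_j)\otimes_W(e_j\otimes e_j),
\]
i.e.\ $1\otimes x_i\otimes 1 - x_i\otimes 1\otimes 1$.

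\emph{The twist assignment is backwards.}  The naive stacking $(e_j\otimes e_j)\otimes_W(e_j\otimes e_j)$ has degree~$0$ and is annihilated by $\partial$ (by Leibniz, since $\partial$ kills each factor---you note this yourself).  So the stacking gives the map from the \emph{untwisted} $W_i$, not from $W_i^{x_i+x_{i+1}}\{2\}$; your $\beta$ cannot even be a graded map, since it would send a degree-$2$ generator to a degree-$0$ element.  Conversely, for the corrected $\xi$ above one computes, using $x_i^2=(x_i+x_{i+1})x_i-x_ix_{i+1}$ and the fact that symmetric polynomials in $x_i,x_{i+1}$ slide through every tensor factor, that
\[
\partial\xi \;=\; (x_i+x_{i+1})\,\xi.
\]
Thus it is the antisymmetric element that carries the twist $x_i+x_{i+1}$, not the stacked one.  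Once you swap the roles of $\alpha$ and $\beta$ and use $\xi$, the rest of your outline (well-definedness via Proposition~\ref{bimod-rel}, basis argument via Proposition~\ref{W_ispan}) goes through.
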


\begin{proof}
First note that using isotopies, black strands can be moved out of the way in these bimodules as in \cite[Lemma 5]{KhovSussan}.  Thus the proof reduces to the proof of \cite[Lemma 4.3]{KRWitt}
or \cite[Lemma 3.5]{QiSussan4}.
\end{proof}

\begin{lem} \label{sesWii+1}
There is a short exact sequence of $(W,W)\# H$-modules
which splits as $(W,W)$-bimodules
\[
0 \rightarrow W_{i,i+1} \rightarrow W_i \otimes_W W_{i+1} \otimes_W W_i \rightarrow 
W_i^{e_1} \rightarrow 0.
\]
\end{lem}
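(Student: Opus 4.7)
The plan is to realize the two arrows explicitly and then verify exactness via the bases from Propositions \ref{basis1}, \ref{basis2}, and \ref{basis3}. I would take the injection $W_{i,i+1} \hookrightarrow W_i \otimes W_{i+1} \otimes W_i$ to be $\alpha_{i,i+1}$ from \eqref{defofalphaii+1}, which was already shown to intertwine $\partial$. The surjection $\pi \colon W_i \otimes W_{i+1} \otimes W_i \twoheadrightarrow W_i^{e_1}$ would be defined by the assignments in \eqref{surj1}--\eqref{surj2} (with appropriate grading shifts so as to match the statement). The essential point is that, while the untwisted version of $\pi$ targets $W_i\{2\}$ and fails to be $\partial$-equivariant, the obstruction---traceable by sliding dots through splitters using \eqref{symmetricrelations}---is a boundary term proportional to $(x_i+x_{i+1})$ times the image generator. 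Twisting the differential on the target by $e_1=x_i+x_{i+1}$, in the spirit of \eqref{defofWitwisted}, cancels this obstruction exactly, so the map lands naturally in $W_i^{e_1}$ as a bona fide $(W,W)\#H$-module map.

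Next I would check $\pi\circ\alpha_{i,i+1}=0$ on the generator \eqref{thickthickgenerator} of $W_{i,i+1}$: its image under $\alpha_{i,i+1}$ is precisely the plain, dotless triple splitter--merger element depicted in \eqref{defofalphaii+1}, which is killed by the first rule of \eqref{surj1}. Since $W_{i,i+1}$ is generated as a $(W,W)$-bimodule from this element by the local relations in Proposition \ref{bimod-rel3}, the composition vanishes everywhere.

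For exactness I would carry out a basis comparison. The twelve families $\gimel_1,\ldots,\gimel_{12}$ of Proposition \ref{WiWi+1Wispan} split into two collections according to whether the central strand of the middle $W_{i+1}$ factor carries the red dot distinguished in \eqref{surj2}. The dotless collection is in natural bijection with the basis $\beth_1,\ldots,\beth_9$ of $W_{i,i+1}$ from Proposition \ref{W_ii+1span} and lies in the image of $\alpha_{i,i+1}$, while the dotted collection is in natural bijection with the basis elements of $W_i$ from Proposition \ref{W_ispan} and maps under $\pi$ to basis elements of $W_i^{e_1}$ in matching bidegrees. This yields an identification $W_i\otimes W_{i+1}\otimes W_i\cong W_{i,i+1}\oplus W_i^{e_1}$ of $(W,W)$-bimodules (though not of $\#H$-modules), which simultaneously establishes exactness of the sequence and the bimodule-level splitting; the splitting fails to respect $\partial$ exactly because $\partial$ of a dotted splitter can produce terms in the image of $\alpha_{i,i+1}$.

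The main obstacle will be the $\partial$-compatibility check for $\pi$. Concretely, one must compute $\partial$ applied to the dotted central generator of $W_i\otimes W_{i+1}\otimes W_i$, move every resulting dot across the three splitters and mergers via \eqref{symmetricrelations}, and confirm that the surplus equals $(x_i+x_{i+1})$ acting on the image generator of $W_i$, matching the $e_1$-twist on $W_i^{e_1}$. Once this diagrammatic identity is established, the remainder of the argument is a finite but essentially mechanical enumeration across the twelve families of the spanning set.
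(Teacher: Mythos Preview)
Your proposal follows essentially the same route as the paper: the injection is $\alpha_{i,i+1}$, the surjection is the map from \eqref{surj1}--\eqref{surj2} (with the $e_1$-twist on the target absorbing the $\partial$-obstruction, just as you describe), the composite vanishes on the generator, and exactness together with the bimodule splitting come from the bases of Propositions \ref{basis1}, \ref{basis2}, \ref{basis3}. The paper, however, does not attempt the explicit family-by-family bijection you sketch; it simply invokes a graded dimension count and then writes down explicit splitting maps (which, as you correctly anticipate, fail to commute with $\partial$).

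One small caution about your bijection: the dichotomy ``dotless central strand $\leftrightarrow$ image of $\alpha_{i,i+1}$, dotted central strand $\leftrightarrow$ preimage of basis of $W_i$'' is too coarse. For instance, the first element in \eqref{surj2} has no dot on the central red strand yet maps nontrivially under $\pi$, and several $\gimel$ families (e.g.\ $\gimel_2,\gimel_3,\gimel_8$) do not carry an $r$-parameter at all. The honest bijection requires tracking how the black strand interacts with the middle layer, which is why the paper opts for the dimension count instead. Your overall strategy is sound; just replace the claimed bijection by the dimension argument, or be prepared for the enumeration to be less tidy than stated.
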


\begin{proof}
The map $W_{i,i+1} \rightarrow W_i \otimes_W W_{i+1} \otimes_W W_i $
is just given by $\alpha_{i,i+1}$ defined in \eqref{defofalphaii+1}.  

Recall there is a map $W_i \otimes_W W_{i+1} \otimes_W W_i  \rightarrow W_i$ defined  on generators in \eqref{surj1} and \eqref{surj2}.  This is clearly a surjection and 
a straightforward calculation shows that this surjection is a $p$-DG map.

Note that
\begin{equation*}
\begin{DGCpicture}[scale={.7,.7}]
\DGCstrand[red](0,0)(.5,.5)
\DGCstrand[red](1,0)(.5,.5)
\DGCstrand[Red](.5,.5)(.5,1)
\DGCstrand[red](.5,1)(0,1.5)
\DGCstrand[red](.5,1)(1,1.5)
\DGCstrand[red](2,0)(2,1.5)
\DGCstrand[red](1,1.5)(1.5,2)
\DGCstrand[red](2,1.5)(1.5,2)
\DGCstrand[Red](1.5,2)(1.5,2.5)
\DGCstrand[red](1.5,2.5)(1,3)
\DGCstrand[red](1.5,2.5)(2,3)
\DGCstrand[red](0,1.5)(0,3)
%%%%
\DGCstrand[red](0,3)(.5,3.5)
\DGCstrand[red](1,3)(.5,3.5)
\DGCstrand[Red](.5,3.5)(.5,4)
\DGCstrand[red](.5,4)(0,4.5)
\DGCstrand[red](.5,4)(1,4.5)
\DGCstrand[red](2,3)(2,4.5)
\end{DGCpicture}
\end{equation*}
is in the kernel of $W_i \otimes_W W_{i+1} \otimes_W W_i \rightarrow W_i^{e_1}$.
By a graded dimension count utilizing the bases of the bimodules from Propositions \ref{basis1}, \ref{basis2}, \ref{basis3}, we get the exactness of the sequence of the lemma.

Now we define a splitting map
$ W_i \otimes_W W_{i+1} \otimes_W W_i \rightarrow W_{i,i+1}$ by
\begin{equation}
\label{split1}
\begin{DGCpicture}[scale={.7,.7}]
\DGCstrand[black](1.5,0)(.5,2.25)(1.5,4.5)
%%%%%
\DGCstrand[red](0,0)(.5,.5)
\DGCstrand[red](1,0)(.5,.5)
\DGCstrand[Red](.5,.5)(.5,1)
\DGCstrand[red](.5,1)(0,1.5)
\DGCstrand[red](.5,1)(1,1.5)
\DGCstrand[red](2,0)(2,1.5)
\DGCstrand[red](1,1.5)(1.5,2)
\DGCstrand[red](2,1.5)(1.5,2)
\DGCstrand[Red](1.5,2)(1.5,2.5)
\DGCstrand[red](1.5,2.5)(1,3)
\DGCstrand[red](1.5,2.5)(2,3)
\DGCstrand[red](0,1.5)(0,3)
%%%%
\DGCstrand[red](0,3)(.5,3.5)
\DGCstrand[red](1,3)(.5,3.5)
\DGCstrand[Red](.5,3.5)(.5,4)
\DGCstrand[red](.5,4)(0,4.5)
\DGCstrand[red](.5,4)(1,4.5)
\DGCstrand[red](2,3)(2,4.5)
\end{DGCpicture}
%%%%%%%%%%
\mapsto
0
\  ,
\quad \quad \quad \quad
\begin{DGCpicture}[scale={.7,.7}]
\DGCstrand[black](1.5,0)(.5,2.25)(1.5,4.5)
%%%%%
\DGCstrand[red](0,0)(.5,.5)
\DGCstrand[red](1,0)(.5,.5)
\DGCstrand[Red](.5,.5)(.5,1)
\DGCstrand[red](.5,1)(0,1.5)
\DGCstrand[red](.5,1)(1,1.5)
\DGCstrand[red](2,0)(2,1.5)
\DGCstrand[red](1,1.5)(1.5,2)
\DGCstrand[red](2,1.5)(1.5,2)
\DGCstrand[Red](1.5,2)(1.5,2.5)
\DGCstrand[red](1.5,2.5)(1,3)
\DGCstrand[red](1.5,2.5)(2,3)
\DGCstrand[red](0,1.5)(0,3)
\DGCdot{2.25}[r]{$ $}
%%%%
\DGCstrand[red](0,3)(.5,3.5)
\DGCstrand[red](1,3)(.5,3.5)
\DGCstrand[Red](.5,3.5)(.5,4)
\DGCstrand[red](.5,4)(0,4.5)
\DGCstrand[red](.5,4)(1,4.5)
\DGCstrand[red](2,3)(2,4.5)
\end{DGCpicture}
%%%%%%%%%%
\mapsto
0
\ ,
\end{equation}

\begin{equation}
\label{split2}
\begin{DGCpicture}[scale={.7,.7}]
\DGCstrand[red](0,0)(.5,.5)
\DGCstrand[red](1,0)(.5,.5)
\DGCstrand[Red](.5,.5)(.5,1)
\DGCstrand[red](.5,1)(0,1.5)
\DGCstrand[red](.5,1)(1,1.5)
\DGCstrand[red](2,0)(2,1.5)
\DGCstrand[red](1,1.5)(1.5,2)
\DGCstrand[red](2,1.5)(1.5,2)
\DGCstrand[Red](1.5,2)(1.5,2.5)
\DGCstrand[red](1.5,2.5)(1,3)
\DGCstrand[red](1.5,2.5)(2,3)
\DGCstrand[red](0,1.5)(0,3)
\DGCdot{2.25}[r]{$ $}
%%%%
\DGCstrand[red](0,3)(.5,3.5)
\DGCstrand[red](1,3)(.5,3.5)
\DGCstrand[Red](.5,3.5)(.5,4)
\DGCstrand[red](.5,4)(0,4.5)
\DGCstrand[red](.5,4)(1,4.5)
\DGCstrand[red](2,3)(2,4.5)
\end{DGCpicture}
\mapsto 
0
\ ,
\quad \quad \quad \quad
\begin{DGCpicture}[scale={.7,.7}]
\DGCstrand[red](0,0)(.5,.5)
\DGCstrand[red](1,0)(.5,.5)
\DGCstrand[Red](.5,.5)(.5,1)
\DGCstrand[red](.5,1)(0,1.5)
\DGCstrand[red](.5,1)(1,1.5)
\DGCstrand[red](2,0)(2,1.5)
\DGCstrand[red](1,1.5)(1.5,2)
\DGCstrand[red](2,1.5)(1.5,2)
\DGCstrand[Red](1.5,2)(1.5,2.5)
\DGCstrand[red](1.5,2.5)(1,3)
\DGCstrand[red](1.5,2.5)(2,3)
\DGCstrand[red](0,1.5)(0,3)
%\DGCdot{2.25}[r]{$ $}
%%%%
\DGCstrand[red](0,3)(.5,3.5)
\DGCstrand[red](1,3)(.5,3.5)
\DGCstrand[Red](.5,3.5)(.5,4)
\DGCstrand[red](.5,4)(0,4.5)
\DGCstrand[red](.5,4)(1,4.5)
\DGCstrand[red](2,3)(2,4.5)
\end{DGCpicture}
\mapsto 
\begin{DGCpicture}[scale={.7,.7}]
\DGCstrand[red](0,0)(1,1)
\DGCstrand[red](1,0)(1,1)
\DGCstrand[red](2,0)(1,1)
\DGCstrand[Red](1,1)(1,3)
\DGCstrand[red](1,3)(0,4)
\DGCstrand[red](1,3)(1,4)
\DGCstrand[red](1,3)(2,4)
\end{DGCpicture}
\ .
\end{equation}
Note that this splitting does not respect the $p$-DG structure.

Next we define a splitting map
$ W_i^{e_1} \rightarrow W_i \otimes_W W_{i+1} \otimes_W W_i$ by
\begin{equation}
\label{split3}
\begin{DGCpicture}[scale={.7,.7}]
\DGCstrand[red](0,0)(.5,1.5)
\DGCstrand[red](1,0)(.5,1.5)
\DGCstrand[Red](.5,1.5)(.5,3)
\DGCstrand[red](.5,3)(0,4.5)
\DGCstrand[red](.5,3)(1,4.5)
\DGCstrand[red](2,0)(2,4.5)
\end{DGCpicture}
\mapsto
\begin{DGCpicture}[scale={.7,.7}]
\DGCstrand[red](0,0)(.5,.5)
\DGCstrand[red](1,0)(.5,.5)
\DGCstrand[Red](.5,.5)(.5,1)
\DGCstrand[red](.5,1)(0,1.5)
\DGCstrand[red](.5,1)(1,1.5)
\DGCstrand[red](2,0)(2,1.5)
\DGCstrand[red](1,1.5)(1.5,2)
\DGCstrand[red](2,1.5)(1.5,2)
\DGCstrand[Red](1.5,2)(1.5,2.5)
\DGCstrand[red](1.5,2.5)(1,3)
\DGCstrand[red](1.5,2.5)(2,3)
\DGCstrand[red](0,1.5)(0,3)
\DGCdot{2.25}[r]{$ $}
%%%%
\DGCstrand[red](0,3)(.5,3.5)
\DGCstrand[red](1,3)(.5,3.5)
\DGCstrand[Red](.5,3.5)(.5,4)
\DGCstrand[red](.5,4)(0,4.5)
\DGCstrand[red](.5,4)(1,4.5)
\DGCstrand[red](2,3)(2,4.5)
\end{DGCpicture} \ ,
%%%%%%%%%%
\quad \quad \quad \quad
\begin{DGCpicture}[scale={.7,.7}]
\DGCstrand[black](1.5,0)(1.5,4.5)
\DGCstrand[red](0,0)(.5,1.5)
\DGCstrand[red](1,0)(.5,1.5)
\DGCstrand[Red](.5,1.5)(.5,3)
\DGCstrand[red](.5,3)(0,4.5)
\DGCstrand[red](.5,3)(1,4.5)
\DGCstrand[red](2,0)(2,4.5)
\end{DGCpicture}
\mapsto
\begin{DGCpicture}[scale={.7,.7}]
\DGCstrand[black](1.5,0)(.5,2.25)(1.5,4.5)
%%%%%
\DGCstrand[red](0,0)(.5,.5)
\DGCstrand[red](1,0)(.5,.5)
\DGCstrand[Red](.5,.5)(.5,1)
\DGCstrand[red](.5,1)(0,1.5)
\DGCstrand[red](.5,1)(1,1.5)
\DGCstrand[red](2,0)(2,1.5)
\DGCstrand[red](1,1.5)(1.5,2)
\DGCstrand[red](2,1.5)(1.5,2)
\DGCstrand[Red](1.5,2)(1.5,2.5)
\DGCstrand[red](1.5,2.5)(1,3)
\DGCstrand[red](1.5,2.5)(2,3)
\DGCstrand[red](0,1.5)(0,3)
%%%%
\DGCstrand[red](0,3)(.5,3.5)
\DGCstrand[red](1,3)(.5,3.5)
\DGCstrand[Red](.5,3.5)(.5,4)
\DGCstrand[red](.5,4)(0,4.5)
\DGCstrand[red](.5,4)(1,4.5)
\DGCstrand[red](2,3)(2,4.5)
\end{DGCpicture}
%%%%%%%%%%
\ .
\end{equation}
Note that this splitting map does not respect the $p$-DG structure either.  All of these splitting maps are bimodule homomorphisms since they are compositions of bimodules maps described in detail in \cite{KhovSussan}.

Thus there is a short exact sequence of $(W,W) \# H$-bimodules
which splits as $(W,W)$-bimodules.
\end{proof}

The next crucial proposition is proved in a similar manner as in \cite[Theorems 4.2, 4.4]{KRWitt}.

\begin{prop} \label{braidclassical}
The complexes of $(W,W)\# H$-modules satisfy the following relations in the relative homotopy category.
\begin{enumerate}
\item $ \Sigma_i \circ \Sigma_i' \cong \Id \cong \Sigma_i' \circ \Sigma_i$,
\item $ \Sigma_i \circ \Sigma_j \cong \Sigma_j \circ \Sigma_i$ for $|i-j|>1$,
\item $ \Sigma_i \circ \Sigma_j \circ \Sigma_i \cong \Sigma_j \circ \Sigma_i \circ \Sigma_j$ for $|i-j|=1$.
\end{enumerate}
\end{prop}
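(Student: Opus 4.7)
The plan is to leverage the fact that in the relative homotopy category $\mc{C}^\dif(W,d_0)$ all homotopies only need to be $(W,W)$-bimodule maps, not $(W,W)\#H$-bimodule maps. This slack makes the splittings of Lemmas \ref{BB=B+B} and \ref{sesWii+1} directly applicable, even though several of them are not $H$-equivariant (see the splittings \eqref{split1}--\eqref{split3}). Consequently, once the required bimodule-level cancellations are in place, the arguments reduce to well-known calculations, in the spirit of \cite[Theorems 4.2, 4.4]{KRWitt} and \cite{KhovSussan}, modulo careful tracking of the twist $W_i^{-e_1}$ dictated by \eqref{defofWitwisted}.

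For (1), I would form the total complex of $\Sigma_i \otimes_W \Sigma_i'$ and simplify its nonidentity corner $W_i \otimes_W W_i^{-e_1}\{-2\}$ using Lemma \ref{BB=B+B}; the twist $-e_1$ exactly cancels the $+e_1$ appearing in the decomposition, producing a direct summand $W_i \oplus W_i\{-2\}\{2\} = W_i \oplus W_i$. The maps induced by $\epsilon_i$ and $\iota_i$ on this summand identify a contractible piece (one copy of $W_i$ cancelling against the off-diagonal $W_i$ term), leaving $W \cong \Id$. The composition $\Sigma_i' \circ \Sigma_i$ is handled symmetrically.

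For (2), when $|i-j|>1$ the bimodule generators \eqref{thickgenerator} of $W_i$ and $W_j$ involve disjoint red strands, so $W_i \otimes_W W_j \cong W_j \otimes_W W_i$ via the evident diagrammatic swap. This gives a termwise isomorphism of the bicomplex $\Sigma_i \circ \Sigma_j$ with $\Sigma_j \circ \Sigma_i$ that commutes with $\epsilon_i, \epsilon_j$, so they are already isomorphic at the level of $(W,W)\#H$-complexes.

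The main obstacle is (3). Here I would expand $\Sigma_i \circ \Sigma_{i+1} \circ \Sigma_i$ as an $8$-term cube and use Lemma \ref{sesWii+1} to decompose the $W_i \otimes W_{i+1} \otimes W_i$ corner, at the level of $(W,W)$-bimodules, as $W_{i,i+1} \oplus W_i^{e_1}$. A careful diagram chase using the homomorphisms $\alpha_{i,i+1}, \alpha_{i+1,i}$ from Section~\ref{secbihoms} and \eqref{surj1}--\eqref{surj2} should exhibit four of the cube's terms as a contractible subcomplex, collapsing the cube to a three-term complex whose middle term is $W_{i,i+1}$. The analogous simplification of $\Sigma_{i+1} \circ \Sigma_i \circ \Sigma_{i+1}$ lands on the same complex because $W_{i,i+1}$ is symmetric in the roles of strands $i$ and $i+1$; the comparison isomorphism is then constructed from $\alpha_{i+1,i} \circ \alpha_{i,i+1}^{-1}$ (with $\alpha_{i,i+1}$ inverted on the $W_{i,i+1}$ summand). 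The delicate point will be bookkeeping of the twists $W_i^{\pm e_1}$ and of the signs in the cube differentials so that the common core really is identical on both sides, and verifying that the splittings of Lemma \ref{sesWii+1}, although not $H$-equivariant, are genuinely usable in $\mc{C}^\dif(W,d_0)$ by virtue of Definition \ref{def-relative-homotopy-category}.
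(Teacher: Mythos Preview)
Your approach is essentially the same as the paper's: it reduces (1) and (3) to the analogues of \cite[Theorems 4.2, 4.4]{KRWitt} via Lemmas \ref{BB=B+B} and \ref{sesWii+1}, and notes that (2) is immediate. One small slip: in (1), tensoring with the twist $-e_1$ on the right shifts the $H$-structure on \emph{both} summands of Lemma \ref{BB=B+B}, so $W_i \otimes_W W_i^{-e_1}\{-2\} \cong W_i^{-e_1}\{-2\} \oplus W_i$, not $W_i \oplus W_i$; this is harmless since these two summands are exactly what cancel against the remaining corners $W_i$ and $W_i^{-e_1}\{-2\}$ of the bicomplex.
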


We point out that, as in the usual homotopy category case, relative homotopy classes of $p$-DG bimodules over two $p$-DG algebras $A$ and $B$ give rise to functors from $\mc{C}^\dif(B,d_0)$ to $\mc{C}^\dif(A,d_0)$. Therefore, Proposition \ref{braidclassical}
can be regarded as an isomorphism of functors on the relative homotopy category of $p$-DG $W$-modules.

\begin{proof}
The first isomorphism follows as in \cite[Theorem 4.2]{KRWitt} or \cite[Proposition 3.7]{QiSussan4} which use versions of Lemma \ref{BB=B+B}.

The second item is clear.  

The third isomorphism follows as in \cite[Theorem 4.4]{KRWitt} or \cite[Proposition 3.9]{QiSussan4} which use versions of Lemma \ref{sesWii+1}.  
\end{proof}

Applying the $p$-extension functor $\mc{P}$, we obtain $p$-complexes of $p$-DG $(W,W)$-bimodules
$T_i:=  \mc{P} (\Sigma_i)$ and
$T_i':= \mc{P} (\Sigma_i')$ in the relative $p$-homotopy category $\mc{C}^\dif(W,\dif_0)$.  Explicitly, these complexes look like: 

\begin{align}
 T_i:=  & \left( W_i  \xrightarrow{=}\cdots 
 \xrightarrow{=}   W_i   \xrightarrow{\epsilon_i}   W \right)\\
T_i':= & \left( W \xrightarrow{\iota_i} W_i^{-e_1}\{-2\} \xrightarrow{=} \cdots \xrightarrow{=} W_i^{-e_1} \{-2\}
\right)
, 
\end{align}
where the repeated terms appear $p-1$ times. It should be pointed out that $T_i$ and $T_i^\prime$ are just cones of the $p$-DG bimodule maps $\epsilon_i$ and $\iota_i$ in the relative homotopy category of $p$-DG $(W,W)$-bimodules. This is because $\mc{P}$ is exact (Proposition \ref{relextot}), and thus it sends the cones $\Sigma_i$ and $\Sigma_i^\prime$ to cones in the relative $p$-homotopy category.

\begin{thm} \label{mainthm}
The $p$-complexes of $p$-DG $(W,W)$-bimodules satisfy the following isomorphisms of functors on the relative $p$-homotopy category $\mathcal{C}^{\dif}(W,\dif_0)$.
\begin{enumerate}
\item $ T_i \circ T_i' \cong \Id \cong T_i' \circ T_i$,
\item $ T_i \circ T_j \cong T_j \circ T_i$ for $|i-j|>1$,
\item $ T_i \circ T_j \circ T_i \cong T_j \circ T_i \circ T_j$ for $|i-j|=1$.
\end{enumerate}
\end{thm}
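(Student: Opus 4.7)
The plan is to deduce Theorem \ref{mainthm} from its classical counterpart, Proposition \ref{braidclassical}, by carefully tracking how the compositions $T_i \circ T_j$ behave under the $p$-DG structure. Although the $p$-extension functor $\mathcal{P}$ is a triangulated functor on relative homotopy categories, it does not commute with $\otimes_W$, so we cannot simply write $T_i \circ T_j = \mathcal{P}(\Sigma_i \circ \Sigma_j)$; instead each composition must be analyzed directly as a $p$-DG complex of $(W,W)\# H$-modules. The two key technical inputs are the $p$-DG bimodule decomposition of Lemma \ref{BB=B+B} and the $p$-DG short exact sequence of Lemma \ref{sesWii+1}, both of which are already formulated at the level of smash-product bimodules and therefore respect $\partial$.

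For relation (1), I would expand $T_i \otimes_W T_i'$ into a two-dimensional rectangular array. In every position except the extreme corner, the tensor factor is of the form $W_i \otimes_W W_i^{-e_1}\{-2\}$, which by Lemma \ref{BB=B+B} splits as $W_i^{-e_1}\{-2\} \oplus W_i$ as $(W,W)\# H$-modules. Carefully matching the identity maps along rows and columns with this decomposition, and cancelling telescoping pairs of identities (which are contractible in $\mathcal{C}^{\dif}(W,\dif_0)$), collapses the total complex onto the single surviving copy of $W$ in the bottom-right corner. This reproduces the method of \cite[Proposition 3.3]{QiSussan4}, adapted to the relative $p$-homotopy category. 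Relation (2) is immediate: when $|i-j|>1$, one has a natural isomorphism $W_i \otimes_W W_j \cong W_j \otimes_W W_i$ of $(W,W)\# H$-modules, since the bimodule generators are supported on disjoint strands and the $p$-differential acts locally. Applying this termwise to $T_i \otimes_W T_j$ gives the braid commutation.

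For relation (3), I would tensor both sides with Lemma \ref{sesWii+1}: the short exact sequence $0 \to W_{i,i+1} \to W_i \otimes W_{i+1} \otimes W_i \to W_i^{e_1} \to 0$ splits as $(W,W)$-bimodules via the maps \eqref{split1}–\eqref{split3}, though not as $(W,W)\# H$-modules. Tensoring $T_i \circ T_j \circ T_i$ with this decomposition yields a complex whose $W_i^{e_1}$-component is built entirely of identity maps and is therefore contractible after forgetting the $H$-action — precisely the condition killed by passing to $\mathcal{C}^{\dif}(W,\dif_0)$. What remains is a complex built on $W_{i,i+1}$, which is manifestly symmetric in $i$ and $i+1$, giving the desired braid relation; this is the analog of the argument in \cite[Theorem 4.4]{KRWitt} and \cite[Proposition 3.5]{QiSussan4}.

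The main obstacle is in the step for relation (3). The splittings \eqref{split1}–\eqref{split3} fail to commute with $\partial$, so the decomposition of $T_i \circ T_j \circ T_i$ into $W_{i,i+1}$- and $W_i^{e_1}$-parts only exists as $(W,W)$-bimodules; the $p$-differential mixes the two summands. The whole reason for working in the relative $p$-homotopy category is to permit such non-$H$-equivariant splittings: the acyclicity of the $W_i^{e_1}$-piece is witnessed by a non-$H$-equivariant homotopy which, by definition of $\mathcal{C}^\dif(W,\dif_0)$, suffices. Writing down and verifying these homotopies, and checking that the residual $W_{i,i+1}$-complex on both sides of the braid relation is genuinely isomorphic (not merely quasi-isomorphic after forgetful), is the technical heart of the proof.
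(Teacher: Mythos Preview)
Your approach is correct but considerably more elaborate than the paper's. The paper's proof is a single sentence: it applies the $p$-extension functor $\mc{P}$ (Proposition~\ref{relextot}) to the isomorphisms already established in Proposition~\ref{braidclassical}. The point is that all the delicate homotopy-theoretic work --- the cancellation arguments using Lemma~\ref{BB=B+B}, the use of the non-$H$-equivariant splitting from Lemma~\ref{sesWii+1}, and the resulting contractions --- has already been carried out at the level of ordinary chain complexes of $(W,W)\#H$-bimodules in Proposition~\ref{braidclassical}, and is then transported wholesale to the $p$-setting by the exact functor $\mc{P}$.

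You raise a legitimate technical point: $\mc{P}$ does not literally commute with $\otimes_W$, so for the paper's one-line deduction one needs $\mc{P}(\Sigma_i)\otimes_W \mc{P}(\Sigma_j)\cong \mc{P}(\Sigma_i\otimes_W\Sigma_j)$ in $\mc{C}^\dif(W,\dif_0)$. The paper does not isolate this as a separate lemma here, treating it as part of the formalism imported from \cite{QiSussan4}. Your alternative --- re-running the contraction arguments of \cite{KRWitt} and \cite{QiSussan4} directly on the $p$-complexes $T_i\otimes_W T_j$ --- sidesteps this issue entirely and would succeed, but it duplicates at the $p$-level exactly the work that Proposition~\ref{braidclassical} was designed to absorb. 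In short: your route is sound, but the paper's route is shorter precisely because it front-loads the hard analysis into the classical relative homotopy category and then invokes functoriality of $\mc{P}$ once.
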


\begin{proof}
This follows from Proposition \ref{braidclassical} by applying the functor $\mc{P}$ from Section \ref{secpdgtheory}.
%This follows from Proposition \ref{braidclassical} by applying the functor $\mc{T} \circ \mc{P}$ from Section \ref{secpdgtheory}.
\end{proof}

%\begin{rem}
%Theorem \ref{mainthm} could be stated for the algebra %$\overline{W}$ instead of $W$.
%\end{rem}

\addcontentsline{toc}{section}{References}

% ====================================================================
% REFERENCES

\bibliographystyle{alpha}
\bibliography{qy-bib}

%
% ====================================================================

\vspace{0.1in}

%\noindent M.~K.: { \sl \small Department of Mathematics, Columbia University, New York, NY 10027, USA} \newline \noindent {\tt \small email: khovanovi@math.columbia.edu}

\vspace{0.1in}

\noindent Y.~Q.: { \sl \small Department of Mathematics, University of Virginia, Charlottesville, VA 22904, USA} \newline \noindent {\tt \small email: yq2dw@virginia.edu}

\vspace{0.1in}

\noindent J.~S.:  {\sl \small Department of Mathematics, CUNY Medgar Evers, Brooklyn, NY, 11225, USA}\newline \noindent {\tt \small email: jsussan@mec.cuny.edu \newline 
\sl \small Mathematics Program, The Graduate Center, CUNY, New York, NY, 10016, USA}\newline \noindent {\tt \small email: jsussan@gc.cuny.edu}

\vspace{0.1in} 

\noindent Y.~Y. {\sl \small Advanced Mathematical Institute, Osaka City University, Osaka, Japan} \newline \noindent {\tt \small email: yasuyoshi.yonezawa@gmail.com}

% ==============================================================================
%
\end{document}